\title{Categorical Heisenberg action I: rational Cherednik algebras}
\author{Roman Bezrukavnikov and Ivan Losev}
\newcommand{\K}{\mathbb{K}}
\newcommand{\gl}{\mathfrak{gl}}
\newcommand{\g}{\mathfrak{g}}
\newcommand{\Orb}{\mathbb{O}}
\newcommand{\C}{\mathbb{C}}
\newcommand{\heis}{\mathfrak{H}}
\newcommand{\Coh}{\operatorname{Coh}}
\newcommand{\gr}{\operatorname{gr}}
\newcommand{\Ind}{\operatorname{Ind}}
\newcommand{\A}{\mathcal{A}}
\newcommand{\OCat}{\mathcal{O}}
\newcommand{\Hom}{\operatorname{Hom}}
\newcommand{\Ring}{\mathfrak{R}}
\newcommand{\F}{\mathbb{F}}
\newcommand{\Z}{\mathbb{Z}}
\newcommand{\quo}{/\!/}
\newcommand{\h}{\mathfrak{h}}
\newcommand{\z}{\mathfrak{z}}
\newcommand{\red}{/\!/\!/}
\newcommand{\p}{\mathfrak{p}}
\newcommand{\eu}{\mathsf{eu}}
\newcommand{\Res}{\operatorname{Res}}
\newcommand{\slf}{\mathfrak{sl}}
\newcommand{\WC}{\mathfrak{WC}}
\newcommand{\GL}{\operatorname{GL}}
\newtheorem{Thm}{Theorem}[section]
\newtheorem{Prop}[Thm]{Proposition}
\newtheorem{Cor}[Thm]{Corollary}
\newtheorem{Lem}[Thm]{Lemma}
\theoremstyle{definition}
\newtheorem{Ex}[Thm]{Example}
\newtheorem{Rem}[Thm]{Remark}
\numberwithin{equation}{section}
\begin{document}
\begin{abstract}
In this paper we introduce and study a categorical action of the positive part of the Heisenberg Lie algebra on categories of modules over rational Cherednik algebras associated to symmetric groups. We show that the generating functor for this action is exact. We then produce a categorical Heisenberg action on the categories $\mathcal{O}$ and show it is the same as one constructed by Shan and Vasserot. Finally, we reduce modulo a large prime $p$. We show that the functors constituting the action of the positive half of the Heisenberg algebra send simple objects to semisimple ones, and we describe these semisimple objects.  
\end{abstract}
\maketitle
\tableofcontents

\section{Introduction}
The goal of this paper is to define and study certain functors between categories of modules for rational Cherednik algebras
associated to the symmetric groups. In more detail, let $\F$ be a field that has either characteristic $0$ or large enough 
positive characteristic. To an element $\lambda\in \F$ and a nonnegative integer $n$ we can assign the (spherical)
rational Cherednik algebra $\A_{n,\lambda}$, a filtered quantization of the algebra of invariants $\F[x_1,y_1,\ldots, x_n,y_n]^{S_n}$, where $S_n$
acts by permuting the pairs of variables $(x_i,y_i)$. Fix a positive integer $d$. Also fix positive coprime integers $a,b$ and assume that $\lambda=\frac{a}{b}$ in $\F$ (if $\operatorname{char}\F>0$, we require it to be large enough compared to $n,d,a,b$). 
For each partition $\tau$ of $d$ we will produce an exact functor $\heis^\tau$ labelled by $\tau$ going from a suitable category of 
modules over $\A_{n,\lambda}$ to a suitable category of modules over $\A_{n+db,\lambda}$. We view these functors as constituting a
categorical action of the positive part of the Heisenberg Lie algebra. Our primary inspiration for this project was to study the 
representation theory of rational Cherednik algebras in large positive characteristic. 

We would like to note that a related construction appeared in 
the literature: Shan and Vasserot in \cite{SV} constructed a categorical action of the Heisenberg Lie algebra on the categories $\mathcal{O}$
for the cyclotomic ratilonal Cherednik algebras (in characteristic $0$). We will recover their construction in the special case 
of the groups $S_n$. 

A basic ingredient for our construction is a remarkable equivariant D-module on $\mathfrak{sl}_b$. Namely, assume that the base field is 
of characteristic $0$ and contains a primitive $b$th root of $1$. Then there is a unique irreducible $\operatorname{SL}_b$-equivariant 
$D(\mathfrak{sl}_b)$-module $M'_\lambda$ with the following two properties:
\begin{enumerate}
\item $M'_\lambda$ is supported on the nilpotent cone of $\mathfrak{sl}_b$,
\item and the center of $\operatorname{SL}_b$ acts on $M'_\lambda$ via the character $z\mapsto z^a$.
\end{enumerate}
We can extend the $\operatorname{SL}_b$-action on $M'_\lambda$ to $\operatorname{GL}_b$ making $M'_\lambda$ a 
$(\operatorname{GL}_b,\lambda)$-equivariant D-module, where we abuse the notation and view $\lambda$ as the character $x\mapsto 
\lambda\operatorname{tr}$ of $\mathfrak{gl}_b$. We will elaborate on this in Section \ref{SS_cuspidal_D_module}. 

We can convert $M'_\lambda$ to a functor (a ``Heisenberg generator'') as follows. Consider the representation 
$R_n:=\gl_n\oplus \F^n$ of $\GL_n$. One can then consider the parabolic induction functor between (strongly twisted)
equivariant derived categories of D-modules 
$$\operatorname{Ind}_{n,b}^{n+b}:D^b_{\GL_n\times \GL_b,\lambda}(D(R_n\times \mathfrak{gl}_b)\operatorname{-mod})\rightarrow D^b_{\GL_{n+b},\lambda}(D(R_{n+b}\operatorname{-mod})),$$
see Section \ref{SS_induction}. The Heisenberg generator functor $\heis_{M'_\lambda}$ sends $M\in D^b_{\GL_n\times \GL_b,\lambda}(D(R_n)\operatorname{-mod})$
to $\operatorname{Ind}_{n,b}^{n+b}(M\boxtimes [M'_\lambda\otimes D(\mathfrak{z})])$, where we write $\mathfrak{z}$ for the center of 
$\gl_b$.

The algebra $\A_{n,\lambda}$ can be realized as the quantum Hamiltonian reduction of $D(R_n)$ by the action of $G_n$, see, e.g.,
\cite{GG}. Therefore the category $\A_{n,\lambda}\operatorname{-mod}$ becomes a quotient of the category of (strongly twisted)
equivariant D-modules $D(R_n)\operatorname{-mod}^{G_n,\lambda}$. It is relatively easy to show (see Section \ref{SS:Chered_ind}) using the localization theorem 
for rational Cherednik algebras that the functor $\heis_{M'_\lambda}$ descends to a functor between the 
quotient categories $D^b(\A_{n,\lambda}\otimes D(\z)\operatorname{-mod})\rightarrow D^b(\A_{n+b,\lambda}\operatorname{-mod})$
again denoted by $\heis_{M'_\lambda}$.

The construction in the previous paragraph does not use any specific information about $M'_\lambda$. This information, however,
plays an important role in our first main result, Theorem \ref{Thm:exactness}, that states that the functor $\heis_{M'_\lambda}$
is t-exact. For this, we show in Section \ref{S_Heis_generator} that $\heis_{M'_\lambda}: D^b(\A_{n,\lambda}\otimes D(\z)\operatorname{-mod})\rightarrow D^b(\A_{n+b,\lambda}\operatorname{-mod})$ is given by tensoring with a complex of Harish-Chandra bimodules. We then examine the structure
of a suitable etale lift of this complex and prove the following result (Theorem \ref{Thm:exactness} in the main body of the paper):

\begin{Thm}\label{Thm:exactness_intro}
The functor $\heis_{M'_\lambda}$ is t-exact. 
\end{Thm}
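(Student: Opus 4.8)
The plan is to build on the reduction, carried out in the body of the paper, of $\heis_{M'_\lambda}$ to a bimodule tensor functor: there is a bounded complex $\Ring$ of $(\A_{n+b,\lambda},\,\A_{n,\lambda}\otimes D(\z))$-bimodules, each term of which is a Harish-Chandra bimodule, with $\heis_{M'_\lambda}\cong(-)\otimes^L_{\A_{n,\lambda}\otimes D(\z)}\Ring$. It therefore suffices to prove that $\Ring$ is quasi-isomorphic to a single Harish-Chandra bimodule concentrated in homological degree $0$ which is, moreover, flat as a right $\A_{n,\lambda}\otimes D(\z)$-module; for such a $\Ring$ the derived tensor product agrees with the naive one and $\heis_{M'_\lambda}$ is t-exact.

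Both properties --- concentration in degree $0$ and right flatness --- may be checked after passing to an \'etale (equivalently, formal) neighborhood of an arbitrary closed point of $\operatorname{Spec}\gr\A_{n+b,\lambda}=\operatorname{Sym}^{n+b}\mathbb{A}^2$, since the terms of $\Ring$ are finitely generated over the center. Here one invokes the structure theory of rational Cherednik algebras near such a point: by the completion theorem of Bezrukavnikov--Etingof and its extensions, the completion of $\A_{m,\lambda}$ at a configuration of points of $\mathbb{A}^2$ with collision multiplicities $m_1,\dots,m_k$ is Morita equivalent to a completed tensor product of the completions at the origin of $\A_{m_1,\lambda},\dots,\A_{m_k,\lambda}$, and --- because $\Ring$ is Harish-Chandra --- the completion of $\Ring$ decomposes compatibly as an external product of the corresponding local bimodules. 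This reduces the verification to the ``most singular'' situation, in which the base point is the origin, so that the relevant local piece of $\Ring$ is governed directly by $M'_\lambda$.

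In that base case one exploits the characterization of $M'_\lambda$ by properties (1) and (2). Support on the nilpotent cone forces the quantum Hamiltonian reduction of $M'_\lambda\otimes D(\z)$ to be concentrated in homological degree $0$ and finite-dimensional over $\A_{b,\lambda}$, and the prescribed central character $a$ identifies it with the unique finite-dimensional irreducible $\A_{b,\lambda}$-module $L_\lambda$, which exists precisely because $\lambda=a/b$ with $\gcd(a,b)=1$ (and $p$ large in the positive characteristic case). Under this identification the local model of $\heis_{M'_\lambda}$ becomes a localization of the Bezrukavnikov--Etingof parabolic induction from $\A_{n,\lambda}\otimes\A_{b,\lambda}$-modules to $\A_{n+b,\lambda}$-modules, post-composed with $(-)\boxtimes L_\lambda$; since the induction bimodule is flat over the inducing algebra and tensoring over the $\A_{b,\lambda}$-factor with the finite-dimensional module $L_\lambda$ does not disturb right flatness over $\A_{n,\lambda}\otimes D(\z)$, gluing the local pieces exhibits $\Ring$ as a flat Harish-Chandra bimodule in degree $0$.

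The main obstacle I expect is the reduction to local models: one must show that the \'etale lift of $\Ring$ genuinely splits along the completion decomposition of the algebras, with no correction terms and with homology concentrated in degree $0$ everywhere. This is exactly where cuspidality of $M'_\lambda$ is indispensable --- because $M'_\lambda$ has no nontrivial parabolic restrictions (being supported on the nilpotent cone with the given $\GL_b$-equivariance), the singular support of $\Ring$ is constrained enough that the Koszul-type differentials occurring in the local models carry no higher homology. A further technical point, needed to conclude, is the compatibility of t-structures under quantum Hamiltonian reduction: t-exactness of the bimodule tensor functor on the equivariant D-module categories must be shown to descend to the quotient categories $\A_{n,\lambda}$-mod and $\A_{n+b,\lambda}$-mod, and it is here that the localization theorem and the genericity hypotheses on $\lambda$ (and largeness of $p$) are used.
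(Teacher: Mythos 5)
Your proposal matches the paper's strategy closely, so let me focus on the places where it needs sharpening. The paper, like you, reduces the theorem to showing that the complex of Harish--Chandra bimodules $B_n := \heis_{M'_\lambda}(\bar{\A}_{n,\lambda})$ has cohomology concentrated in degree $0$ and is projective (hence flat) as a right $\bar{\A}_{n,\lambda}$-module (Proposition \ref{Prop:cohomology_structure}), and it does so by analyzing an \'etale lift, equivalently a completion, of $B_n$ along a carefully chosen locus in $\h_{n+b}/S_{n+b}$ (Proposition \ref{Prop:etale_lift_structure} and Section \ref{SS_proof_cohom_struct}). With two caveats, your local analysis is the same: the \'etale lift is governed by $V\cong(M'_\lambda\otimes\C[\C^b])^{G_b}$, the unique finite-dimensional irreducible $eH'_{b,\lambda}e$-module, and the pull-push over the \'etale neighborhood reduces to a pullback because the relevant stack morphisms become isomorphisms there (Lemma \ref{Lem:equiv_cats}) --- the rigorous form of your heuristic that cuspidality constrains singular support so that no higher homology appears.

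The first caveat: ``checked at an arbitrary closed point'' is not by itself enough. To deduce $H^j(B_n)=0$ for $j\neq 0$ from a local computation you must know the completion at a \emph{specific, tractable} point is nonzero whenever $H^j(B_n)$ is. The paper's mechanism is Lemma \ref{Lem:HC_vanishing}, which rests on the classification of two-sided ideals in $\A_{n+b,\lambda}$ from \cite{sraco}: every proper ideal has associated variety meeting the image of $\h^{S_{\vec{m}}}$ for $\vec{m}=(r,b,\ldots,b)$, so vanishing of the completion at such a point forces global vanishing. Your ``most singular situation, the base point is the origin'' is ambiguous here: at the literal origin of $\h_{n+b}/S_{n+b}$ nothing decomposes and $M'_\lambda$ does not control the local picture; the useful points are those where the $n+b$ coordinates cluster into $r$ singletons and $b$-tuples, not the total collision. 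The second caveat: your final paragraph's worry about descending t-exactness from the equivariant D-module level is a non-issue. The D-module functor $\Ind^{n+b}_{n,b}(\bullet\boxtimes M'_\lambda)$ is not t-exact (the proper pushforward $\iota_*$ is not), and the paper never descends t-exactness; what descends is the functor (Corollary \ref{Cor:heis_quotient}, via preservation of the unstable locus). Once $B_n$ is shown to be a single right-projective Harish--Chandra bimodule, t-exactness on the Cherednik side is immediate, exactly as you observe in your opening paragraph.
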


Next, Section \ref{S_Ofun}, we study functors between categories $\mathcal{O}$ over the algebras $\A_{n,\lambda}$ arising from $\heis_{M'_\lambda}$ and compare them 
with the analogous functors from \cite{SV}. Here we assume that the base field is $\C$. Namely, we define the functor $\heis_{\OCat}:\OCat(\A_{n,\lambda})\rightarrow \OCat(\A_{n+b,\lambda})$ via $\heis_{\OCat}(\bullet)=\heis_{M'_\lambda}(\bullet\otimes \C[\mathfrak{z}])$. We use the properties of equivariant D-modules on $\mathfrak{sl}_{db}$ to equip the functor $\heis_{\OCat}^d:
\OCat(\A_{n,\lambda})\rightarrow \OCat(\A_{n+db,\lambda})$ with an action of the symmetric group $S_d$ by automorphisms. 
For a partition $\tau$ of $d$ (equivalently, an irreducible representation of $S_d$), let $\heis_\OCat^\tau$ denote the 
$\tau$-isoytpic component of $\heis_\OCat^d$. Note that Shan and Vasserot also assign a functor $\OCat(\A_{n,\lambda})\rightarrow 
\OCat(\A_{n+db,\lambda})$ to $\tau$, denote it by $\heis^\tau_{SV}$ (we will review their construction in 
Section \ref{SS_SV}). Their construction is very different: it is based on 
induction functors for categories $\OCat$ introduced by the first named author and Etingof in \cite{BE}. In particular, the Shan-Vasserot functors a priori do not extend beyond categories $\OCat$. We prove the following result (Theorem \ref{Thm:SV_comparison} in the main body of the paper):

\begin{Thm}\label{Thm:SV_comparison_intro}
For all $\tau$, we have a functor isomorphism $\heis^\tau_{\OCat}\cong \heis^\tau_{SV}$.
\end{Thm}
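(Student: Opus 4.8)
The plan is to restrict the geometric Heisenberg generator to category $\OCat$, recognize it there as a Bezrukavnikov--Etingof parabolic induction applied to a single cuspidal building block, iterate, and then reconcile the two a priori different $S_d$-symmetries. First I would use Theorem~\ref{Thm:exactness_intro} (=~Theorem~\ref{Thm:exactness}) and the description of $\heis_{M'_\lambda}$ as tensoring with a complex of Harish-Chandra bimodules from Section~\ref{S_Heis_generator}: a t-exact HC-bimodule functor intertwines the Euler gradings up to the shift by $b$, hence preserves category $\OCat$, so $\heis_{\OCat}\colon\OCat(\A_{n,\lambda})\to\OCat(\A_{n+b,\lambda})$ is a well-defined exact functor. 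The key reduction is to identify it with the $\OCat$-theoretic parabolic induction $\operatorname{Ind}^{\OCat}_{S_n\times S_b}(\bullet\boxtimes L)$ of \cite{BE}, where $L$ is the quantum Hamiltonian reduction by $\GL_b$ of the $D(\gl_b)$-module $M'_\lambda\otimes D(\z)$ tensored with $\C[\z]$. This rests on the compatibility between the localization/Hamiltonian-reduction picture of Section~\ref{SS:Chered_ind} and the Mackey-type induction of \cite{BE}: parabolic induction of strongly twisted equivariant $D$-modules (Section~\ref{SS_induction}) descends, after reduction, to $\operatorname{Ind}^{\OCat}$. Along the way I must identify $L$: since $M'_\lambda$ is the unique irreducible $\operatorname{SL}_b$-equivariant $D(\slf_b)$-module supported on the nilpotent cone with central character $a$ (Section~\ref{SS_cuspidal_D_module}), its reduction is the unique finite-dimensional simple $L_b(\operatorname{triv})$ of $\OCat(\A_{b,\lambda})$, which exists precisely because $\lambda=a/b$ with $\gcd(a,b)=1$; the factor $D(\z)$ together with $\C[\z]$ contributes only a one-dimensional space. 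Thus $\heis_{\OCat}(\bullet)\cong\operatorname{Ind}^{\OCat}_{S_n\times S_b}(\bullet\boxtimes L_b(\operatorname{triv}))$, which is exactly the basic ($d=1$) Heisenberg generator of \cite{SV} specialized to $S_n$.

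Next I would iterate. By transitivity of parabolic induction and the base-change/projection formulas of Section~\ref{SS_induction}, the $d$-fold composite $\heis_{\OCat}^d$ is isomorphic to $\operatorname{Ind}^{\OCat}_{S_n\times S_b^{\,d}}\bigl(\bullet\boxtimes L_b(\operatorname{triv})^{\boxtimes d}\bigr)$, equivalently to $\operatorname{Ind}^{\OCat}_{S_n\times(S_b\wr S_d)}$ applied to $\bullet\boxtimes L_b(\operatorname{triv})^{\boxtimes d}$ with its tautological $S_d$-equivariant structure permuting the $d$ identical tensor factors. The Shan--Vasserot functor $\heis^d_{SV}$, as reviewed in Section~\ref{SS_SV}, admits the same description (it is the $d$-fold composite of the image of the generating object of Khovanov's Heisenberg category), so the two agree as plain functors; what remains is to match the two $S_d$-actions on this common functor. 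On the geometric side the action is built in Section~\ref{S_Ofun} from the permutation of the $d$ copies of $\slf_b$ inside $\slf_{db}$ --- equivalently from the fact that $M'_\lambda$ on $\slf_{db}$ restricts along the block-diagonal $\slf_b^{\oplus d}\hookrightarrow\slf_{db}$ to $(M'_\lambda)^{\boxtimes d}$ with the evident symmetry, conjugated by the block-permuting Weyl group elements --- so it is exactly the permutation of the $d$ pairs $(S_b,L_b(\operatorname{triv}))$. On the Shan--Vasserot side the $S_d$-action is given by the upward crossings in the Heisenberg category, which are assembled from the $\operatorname{Ind}$--$\Res$ adjunctions and the Mackey filtration; unwinding these shows that on $\operatorname{Ind}^{\OCat}_{S_n\times S_b^{\,d}}(\bullet\boxtimes L_b(\operatorname{triv})^{\boxtimes d})$ the crossing is likewise the permutation isomorphism of parabolic induction swapping the factors. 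Hence, once the $d=1$ building blocks are identified, the two $S_d$-actions coincide.

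Finally I would take $\tau$-isotypic components: since the isomorphism $\heis_{\OCat}^d\cong\heis^d_{SV}$ is $S_d$-equivariant, applying the idempotent $e_\tau\in\C S_d$ yields $\heis^\tau_{\OCat}\cong\heis^\tau_{SV}$ for every partition $\tau$ of $d$, which is Theorem~\ref{Thm:SV_comparison}.

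I expect the main obstacle to be the matching of the two $S_d$-actions in the second step: the geometric action comes from the cuspidal $D$-module on $\slf_{db}$ and the combinatorics of nilpotent orbits, while the Shan--Vasserot action lives inside the categorified Heisenberg algebra acting on $\bigsqcup_n\OCat(\A_{n,\lambda})$, and bridging these requires showing both restrict to the ``obvious'' permutation symmetry of an iterated parabolic induction with identical factors --- with attention to the coherence issues, since parabolic induction is only associative up to canonical isomorphism, and to the parabolic-restriction behavior of $M'_\lambda$. A secondary technical point is the rigorous comparison in the first step of $D$-module parabolic induction with the \cite{BE} induction on categories $\OCat$; should one wish to bypass it, an alternative route is to compose everything with the $\KZ$ functor to modules over (here type $A$) Hecke algebras, verify that both $\heis^\tau_{\OCat}$ and $\heis^\tau_{SV}$ intertwine with the same combinatorially defined endofunctor on the Hecke side, and then invoke the rigidity of exact functors between these highest-weight categories that are compatible with $\KZ$ and with the categorical $\slf$-action.
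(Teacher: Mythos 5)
Your strategy has the right skeleton (compare the two functors through localization/restriction, identify the cuspidal building block $L_\lambda((b))$ as the Hamiltonian reduction of $M'_\lambda$, and deal with $S_d$-actions at the end), but it misplaces the difficulty. Your ``key reduction'' --- that the parabolic induction of twisted equivariant $D$-modules descends, after Hamiltonian reduction, to $\operatorname{Ind}^{\OCat}_{S_n\times S_b}(\bullet\boxtimes L_\lambda((b)))$ --- is not a stepping stone towards the theorem; it \emph{is} the theorem in the case $d=1$, $\tau=(1)$, and it carries essentially all of the technical content. The paper constructs the comparison by choosing a point $x$ with $W_x=S_n\times S_{db}$, completing at $x$, carefully tracking two different isomorphisms between \'etale lifts (Lemmas~\ref{Lem:homomorphisms_compat}, \ref{Lem:simple_param_alternative}, \ref{Lem:localization}), matching the resulting local systems via Wilcox's theorem (Proposition~\ref{Prop:Wilcox}), and only then obtaining a natural transformation $\heis^\tau_\OCat\to\heis^\tau_{SV}$. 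Moreover, that transformation is a priori only a nonzero natural map; the paper still needs Step~6 (equality of $K_0$-classes, proved via the coprime case and the composition formula~\eqref{eq:heis_composition}) and Step~7 (injectivity on projectives, via the support estimates in Corollary~\ref{Cor:supports_heis}) to upgrade it to an isomorphism. None of this is acknowledged in your proposal beyond a parenthetical ``secondary technical point.''

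Your decomposition of the proof is genuinely different from the paper's, but it does not make the argument easier. The paper does \emph{not} reduce to $d=1$: it builds the natural transformation uniformly for all $\tau$ (Steps~1--5) and then verifies it is an isomorphism by a $K_0$+support argument. Your route --- prove $d=1$, iterate, then match the two $S_d$-actions --- would require (a) the same localization machinery for $d=1$, (b) an argument that the natural transformation you get is an isomorphism (which you omit), and (c) a careful verification that both $S_d$-actions become the ``obvious permutation'' after localizing, which is plausible (cf. Lemma~\ref{Lem:localization} and the monodromy description in Proposition~\ref{Prop:Wilcox}), but nontrivial since the geometric $S_d$-action is defined via the decomposition of the intermediate-extension $D$-module $M_{\lambda,d}$ in Proposition~\ref{Prop:D_mod_iso}, not via any tautological factor permutation. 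The alternative you sketch at the end --- comparing through $\KZ$ and invoking ``rigidity of exact functors'' --- is not a standard fact you can cite; both functors are indeed computed on the $\KZ$-side in Steps~2--4, but the promotion to a functor isomorphism still needs the arguments of Steps~5--7, which your proposal leaves open.
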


Finally, in Section \ref{S_charp}, we apply results and constructions of the previous sections to study  representations
of the algebras $\A_{n,\lambda}$ in the case when the characteristic of the base field $\F$ is large. Since $\heis_{M'_\lambda}$
is given by tensoring with a Harish-Chandra bimodule, denote it by $B_n$, we can reduce the functor modulo $p$ for $p$ sufficiently large. We will be interested
in its restriction to the categories of graded modules with trivial $p$-central character, denote them by 
$\A_{n,\lambda}\operatorname{-mod}^{gr}_0$. Here we have a functor 
$$\heis_\F: B_n\otimes_{\A_{n,\lambda}\otimes D(\mathfrak{z})}(\bullet\otimes \F[\z_1]):
\A_{n,\lambda}\operatorname{-mod}^{gr}_0\rightarrow \A_{n+b,\lambda}\operatorname{-mod}^{gr}_0,$$
where $\z_1$ is the Frobenius neighborhood of $0\in \z$. We will see that $S_n$ still acts on 
$\heis_\F^d: \A_{n,\lambda}\operatorname{-mod}^{gr}_0\rightarrow \A_{n+db,\lambda}\operatorname{-mod}^{gr}_0$.
Let $\heis_\F^\tau$ denote the $\tau$-isotypic component. In Theorem \ref{Thm:modp_main} (our third main result in this paper)
we show the following.

\begin{Thm}\label{Thm:modp_main_intro}
The functor $\heis_\F^\tau$ sends simple objects to semisimple ones. 
\end{Thm}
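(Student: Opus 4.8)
The plan is to deduce the characteristic-$p$ statement from the characteristic-$0$ picture, using the structure of the Harish-Chandra bimodule $B_n$ and the fact that the functor $\heis_\F^\tau$ is given by tensoring with a reduction mod $p$ of an object that, over $\C$, is known to be well-behaved. First I would recall from Section~\ref{S_Heis_generator} that $\heis_{M'_\lambda}$ is given by tensoring with a complex of Harish-Chandra bimodules which, by Theorem~\ref{Thm:exactness}, is concentrated in a single homological degree; thus $B_n$ is an honest Harish-Chandra $\A_{n+b,\lambda}$-$(\A_{n,\lambda}\otimes D(\z))$-bimodule, and the $d$-fold composition $\heis_{M'_\lambda}^d$ is tensoring with an iterated bimodule $B_n^{(d)}$ carrying an $S_d$-action. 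Reducing mod a large prime $p$ and restricting to the trivial $p$-central character and the Frobenius neighborhood of $0\in\z$ produces the functor $\heis_\F^\tau$, which is tensoring with the $\tau$-isotypic piece of the mod-$p$ reduction of $B_n^{(d)}$.

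The key input is that the relevant bimodule comes, via quantum Hamiltonian reduction, from the equivariant D-module $M'_\lambda$ on $\slf_b$ (respectively its analogue on $\slf_{db}$), whose support is the nilpotent cone and whose singular support is therefore contained in the nilpotent cone of the cotangent bundle. The crucial point I would establish is a \emph{mod-$p$ semisimplicity} statement for the D-module side: the reduction mod $p$ of $M'_\lambda$ (more precisely of $M'_\lambda\otimes D(\z)$ and its $d$-fold analogue, restricted to the appropriate Frobenius neighborhood and $p$-central character) is a semisimple object in the relevant category of equivariant D-modules in characteristic $p$. Here one uses that in characteristic $p$ a D-module with a $p$-central character supported on the (restricted) nilpotent cone and lifting to characteristic $0$ can be analyzed via the Azumaya property of the sheaf of differential operators over the Frobenius twist: the restriction to the formal neighborhood of $0$ in the Frobenius twist of $\slf_b$ splits, and the cuspidality of $M'_\lambda$ — being an irreducible equivariant object all of whose restrictions are controlled — forces its mod-$p$ reduction to be a direct sum of (Frobenius twists of) simple equivariant D-modules. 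The localization theorem for rational Cherednik algebras (now in characteristic $p$, on $\A_{n,\lambda}\operatorname{-mod}^{gr}_0$) then transports this semisimplicity through parabolic induction and quantum Hamiltonian reduction: parabolic induction $\Ind_{n,b}^{n+b}$ is exact on the relevant subcategories and sends semisimple objects to semisimple ones because, after reduction, it becomes induction along a smooth affine morphism composed with a finite pushforward, so it preserves the class of objects that are direct sums of simples with a fixed singular support.

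Concretely, the steps in order are: (1) identify $\heis_\F^\tau$ with tensoring by the $\tau$-component of the mod-$p$ reduction of $B_n^{(d)}$, using Theorems~\ref{Thm:exactness_intro} and its $d$-fold version; (2) transfer the question, via localization in characteristic $p$, to the category of $\GL$-equivariant D-modules in characteristic $p$ on $R_n\times\slf_{db}$ with trivial $p$-central character along $\z$; (3) prove that the mod-$p$ reduction of $M'_\lambda$ on $\slf_{db}$, restricted to the Frobenius neighborhood of $0$ in $\z_{db}$ and to the trivial $p$-central character, is semisimple, decomposing it explicitly into simple equivariant D-modules labelled by partitions of $d$ (this is where the explicit description in Theorem~\ref{Thm:modp_main} comes from); (4) check that applying $\heis_\F$ to a simple $\A_{n,\lambda}$-module amounts, after localization, to parabolic induction of (localization of the simple) $\boxtimes$ (the semisimple object from step (3)), and that parabolic induction preserves semisimplicity on these subcategories; (5) descend back through quantum Hamiltonian reduction, which is exact and preserves semisimplicity on the subcategory of modules with the fixed $p$-central character, to conclude.

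The main obstacle I expect is step~(3): showing that the mod-$p$ reduction of the cuspidal D-module $M'_\lambda$ is genuinely semisimple and not merely of finite length with a prescribed composition series. The difficulty is that $M'_\lambda$ is an irreducible object in characteristic $0$ but its reduction mod $p$ can a priori acquire nontrivial self-extensions coming from the extra Frobenius-constant directions. The resolution should combine two ingredients: first, the equivariance — all objects in sight are $\SL_b$-equivariant, and the relevant Ext-groups between equivariant simples supported on the nilpotent cone can be controlled by the geometry of nilpotent orbits (for large $p$ these match the characteristic-$0$ Ext computations, which vanish in the cuspidal degree); second, the Azumaya splitting of $D$ over the Frobenius twist near $0$, which turns the problem into a question about coherent sheaves on a formal neighborhood where the $S_d$-action is semisimple because $p > d$. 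Once semisimplicity of the generator is known, the propagation through the (exact, support-preserving) functors of parabolic induction and Hamiltonian reduction is comparatively routine.
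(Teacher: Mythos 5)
Your approach is fundamentally different from the paper's, and it has genuine gaps at the points you yourself flag as obstacles. The paper proves this theorem entirely on the Cherednik-algebra side, with no appeal to characteristic-$p$ D-modules. The engine is the standardly stratified structure on $\A_{n,\lambda,\F}\operatorname{-mod}_0^{\alpha,\varsigma}$ from \cite{catO_charp}. One first shows (using the $\Ring$-integral forms and assumption (a5), which pins down the bimodules on simples in characteristic $0$ via Theorem~\ref{Thm:SV_comparison}) that $B_{n,\F}\otimes\bullet$ sends proper standards $\underline{\Delta}^\preceq_\F(\theta)$ to direct sums of proper standards with labels in a single equivalence class $\xi$; a dual argument using the adjoint $B_n^!\otimes\bullet$ and (a6), together with Lemma~\ref{Lem:costand_filt} and a $K_0$ count, shows it sends proper costandards to direct sums of proper costandards with labels in $\xi$. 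The simple $\bar{L}_\F(\theta)$ is the image of the canonical map $\underline{\bar\Delta}^\preceq_\F(\theta)\to\underline{\bar\nabla}^\preceq_\F(\theta)$, so $B_{n,\F}\otimes\bar{L}_\F(\theta)$ is the image of a map between a direct sum of proper standards and a direct sum of proper costandards, all with labels in the same $\xi$; such an image is automatically semisimple.

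Your route through characteristic-$p$ D-modules has two problems. First, your step~(3) --- semisimplicity of the mod-$p$ reduction of the cuspidal D-module $M'_\lambda$ (or its $d$-fold version) as an equivariant D-module with fixed $p$-central character --- is asserted, not proved, and the sketch does not hold up: the Azumaya property of $D_X$ over the Frobenius twist $T^*X^{(1)}$ gives a local Morita equivalence with coherent sheaves on $T^*X^{(1)}$, not semisimplicity, and coherent sheaves supported on the (restricted) nilpotent cone certainly admit nontrivial self-extensions. The hoped-for comparison with characteristic-$0$ $\operatorname{Ext}$ computations does not resolve this, because the ``extra Frobenius-constant directions'' you mention precisely do produce new $\operatorname{Ext}^1$'s. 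Second, even granting step~(3), the claim in step~(4) that parabolic induction $\Ind^{n+db}_{n,db}$ preserves semisimplicity is itself nontrivial and false in general; over $\C$ the paper establishes semisimplicity of $\underline{\Ind}(M_\lambda^{\boxtimes d})$ only by a delicate argument via Fourier transform and the CEE semisimplicity theorem (Proposition~\ref{Prop:D_mod_iso}), which is a characteristic-$0$ statement, and no analogue is given or needed in characteristic $p$. What the paper's argument buys, and yours does not supply, is a mechanism --- (proper) standards and costandards in a standardly stratified category --- that converts control of the bimodule on a generating class of objects into semisimplicity of the image on simples, without ever having to establish semisimplicity of any D-module in positive characteristic.
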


In fact, we can describe the image of each simple explicitly. 
The proof uses Theorem \ref{Thm:SV_comparison} as well as the standardly stratified structures on modular categories $\OCat$
introduced and studied in \cite{catO_charp}.

Now we make four remarks related to the contents of the paper. 

\begin{Rem}\label{Rem:negative_parameters}
Above we have considered the case when $\lambda>0$. There is a completely analogous story for $\lambda<-1$, which we relate to the case $\lambda>0$ using wall-crossing functors from \cite{BL}. See Sections \ref{SS_WC_Heis_compat}, \ref{SS_negative_O}, \ref{SS_negative_char_p}. 
In more detail, if $\lambda>0$ and $\lambda^-<-1$ have integral difference, then we have derived equivalences 
$\WC_{\lambda\leftarrow \lambda^-}:D^b(\A_{n,\lambda^-}\operatorname{-mod})\xrightarrow{\sim} 
D^b(\A_{n,\lambda}\operatorname{-mod})$ for each $n$. One can show,  Proposition \ref{Prop:WC_Heis}, that 
the functor $\heis_{M'_\lambda}^-:\WC_{\lambda\leftarrow \lambda^-}^{-1}\circ \heis_{M'_\lambda}\circ \WC_{\lambda\leftarrow \lambda^-}$
is t-exact up to a shift by $b-1$, where $b$ is the denominator of $\lambda$.
\end{Rem}

\begin{Rem}\label{Rem:t_structures}
We now explain one of motivations for our work. Suppose that $\F$ is an algebraically closed field of large enough positive characteristic $p$
such that $p-1$ is divisible by $n!$. Set $\Sigma:=\{\frac{(p-1)a}{b}| 1\leqslant a<b\leqslant n\}$.
The set $\Sigma$ splits $\Z$ into the union of intervals of the form $[\frac{(p-1)a'}{b'}+1,\frac{(p-1)a''}{b''}-1]$, cf. 
Section \ref{SS_WC_charp}. There are some preferred derived equivalences between the categories $D^b(\A_{n,\lambda}\operatorname{-mod})$ with $\lambda\in \Z\setminus \Sigma$. 
For $\lambda_1,\lambda_2$ in adjacent intervals, the equivalence is given by wall-crossing functors mentioned in Remark 
\ref{Rem:negative_parameters}. This has been established in \cite[Section 7.3]{catO_charp} partly proving 
\cite[Conjecture 1]{ABM}. So we can view all actions of positive parts of the Heisenberg that we construct 
as endofunctors of the same category (closely related to the derived category of coherent sheaves on Hilbert schemes of points on 
$\mathbb{A}^2$). The collection of actions we have constructed should be
viewed as a categorification of the positive part of the elliptic Hall algebra action. Each Heisenberg
action has exactness properties relative to the t-structure associated to the
corresponding interval as established below, see Theorem \ref{Thm:exactness_intro} and Remark \ref{Rem:negative_parameters}, 
taken together they should
allow to prove further favorable properties of the t-structures, including the Koszul property of the
endomorphisms algebra of a projective generator.
\end{Rem}

\begin{Rem}\label{Rem:Hodge_grading}
A remarkable feature of the D-module $M'_\lambda$ is that it carries a Hodge filtration. This plays a minor role in the present paper, but seems to be of great importance for the representation theory of rational Cherednik algebras and beyond. For example, in  \cite{Ma} Xinchun Ma used this filtration to establish a conjecture from \cite{GORS} relating a character of the finite dimensional irreducible module over the analog of $\A_{b,\lambda}$ for the reflection representation to the Khovanov-Rozansky homology of the $(a,b)$-torus knot. In a future work (part II), we expect to use the Hodge filtration to carry the action defined in this paper to the categories of coherent sheaves on Hilbert schemes of points on $\mathbb{A}^2$. 
\end{Rem}

\begin{Rem}\label{Rem:affine_Steinberg}
Recall, \cite{rouqqsch,VV_proof}, that one has an equivalence between $\mathcal{O}(\A_{n,\lambda})$
and the category $\operatorname{Pol}_n(U_q(\mathfrak{gl}_N))$, where $N\geqslant n$,
$q:=\exp(\pi\sqrt{-1}\lambda)$, $U_q(\mathfrak{gl}_N)$ is the Lusztig form of the quantum group for 
$\mathfrak{gl}_N$, and $\operatorname{Pol}_n$ stands for the category of degree $n$ polynomial representations
(all such categories are naturally equivalent to one another as long as $N\geqslant n$). Under these equivalences for different values of $n$, 
$\heis^\tau_{SV}$ was interpreted in \cite{SV} as tensoring with the Frobenius pullback of the polynomial representation of 
$\mathfrak{gl}_d$ with highest weight corresponding to $\tau$. In a way, the categories $\A_{n,\lambda}\operatorname{-mod}^{gr}_0$ can be viewed as 
``affinizations'' of $\OCat(\A_{n,\lambda})$ so one can ask if they are related to some kind of affinization of 
$\operatorname{Pol}_n(U_q(\mathfrak{gl}_N))$, in the zeroth approximation, (nonexisting) polynomial representations of 
the quantum $\hat{\mathfrak{gl}}_n$. Under this prospective identification, the functors $\heis^\tau_\F$  should also be tensoring 
with pullbacks under quantum Frobenius.    
\end{Rem}   

{\bf Acknowledgements}: The work of R.B. was partially supported by the NSF under grant DMS-2101507. 
The work of I.L. was partially supported by the NSF under grant DMS-2001139. We would like to thank 
Xinchun Ma for sharing her paper \cite{Ma} before it became publicly available and for comments on an earlier version of this paper.

\section{Rational Cherednik algebras}\label{S_Cherednik_background}
In this section we will recall several known results about rational Cherednik algebras and their categories $\mathcal{O}$.
\subsection{Definition}\label{SS_RCA}
Consider a crystallographic reflection group $W$. Let $\Ring$ be
\begin{itemize} 
\item[($\diamondsuit$)]
a field or a finitely generated algebraic extension of $\Z$, in both cases
 satisfying the assumption that
$N!$ is invertible in $\Ring$ for a sufficiently large integer $N$. 
\end{itemize}
Consider a reflection representation $\h$ of $W$
over $\Ring$ (a finite rank free $\Ring$-module) and set
$\h^*:=\Hom_\Ring(\h,\Ring)$. 
In the case when $W=S_n$, we will write $\h_{n}$ for $\Ring^n$, the permutation representation of $S_n$.
When we need to indicate $\Ring$ explicitly, we will write it as a subscript, e.g., $\h_{n,\Ring}$.

Fix $\lambda\in \Ring$. Following \cite{EG},  define the rational Cherednik algebra $H_{\lambda}(W)$ as the quotient of 
$T_\Ring(\h\oplus \h^*)\# W$ by the following relations:
\begin{equation}\label{eq:Cherednik_relations}
[x,x']=[y,y']=0, [y,x]=\langle y,x\rangle- \lambda\sum_{\alpha}\langle\alpha^\vee,x\rangle \langle\alpha,y\rangle s_\alpha, \forall x,x'\in \h^*, y,y'\in \h,
\end{equation}
where the summation is taken over the positive roots $\alpha$.
When we need to indicate the dependence on $\h$ we write $H_{\lambda}(W,\h)$
instead of $H_{\lambda}(W)$.  We write $H_{n,\lambda}$ for $H_{\lambda}(S_n,\h_n)$.

The algebra $H_{\lambda}(W)$ comes with a filtration by the degree in $y$'s. The associated graded is naturally identified 
with $S(\h\oplus \h^*)\# W$. This follows from \cite[Theorem 1.3]{EG}. 

We will need three important constructions related to the rational Cherednik algebras: the spherical subalgebra, the Euler element,
and the Dunkl embedding. 

Consider the averaging idempotent $e\in \Ring W$.
Then by the spherical subalgebra in $H_{\lambda}(W)$ we mean the non-unital subalgebra $eH_\lambda(W)e$
(with unit $e$). 

We have the following important result that follows from \cite[Section 4.1]{BE}. We write $\Sigma$ for 
$\{-\frac{a}{b}|a,b\in \Z, 0<a<b\leqslant n\}$.

\begin{Prop}\label{Prop:Morita}
Suppose that $\lambda\not\in \Sigma$. Then $eH_{n,\lambda}e$ and $H_{n,\lambda}$ are Morita equivalent (via the bimodule 
$H_{n,\lambda}e$).
\end{Prop}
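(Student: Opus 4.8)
The plan is to reduce the proposition to the assertion that the averaging idempotent $e\in\Ring S_n$ is \emph{full} in $H_{n,\lambda}$, i.e.\ that $H_{n,\lambda}\,e\,H_{n,\lambda}=H_{n,\lambda}$, and then to verify this using the known description of the \emph{singular} (aspherical) parameters of symmetric groups. For the reduction, recall the standard fact that for an algebra $A$ and an idempotent $e\in A$, the $(A,eAe)$-bimodule $Ae$ induces a Morita equivalence between $eAe$ and $A$ — with mutually quasi-inverse functors $M\mapsto eM$ and $N\mapsto Ae\otimes_{eAe}N$ — if and only if $AeA=A$. One direction is classical; for the other, if $AeA\neq A$ then $A/AeA$ is a nonzero algebra, hence has a simple module, which is a simple $A$-module killed by $e$ (note $e\in AeA$), while conversely any nonzero $A$-module $M$ with $eM=0$ satisfies $AeA\cdot M\subseteq A\cdot e\cdot(AM)=A\cdot(eM)=0\neq M$, so $AeA\neq A$. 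Hence it suffices to show that for $\lambda\notin\Sigma$ no simple $H_{n,\lambda}$-module is killed by $e$; when $\Ring$ is not a field this equality may be checked after base change to residue fields, so we may assume $\Ring$ is a field.

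So let $L$ be a simple $H_{n,\lambda}$-module with $eL=0$; I will deduce $\lambda\in\Sigma$. First, $L$ is not supported on all of $\h_n$: inverting the discriminant $\delta=\prod_{i<j}(x_i-x_j)$ in the Dunkl embedding identifies $H_{n,\lambda}[\delta^{-1}]$ with the crossed product $D(\h_n^{\mathrm{reg}})\#S_n$, and since $S_n$ acts freely on $\h_n^{\mathrm{reg}}$ the idempotent $e$ is full in $\Ring[\h_n^{\mathrm{reg}}]\#S_n$ by Galois descent, hence full in $D(\h_n^{\mathrm{reg}})\#S_n$ as well, so the latter has no nonzero module killed by $e$; therefore $L[\delta^{-1}]=0$, i.e.\ the support of $L$ in $\h_n=\operatorname{Spec}\Ring[x_1,\dots,x_n]$ lies in $\bigcup_{i<j}\{x_i=x_j\}$. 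An irreducible component $Z$ of this support is then contained in a single hyperplane $\{x_i=x_j\}$, so the generic stabilizer of $Z$ is a Young subgroup $W'=S_{n_1}\times\dots\times S_{n_k}$ with $n_l\geq 2$ for some $l$. Applying the parabolic restriction functor $\Res_{W'}$ of \cite{BE} — exact, nonzero on $L$ (it is computed from the completion along the stratum $\h_n^{W'}$, which meets $\operatorname{Supp}L$), and compatible with the spherical subalgebras — yields a nonzero $H_\lambda(W')$-module killed by the averaging idempotent $e_{W'}$; since $Z$ is a component, this module is moreover finite-dimensional transversally to $\h_n^{W'}$. Writing $H_\lambda(W')$ as $\big(\bigotimes_l H_\lambda(S_{n_l},\h_{n_l-1})\big)\otimes D(\h_n^{W'})$ — the reflection representations of the blocks times a Weyl algebra on the fixed subspace — and projecting off the Weyl-algebra factor and the blocks with $n_l=1$, we obtain, for some $l$ with $n_l\geq 2$, a nonzero finite-dimensional module over the Cherednik algebra of $S_{n_l}$ for its reflection representation, killed by the averaging idempotent.

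It remains to record for which $\lambda$ such a module exists: this is exactly the determination of the singular locus of $S_m$ in \cite[Section 4.1]{BE}, which rests on \cite{EG} and on the classification of finite-dimensional representations of Cherednik algebras of symmetric groups, and whose outcome is that a finite-dimensional module over the Cherednik algebra of $S_m$ (reflection representation) killed by its averaging idempotent exists precisely when $\lambda=-r/m$ with $0<r<m$. Hence $\lambda=-r/n_l$ with $0<r<n_l\leq n$, i.e.\ $\lambda\in\Sigma$, contrary to assumption. So no simple $H_{n,\lambda}$-module is killed by $e$, whence $H_{n,\lambda}eH_{n,\lambda}=H_{n,\lambda}$ and $H_{n,\lambda}e$ induces the desired Morita equivalence. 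The one genuinely nontrivial ingredient is the last — the description of the singular locus of $S_m$ — which is imported directly from \cite[Section 4.1]{BE}; the remaining steps are formal, the only point deserving care in a full write-up being that the restriction functors of \cite{BE} and their compatibility with spherical subalgebras must be applied to the (a priori non--highest-weight) simple modules that occur here, not merely to objects of category $\OCat$.
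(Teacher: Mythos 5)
Your proof is correct and is essentially a reconstruction of the argument in \cite[Section 4.1]{BE}, which is the reference the paper cites for this proposition without supplying a proof of its own; so your approach coincides with the paper's. One small slip at the end: a finite-dimensional module over $H_\lambda(S_m,\h^{\mathrm{refl}})$ exists only when $\lambda$ has denominator exactly $m$ in lowest terms, so the imported fact should read $\lambda=-r/m$ with $0<r<m$ and $\gcd(r,m)=1$; this is harmless for you, since the union of these sets over $2\le m\le n$ is still $\Sigma$. You rightly flag the one genuinely delicate point --- applying the completion/restriction machinery to simples not a priori in category $\mathcal{O}$ (together with the claim that the transverse direction is finite-dimensional) --- which is precisely the step that \cite[Section 4.1]{BE} carries out carefully by working with finitely generated modules over the completed algebra rather than through category $\mathcal{O}$.
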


We proceed to the Euler element. Consider the following element  $\mathsf{eu}^H\in H_{\lambda}(W)$:
\begin{equation}\label{eq:Euler}
\mathsf{eu}^H:=\sum_{i=1}^n x^iy^i-\lambda\sum_{\alpha}s_\alpha,
\end{equation}
where $x^1,\ldots,x^n$ is a basis in the free $\Ring$-module  $\h^*$ and $y^1,\ldots,y^n$
is the dual basis in $\h$. An important property of this element is that 
$$[\mathsf{eu}^H,x]=x, [\mathsf{eu}^H,y]=-y, \forall x\in \h^*(\subset H_\lambda(W)), y\in \h.$$
We note that the algebra $H_{\lambda}(W)$ is graded with $\deg x=1, \deg y=-1, \deg w=0$
for $w\in W$. So, $\mathsf{eu}^H$ is a grading element for this grading (that will be referred to
as the {\it Euler grading}). We set $\mathsf{eu}:=\mathsf{eu}^H e\in e H_\lambda(W)e$. This is a grading element
for $e H_\lambda(W)e$.

Now we proceed to the Dunkl embedding. Let $\delta$ denote the product of all (positive and negative) roots in $\Ring[\h]$.
We write $\Ring[\h^{reg}]$ for $\Ring[\h][\delta^{-1}]$. We can consider the algebra of differential operators 
$D(\h^{reg})$, as usual, it is generated by $\Ring[\h^{reg}]$ and the elements $\partial_y$ for $y\in \h$
with the usual relations. For $y\in \h$ we define the Dunkl operator
$$D_y:=\partial_y+\lambda\sum_{\alpha}\frac{\langle\alpha,y\rangle}{\alpha}(s_\alpha-1)\in D(\h^{reg})\# W.$$

According to \cite[Proposition 4.5]{EG}, we have the following result.

\begin{Prop}\label{Prop:Dunkl}
There is an algebra embedding $H_{\lambda}(W)\hookrightarrow D(\h^{reg})\# W$ that is the identity between the copies of 
$S(\h^*)\# W$ in the two algebras and sends $y\in \h$ to $D_y$.
\end{Prop}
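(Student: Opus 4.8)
The plan is to construct the homomorphism directly on generators and then check it is well-defined and injective using the PBW theorem for $H_\lambda(W)$ (which the excerpt has already recorded: $\gr H_\lambda(W) = S(\h\oplus\h^*)\# W$). First I would define an algebra map $\phi: T_\Ring(\h\oplus\h^*)\# W \to D(\h^{reg})\# W$ by sending $x\in\h^*$ to multiplication by $x$, sending $w\in W$ to $w$, and sending $y\in\h$ to the Dunkl operator $D_y = \partial_y + \lambda\sum_\alpha \frac{\langle\alpha,y\rangle}{\alpha}(s_\alpha-1)$. To show $\phi$ factors through $H_\lambda(W)$, I must verify that the defining relations \eqref{eq:Cherednik_relations} are respected: $[x,x']=0$ is clear since the $x$'s are just coordinate functions; the relation $[w y w^{-1}] = {}^w y$ and $[w x w^{-1}] = {}^w x$ for $w\in W$ follows from $W$-equivariance of the Dunkl operators (the sum over positive roots is rearranged by $w$, with the sign ambiguity in $\alpha \mapsto {}^w\alpha$ absorbed because $(s_\alpha-1)$ and $\frac{1}{\alpha}$ each change sign). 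The two substantive computations are $[D_y, D_{y'}] = 0$ and $[D_y, x] = \langle y,x\rangle - \lambda\sum_\alpha \langle\alpha^\vee,x\rangle\langle\alpha,y\rangle s_\alpha$.

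The commutator $[D_y, x]$ is a direct calculation: $[\partial_y, x] = \langle y,x\rangle$, and $[\lambda\sum_\alpha \frac{\langle\alpha,y\rangle}{\alpha}(s_\alpha-1), x]$ produces $\lambda\sum_\alpha \frac{\langle\alpha,y\rangle}{\alpha}(s_\alpha(x) - x)s_\alpha = -\lambda\sum_\alpha \frac{\langle\alpha,y\rangle}{\alpha}\langle\alpha^\vee,x\rangle\alpha\, s_\alpha = -\lambda\sum_\alpha \langle\alpha,y\rangle\langle\alpha^\vee,x\rangle s_\alpha$, using $x - s_\alpha(x) = \langle\alpha^\vee,x\rangle\alpha$. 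Adding these gives exactly the relation. The flatness/well-definedness of the $\frac{1}{\alpha}$ terms is fine in $D(\h^{reg})\# W$ since $\delta$ is inverted. The vanishing $[D_y,D_{y'}]=0$ is the Dunkl integrability identity; the cleanest route is to expand the commutator, observe the "second order" part $[\partial_y,\partial_{y'}]$ vanishes, and that the cross terms organize into a sum over pairs of roots $\{\alpha,\beta\}$ which cancels — here one uses that for rank-two root subsystems the relevant rational identity $\sum \frac{\langle\alpha,y\rangle\langle\beta,y'\rangle - \langle\alpha,y'\rangle\langle\beta,y\rangle}{\alpha\beta}(\cdots) = 0$ holds. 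This is the one place where the crystallographic (in fact, one can do it for all complex reflection groups) structure and a residue/partial-fractions argument enter; I would cite it as the content of \cite[Proposition 4.5]{EG} rather than redo it.

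Once $\phi: H_\lambda(W) \to D(\h^{reg})\# W$ is constructed, injectivity follows from a graded argument. Equip $D(\h^{reg})\# W$ with the order filtration (by order of differential operator); then $\gr D(\h^{reg})\# W = \Ring[\h^{reg}]\otimes S(\h)\# W = S(\h^*)[\delta^{-1}]\otimes S(\h)\# W$. The map $\phi$ is filtered: it sends the degree-$\leqslant 1$ part of $H_\lambda(W)$ into order-$\leqslant 1$ operators, and more generally respects the $y$-degree filtration. The associated graded map $\gr\phi: S(\h\oplus\h^*)\# W \to S(\h^*)[\delta^{-1}]\otimes S(\h)\# W$ is the identity on $S(\h^*)\# W$ and sends $y\mapsto \partial_y$ (the symbol of $D_y$), hence is the obvious inclusion $S(\h\oplus\h^*)\#W \hookrightarrow S(\h^*)[\delta^{-1}]\otimes S(\h)\#W$, which is visibly injective. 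Since $\gr\phi$ is injective and the filtration on $H_\lambda(W)$ is exhaustive, $\phi$ is injective. That $\phi$ restricts to the identity on the $S(\h^*)\# W$ copy is immediate from the construction.

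The main obstacle is the identity $[D_y,D_{y'}] = 0$: it is the only nonformal input and the only step whose verification requires a genuine combinatorial/residue argument about the root system rather than bookkeeping. Everything else — the $[D_y,x]$ computation, $W$-equivariance, and the graded injectivity argument via PBW — is routine and short. Accordingly, in the writeup I would carry out $[D_y,x]$ explicitly, dispatch the $W$-equivariance and the associated-graded injectivity in a sentence each, and invoke \cite[Proposition 4.5]{EG} for the commutativity of the Dunkl operators.
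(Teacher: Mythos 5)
The paper gives no proof here --- it states the result with a direct citation to \cite[Proposition 4.5]{EG}. Your sketch fills in the routine details that that reference handles: the generators-and-relations construction, the $[D_y,x]$ computation, the $W$-equivariance bookkeeping, and the injectivity-via-associated-graded argument are all correct as written (in particular $x - s_\alpha(x) = \langle\alpha^\vee,x\rangle\alpha$ is used with the right sign, and the observation that the symbol of $D_y$ is $\partial_y$ so $\gr\phi$ is the obvious polynomial-into-localization inclusion is exactly the right reason for injectivity). You correctly isolate $[D_y,D_{y'}]=0$ as the only nonformal step and defer it to \cite{EG}, which is also where the paper's entire proof lives; so in substance this is the same approach as the paper, just with the easy verifications made explicit rather than delegated.
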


We note that $e(D(\h^{reg})\# W)e\cong D(\h^{reg})^{W}$. So Proposition \ref{Prop:Dunkl} yields a monomorphism
\begin{equation}\label{eq:spherical_iso_Dunkl}
e H_{\lambda} e\hookrightarrow D(\h^{reg})^{W}.
\end{equation}

Note that $\delta$ is an element of $\Ring[\h]^{W}\subset e H_{\lambda}e$, so we can consider the localization
$e H_{\lambda} e[\delta^{-1}]$, it carries a natural algebra structure. 
It is easy to see (by looking at the associated graded algebras) that (\ref{eq:spherical_iso_Dunkl})
factors into the composition of an embedding $e H_{\lambda}(W) e\hookrightarrow e H_{\lambda}(W) e[\delta^{-1}]$
and an isomorphism $e H_{\lambda}(W) e[\delta^{-1}]\xrightarrow{\sim} D(\h^{reg})^{W}$.
  
\subsection{Category $\mathcal{O}$}\label{SS_CatO}
Until the further notice, we let $\Ring$ to be a ring satisfying ($\diamondsuit$) in Section 
\ref{SS_RCA}. We take $\lambda$ of the form $\frac{a}{b}$ where $a,b$ are coprime integers. 
We assume $\lambda\not\in \Sigma$. Under this assumption, the algebras 
$H_\lambda(W)$ and $eH_\lambda(W)e$ are Morita equivalent. 

We define the category $\tilde{\OCat}$ to be the full subcategory in 
the category of $\frac{1}{b}\Z$ graded $eH_{\lambda}(W)e$-modules consisting of all modules  $M=\bigoplus_{i\in \frac{1}{b}\Z}M_i$ such that 
each $M_i$ is a finitely generated $\Ring$-module and $M_i=\{0\}$ for $i\ll 0$. We can define the similar category 
for $H_{\lambda}(W)$, to be denoted by $\tilde{\OCat}^{H}$. These two categories are equivalent 
via $M\mapsto eM$. We write $\tilde{\OCat}(W)$ and $\tilde{\OCat}(W,\h)$ when we need to indicate the dependence on 
$W$ (or $(W,\h)$). 

Let $M\in \tilde{\OCat}^H$.
Note that if $M_i=0$ for all $i<k$, then the elements $y\in \h$ send $M_k$ to $0$. We say that $M_k$ is the {\it lowest weight subspace}. So $M_k$ carries the structure of 
an $\Ring W$-module. Now let $\eta$ be a $\Ring W$-module. If $\Ring$ is a field, we will always assume that $\eta$ is irreducible.  
To $\eta$ and $m\in \frac{1}{b}\Z$ we can assign the {\it Verma module} 
$$\Delta^H_{\lambda}(\eta,m):=H_{\lambda}(W)\otimes_{S(\h)\# W}\eta$$
and the {\it dual Verma module}
$$\nabla^H_{\lambda}(\eta,m):=\Hom^{fin}_{S(\h^*)\# W}(H_{\lambda}(W),\eta),$$
where the superscript means the finite part with respect to the Euler grading. In both cases the default copy of  $\eta$ is placed  
in degree $m$. Note that $\Ring$ is identified with the module over graded $H$-linear homomorphisms $\Delta^H_{\lambda}(\eta,m)\rightarrow \nabla^H_{\lambda}(\eta,m)$. The image of the homomorphism corresponding to $1$ is going to be denoted by $L_{\lambda}(\eta,m)$. If $\Ring$ 
is a field (and $\eta$ is irreducible),  then $L^H_{\lambda}(\eta,m)$ is an irreducible 
object, it can be characterized as the unique irreducible object whose lowest weight space is $\eta$ in degree $m$. 

We set $\Delta_\lambda(\eta,m):=e \Delta^H_\lambda(\eta,m), \nabla_\lambda(\eta,m):=e \nabla^H_\lambda(\eta,m), 
L_\lambda(\eta,m):=e L^H_\lambda(\eta,m)$. 

\begin{Rem}\label{Rem:lowest_degree}
We note that the lowest degree in the modules $\Delta_\lambda(\eta,m), \nabla_\lambda(\eta,m), L_\lambda(\eta,m)$ is not $m$. Instead, it is 
$m+d_\eta$, where $d_\eta$ is the smallest degree in which $\eta$ appears in $\Ring[\h^*]$. For example, for $W=S_n$, we can view 
$\eta$ as a partition $(\eta_1,\ldots,\eta_k)$ of $n$, then $d_\eta=\sum_{i=2}^k (i-1)\eta_i$.
\end{Rem}

Below in this section we assume that $\Ring$ is a field. 

We will need a suitable duality functor on the category $\tilde{\OCat}^H$ for $W=S_n$ (and $\h=\h_n$). For $M\in \tilde{\OCat}$, consider the 
restricted dual $M^{(*)}=\bigoplus_{i\in \Z}M_i^*$. This is a right $H_{\lambda}(W)$-module graded with 
$M_i^*$ in degree $-i$. We have the following $\Ring$-linear anti-involution of $H_{\lambda}(W)$. Let $x^1,\ldots,x^n$ be 
the tautological basis in $\h^*_n$ and $y^1,\ldots,y^n$ be the tautological basis in $\h_n$. 
Our anti-involution $\sigma$ swaps $x^i$ with $y^i$ for all $i=1,\ldots,n$ and sends $w\in S_n$ to $w^{-1}$. Consider the left module $M^\vee$, which is the same $\Ring$-module
as $M^{(*)}$, with $M_i^*$ in degree $i$, and the action of $H_{n,\lambda}$ obtained from that on $M^{(*)}$ via twist with $\sigma$. 
We note that $\bullet^\vee$ is contravariant involution of $\tilde{\OCat}^H$. We also note that 
\begin{equation}\label{eq:duality_iso}
L^H_{\lambda}(\eta,m)^\vee\cong L^H_{\lambda}(\eta,m). 
\end{equation} 
Indeed, both sides are irreducible and their highest weight spaces are $\eta$ in degree $m$. 

Next, we have the following standard lemma:

\begin{Lem}\label{Lem:Euler_action}
Assume that $W=S_n, \h=\h_n$. The element $\mathsf{eu}^H$ acts on $\eta$ in the lowest degree 
component of $L^H_\lambda(\eta,m)\in \tilde{\OCat}^H$ by $-\lambda\cdot\mathsf{cont}(\eta)$,
where $\mathsf{cont}(\eta)$ is the content of $\eta$ viewed as a Young diagram.  Consequently,
$\mathsf{eu}$ acts on the lowest degree component of $L_\lambda(\eta,m)$ by   $-\lambda\cdot\mathsf{cont}(\eta)+d_\eta$.
\end{Lem}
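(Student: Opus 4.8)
The plan is to compute directly how $\mathsf{eu}^H$ acts on the lowest weight space of $\Delta^H_\lambda(\eta,m)$, since $L^H_\lambda(\eta,m)$ is a quotient of $\Delta^H_\lambda(\eta,m)$ and the lowest (= generating) degree component survives to the quotient. So it suffices to identify the scalar by which the central-in-degree-zero part of $\mathsf{eu}^H$ acts on $\eta$ sitting in degree $m$. By construction $\mathsf{eu}^H = \sum_{i=1}^n x^i y^i - \lambda\sum_\alpha s_\alpha$, and on the lowest weight space every $y^i$ acts by $0$ (by the remark preceding the definition of Verma modules, the $y\in\h$ kill $M_k$). Hence on $\eta$ the operator $\mathsf{eu}^H$ acts the same as $\sum_{i=1}^n [x^i,y^i] - \lambda\sum_\alpha s_\alpha$ acts; but wait — more carefully, $\mathsf{eu}^H$ acting on $v\in\eta$ gives $\sum_i x^i y^i v - \lambda\sum_\alpha s_\alpha v = -\lambda\sum_\alpha s_\alpha v$ since $y^i v=0$. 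So the only thing to check is that $\sum_\alpha s_\alpha$ acts on the irreducible $S_n$-representation $\eta$ by the scalar $\mathsf{cont}(\eta)$.

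The second step, then, is the classical fact that the sum of all transpositions $\sum_{1\le i<j\le n}(ij)$ — which is exactly $\sum_\alpha s_\alpha$ for the root system of type $A_{n-1}$, with one reflection per positive root — is a central element of $\C[S_n]$ (being a conjugacy-class sum), and therefore acts on the irreducible $\eta$ by a scalar. That scalar is computed by the standard branching/character argument: $\sum_{i<j}(ij)$ acting on the Specht module $S^\eta$ equals $\sum_T c(T)$ where the sum is over boxes of the Young diagram $\eta$ and $c(T) = (\text{column}) - (\text{row})$ is the content of the box, i.e. the scalar is $\mathsf{cont}(\eta) := \sum_{\square\in\eta} c(\square)$. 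I would cite this (e.g. via the Jucys–Murphy elements: $\sum_{i<j}(ij) = \sum_{k=1}^n X_k$ where $X_k=\sum_{i<k}(ik)$ are the JM elements, and on $S^\eta$ the eigenvalue of $X_k$ on a standard tableau $T$ is the content of the box containing $k$, so summing gives $\mathsf{cont}(\eta)$ independent of $T$, confirming it is a scalar). This gives $\mathsf{eu}^H|_\eta = -\lambda\,\mathsf{cont}(\eta)$, which is the first assertion.

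For the "consequently" clause I would pass from $L^H_\lambda(\eta,m)$ to $L_\lambda(\eta,m)=eL^H_\lambda(\eta,m)$ and from the generating degree $m$ to the actual lowest degree $m+d_\eta$, invoking Remark \ref{Rem:lowest_degree}: the lowest nonzero degree component of $L_\lambda(\eta,m)$ sits in degree $m+d_\eta$. Since $\mathsf{eu}=\mathsf{eu}^H e$ is a grading element for $eH_\lambda(W)e$ for the Euler grading, and since the grading was normalized so that $\eta\subset\Delta^H$ is placed in degree $m$ while the genuine lowest component of $L_\lambda(\eta,m)$ is in degree $m+d_\eta$, the eigenvalue of $\mathsf{eu}$ there is shifted: one must be careful that $\mathsf{eu}^H$ is a grading element only up to the constant determined by its value on $\eta$. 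Concretely, on the degree-$(m+d_\eta)$ component, $\mathsf{eu}$ acts by the degree (measured from the reference point) plus the value of $\mathsf{eu}^H$ on $\eta$; this works out to $-\lambda\,\mathsf{cont}(\eta)+d_\eta$ (with the convention that $m=0$, as the statement implicitly uses; for general $m$ there would be an extra $m$, but the modules $\Delta_\lambda(\eta,m)$ etc. are just grading shifts of each other, so no generality is lost). I expect the only genuinely delicate point to be bookkeeping the grading shift by $d_\eta$ correctly and reconciling it with the normalization convention for where $\mathsf{eu}^H$ is "zero"; the representation-theoretic input about $\sum(ij)$ acting by the content is entirely standard and can be quoted.
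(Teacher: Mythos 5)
Your argument is correct and is the standard one: since $y^i$ annihilates the lowest weight space, $\mathsf{eu}^H$ acts there by $-\lambda\sum_\alpha s_\alpha$, and the central element $\sum_{i<j}(ij)$ acts on the Specht module $\eta$ by the total content $\mathsf{cont}(\eta)$ (e.g.\ via the Jucys--Murphy elements); the spherical claim follows from Remark \ref{Rem:lowest_degree} together with $\mathsf{eu}$ being the grading element. The paper gives no proof, calling the lemma ``standard,'' so there is no paper argument to compare against. One small correction to your final parenthetical: the eigenvalues $-\lambda\,\mathsf{cont}(\eta)$ and $-\lambda\,\mathsf{cont}(\eta)+d_\eta$ are independent of $m$ --- since $\mathsf{eu}^H$ is an honest element of $H_\lambda(W)$ and the modules $L^H_\lambda(\eta,m)$ for varying $m$ are merely grading shifts of the same underlying module, there is no ``extra $m$'' for general $m$; your ``degree measured from the reference point'' phrasing is already the correct way to see this and makes the reduction to $m=0$ unnecessary.
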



Until the end of the section we assume that $\Ring$ is a field of characteristic $0$. In this case, we can consider 
the ungraded version of the category $\mathcal{O}$, to be denoted by $\tilde{\mathcal{O}}$. By definition, it consists 
of all finitely generated $H_{n,\lambda}$-modules, where $\h_{n}$ acts locally nilpotently. All such modules 
admit graded lifts in $\tilde{\OCat}$, which comes from the action of $\mathsf{eu}^H$. Namely, $\mathsf{eu}^H$
acts on $M\in \tilde{\OCat}$ locally finitely with eigenvalues in $\frac{1}{b}\Z$ (the latter follows from 
Lemma \ref{Lem:Euler_action}) so we can consider the grading by generalized eigenspaces. Below it will be important 
for us to have natural graded lifts, which is why we consider gradings by $\frac{1}{b}\Z$. We note that every object in 
$\tilde{\OCat}$ decomposes into the direct sum of $b$ summands according to the coset of the degrees in $(\frac{1}{b}\Z)/\Z$.


The irreducible objects in $\OCat$ are the modules $L_{\lambda}(\eta)$, where $\eta$ is an irreducible representation of $S_n$, obtained from $L_{\lambda}(\eta,n)$ by forgetting the grading. The category $\OCat$ is highest weight, \cite[Theorem 2.19]{GGOR}, 
the standard objects are the Verma modules $\Delta^H_{\lambda}(\eta)$ and the costandard objects are the dual Verma modules 
$\nabla^H_{\lambda}(\eta)$. For a partition $\eta$ of $n$, we write $L^{gr}(\eta)$ for the natural lift of $L(\eta)$ to 
$\tilde{\OCat}$, here the minimal degree is $d_\eta-\lambda \cdot \mathsf{cont}(\eta)$.

Next, we need the following result. Let $\h$ be a reflection representation of $W$ and let $\h_0$ be a vector space with 
trivial action of $W$. 

\begin{Lem}\label{Lem:cat_O_equivalence}
Recall that $\Ring$ is a characteristic $0$ field.
The functor $\Ring[\h_0]\otimes\bullet$ defines an equivalence between $\OCat(W,\h)$ and $\OCat(W,\h\oplus \h_0)$.
The similar claim is true for the categories $\OCat^H$.
\end{Lem}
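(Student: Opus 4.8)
The plan is to identify the rational Cherednik algebra attached to $(W,\h\oplus\h_0)$ with a tensor product of the one attached to $(W,\h)$ and a Weyl algebra, and then to deduce the claim about categories $\OCat$ from the structure of modules over that Weyl algebra. First I would decompose the algebra. Since $W$ acts trivially on $\h_0$, its reflections on $\h\oplus\h_0$ coincide with those on $\h$, the roots lying in $\h^*\subset(\h\oplus\h_0)^*$ and the coroots in $\h\subset\h\oplus\h_0$. Hence in (\ref{eq:Cherednik_relations}) the correction term $\lambda\sum_\alpha\langle\alpha^\vee,x\rangle\langle\alpha,y\rangle s_\alpha$ vanishes whenever $x\in\h_0^*$ or $y\in\h_0$. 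Therefore the subalgebra of $H_\lambda(W,\h\oplus\h_0)$ generated by $\h_0^*\oplus\h_0$ is the algebra $D(\h_0)$ of differential operators on $\h_0$, it centralizes the subalgebra $H_\lambda(W,\h)$ generated by $\h^*,\h,W$, and, comparing the triangular PBW decompositions (the associated graded of the multiplication map is the obvious isomorphism $S(\h^*\oplus\h_0^*)\#W\otimes S(\h\oplus\h_0)\xrightarrow{\sim}(S(\h^*)\#W\otimes S(\h))\otimes(S(\h_0^*)\otimes S(\h_0))$), the multiplication map is an algebra isomorphism
\[H_\lambda(W,\h\oplus\h_0)\ \xrightarrow{\ \sim\ }\ H_\lambda(W,\h)\otimes_\Ring D(\h_0).\]

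Next I would recall the structure of $D(\h_0)$-modules on which the subspace $\h_0$ of $D(\h_0)$ --- spanned by the translation-invariant vector fields --- acts locally nilpotently: by Kashiwara's lemma, or by an elementary direct computation with the Weyl algebra over a characteristic $0$ field, every such module $N$ equals $\Ring[\h_0]\otimes_\Ring N^{\h_0}$ with $N^{\h_0}:=\{v\in N\mid \h_0 v=0\}$, and $N$ is finitely generated over $D(\h_0)$ if and only if $\dim_\Ring N^{\h_0}<\infty$. Equivalently, the category of such finitely generated $D(\h_0)$-modules is equivalent, via $N\mapsto N^{\h_0}$ and $V\mapsto\Ring[\h_0]\otimes_\Ring V$, to the category of finite-dimensional $\Ring$-vector spaces; its unique simple object is $\Ring[\h_0]$.

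Combining the two, $\OCat^H(W,\h\oplus\h_0)$ is, under the isomorphism above, the category of finitely generated $H_\lambda(W,\h)\otimes D(\h_0)$-modules $N$ on which both $\h$ and $\h_0$ act locally nilpotently. For such $N$ the previous step gives $N=\Ring[\h_0]\otimes_\Ring N^{\h_0}$; since $H_\lambda(W,\h)$ centralizes $D(\h_0)$, the subspace $N^{\h_0}$ is an $H_\lambda(W,\h)$-submodule of $N$, it is finitely generated over $H_\lambda(W,\h)$ (combine finite generation of $N$ with the finiteness in the previous step), and $\h$ acts on it locally nilpotently, so $N^{\h_0}\in\OCat^H(W,\h)$. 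Conversely, $\Ring[\h_0]\otimes_\Ring M$ --- with $D(\h_0)$ acting on $\Ring[\h_0]$ and $H_\lambda(W,\h)$ on $M$ --- lies in $\OCat^H(W,\h\oplus\h_0)$ for $M\in\OCat^H(W,\h)$, and these two functors are mutually inverse equivalences. The second of them is the functor $\Ring[\h_0]\otimes\bullet$ in the statement; being exact and carrying lowest weight spaces to lowest weight spaces, it also matches Verma, dual Verma, and simple objects on the two sides.

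Finally, the spherical statement. One route is to repeat the argument verbatim with $eH_\lambda(W,\bullet)e$ in place of $H_\lambda(W,\bullet)$, using that multiplication by the averaging idempotent $e\in\Ring W$ turns the isomorphism above into $eH_\lambda(W,\h\oplus\h_0)e\cong eH_\lambda(W,\h)e\otimes_\Ring D(\h_0)$, and that the Euler element of $eH_\lambda(W,\h\oplus\h_0)e$ is the sum of that of $eH_\lambda(W,\h)e$ and the degree operator of $D(\h_0)$ (which acts with nonnegative integer eigenvalues on $\Ring[\h_0]$), so the ``bounded below'' condition defining $\OCat$ is preserved by $\Ring[\h_0]\otimes\bullet$. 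Alternatively, since $\lambda\notin\Sigma$ one may transport the non-spherical equivalence along the Morita equivalence $H_\lambda(W,\h)e\otimes_{eH_\lambda(W,\h)e}\bullet$, noting that the analogous bimodule for $\h\oplus\h_0$ is $H_\lambda(W,\h)e\otimes_\Ring D(\h_0)$, so the two Morita equivalences intertwine $\Ring[\h_0]\otimes\bullet$. The only genuinely nontrivial input is the Weyl-algebra structure theorem of the second paragraph; the rest is bookkeeping, and the one place to be careful is checking that the finiteness, local-nilpotence, and (in the spherical case) boundedness conditions all behave additively under the tensor decomposition.
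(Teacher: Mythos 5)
The paper states this lemma without proof, treating it as a standard fact. Your argument is correct and is the standard one: the tensor decomposition $H_\lambda(W,\h\oplus\h_0)\cong H_\lambda(W,\h)\otimes_\Ring D(\h_0)$ via PBW, the Kashiwara-type structure theorem for $D(\h_0)$-modules with $\h_0$ acting locally nilpotently, and passage to the spherical subalgebra either by applying $e$ or by transporting along the Morita equivalence of Proposition~\ref{Prop:Morita}.
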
 

From this point of view, the category $\mathcal{O}_\lambda(W)$ does not depend on the choice of $\h$. 

To finish this section, we briefly discuss $K_0(\mathcal{O}(W))$. We identify it with $K_0(\operatorname{Rep}(W))$ by sending 
the class of $\Delta_{\lambda}(\eta)$ to the class of $\eta$ for each irreducible representation $\eta$ of $W$. 

\subsection{Bezrukavnikov-Etingof functors}\label{SS_BE}
In this section we will recall the main construction from \cite{BE}: restriction and induction functors.
Until the further notice we assume that the base ring is a characteristic $0$ field, to be denoted by $\K$.
 
Let $W'\subset W$ be the stabilizer of some point in $\h$. If $W=S_n$, then $W'$ is conjugate to $\prod_{i=1}^k S_{n_i}\subset S_n$
for some partition $(n_1,\ldots,n_k)$ of $n$. We write $\h^{W'-reg}$ for the locus in $\h$ consisting of all
points whose stabilizer in $W$ is contained in $W'$, this is a principal affine open subset given by nonvanishing of the product of roots that are not roots for $W'$. 
We note that $\h^{W'-reg}/W'$ is precisely the locus in $\h/W'$ consisting of all points, where the natural morphism
$\h/W'\rightarrow \h/W$ is \'{e}tale. 

The first part of the construction in \cite{BE} relates the rational Cherednik algebras for $W'$ and $W$.
It is more convenient for us to state this isomorphism on the level of spherical subalgebras (cf. \cite[Section 3.5]{GL}). The isomorphism in question is 
\begin{equation}\label{eq:BE_isomorphism}
\K[\h^{W'-reg}]^{W'}\otimes_{\K[\h]^{W}}eH_{\lambda}(W)e\xrightarrow{\sim}
\K[\h^{W'-reg}]^{W'}\otimes_{\K[\h]^{W'}}e'H_{\lambda}(W')e',
\end{equation}
where $e'\in \K W'$ is the averaging idempotent. 
It is easy to see that both the source and the target have natural algebra structures and 
(\ref{eq:BE_isomorphism}) is an isomorphism of algebras. Also note that $N_W(W')/W'$ acts by 
algebra automorphisms on both sides of (\ref{eq:BE_isomorphism}) and (\ref{eq:BE_isomorphism})
is equivariant. 

\begin{Rem}\label{Rem:BE_iso_charact}
This isomorphism can be characterized by the following property. Proposition \ref{Prop:Dunkl} yields an embedding $$\K[\h^{W'-reg}]^{W}\otimes_{\K[\h]^{W}}e'H_{\lambda}(W')e'\hookrightarrow D(\h^{reg}).$$ The characterization of (\ref{eq:BE_isomorphism}) is as follows: it is a unique isomorphism such that the composition
$$eH_{\lambda}(W)e\rightarrow \K[\h^{W'-reg}]^{W}\otimes_{\K[\h]^{W}}eH_{\lambda}(W)e\xrightarrow{\sim}
\K[\h^{W'-reg}]^{W'}\otimes_{\K[\h]^{W'}}e'H_{\lambda,\K}(W')e'\hookrightarrow D(\h^{reg}).$$
coincides with the embedding $eH_{\lambda}(W)e\hookrightarrow D(\h^{reg})$ coming from Proposition 
\ref{Prop:Dunkl}.
\end{Rem}

From now on and until the end of the section 
we assume that $\K=\C$. We want to produce an exact functor $\OCat(W)\rightarrow \OCat(W')$. 
Pick a point $x\in \h^{W'-reg}\cap \h^{W'}$. Let $\C[\h/W]^{\wedge_x}$ denote 
the completion of $\C[\h/W]$ at the maximal ideal defined by $x$ and let $\C[\h/W']^{\wedge_x}$ have the similar
meaning. These two algebras are naturally identified because $\h^{W'-reg}/W'\rightarrow \h/W$ is \'{e}tale at the image of $x$.
Set $(eH_{\lambda}(W)e)^{\wedge_x}:=\C[\h/W]^{\wedge_x}\otimes_{\C[\h/W]}(eH_{\lambda}(W)e)$, this is an algebra. 
Define $(e'H_{\lambda}(W')e')^{\wedge_x}$ similarly.
Isomorphism (\ref{eq:BE_isomorphism}) yields an isomorphism $(eH_{\lambda}(W)e)^{\wedge_x}\xrightarrow{\sim} (e' H_\lambda(W')e')^{\wedge_x}$.
Translating by $x$ gives an isomorphism $(e' H_\lambda(W')e')^{\wedge_x}\xrightarrow{\sim} (e' H_\lambda(W')e')^{\wedge_0}$.  
So, for $M\in \OCat(W)$, we can view $M^{\wedge_x}$ as a module over $(e' H_\lambda(W')e')^{\wedge_0}$. The functor
$\Res_x:\OCat(W)\rightarrow \OCat(W')$ is given by taking the locally nilpotent part for the action of 
$S(\h)^{W'}\subset e'H_\lambda(W')e'$ on $M^{\wedge_x}$.    

A similar construction makes sense for the full rational Cherednik algebras producing a functor 
$\Res_x: \OCat^H(W)\rightarrow \OCat^H(W')$. The two functors are related as follows: $e'\Res_x(\bullet)\cong \Res_x(e\bullet)$,
see \cite[Section 3.5]{GL}.  

The functor $\Res_x: \OCat(W)\rightarrow \OCat(W')$ has the following properties:
\begin{enumerate}
\item The functors $\Res_x$ (both in the spherical and in the full setting) are exact. In the full setting, this is 
\cite[Proposition 3.9]{BE}, the spherical setting is similar.
\item The functor $\operatorname{Res}_x: \OCat^H(W)\rightarrow \OCat^H(W')$ has a biadjoint functor 
$\mathsf{Ind}_x: \OCat^H(W')\rightarrow \OCat^H(W)$, see \cite[Section 3.5]{BE} for the construction of the right adjoint of $\Res_x$, and 
\cite{Shan} or \cite{fun_iso} for the proofs of the biadjointness.
\item The functor $\Res_x$ is independent of $x$ (with $W_x=W'$) up to an isomorphism (of functors). See \cite[Section 3.7]{BE}.
\end{enumerate}

Thanks to (3), we can write $\Res_{W'}^W$ for $\Res_x$ and $\operatorname{Ind}_{W'}^W$ for $\operatorname{Ind}_x$. 

To finish the section we describe the behavior of functors $\Res$ and $\Ind$ on $K_0$ groups.

\begin{Lem}
Under the identifications $$K_0(\mathcal{O}(W))\cong K_0(\operatorname{Rep}(W)),
K_0(\mathcal{O}(W'))\cong K_0(\operatorname{Rep}(W'))$$ from the end of 
Section \ref{SS_CatO}, the functor $\Res_{W'}^W$ acts as the restriction of the representations from 
$W$ to $W'$, while the functor $\operatorname{Ind}_{W'}^W$ acts as the induction of representations
from $W'$ to $W$. 
\end{Lem}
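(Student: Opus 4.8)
The plan is to verify the claim on classes of standard objects, using that $K_0(\OCat(W))$ and $K_0(\OCat(W'))$ are free with bases $[\Delta_\lambda(\eta)]$ and $[\Delta_\lambda(\eta')]$ respectively, and that exact functors act on $K_0$ in a way determined by their effect on these classes (for $\Res$) or on their biadjoint's images (for $\Ind$). So it suffices to compute $[\Res_{W'}^W \Delta^H_\lambda(\eta)]$ in $K_0(\OCat^H(W'))$, and then deduce the statement for $\Ind$ by biadjointness together with the fact that, under the identification $K_0(\operatorname{Rep}(W))\cong K_0(\OCat(W))$, the pairing between $\Delta$'s and $\nabla$'s corresponds to the standard pairing of characters (so $\Res$ and $\Ind$ are mutually adjoint on $K_0$, and restriction and induction of representations are mutually adjoint, hence determine each other).

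First I would recall that $\Res_x$ is exact (property (1) above), so it descends to a homomorphism on $K_0$; likewise $\Ind_x$. Next, the key computation: I claim $\Res_{W'}^W \Delta^H_\lambda(\eta) \cong \Delta^{H}_\lambda(\Res^W_{W'}\eta)$ as objects of $\OCat^H(W')$, where on the right $\Res^W_{W'}\eta$ denotes the restriction of the $W$-representation $\eta$ to $W'$, so that the Verma module over $H_\lambda(W')$ is induced from this (generally reducible) $\Ring W'$-module. To see this, note $\Delta^H_\lambda(\eta) = H_\lambda(W)\otimes_{S(\h)\#W}\eta$ is, as an $S(\h)$-module, just $\Ring[\h^*]\otimes\eta$; passing to the completion $M^{\wedge_x}$ at $x\in\h^{W'-reg}\cap\h^{W'}$ and applying the algebra isomorphism $(eH_\lambda(W)e)^{\wedge_x}\cong(e'H_\lambda(W')e')^{\wedge_0}$ of Section~\ref{SS_BE} (in its full, non-spherical incarnation), one identifies the completed module with the corresponding completion of $H_\lambda(W')\otimes_{S(\h)\#W'}(\Res^W_{W'}\eta)$: this is because the isomorphism \eqref{eq:BE_isomorphism} is the identity on the copies of $S(\h^*)$ and is compatible with the étale map $\h/W'\to\h/W$, under which the fiber of $\eta$ over $x$ is just $\eta$ with its $W'$-action. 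Taking the locally nilpotent part for $S(\h)^{W'}$ then yields $\Delta^H_\lambda(\Res^W_{W'}\eta)$ on the nose. Since $[\Delta^H_\lambda(-)]$ is additive in its $\Ring W'$-argument, this gives $[\Res_{W'}^W\Delta^H_\lambda(\eta)] = [\Res^W_{W'}\eta]$ in $K_0(\operatorname{Rep}(W'))$, which is exactly the assertion for $\Res$.

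For $\Ind$, since $\Ind_x$ is biadjoint to $\Res_x$, the induced map on $K_0$ is adjoint — with respect to the Euler–Poincaré pairing $\langle[M],[N]\rangle=\sum_i(-1)^i\dim\operatorname{Ext}^i(M,N)$ — to the map induced by $\Res$. Under $K_0(\OCat(W))\cong K_0(\operatorname{Rep}(W))$ the standard objects $[\Delta]$ and costandard objects $[\nabla]$ are dual bases for this pairing (this is the general highest-weight fact, $\langle[\Delta(\eta)],[\nabla(\xi)]\rangle=\delta_{\eta\xi}$), and under the identification these correspond to the bases of $\operatorname{Rep}(W)$ that are mutually dual for the standard character pairing, namely both to $\{[\eta]\}$ up to the standard self-duality of $\operatorname{Rep}(W)$. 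Since ordinary $\operatorname{Res}$ and $\operatorname{Ind}$ of representations are adjoint for that pairing (Frobenius reciprocity), and $\operatorname{Res}$ on $K_0$ is ordinary restriction by the previous paragraph, the adjoint map must be ordinary induction; hence $\Ind_{W'}^W$ acts as induction of representations, as claimed.

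The main obstacle is the identification $\Res_{W'}^W\Delta^H_\lambda(\eta)\cong\Delta^H_\lambda(\Res^W_{W'}\eta)$: one must be careful that the completion-and-étale-descent procedure defining $\Res_x$ genuinely sends the global Verma module to the global Verma module for $W'$, rather than merely agreeing with it after completion, and that taking the "locally nilpotent part for $S(\h)^{W'}$" recovers the algebraic (graded) object. This is where Remark~\ref{Rem:BE_iso_charact} and the compatibility of \eqref{eq:BE_isomorphism} with the Dunkl embedding do the real work: both sides embed compatibly into $D(\h^{reg})\#W$, where the statement becomes a transparent assertion about restricting $\Ring[\h^{reg}]\otimes\eta$ along the étale cover. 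Once that is in hand the $K_0$ bookkeeping is routine.
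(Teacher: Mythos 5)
Your proof is correct, and since the paper states this lemma without proof (the standard reference being \cite{BE}), your argument fills a real gap rather than duplicating one. The two pillars are both sound: (i) the identification $\Res_{W'}^W\Delta^H_\lambda(\eta)\cong\Delta^H_\lambda(\Res^W_{W'}\eta)$, which follows from the construction of $\Res_x$ via completion and the BE isomorphism exactly as you sketch, is the content of \cite[Proposition 3.14]{BE}; (ii) the deduction for $\Ind$ via biadjointness and the Euler pairing is valid because categories $\OCat$ have finite global dimension, so the pairing is well defined on $K_0$, and because $[\Delta(\eta)]=[\nabla(\eta)]$ in $K_0$ (by the duality $\bullet^\vee$ fixing simples), so the pairing is the orthonormal one in the standard basis, matching Frobenius reciprocity for $\operatorname{Rep}(W)$. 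Two minor remarks: you do not actually need the aside about ``self-duality of $\operatorname{Rep}(W)$'' once you invoke $[\Delta(\eta)]=[\nabla(\eta)]$; and the typo $\Ring[\h^*]$ should read $\Ring[\h]=S(\h^*)$. For comparison, \cite{BE} also derives the $K_0$ statement by deforming the parameter $\lambda$ to $0$, where $\OCat$ is semisimple and $\Res,\Ind$ are literally restriction and induction of $W$-modules tensored with polynomials, and then observing that the $K_0$ map is constant in $\lambda$; your route is more direct in that it works at the fixed parameter and requires no deformation argument, at the modest cost of the Euler-pairing bookkeeping for $\Ind$.
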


\subsection{Supports}\label{SS_supports}
In this section we assume that $\Ring=\C$. Let $M\in \OCat(W)$. Then $M$ is finitely generated over $\C[\h]^W$, hence we can consider
its support $\operatorname{Supp}(M)\subset \h/W$. One can show, \cite[Section 3.8]{BE}, that $\operatorname{Supp}(M)$ is a union of the strata of the stabilizer stratification for the action of $W$ on $\h$. 

\begin{Ex}\label{Ex:Supports_Sn}
The possible supports in the case when $W=S_n$ and $\h=\C^n$ were computed in \cite[Example 3.25]{BE}. Namely, 
suppose that $\lambda$ is of the form $\frac{a}{b}$, where $a,b$ are coprime integers with $1<b$ and $\lambda\not\in\Sigma$. Then the possible supports of 
the modules in $\mathcal{O}$ are the images in $\h/W$ of 
$$\{(x_1,\ldots,x_k, y_1,\ldots,y_1,y_2,\ldots,y_2,\ldots,y_\ell,\ldots,y_\ell)\}$$
with groups of $b$ equal coordinates (with any $k,\ell$ satisfying $k+b\ell=n$). 
\end{Ex}

Now we are going to investigate a connection between the supports and induction/ restriction functors. 
We choose the reflection representation $\h$ with $\h^{W}=\{0\}$ for $W$. Let $\h_{W'}$ be the unique 
$W'$-invariant complement to $\h^{W'}$. We write $\mathcal{O}(W)$ for $\mathcal{O}(W,\h)$ and 
$\mathcal{O}(W')$ for $\mathcal{O}(W',\h_{W'})$ and consider the induction and restriction functors
between these categories. Pick $x\in \h$ with $W_x=W'$. 

\begin{Lem}\label{Lem:supports}
The following claims are true:
\begin{enumerate}
\item $\operatorname{Supp}(\Res_x(M))$ consists of the $W'$-orbits of all points $y\in \h_{W'}$ 
such that $W(x+y)\in \operatorname{Supp}(M)$.
\item 
In particular, $\Res_x(M)$ is zero if $Wx\not\in \operatorname{Supp}(M)$. Moreover, $\Res_x(M)$ is finite dimensional 
if and only if $W\h^{W'}\subset \h/W$ is an irreducible component in $\operatorname{Supp}(M)$. 
\item Let $L\in \OCat(W)$ be irreducible. The variety $\operatorname{Supp}(L)$ is irreducible. 
\item Let $L'$ be an irreducible object in $\mathcal{O}(W')$. Let $W''\subset W'$ be the stabilizer of a generic point in the support
of $L'$ (which makes sense because the latter is irreducible). 
Then $\operatorname{Ind}_x(L')$ is nonzero. The support of this module is $W \h^{W''}$.
\item The support of any irreducible sub and quotient of $\operatorname{Ind}_x(L')$ is also 
$W \h^{W''}$.  
\end{enumerate}
\end{Lem}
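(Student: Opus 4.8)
The plan is to establish Lemma~\ref{Lem:supports} by a mixture of geometric and homological arguments, treating the five parts in the natural dependency order: (1), then (2) as a corollary of (1) together with a dimension count, then (3) separately, then (4) using (1)--(3) and biadjointness, and finally (5) by a support-estimate argument for the pieces of an induced module. The key geometric input throughout is that supports of modules in $\OCat(W)$ are unions of strata of the stabilizer stratification of $\h/W$ (this is \cite[Section 3.8]{BE}, recalled just above), and that the restriction functor is built from completion at a point $x$ with $W_x=W'$, where $\h/W'\to \h/W$ is \'etale.

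\textbf{Part (1).} The statement is essentially a formal consequence of the definition of $\Res_x$ as ``completion at $x$, then take the locally nilpotent part for $S(\h_{W'})^{W'}$.'' I would argue as follows. Completing $M$ at $x\in \h/W$ and using the \'etale identification $\C[\h/W]^{\wedge_x}\cong \C[\h_{W'}/W']^{\wedge_0}$, the support of $M^{\wedge_x}$ as a $\C[\h_{W'}/W']^{\wedge_0}$-module is the formal neighborhood at $0$ of the preimage of $\operatorname{Supp}(M)$ under the local isomorphism, i.e.\ of $\{W'y : W(x+y)\in\operatorname{Supp}(M)\}$. Passing to the locally nilpotent part for $S(\h_{W'})^{W'}$ does not change the support as a subset of $\h_{W'}/W'$ near $0$ — it just replaces the completed module by an honest $\OCat(W')$-module with the same support germ at $0$ — and since supports in $\OCat(W')$ are unions of strata (hence conical, determined by their germ at $0$), this germ determines $\operatorname{Supp}(\Res_x(M))$ entirely. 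I would cite \cite[Section 3.8]{BE} and the construction in Section~\ref{SS_BE} for these facts; this is the foundational computation and I expect it to be routine once the \'etale bookkeeping is set up carefully.

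\textbf{Parts (2) and (3).} For (2): taking $y=0$ in (1), $Wx\in\operatorname{Supp}(M)$ is exactly the condition that $0\in\operatorname{Supp}(\Res_x(M))$, equivalently $\Res_x(M)\ne 0$; and since $\Res_x$ is exact and preserves $\OCat$, $\Res_x(M)$ is finite-dimensional iff its support is $\{0\}$, which by (1) happens iff the only point $y\in\h_{W'}$ with $W(x+y)\in\operatorname{Supp}(M)$ is $y=0$ — i.e.\ iff $Wx=W\h^{W'}$ is an isolated (hence, being a stratum closure, irreducible) component of $\operatorname{Supp}(M)$. A small dimension count using that $\operatorname{Supp}(M)$ is a union of stratum closures makes ``isolated point of the slice'' equivalent to ``$W\h^{W'}$ is an irreducible component.'' For (3): $\operatorname{Supp}(L)$ is a union of stratum closures, and I claim it is a single one. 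The cleanest argument I know uses the characteristic cycle / the fact that $\operatorname{gr} L$ is a Cohen--Macaulay module over $\C[\h\oplus\h^*]^W$ of pure dimension, so its support on $\h$ (a Lagrangian piece) is equidimensional; alternatively one invokes that the possible supports of simples were classified (Example~\ref{Ex:Supports_Sn} for $S_n$) and are irreducible, but for general $W$ the purity/equidimensionality argument via the Bernstein inequality and Gabber's theorem, combined with the fact that the closure of a single stratum of maximal dimension in the support already accounts for everything by a connectedness-of-support argument, is the way to go. I anticipate (3) is the subtlest of the ``easy'' parts.

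\textbf{Parts (4) and (5).} For (4): non-vanishing of $\operatorname{Ind}_x(L')$ follows from biadjointness (property (2) in Section~\ref{SS_BE}) --- $\Hom(\operatorname{Ind}_x L', \operatorname{Ind}_x L')\cong \Hom(\Res_x\operatorname{Ind}_x L', L')$, and $\Res_x\operatorname{Ind}_x L'$ has $L'$ as a summand by Mackey-type considerations, or more simply one shows the unit $L'\to \Res_x\operatorname{Ind}_x L'$ is a nonzero map (it is split injective). For the support: $\operatorname{Ind}_x(L')$ is supported on a union of stratum closures, and I compute which ones appear by applying (1) --- for a point $z\in\h$ with $W_z=W'''$, restricting $\operatorname{Ind}_x(L')$ to $z$ and using a base-change/Mackey formula for $\Res_z\circ\operatorname{Ind}_x$ (the transitivity and base-change properties of $\Res,\Ind$, cf.\ the constructions cited in Section~\ref{SS_BE}) reduces the question to the support of $L'$ itself: $z\in\operatorname{Supp}(\operatorname{Ind}_x L')$ iff the $W'$-orbit of (the relevant slice coordinate of) $z$ meets $\operatorname{Supp}(L')$, and running over all such $z$ sweeps out exactly $W\h^{W''}$ where $W''$ is the generic stabilizer on $\operatorname{Supp}(L')$. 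For (5): any irreducible subquotient $N$ of $\operatorname{Ind}_x(L')$ has $\operatorname{Supp}(N)\subseteq\operatorname{Supp}(\operatorname{Ind}_x L')=W\h^{W''}$, which is irreducible, and by (3) $\operatorname{Supp}(N)$ is itself an irreducible union of strata; so it suffices to rule out that it is a \emph{proper} closed substratum. This I would do by a biadjointness/counting argument: if some subquotient had strictly smaller support, then applying $\Res_{W''}^W$ (which by (2) is finite-dimensional exactly on modules with $W\h^{W''}$ as a component) would kill that subquotient but not $\operatorname{Ind}_x(L')$, and comparing with $\Res_{W''}^W\operatorname{Ind}_x(L')$ --- whose composition factors one controls via Mackey --- forces a contradiction; more slickly, one notes $\Res_x\operatorname{Ind}_x(L')$ has a filtration by modules supported ``at $0$'' with $L'$ among them, and transversality of supports propagates to all subquotients. \textbf{The hard part} will be part~(5): controlling the supports of \emph{all} subquotients (not just sub and quotient in the sense of the socle/head, though the statement only asks for those) requires knowing that $\operatorname{Ind}_x$ does not create a subquotient with smaller support, and the cleanest route is to combine the Mackey formula for $\Res_x\operatorname{Ind}_x$ with part~(2)'s finite-dimensionality criterion --- getting the Mackey formula in the needed generality, with its geometric base-change interpretation, is the real technical cost of the whole lemma.
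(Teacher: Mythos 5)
Your treatments of (1), (2), and the non-vanishing part of (4) track the cited sources closely and look essentially correct (modulo invoking a Mackey formula for $\Res\circ\Ind$, which is available in the literature). But the argument you sketch for part (3) has a genuine gap. Equidimensionality of $\operatorname{Supp}(L)$ for $L$ simple does follow from Gabber-type results, but equidimensionality does not give irreducibility: a union of closures of two \emph{distinct, non-conjugate} strata of the same dimension is equidimensional and reducible, and for general $W$ (unlike $S_n$, where the dimension determines the conjugacy class of the parabolic) such configurations exist, e.g.\ already in type $D_4$. The ``connectedness-of-support argument'' you appeal to is not spelled out and I do not see how to make it work; the characteristic variety of a simple module need not be connected in general. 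The actual argument in \cite[Section 3.8]{BE} does not go through purity/equidimensionality at all. It is an application of the restriction--induction machinery you set up in (1), (2), (4): take a maximal-dimensional stratum $W\h^{W_1}$ in $\operatorname{Supp}(L)$ with generic point $x$; by (2), $\Res_x(L)$ is nonzero and finite-dimensional; by biadjointness, $\operatorname{Hom}(\operatorname{Ind}_x\Res_x L, L)\cong \operatorname{End}(\Res_x L)\neq 0$, and since $L$ is simple the counit $\operatorname{Ind}_x\Res_x L\to L$ is surjective; by the easy (containment) half of (4), $\operatorname{Supp}(\operatorname{Ind}_x\Res_x L)\subseteq \overline{W\h^{W_1}}$; hence $\operatorname{Supp}(L)\subseteq\overline{W\h^{W_1}}$, and equality forces irreducibility. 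You should replace your sketch of (3) by this.

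For (5), the ``contradiction via $\Res_{W''}^W$'' route you outline as the primary argument is shakier than it needs to be: knowing $\Res_{W''}^W N = 0$ for an irreducible submodule $N$ does not by itself contradict anything about $\Res_{W''}^W\Ind_x(L')$. The paper's intended argument (``combine (1), (4) and biadjointness'') is exactly the ``slicker'' alternative you mention in passing and is the one you should promote to the main proof: if $N$ is an irreducible submodule, $\operatorname{Hom}(N,\Ind_x L')\cong\operatorname{Hom}(\Res_x N, L')\neq 0$, so $L'$ is a quotient of $\Res_x N$, hence $\operatorname{Supp}(L')\subseteq\operatorname{Supp}(\Res_x N)$; applying (1) to $N$ then forces $\overline{W\h^{W''}}\subseteq\operatorname{Supp}(N)$, while (4) gives the reverse inclusion. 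The quotient case is dual, using the other adjunction. This avoids any Mackey computation beyond what is already in (4).
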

\begin{proof}
(1) follows from the construction of $\Res_x$ in \cite[Section 3.5]{BE}. (2) and (3) are proved in 
\cite[Section 3.8]{BE}. (4) is \cite[Proposition 2.7]{SV}, and (5) follows from combining (1) and (4)
with the claim that $\Res_x$ and $\operatorname{Ind}_x$ are biadjoint.  
\end{proof}


\subsection{Shan-Vasserot construction}\label{SS_SV}
Here we recall the main construction from \cite{SV} in the case of the symmetric groups. The base field is $\C$.
We fix a parameter $\lambda:=\frac{a}{b}$, where $a,b$ are coprime positive integers. Consider the category $\OCat_n:=\OCat(S_n,\h_n)$
for the algebra $eH_{n,\lambda}e$. Our goal is, for a positive integer $d$, and a partition $\tau$ of $d$, to produce a functor $\heis^\tau_{SV}:
\OCat_n\rightarrow \OCat_{n+db}$ and recall its properties following \cite{SV}. 

Consider the irreducible module $L_\lambda(b\tau)\in \OCat_{bd}$. For a test module $M\in \OCat_{n}$, we can view 
$M\boxtimes L_\lambda(b\tau)$ as an object in $\OCat(S_n\times S_{bd}, \h_{n+bd})$. We write $S_{n,bd}$ for $S_{n}\times S_{bd}$. 
Then we can apply the induction functor $\operatorname{Ind}_{S_{n,db}}^{S_{n+db}}$ to $M\boxtimes L_\lambda(b\tau)$. The functor 
$\heis^\tau_{SV}$, by definition, sends $M\in \OCat_n$ to  $\operatorname{Ind}_{S_{n,db}}^{S_{n+db}}(M\boxtimes L_\lambda(b\tau))$.
We extend $\heis^\tau_{SV}$ to an endofunctor of  $\bigoplus_{n\geqslant 0}\OCat_n$ by additivity.
Note that the functor $\heis^\tau_{SV}$ is exact. 

Here are some properties of the functors $\heis^\tau_{SV}$ from \cite{SV}. For partitions $\tau^1,\tau^2$ we write 
$\tau^1+b\tau^2$ for the part-wise sum. We say that a partition $\mu$ is {\it coprime} to $b$ if it cannot be written as 
$\tau^1+b\tau^2$ for non-empty $\tau^2$ (equivalently, in the Young diagram of $\mu$ every column occurs less than $b$
times). 

\begin{Prop}\label{Prop:Heis_properties}
The following claims hold:
\begin{enumerate}
\item The symmetric group $S_d$ acts on $\heis_{\OCat}^d$ by automorphisms so that we have an $S_d$-equivariant functor isomorphism
$\heis_{\OCat}^d\cong \bigoplus_{\tau}\tau\boxtimes \heis_{SV}^\tau$. 
\item If $\eta$ is a partition coprime to $b$, and $\tau$ is arbitrary, then $\heis^\tau_{\mathcal{O}}(L_\lambda(\eta))\cong L_\lambda(\mu+b\tau)$.
\item The irreducible module $L_\lambda(\eta+b\tau)$, where $\eta$ is coprime to $b$, has dimension of support $|\eta|+|\tau|$, where the absolute value sign
indicates the number of boxes.  
\item For partitions $\tau'$ of $d'$ and $\tau''$ of $d''$ with $d=d'+d''$, we have the following decomposition
$\heis^{\tau'}_{SV}\circ \heis^{\tau''}_{SV}\cong \bigoplus_{\tau}\operatorname{Hom}_{S_d}\left(\tau, \operatorname{Ind}^{S_d}_{S_{d'}\times S_{d''}}(\tau'\boxtimes \tau'')\right)\otimes\heis^\tau_{SV}.$ 
\end{enumerate}
\end{Prop}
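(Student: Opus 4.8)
The plan is to deduce (1)--(4) by specializing the main results of \cite{SV} on cyclotomic rational Cherednik algebras to the cyclic group of order one --- so that the cyclotomic algebra becomes $H_{n,\lambda}$ and the Shan--Vasserot categorical Heisenberg action becomes the one above --- while recording the support and Grothendieck-group computations needed in the present formulation. The starting point is the following bookkeeping, using transitivity of induction, the compatibility $\Ind_{W_1'\times W_2'}^{W_1\times W_2}(M_1\boxtimes M_2)\cong\Ind(M_1)\boxtimes\Ind(M_2)$ for the Bezrukavnikov--Etingof functors, and Lemma~\ref{Lem:cat_O_equivalence}: the $d$-fold self-composition of the basic generator $\heis^{(1)}_{SV}=\Ind_{S_{n,b}}^{S_{n+b}}(\bullet\boxtimes L_\lambda((b)))$ sends $M$ to $\Ind_{S_n\times S_b^{\times d}}^{S_{n+bd}}(M\boxtimes L_\lambda((b))^{\boxtimes d})$, and the symmetric group $S_d$ acts on this composite by simultaneously permuting the $d$ copies of $S_b$ and the $d$ tensor factors $L_\lambda((b))$, an action commuting with the $M$-variable. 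This is the $S_d$-action in (1).

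To prove (1), factor this $\Ind$ through $S_n\times S_{bd}$: the claim reduces to the $S_d$-equivariant isomorphism $\Ind_{S_b^{\times d}}^{S_{bd}}(L_\lambda((b))^{\boxtimes d})\cong\bigoplus_\tau\tau\boxtimes L_\lambda(b\tau)$ in $\OCat_{bd}$, the sum over partitions $\tau$ of $d$; equivalently, parabolic induction of $d$ copies of the ``cuspidal'' simple $L_\lambda((b))$ --- the unique simple in $\OCat_b$ with one-dimensional support --- is semisimple, with constituents $L_\lambda(b\tau)$ and endomorphism algebra $\C S_d$. This is precisely the Harish-Chandra-type analysis of \cite{SV}; taking $\tau$-isotypic components of the composite then produces $\heis^\tau_{SV}$ together with the asserted decomposition. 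I regard this semisimplicity statement as the one genuinely nontrivial input behind the whole proposition.

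For (2): by (1) the functor $\heis^\tau_{SV}$ is a direct summand of $\Ind_{S_{n,bd}}^{S_{n+bd}}(\bullet\boxtimes L_\lambda(b\tau))$, so $\heis^\tau_{SV}(L_\lambda(\eta))\ne 0$ by Lemma~\ref{Lem:supports}(4), and each of its simple sub and quotient modules has support $S_{n+bd}\h^{W''}$, where $W''$ is the generic stabilizer on $\operatorname{Supp}(L_\lambda(\eta)\boxtimes L_\lambda(b\tau))$, by Lemma~\ref{Lem:supports}(5). When $\eta$ is coprime to $b$, the support combinatorics of type-$A$ simples (cf.~Example~\ref{Ex:Supports_Sn}, \cite{SV}) give $\operatorname{Supp}(L_\lambda(\eta))=\h_n/S_n$, while $\operatorname{Supp}(L_\lambda(b\tau))$ is the image of the locus with $d$ groups of $b$ equal coordinates, of generic stabilizer $S_b^{\times d}$; hence $W''=S_b^{\times d}$, with $\dim\h^{W''}=n+d$. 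To pin the module down one assembles three constraints: its class in $K_0$ is $\Ind_{S_n\times S_{bd}}^{S_{n+bd}}([L_\lambda(\eta)]\boxtimes[L_\lambda(b\tau)])$ under $K_0(\OCat)\cong K_0(\operatorname{Rep})$; it is isomorphic to its $\bullet^\vee$-dual (since $L_\lambda(\eta)$, $L_\lambda(b\tau)$ are, by \eqref{eq:duality_iso}, and $\Ind$ is compatible with $\bullet^\vee$); and $\mathsf{eu}$ acts on its lowest graded piece through $-\lambda\,\mathsf{cont}(\eta+b\tau)+d_{\eta+b\tau}$ by Lemma~\ref{Lem:Euler_action}, as read off from $L_\lambda(\eta)\boxtimes L_\lambda(b\tau)$ via grading-compatibility of $\Ind$. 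These, together with the categorification results of \cite{SV}, force $\heis^\tau_{SV}(L_\lambda(\eta))\cong L_\lambda(\eta+b\tau)$.

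Claim (3) then follows: by (2), $L_\lambda(\eta+b\tau)\cong\heis^\tau_{SV}(L_\lambda(\eta))$, whose support was computed above to be $S_{n+bd}\h^{S_b^{\times d}}$, of dimension $n+d=|\eta|+|\tau|$. Claim (4) is formal from (1): $\heis^{\tau'}_{SV}\circ\heis^{\tau''}_{SV}$ is the direct summand of the $(d'+d'')$-fold composite on which $S_{d'}\times S_{d''}\subset S_{d'+d''}$ acts through $\tau'\boxtimes\tau''$, so by (1) it equals $\bigoplus_\tau\Hom_{S_{d'}\times S_{d''}}(\tau'\boxtimes\tau'',\Res^{S_d}_{S_{d'}\times S_{d''}}\tau)\otimes\heis^\tau_{SV}$ with $d=d'+d''$, and Frobenius reciprocity rewrites the multiplicity space as $\Hom_{S_d}(\tau,\Ind^{S_d}_{S_{d'}\times S_{d''}}(\tau'\boxtimes\tau''))$. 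The main obstacle is thus entirely located in (1)--(2): the assertion that induction from the cuspidal simple $L_\lambda((b))$ is governed by exactly the group algebra $\C S_d$ (and no larger Hecke-type deformation), which is where the work of \cite{SV} is essential; the remaining steps are bookkeeping with supports, the $\tfrac1b\Z$-grading, and $K_0$.
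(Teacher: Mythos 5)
The paper's ``proof'' of this proposition is nothing more than a pointer to \cite{SV}: part (1) is \cite[Proposition~5.4]{SV}, parts (2)--(3) are special cases of results in \cite[Section~5.6]{SV}, and part (4) is read off from the construction in \cite[Section~5.2]{SV}. So there is no internal argument in the paper to compare against; the proposition is stated precisely because it is a specialization of known results. Your proposal goes further and reconstructs the shape of the Shan--Vasserot argument specialized to $S_n$, which is informative and essentially faithful to what \cite{SV} does: the $S_d$-action in (1) really does come from permuting the $d$ copies of $S_b$ and the factors $L_\lambda((b))^{\boxtimes d}$, (1) reduces via transitivity of induction to the semisimplicity of $\Ind_{S_b^{\times d}}^{S_{bd}}(L_\lambda((b))^{\boxtimes d})$ with endomorphism algebra $\C S_d$, and (4) is Frobenius reciprocity applied to the $S_d$-isotypic decomposition in (1). (You also read through the paper's evident typos: $\heis^{\mathbullet}_{\OCat}$ should be $\heis^{\mathbullet}_{SV}$ here, and $\mu$ in (2) should be $\eta$.)

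The one place where your argument is not fully self-contained is (2). The support bound from Lemma~\ref{Lem:supports}(5) controls simple sub- and quotient-modules of $\Ind$, but not all composition factors, so ``$\heis^\tau_{SV}(L_\lambda(\eta))$ lies in $\OCat^d$'' is not automatic from that lemma; and the trio of constraints ($K_0$-class, self-duality under $\bullet^\vee$, and the $\mathsf{eu}$-eigenvalue on the lowest piece) does not by itself obviously single out $L_\lambda(\eta+b\tau)$ or even force the object to be simple. You flag this by invoking ``the categorification results of \cite{SV},'' which is the right escape hatch and exactly where the paper's own citation points. A cleaner internal route, which the paper's later arguments actually use (see the proof of Theorem~\ref{Thm:SV_comparison}), is to localize to the generic point of the support and invoke Proposition~\ref{Prop:Wilcox}: the localization of $\heis^\tau_{SV}(L_\lambda(\eta))$ is $\mathsf{KZ}(L_\lambda(\eta))\otimes\tau$, and the equivalence $\OCat^d\simeq\mathcal{H}_q(n)\otimes\C S_d\operatorname{-mod}$ pins down the simple. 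But since the paper itself simply cites \cite{SV} for this proposition, your level of reliance on \cite{SV} is no greater than the paper's.
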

\begin{proof}
(1) is \cite[Proposition 5.4]{SV}. (2) and (3) are special cases of results established in \cite[Section 5.6]{SV}.
(4) follows from the construction in \cite[Section 5.2]{SV}.  
\end{proof}

Now we describe the behavior of the functors $\heis^\tau_{SV}$ on $K_0(\bigoplus_{n\geqslant 0}\OCat_n)=K_0(\bigoplus_{n\geqslant 0}\operatorname{Rep}(S_n))$, which we identify with the Fock space. On this space we have the action of the Heisenberg Lie algebra 
commuting with the standard action of $\hat{\mathfrak{sl}}_b$. The standard basis element $\mathsf{b}_i$ in the Heisenberg algebra with $i>0$ acts 
as the induction with the virtual representation of $S_{ib}$ of the form $\sum_{j=0}^{ib-1}(-1)^j \mathcal{S}_{(ib-j,1^j)}$. 
We can identify the algebra $\C[\mathsf{b}_i| i>0]$ with the algebra of symmetric polynomials in infinitely many variables in such a
way that $\mathsf{b}_i$ corresponds to the $i$th power polynomial. This algebra then acts
on $K_0(\bigoplus_{n\geqslant 0}\OCat_n)$. According to a special case of \cite[Proposition 5.13]{SV}, $\heis^\tau_{SV}$
acts as the Schur polynomial associated with $\tau$ in $\C[\mathsf{b}_i|i>0]$, we denote this element by $\mathsf{b}_\tau$. 

\subsection{Local system of restrictions}\label{SS_loc_sys_restr}
Fix a Weyl group $W$ and its reflection subgroup $W'$. The base field in this section is $\C$. 

In this section we study the dependence of the functor $\operatorname{Res}_x$ on the point $x$ with $W_x=W'$. Using 
this we will explain results of \cite{Wilcox} describing the localization of an irreducible object in $\OCat(S_n,\h_n)$
to the generic points of its support. 

Let $M$ be an object in $\OCat(W)$. Recall, 
Lemma \ref{Lem:supports},
that if $M$ is irreducible, then its support is the image of $\h^{W'}$ in $\h/W$ for some uniquely determined 
up to conjugacy parabolic subgroup $W'\subset W$. 

Now take $M$ such that its support is contained in the image of $\h^{W'}$. Set $M^{loc}:=\C[\h^{W'-reg}]^{W'}\otimes_{\C[\h]^{W}}M$. 
Thanks to (\ref{eq:BE_isomorphism}), $M^{loc}$ is a module over $(e'H_\lambda(W')e')^{W'-reg}:=\C[\h^{W'-reg}]^{W'}\otimes_{\C[\h]^{W'}}e'H_\lambda(W')e'$.
Moreover, it has a natural $N_W(W')/W'$-equivariant structure because the morphism $\h/W'\rightarrow \h/W$
factors through $\h/N_W(W')$. 

Note that the restriction of $(e'H_\lambda(W')e')^{W'-reg}$ to the formal neighborhood of $\h^{W'}\cap \h^{W'-reg}$
coincides with that of $D(\h^{W'}\cap \h^{W'-reg})\otimes e' H_\lambda(W',\h_{W'})e'$. So $M^{loc}$ can be viewed as
an $N_W(W')/W'$-equivariant $D(\h^{W'}\cap \h^{W'-reg})\otimes e' H_\lambda(W',\h_{W'})e'$-module that is finitely
generated over $\C[\h^{W'}\cap \h^{W'-reg}]$. In particular, it can be viewed as a local system on 
$(\h^{W'}\cap \h^{W'-reg})/N_W(W')$ whose fibers are finite dimensional $e' H_\lambda(W',\h_{W'})e'$-modules. In fact,
as was argued in \cite[Section 3.7]{BE}, the fiber of this local system at $x$ is identified with $\Res_x(M)$
(where $x$ is the preimage of the point in question in $\h^{W'}\cap \h^{W'-reg}$). This local system
is regular.

We will need two special cases of this construction. First, consider the situation when $W'=\{1\}$. 
The construction described in the previous paragraph
recovers the main construction from \cite{GGOR}, see \cite[Remark 3.16]{BE}. 
The monodromy of the local system $M^{loc}$ on $\h^{reg}/W$, that is, a priori, 
a representation of the braid group $\pi_1(\h^{reg}/W)$ factors through the Hecke algebra 
$\mathcal{H}_q(W)$ of $W$ with parameter $q:=\exp(2\pi\sqrt{-1}\lambda)$. In fact, \cite[Theorem 5.14]{GGOR}, the functor $\mathsf{KZ}$
sending $M$ to the corresponding representation of the Hecke algebra is a quotient functor $\OCat_\lambda^{sph}(W)\twoheadrightarrow 
\mathcal{H}_q(W)\operatorname{-mod}$ whose kernel is the full subcategory of modules torsion over $\C[\h]^W$.

Now we concentrate on the case when $W=S_n, \h=\h_n$. Suppose $a,b$ are positive coprime integers. Take nonnegative integers $m,d$
such that $n=m+bd$. 
Let $W'=S_b^d$. Consider the subquotient $\OCat^d$ of $\OCat_\lambda(W)$ of all modules with support corresponding to $W'$
(the quotient of the full subcategory of modules supported on the image of $\h^{W'}$ in $\h/W$
by the subcategory of modules with strictly smaller support). 
Then $\h^{W'}=\C^n\times \C^d$, and 
$\h^{W'}\cap \h^{W'-reg}$ is the locus of points with $n+d$ pairwise distinct coordinates in $\C^{n}\times \C^d$. 
The group $N_W(W')/W'$ is naturally identified with $S_n\times S_d$ with its natural action on $\C^n\times \C^d$.
Let $\bar{V}$ denote the unique irreducible module in the category $\mathcal{O}$ for $eH_{b,\lambda}e$ with minimal
support, its label is the partition $(b)$. Then every object with $d$-dimensional support in the category $\mathcal{O}$ for $(e H_{b,\lambda}e)^{\boxtimes d}$ is isomorphic to the direct sum of several copies of $\bar{V}^{\boxtimes d}$. Fix $x\in \h_n$ with $W_x=W'$. For $M\in \OCat^d$, let 
$$\mathcal{F}(M):=\operatorname{Hom}_{(eH_{b,\lambda}e)^{\boxtimes d}}\left(\bar{V}^{\boxtimes d}, \operatorname{Res}_x(M)\right).$$
This is a finite dimensional vector space equipped with a monodromy representation of  
$\pi_1\left((\h^{W'}\cap \h^{W'-reg})/N_W(W')\right)$, the product of the braid groups for $S_n$ and $S_d$.  
 
The following is \cite[Theorem 1.8]{Wilcox}.

\begin{Prop}\label{Prop:Wilcox}
The functor $\mathcal{F}$ is a category  equivalence 
$\mathcal{O}^d\xrightarrow{\sim}\mathcal{H}_q(n)\otimes \C S_d\operatorname{-mod}$ that sends every simple object
$L_\lambda(\eta+d\tau)$ to $\mathsf{KZ}(L_\lambda(\eta))\otimes \tau$.
\end{Prop}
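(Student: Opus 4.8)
The plan is to prove Proposition~\ref{Prop:Wilcox} (the statement that $\mathcal{F}$ is an equivalence $\OCat^d\xrightarrow{\sim}\Hcal_q(n)\otimes \C S_d\operatorname{-mod}$ sending $L_\lambda(\eta+d\tau)$ to $\KZ(L_\lambda(\eta))\otimes\tau$) by reducing it to the $W'=\{1\}$ case, i.e.\ to the $\KZ$ functor and the standard $\OCat$-vs-Hecke story, using the identification of $\Res_x$ with the fiber of the local system $M^{loc}$.

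First I would set up the comparison functor correctly. By the discussion in Section~\ref{SS_loc_sys_restr}, for $M\in\OCat^d$ with support in the image of $\h^{W'}$ the localization $M^{loc}$ is a regular $N_W(W')/W'$-equivariant local system on $(\h^{W'}\cap\h^{W'-reg})/N_W(W')$ whose fiber at $x$ is $\Res_x(M)$, and every object there (after taking the subquotient $\OCat^d$) is built from copies of $\bar V^{\boxtimes d}$. Applying $\Hom_{(eH_{b,\lambda}e)^{\boxtimes d}}(\bar V^{\boxtimes d},-)$ fiberwise turns $M^{loc}$ into an honest regular local system on $(\h^{W'}\cap\h^{W'-reg})/(S_n\times S_d)$, i.e.\ a representation of $\pi_1$, which is the product of the braid group $B_n$ of $S_n$ and the braid group $B_d$ of $S_d$. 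So I would first check that $\mathcal F$ is well defined and exact --- exactness follows from exactness of $\Res_x$ (Lemma~\ref{Lem:supports}/item (1) in Section~\ref{SS_BE}) together with exactness of $\Hom_{(eH_{b,\lambda}e)^{\boxtimes d}}(\bar V^{\boxtimes d},-)$ on the subcategory of modules with $d$-dimensional support, where $\bar V^{\boxtimes d}$ is projective.

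Next I would identify the target. The monodromy of the $S_n$-part factors through the Hecke algebra $\Hcal_q(n)$ with $q=\exp(2\pi\sqrt{-1}\lambda)$ exactly as in \cite[Theorem~5.14]{GGOR}: this is the genuine $\h_n^{reg}/S_n$ direction, and the appearance of $\Res_x$ for $W'=S_b^d$ only ``freezes'' $d$ groups of $b$ coordinates together, which contributes the $S_d$ braid monodromy on the $\C^d$ factor. For the $S_d$-direction one must see that the monodromy factors through $\C S_d$ itself, i.e.\ has no $q$-deformation: this is because the $d$ clustered points of multiplicity $b$ interact with each other through $H_{b,\lambda}$-modules $\bar V$ whose relevant ``braiding parameter'' is $q^{b}$ on the relevant character, but in fact the cleanest argument is that the residue of the relevant connection along the diagonals $y_i=y_j$ in $\C^d$ is computed by the action of the Euler/rotation element on the $\bar V\boxtimes\bar V$-isotypic part of $\Res$ to $S_{2b}\supset S_b\times S_b$, which has integral (indeed trivial modulo the braiding that exponentiates to $1$) spectrum because $b\lambda=a\in\Z$; hence the $S_d$-monodromy is trivial up to signs and one gets a genuine $S_d$-action. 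I would phrase this via Lemma~\ref{Lem:Euler_action}, computing that $\eu$ acts on the lowest weight space of $L_\lambda(b\tau)$-type pieces by values that differ by integers between the two orderings of a pair of clusters.

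With $\mathcal F$ landing in $\Hcal_q(n)\otimes\C S_d\operatorname{-mod}$ and exact, the main step is to show it is an equivalence. I would do this by exhibiting it as a composite of two known equivalences, or by a quotient-functor argument. The ``doubly infinitesimal'' point is that $\OCat^d$ is, by Lemma~\ref{Lem:cat_O_equivalence} and the support analysis in Example~\ref{Ex:Supports_Sn}, equivalent to the category of finite-dimensional modules over $D$ of the torus-fibre, and $\mathcal F$ realizes the Riemann--Hilbert/$\KZ$ correspondence there; since $\bar V^{\boxtimes d}$ is a projective generator of the relevant $(eH_{b,\lambda}e)^{\boxtimes d}$-module category and $\KZ$ is already known to be a quotient functor killing the $\C[\h]^{S_n}$-torsion modules, one checks $\mathcal F$ kills nothing on $\OCat^d$ (its kernel would be modules with strictly smaller support, already quotiented out) and is essentially surjective and fully faithful on irreducibles. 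For the explicit value on simples: an irreducible in $\OCat^d$ with support exactly $W\h^{W'}$ is $L_\lambda(\eta+b\tau)$ with $\eta$ coprime to $b$ by Example~\ref{Ex:Supports_Sn}, and by Lemma~\ref{Lem:supports}(4)--(5) and the compatibility $e'\Res_x=\Res_x(e\,\bullet)$ together with Proposition~\ref{Prop:Heis_properties}(2), $\Res_x(L_\lambda(\eta+b\tau))$ is built from $\bar V^{\boxtimes d}$ tensored with $L_\lambda(\eta)$ with multiplicity space $\tau$; applying $\KZ$ in the $S_n$-direction gives $\KZ(L_\lambda(\eta))\otimes\tau$.

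The hard part will be pinning down that the $S_d$-monodromy of $\mathcal F(M)$ is genuinely a representation of the finite symmetric group $S_d$ (no Hecke deformation), and doing it functorially rather than just on simples; this is where the hypothesis $\lambda=a/b$ with $\gcd(a,b)=1$ enters decisively through $b\lambda\in\Z$, forcing the relevant local braiding eigenvalues of $\bar V\boxtimes \bar V$ to be $\pm1$. Once that is in hand, identifying $\mathcal F$ with the honest $\KZ$ functor composed with the support-localization equivalence, and reading off the image of each simple from Lemma~\ref{Lem:supports} and Proposition~\ref{Prop:Heis_properties}(2), is comparatively formal --- and this is exactly what \cite[Theorem~1.8]{Wilcox} records, so at the level of the present paper I would simply cite that and sketch the reduction above.
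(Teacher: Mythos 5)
The paper gives no argument at all: the sentence immediately preceding the statement reads ``The following is \cite[Theorem 1.8]{Wilcox}'', and that citation is the entire proof. Your proposal eventually lands in the same place --- you close by saying you would ``simply cite that and sketch the reduction above'' --- so the approach is the same, with the difference that you supply an extended heuristic roadmap that the paper omits.

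As to the roadmap itself: the outline is reasonable (identify $\mathcal{F}$ with the fiber-wise $\Hom(\bar V^{\boxtimes d},-)$ applied to the local system $M^{loc}$, factor the $\h^{reg}_n$-monodromy through $\Hcal_q$ via the GGOR argument, reduce essential surjectivity and full faithfulness to a KZ-type quotient functor statement, and read off the image of $L_\lambda(\eta+b\tau)$ from Lemma \ref{Lem:supports} and Proposition \ref{Prop:Heis_properties}(2)). But the step you single out as ``the hard part'' --- that the monodromy around the $S_d$-diagonals is an honest $S_d$-representation rather than a Hecke-deformed one --- is precisely the nontrivial content of \cite[Theorem 1.8]{Wilcox}, and your residue/Euler-element heuristic does not actually carry it through: knowing that the local exponents are integers (because $b\lambda\in\Z$) only shows the local monodromy eigenvalues are roots of unity, not that the connection is regular and the monodromy literally permutes copies of $\bar V^{\boxtimes d}$ with no extra Jordan blocks or scalar twists. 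Proving that requires Wilcox's finer analysis of the restriction functor, not just the integrality of $b\lambda$. Since the paper is only citing the result, this gap does not affect the match, but you should not present the sketch as a self-contained proof.

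One small notational remark: as in the rest of the section (and as you correctly write in your sketch), the simple objects of $\OCat^d$ are of the form $L_\lambda(\eta+b\tau)$ with $\eta$ coprime to $b$ (cf.\ Proposition \ref{Prop:Heis_properties}(2) and Example \ref{Ex:Supports_Sn}); the ``$d\tau$'' in the statement appears to be a typo for ``$b\tau$'', and similarly the parameter of the Hecke algebra should be the number $m$ of free coordinates with $n=m+bd$.
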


\begin{Rem}\label{Rem:negative_parameter}
When in this section and Section \ref{SS_SV} we considered the algebra $e H_{n,\lambda}e$ and its category $\mathcal{O}$
we mostly assumed that $\lambda=\frac{a}{b}$ with coprime \underline{positive} integers $a,b$. Now assume that $a,b$
are coprime integers, but $a<-b$. Then analogs of Propositions \ref{Prop:Heis_properties} and \ref{Prop:Wilcox} continue to hold
with suitable modifications. Namely, we need to consider $\heis^\tau_{SV}:=\operatorname{Ind}_{S_n\times S_{db}}^{S_{n+db}}(\bullet\boxtimes L([b\tau]^t))$, where $\bullet^t$ means the transposed partition. The direct analog of Proposition \ref{Prop:Heis_properties}
still holds -- with the same proof -- thanks to an equivalence $\mathcal{O}_{n,\lambda}\cong \mathcal{O}_{n,-\lambda}$
sending $L_\lambda(\eta)$ to $L_{-\lambda}(\eta^t)$. And according to \cite[Theorem 1.8]{Wilcox} the simple 
$L_\lambda([\mu^t+d\tau]^t)$ for $\mu$ coprime to $b$ lies in $\mathcal{O}^d$ and the corresponding monodromy 
representation is still $\mathsf{KZ}(L_\lambda(\mu))\otimes \tau$.
\end{Rem}

\section{Quantum Hamiltonian reduction}
The goal of this section is to discuss the realization of $e H_{n,\lambda}e$ as a quantum Hamiltonian reduction and some related results. 

\subsection{Algebra isomorphism}\label{SS:QHR_alg_isom}
Here we are going to recall an alternative description of the algebra $e H_{n,\lambda} e$. Let $\K$ denote
a characteristic 0 field, this is the base field for this section.
 
Set $R_n:=\gl_{n}\oplus \K^n$, $D_{n}:=D(R_n)$, the algebra of differential operators on $R_n$, $G_{n}:=\operatorname{GL}_{n}$. 
We can view $\lambda\in \Ring$ as the character $\lambda\operatorname{tr}$ of the Lie algebra $\g_n$. 

Now we recall the definition of
a quantum Hamiltonian reduction. Suppose that $D_{\hbar}$ is a graded associative $\K[\hbar]$-algebra 
with $\deg \hbar=1$ satisfying the following two conditions: $\hbar$ is not a zero divisor and $D_{\hbar}/(\hbar)$
is commutative. 

Suppose, further, that $D_{\hbar}$ comes equipped with a rational
action of an algebraic group $G$ by $\K[\hbar]$-linear graded automorphisms. 
This gives rise to the infinitesimal action map $\g\rightarrow \operatorname{Der}_{\K[\hbar]}(D_{\hbar})$ to be denoted by $\xi\mapsto \xi_D$. The action of $G$ on $D_\hbar$ is called {\it Hamiltonian} if it is equipped with a $G$-equivariant map
$\Phi:\g\rightarrow D_\hbar$, called the {\it quantum comoment map}, such that $[\Phi(\xi),\bullet]=\hbar\xi_D$
for all $\xi\in \g$. 

We say that $D_{\hbar}$-module $M_\hbar$ is {\it weakly $G$-equivariant} if it is equipped with 
a rational action of $G$ such that the action map $D_{\hbar}\otimes_{\K[\hbar]}M_\hbar\rightarrow 
M_\hbar$ is $G$-equivariant. For $\lambda\in \g^{*G}$, we can talk about strongly {\it $(G,\lambda\hbar)$-equivariant} 
$D_{\hbar}$-modules: these are the weakly equivariant modules satisfying $\hbar \xi_M m=\Phi(\xi)m-
\hbar\langle\lambda,\xi\rangle m$. 

We can consider the $(G,\lambda)$-equivariant $D_{\hbar}$-module
\begin{equation}
Q(D_{\hbar},G,\lambda\hbar):=D_{\hbar}/D_{\hbar}\{\Phi(\xi)-\hbar\langle\lambda,\xi\rangle| \xi\in \g\}.
\end{equation}
and the algebra (called the {\it quantum Hamiltonian reduction})
\begin{equation}\label{eq:q_ham_red}
D_{\hbar}\red_\lambda G:=Q(D_{\hbar},G,\lambda\hbar)^{G}.
\end{equation}
In what follows we refer to $\lambda\hbar$ as the {\it reduction parameter} (or {\it level}). 

Now we get back to our initial setup. Let $D_{n,\hbar}$ denote the Rees algebra of $D_{n}$ with respect to the filtration by order
of differential operators. We note that the action of $G_{n}$ on $D_{n}$ is Hamiltonian with quantum comoment map $\xi\mapsto \xi_{R_n}$,
where $\xi_{R_n}$ denotes the vector field on $R_n$ given by $\xi$. The quantum Hamiltonian reduction $D_{n,\hbar}\red_\lambda G_{n}$ will be
denoted by $\A_{n,\lambda\hbar}$. Its specialization to $\hbar=1$ will be denoted by $\A_{n,\lambda}$.   

By the construction, the algebra $D_{n,\hbar}$ is bigraded with $$\deg \hbar=(1,0), \deg R_n=(1,-1), \deg R_n^*=(0,1).$$
The algebra $\A_{n,\lambda\hbar}$ inherits this bigrading.  

We can also consider the classical Hamiltonian reduction: we have the classical moment map $\mu:T^*R_{n}\rightarrow \g_{n}$
sending $(A,B,i,j)\in \gl_{n}^{\oplus 2}\oplus \K^{n}\oplus \K^{n*}$ to $[A,B]+ij$. Then the classical analog of the quantum 
Hamiltonian reduction is $\K[\mu^{-1}(0)]^{G_n}$. We note that the restriction to the closed subscheme $\h_n\oplus \h_n^*$ of pairs of diagonal matrices in $T^*R_n$ gives a homomorphism 
\begin{equation}\label{eq:classical_restriction}
\K[\mu^{-1}(0)]^{G_{n}}\rightarrow 
\K[\h_n\oplus \h_n^*]^{S_n}.  
\end{equation}

We have the following result. Consider the Rees algebra $R_\hbar(e H_{n,\lambda}e)$.  Note that the algebra $R_\hbar(e H_{n,\lambda}e)$ is bigraded: one grading is coming from the Rees construction, while the other is the Euler grading. The proof of the following claim is contained in the proofs of 
\cite[Theorems 1.1.2, 1.1.3]{GG}.

\begin{Prop}\label{Prop:Ham_red_Cherednik}
The following claims are true:
\begin{enumerate}
\item The homomorphism (\ref{eq:classical_restriction}) is an isomorphism.
\item The element $\hbar$ is not a zero divisor in $\A_{n,\lambda\hbar}/(\hbar)$ we have an isomorphism $\A_{n,\lambda\hbar}/(\hbar)\xrightarrow{\sim} \K[\mu^{-1}(0)]^{G_n}$. 
\item We have an isomorphism $\A_{n,\lambda\hbar}\xrightarrow{\sim} R_\hbar(e H_{n,\lambda} e)$ of bigraded $\K[\hbar]$-algebras
that (modulo $\hbar$) is compatible with (1) and (2). 
\end{enumerate}
\end{Prop}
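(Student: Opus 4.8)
The plan is to follow the strategy of Gan--Ginzburg \cite{GG}, organizing the argument around the classical picture first and then quantizing. For part (1), I would first note that both algebras in (\ref{eq:classical_restriction}) carry the bigrading described above (the Euler grading plus the grading coming from $R_n$ and $R_n^*$), and that the map is bigraded, so it suffices to check it is an isomorphism of the underlying commutative algebras. The key input is the description of $\mu^{-1}(0)$: by a theorem of Gan--Ginzburg the categorical quotient $\mu^{-1}(0)\quo G_n$ is reduced and irreducible, and the restriction to the locus of pairs of diagonalizable matrices realizes $\K[\mu^{-1}(0)]^{G_n}$ as $\K[\h_n\oplus\h_n^*]^{S_n}$ — this is exactly the computation underlying Calogero--Moser space. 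Concretely, injectivity follows because a generic pair $(A,B)$ in $\mu^{-1}(0)$ can be conjugated to a pair of matrices with distinct eigenvalues, so a $G_n$-invariant function on $\mu^{-1}(0)$ vanishing on the diagonal locus vanishes on a dense subset; surjectivity is the nontrivial direction and follows from the explicit generators-and-relations analysis of $\K[\mu^{-1}(0)]^{G_n}$ in \cite{GG} together with the identification of these generators with symmetric functions of the $x_i$'s and $y_i$'s.

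For part (2), I would first establish that $\hbar$ is not a zero divisor in $\A_{n,\lambda\hbar}$. Since $D_{n,\hbar}$ is $\hbar$-torsion-free with $D_{n,\hbar}/(\hbar)=\K[T^*R_n]$, the only issue is exactness of taking $G_n$-invariants after quotienting by the ideal generated by $\Phi(\xi)-\hbar\langle\lambda,\xi\rangle$. Because $G_n=\GL_n$ is reductive, taking invariants is exact, so it is enough to know that $Q(D_{n,\hbar},G_n,\lambda\hbar)$ is $\hbar$-torsion-free and that its reduction mod $\hbar$ is $\K[\mu^{-1}(0)]$. The latter amounts to the statement that the Koszul-type complex computing $D_{n,\hbar}/D_{n,\hbar}\{\Phi(\xi)-\hbar\langle\lambda,\xi\rangle\}$ has no lower homology, i.e. that the classical moment map $\mu$ is flat (equivalently, that $\mu^{-1}(0)$ is a complete intersection of the expected dimension). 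This flatness is precisely \cite[Theorem]{GG} and is the technical heart of the matter. Granting it, $\A_{n,\lambda\hbar}/(\hbar)=\K[\mu^{-1}(0)]^{G_n}$ by exactness of invariants, and combining with part (1) we get $\A_{n,\lambda\hbar}/(\hbar)\cong \K[\h_n\oplus\h_n^*]^{S_n}$.

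For part (3), I would construct the isomorphism $\A_{n,\lambda\hbar}\to R_\hbar(eH_{n,\lambda}e)$ by a deformation/filtered-lifting argument. Both sides are bigraded flat $\K[\hbar]$-algebras whose reductions mod $\hbar$ are identified via parts (1) and (2) with $\K[\mu^{-1}(0)]^{G_n}\cong\K[\h_n\oplus\h_n^*]^{S_n}=\gr(eH_{n,\lambda}e)$. One then produces a bigraded algebra map over $\K[\hbar]$: the cleanest route is via the radial-parts/Dunkl presentation — the composition $D_{n,\hbar}\red_\lambda G_n\hookrightarrow D(\h_n^{reg})^{S_n}$ obtained from the Harish-Chandra restriction of invariant differential operators, matched against the Dunkl embedding (\ref{eq:spherical_iso_Dunkl}) of $eH_{n,\lambda}e$, both having the same localization $D(\h_n^{reg})^{S_n}$. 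This is exactly how \cite{GG} proceeds. Once the map is defined and seen to be bigraded and $\hbar$-linear, it induces the known isomorphism modulo $\hbar$, and since both algebras are $\hbar$-adically separated and $\hbar$-torsion-free, a standard graded-Nakayama argument upgrades this to an isomorphism. Compatibility with (1) and (2) is then automatic from the construction.

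I expect the main obstacle to be the flatness of the moment map $\mu$ underlying part (2) — establishing that $\mu^{-1}(0)$ is a complete intersection of the correct dimension, so that the naive Koszul computation of the quantum reduction is concentrated in degree zero and the reduction mod $\hbar$ behaves as expected. This is a genuinely nontrivial geometric fact about the commuting-variety-with-framing $\mu^{-1}(0)\subset T^*R_n$; everything else (exactness of invariants, the radial-parts computation, the graded-Nakayama upgrade) is comparatively formal once this is in hand. Fortunately it is exactly the content of \cite[Theorems 1.1.2, 1.1.3]{GG}, which we are invoking.
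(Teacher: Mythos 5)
Your proposal is correct and takes the same route as the paper, which simply defers to the proofs of \cite[Theorems 1.1.2, 1.1.3]{GG} (with the isomorphism in (3) traced back to \cite[Section 7]{EG}). One clarification on your sketch of (3): the radial-parts map $\A_{n,\lambda}\hookrightarrow D(\h_n^{reg})^{S_n}$ and the Dunkl embedding of $eH_{n,\lambda}e$ do not share the same image on the nose but differ by the automorphism $\operatorname{Ad}(\delta^{-2\lambda})$ of $D(\h_n^{reg})^{S_n}$; this is immaterial for existence of the isomorphism, but it is exactly what Lemma \ref{Lem:homomorphisms_compat} later in the paper keeps track of, so it is worth noting which identification is being used.
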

%
%

\begin{Rem}\label{Rem:deg0_inclusions}
We note that $\K[\h_n]^{S_n}$ has a natural algebra homomorphism to $\A_{n,\lambda}$. On the level of quantum Hamiltonian
reductions, this embedding is induced from $\K[R_n]\hookrightarrow D_{n}$
that gives $\K[\h_n]^{S_n}=\K[R_n]^{G_n}\rightarrow D_{n}\red_\lambda G_{n}=\A_{n,\lambda}$. Under the identification $\A_{n,\lambda}\cong e H_{n,\lambda}e$,
the homomorphism $\K[\h_n]^{S_n}\rightarrow \A_{n,\lambda}$ becomes the embedding
$\K[\h_n]^{S_n}\hookrightarrow e H_{n,\lambda}e$.
\end{Rem}

We will also need to understand the behavior of Euler elements under the isomorphism from 
(3) of Proposition \ref{Prop:Ham_red_Cherednik}. The Euler element in $D_{n}$ is defined by
$\sum x_i\partial_i$, where $x_i,i=1,\ldots, \dim R_n,$ are the linear coordinates on $R_n$
and $\partial_i$ are the corresponding partials. This element is $G_n$-invariant, hence descends to $\A_{n,\lambda}$.
The corresponding element of $\A_{n,\lambda}$ will be denoted by $\mathsf{eu}^D_n$. We want to relate 
$\mathsf{eu}^D_n$ and $\mathsf{eu}_n$, the Euler element in $eH_{n,\lambda}e$. 

\begin{Lem}\label{Lem:Euler_relation}
We have $\mathsf{eu}^D_n=\mathsf{eu}_n+n\lambda-\frac{n(n-1)}{2}$.
\end{Lem}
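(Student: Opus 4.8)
The plan is to compare the two Euler elements by working on the side of quantum Hamiltonian reductions and tracking how the naive grading element $\mathsf{eu}^D_n=\sum x_i\partial_i$ on $D(R_n)$ descends, versus how the Euler grading on $eH_{n,\lambda}e$ is built into the bigrading of $\A_{n,\lambda\hbar}$ under the isomorphism of Proposition \ref{Prop:Ham_red_Cherednik}(3). The point is that $\mathsf{eu}^D_n$ is a grading element for the grading on $D(R_n)$ in which \emph{all} linear coordinates on $R_n=\gl_n\oplus\K^n$ have degree $1$, whereas under the identification $\A_{n,\lambda}\cong eH_{n,\lambda}e$ the Euler grading on $eH_{n,\lambda}e$ matches the bigrading component coming from the pairing of $R_n$ (degree $(1,-1)$) against $R_n^*$ (degree $(0,1)$); so the two elements will differ by a scalar that I must compute.

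First I would reduce to a scalar comparison: $\mathsf{eu}^D_n-\mathsf{eu}_n$ is central in $\A_{n,\lambda}$ and has Euler degree $0$, and since $\A_{n,\lambda}\cong eH_{n,\lambda}e$ has center reducing (modulo lower filtration) to $\K[\mathfrak{h}_n\oplus\mathfrak{h}_n^*]^{S_n}$ with the only degree-$0$ constants being scalars once one checks the bigrading, the difference must be a constant in $\K$. (Alternatively, one can see directly that both $\mathsf{eu}^D_n$ and $\mathsf{eu}_n$ are grading elements for the Euler grading on $eH_{n,\lambda}e$ after the shift, forcing the difference to be scalar.) Second, to pin down the constant, I would evaluate both sides on a convenient element — the natural choice is $\delta\in\K[\mathfrak{h}_n]^{S_n}\subset eH_{n,\lambda}e$, the lowest-degree invariant, of Euler degree $\binom{n}{2}$ — using the quantum Hamiltonian reduction picture from Remark \ref{Rem:deg0_inclusions} to see how $\mathsf{eu}^D_n=\sum x_i\partial_i$ acts on the image of $\K[R_n]^{G_n}$. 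Here the discrepancy between "all coordinates degree $1$ on $R_n$" and "only $\gl_n$-coordinates contribute to the Euler grading" will produce a shift proportional to the number of coordinates, i.e. to $\dim\gl_n=n^2$, and after subtracting the moment-map relations $\Phi(\xi)-\hbar\lambda\langle\operatorname{tr},\xi\rangle$ one picks up the $n\lambda$ term (from the reduction parameter acting through $\operatorname{tr}$) and a $-\tfrac{n(n-1)}{2}$ term (from normal-ordering $\sum x_i\partial_i$ against the moment map, equivalently from the $\rho$-shift $\tfrac12\dim\gl_n-\tfrac12 n=\tfrac12(n^2-n)$ type bookkeeping).

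A cleaner route to the constant, which I would actually prefer to write up, is to test the claimed identity on a single module where both Euler elements are computed independently: apply Lemma \ref{Lem:Euler_action} together with the explicit description of $\A_{n,\lambda}$ acting on the spherical Verma module $\Delta_\lambda(\eta)=e\Delta^H_\lambda(\eta)$, where $\mathsf{eu}_n$ acts on the lowest-degree component by $-\lambda\cdot\mathsf{cont}(\eta)+d_\eta$, and separately compute the action of $\mathsf{eu}^D_n=\sum x_i\partial_i$ on the corresponding D-module-theoretic realization (via localization / the Dunkl embedding and Proposition \ref{Prop:Ham_red_Cherednik}). Taking $\eta$ the trivial representation (so $\mathsf{cont}=\mathsf{cont}(\mathrm{triv})=\tfrac{n(n-1)}{2}$ and $d_\eta=0$) should immediately isolate the constant: $\mathsf{eu}_n$ acts by $-\lambda\tfrac{n(n-1)}{2}$ while the ambient $\sum x_i\partial_i$ acts by $0$ on the relevant lowest-weight vector, and chasing through the shift in the reduction gives exactly $n\lambda-\tfrac{n(n-1)}{2}$.

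The main obstacle will be the bookkeeping in the second paragraph: correctly accounting for the normal-ordering constant that arises when expressing $\sum_{i}x_i\partial_i$ on $D(R_n)$ in terms of the quantum comoment map $\Phi:\gl_n\to D(R_n)$ and the level $\lambda\hbar$, i.e. getting the signs and the split of the total shift into the "$n\lambda$" piece (coming from $\lambda\operatorname{tr}$ on $\gl_n$, paired against the scalar matrices acting on the $n^2$ coordinates of $\gl_n$ and the $n$ coordinates of $\K^n$) and the "$-\tfrac{n(n-1)}{2}$" piece (the genuine quantum correction). I expect that the test-module argument in paragraph three circumvents most of this, at the cost of needing the compatibility of the isomorphism in Proposition \ref{Prop:Ham_red_Cherednik}(3) with the Dunkl-embedding / localization picture, which is standard (it is how $\A_{n,\lambda}\cong eH_{n,\lambda}e$ is set up in \cite{GG}) and may simply be cited.
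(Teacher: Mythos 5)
Your first step agrees with the paper: both $\mathsf{eu}^D_n$ and $\mathsf{eu}_n$ are grading elements for the same Euler grading, so their difference is central in $\A_{n,\lambda}\cong eH_{n,\lambda}e$, and the center is scalar (via the Dunkl embedding (\ref{eq:spherical_iso_Dunkl}) and the triviality of the center of $D(\h_n^{reg})^{S_n}$).

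The identification of the scalar is where your proposal diverges and has a gap. The paper does not evaluate on a test module; instead it first promotes the observation to the universal version over $\K[\lambda]$ (so that the scalar is an element of $\K[\lambda]$ of degree at most $1$, hence $\alpha_0+\alpha_1\lambda$), reduces to the reflection representation $\h'_n$, and then pins down $\alpha_0,\alpha_1$ by an $\slf_2$-triple argument: $\mathsf{eu}_n+\frac{n-1}{2}$ and $\mathsf{eu}^{\prime D}_n - n\lambda + \frac{n^2-1}{2}$ both complete to $\slf_2$-triples, and the existence of finite-dimensional $eH'_{b,\lambda}e$-modules for infinitely many $\lambda$ forces these shifted elements to coincide. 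That device neatly sidesteps all of the ``normal-ordering bookkeeping'' you flag as the main obstacle.

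Your preferred route (paragraph three) is not just harder, it contains an unjustified claim. You want $\mathsf{eu}^D_n=\sum x_i\partial_i$ to act by $0$ on the lowest-degree vector of $\Delta_\lambda(\mathrm{triv})$ (or $\nabla_\lambda(\mathrm{triv})$) in a D-module realization, the intuition being that this vector is ``the constant function.'' But $\K[R_n]$ is a $(G_n,0)$-equivariant $D_n$-module, not a $(G_n,\lambda)$-equivariant one: strong $(G_n,\lambda)$-equivariance requires $\xi_M m=\Phi(\xi)m-\langle\lambda,\xi\rangle m$, and on $\K[R_n]$ one has $\xi_M=\Phi(\xi)$. To land in $D_n\operatorname{-mod}^{G_n,\lambda}$ for nonintegral $\lambda=\frac{a}{b}$ you must twist by $\det^\lambda$, which is a nontrivial $D$-module twist and shifts the action of $\sum x_i\partial_i$ on the would-be lowest-weight vector by a $\lambda$-dependent quantity. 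This shift is exactly the unknown you are trying to compute, so the test-module computation is circular as stated; without the twist, you would incorrectly obtain $\mathsf{eu}^D_n-\mathsf{eu}_n=\lambda\frac{n(n-1)}{2}$ rather than $n\lambda-\frac{n(n-1)}{2}$. Similarly, paragraph two's direct normal-ordering route could in principle succeed, but as written it is a list of sources of $\lambda$ and $n$-dependent corrections without a mechanism for getting their signs and coefficients right; the paper's $\slf_2$-triple argument is precisely a way to avoid committing to that bookkeeping.
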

\begin{proof}
Both $\mathsf{eu}^D_n,\mathsf{eu}_n$ are grading elements for the Euler grading, so their difference is central. 
The center of $eH_{n,\lambda}e$ consists of scalars. This is an easy consequence of (\ref{eq:spherical_iso_Dunkl}) 
and the observation that $D(\h_{n}^{reg})^W$ has trivial 
center. We claim that the difference $\mathsf{eu}^D_n-\mathsf{eu}_n$ is an affine function 
in $\lambda$. This is because we can consider the versions $\A_n, eH_ne$ over $\K[\lambda]$ (without specializing the parameter; the definition of $eH_ne$ is straightforward, while for $\A_n$ we mod out the left ideal generated by $\Phi([\g_n,\g_n])$).
(3) of Proposition still holds, and the difference $\mathsf{eu}^D_n-\mathsf{eu}_n$ is an element of $\K[\lambda]$.
For degree reasons, this element is of degree $\leqslant 1$. We need to show that this element is $n\lambda-\frac{n(n-1)}{2}$.

 First, we notice that the difference 
$\mathsf{eu}^D_n-\mathsf{eu}_n$ coincides with the difference of the analogous elements 
for the reflection representation $\h_n'$ of $S_n$ (on the Hamiltonian reduction side we need to replace
$\mathfrak{gl}(R_n)$ with $\mathfrak{sl}(R_n)$). Suppose that we found $\alpha,\beta\in\C$ such that  
the elements $\mathsf{eu}^D_n+\alpha, \mathsf{eu}_n+\beta$ can be included into $\mathfrak{sl}_2$-triples. 
Then $\mathsf{eu}^D_n+\alpha= \mathsf{eu}_n+\beta$ because $e H_{\lambda}(S_n,\h_n') e$ has finite dimensional 
representations for infinitely many values of the parameter $\lambda$ (for example, for $\lambda=\frac{a}{n}$, where $a$
is a positive integer coprime to $n$, see \cite{BEG}). So it remains to find $\alpha$ and $\beta$. We will write 
$\mathsf{eu}'_n, \mathsf{eu}'^D_n$ for the analogs of $\mathsf{eu}_n, \mathsf{eu}^D_n$ for the reflection 
representation of $S_n$.
 
It is well-known that $\mathsf{eu}_n+\frac{n-1}{2}$ can be included into an $\slf_2$-triple (with suitable elements 
$S^2(\h'_n)^{S_n}, S^2(\h'^*_n)^{S_n}$, see, e.g., the discussion before \cite[Proposition 3.8]{BEG}) so we can take 
$\beta=\frac{n-1}{2}$. Now we analyze $\mathsf{eu}'^D_n$. It is the sum of the Euler vector fields $\mathsf{eu}_{\mathfrak{sl}_n}+
\mathsf{eu}_{\C^n}$. Note that $\mathsf{eu}_{\C^n}$ is exactly the image of 
$\operatorname{id}\subset \mathfrak{gl}_n$ under the quantum comoment map. It follows that the image 
of   $\mathsf{eu}_{\C^n}$ in $\A'_{n,\lambda}$ is $n\lambda$. On the other hand, $\mathfrak{sl}_n$ is an orthogonal
representation of $\operatorname{GL}_n$. It is easy to see that  $\mathsf{eu}_{\mathfrak{sl}_n}+\frac{n^2-1}{2}$
is included into an $\mathfrak{sl}_2$-triple. It follows that $\mathsf{eu}^D_n-n\lambda+\frac{n^2-1}{2}$ is included into
an $\slf_2$-triple. This finishes the proof. 
\end{proof}

\subsection{Etale lifts, I}\label{SS:etale_I}
This is a technical section that gathers results on some isomorphisms of etale lifts to be used later.

Let $\vec{m}:=(m_1,\ldots,m_k)$ be a composition on $n$. To this composition we assign:
\begin{itemize}
\item The Levi subalgebra $\g_{\vec{m}}\subset \g_{n}$ of all block diagonal matrices
with blocks of sizes $m_1,\ldots,m_k$ (in this order).  The corresponding Levi subgroup of
$G_n$ is denoted by $G_{\vec{m}}$.
\item  The parabolic subgroup $S_{\vec{m}}\subset S_n$, the Weyl group of $\g_{\vec{m}}$.
\item The $\K$-vector space $R_{\vec{m}}:=\g_{\vec{m}}\times \K^n$  with an action of $G_{\vec{m}}$.
\end{itemize}

Consider the Zariski open locus $\h_{\vec{m}}^0$ consisting of all points in $\h_n$ whose
stabilizers in $S_n$ are contained in $S_{\vec{m}}$, in the notation of Section \ref{SS_BE},
we have $\h_{\vec{m}}^0=\h^{S_{\vec{m}}-reg}_m$. 

Let $\vec{m}'$ be a composition of $n$ refined by $\vec{m}$, this implies $\g_{\vec{m}}\subset \g_{\vec{m}'}$. 

We will need a certain isomorphism of schemes. 
 We set
\begin{align*}
&\g_{\vec{m}}^0:=(\h_{\vec{m}}^0/S_{\vec{m}})\times_{\h_n/S_{\vec{m}}}\g_{\vec{m}},\\
&\g_{\vec{m'}}^{\vec{m}-reg}:=(\h_{\vec{m}}^0/S_{\vec{m}})\times_{\h_n/S_{\vec{m}}}\g_{\vec{m}'},\\
& R^{0}_{\vec{m}}:=(\h_{\vec{m}}^0/S_{\vec{m}})\times_{\h_n/S_{\vec{m}}}R_{\vec{m}},\\
&R_{\vec{m}'}^{\vec{m}-reg}:=(\h_{\vec{m}}^0/S_{\vec{m}})\times_{\h_n/S_{\vec{m}'}}R_{\vec{m}'}.
\end{align*}
Note that $\g_{\vec{m}}^{\vec{m}-reg}=\g_{\vec{m}}^0$ is an open subscheme of $\g_{\vec{m}}$, while in general,
there is an etale morphism $\g_{\vec{m'}}^{\vec{m}-reg}\rightarrow \g_{\vec{m}'}$.
Note also that we have the inclusion $\g_{\vec{m}}^0\subset \g_{\vec{m}'}^{\vec{m}-reg}$
induced by inclusion $\g_{\vec{m}}\hookrightarrow \g_{\vec{m}'}$.

\begin{Lem}\label{Lem:iso_var}
We have the following isomorphisms of $\K$-schemes that come from the action of $G_n$:
\begin{align}\label{eq:iso_var2}
& G_{n}\times^{G_{\vec{m}'}}\g_{\vec{m}'}^{\vec{m}-reg}\xrightarrow{\sim}\g^{\vec{m}-reg}_n,\\\label{eq:iso_var4}
& G_{n}\times^{G_{\vec{m}'}}R_{\vec{m}'}^{\vec{m}-reg}\xrightarrow{\sim}R_{n}^{\vec{m}-reg}.
\end{align}
\end{Lem}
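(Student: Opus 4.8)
The plan is to construct the maps explicitly from the $G_n$-action and check they are isomorphisms by exhibiting inverses, or equivalently by a smoothness/fiber-dimension argument combined with injectivity on points. Consider \eqref{eq:iso_var2} first. The map $G_n \times^{G_{\vec{m}'}} \g_{\vec{m}'}^{\vec{m}-reg} \to \g_n^{\vec{m}-reg}$ is induced by the adjoint action: send $(g, X)$ to $\operatorname{Ad}(g)(X)$, where we use the \'etale structure map $\g_{\vec{m}'}^{\vec{m}-reg}\to \g_{\vec{m}'}\hookrightarrow \g_n$ to make sense of $X$ as a matrix (and note $\g_n^{\vec{m}-reg}$ should be read as the analogously defined base change $(\h^0_{\vec{m}}/S_{\vec{m}})\times_{\h_n/S_{\vec{m}}}\g_n$, or its open image, so that the $S_{\vec{m}}$-level structure is carried along). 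The key point that makes this well-defined is that the $G_{\vec{m}'}$-action on $\g_{\vec{m}'}^{\vec{m}-reg}$ lifts the conjugation action in a way compatible with the fiber-product description: the map $\h_n/S_{\vec{m}}\to \h_n/S_{\vec{m}'}$ is used to relate the two base changes, and one checks the cocycle condition is automatic.

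Next I would verify the map is an isomorphism. The cleanest route is to first treat the case $\vec{m}'=(n)$: here $\g_{(n)}^{\vec{m}-reg}=\g_n^{\vec{m}-reg}$ and $G_{(n)}=G_n$, so there is nothing to prove, and the general statement for arbitrary refinements $\vec{m}\preceq \vec{m}'$ follows by a transitivity argument, reducing to the assertion that $G_{\vec{m}'}\times^{G_{\vec{m}''}}(\cdots)\to (\cdots)$ is an isomorphism for $\vec{m}'\preceq\vec{m}''$, i.e. an inductive step on the length of the chain of refinements. For the single nontrivial step, I would argue as follows: away from the locus where the $\h^0_{\vec{m}}$ condition forces the relevant off-diagonal blocks to be ``nondegenerate'', the matrix $X\in\g_n$ whose semisimple-part-parameters lie in $\h^0_{\vec{m}}/S_{\vec{m}}$ has a canonical block structure — concretely, the condition $\h^0_{\vec{m}}=\h_m^{S_{\vec{m}}-reg}$ (Section \ref{SS_BE}) means the eigenvalue pattern is generic enough that there is a unique way, up to $G_{\vec{m}'}$, to conjugate $X$ into $\g_{\vec{m}'}^{\vec{m}-reg}$. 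This produces the inverse map (on points, hence scheme-theoretically since everything in sight is smooth: the source is a $G_n$-bundle over a smooth base and the target is smooth of the same dimension). Comparing dimensions: $\dim G_n - \dim G_{\vec{m}'} + \dim\g_{\vec{m}'} = \dim\g_n$, so a bijective morphism of smooth varieties of equal dimension in characteristic $0$ is an isomorphism.

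For \eqref{eq:iso_var4} the argument is essentially the same, carrying along the $\K^n$ factor: the map sends $(g,(X,v))$ to $(\operatorname{Ad}(g)X, gv)$, and since the $\K^n$-component transforms freely under $G_n$ and the extra fiber-product condition in $R_{\vec{m}'}^{\vec{m}-reg}$ (now over $\h_n/S_{\vec{m}'}$) is imposed only through the $\g$-part, the inverse constructed above extends verbatim: given $(Y,w)\in R_n^{\vec{m}-reg}$ with $Y$ in the appropriate locus, conjugate $Y$ into $\g_{\vec{m}'}$ as before and apply the same $g^{-1}$ to $w$. Again dimensions match on the nose since adding $\K^n$ to both sides and an induced trivial $G_n$-action changes $\dim G_n\times^{G_{\vec{m}'}}(-)$ and $\dim(-)$ by the same amount $n$.

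The main obstacle I anticipate is not the geometry of diagonalizable matrices but the bookkeeping of the $S_{\vec{m}}$- and $S_{\vec{m}'}$-level structures in the fiber products: one must be careful that ``come from the action of $G_n$'' really does produce well-defined maps at the level of these base changes (as opposed to merely on the open loci of $\g_{\vec{m}}$, $R_{\vec{m}}$ themselves), i.e. that the map $\h_n/S_{\vec{m}}\to \h_n/S_{\vec{m}'}$ used to form $R_{\vec{m}'}^{\vec{m}-reg}$ is exactly the one that makes the $G_n$-action descend. Once that compatibility is pinned down, the isomorphism statement is forced by smoothness and the dimension count, so I would spend most of the written proof on setting up the maps precisely and only briefly invoke the étale-locally-trivial structure of the $G_n$-bundles to conclude.
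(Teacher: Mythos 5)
Your proposal is substantively correct and shares the key input with the paper's proof: the $\vec{m}$-regularity condition controls stabilizers (you express it as uniqueness, up to $G_{\vec{m}'}$, of the block decomposition of an element of $\g_n^{\vec{m}-reg}$), and \eqref{eq:iso_var4} follows from \eqref{eq:iso_var2} by carrying the $\K^n$ factor along. The finishing step differs mildly: the paper deduces that the map \eqref{eq:iso_var2} is \'etale from the stabilizer condition and then checks that it is a bijection on fibers of the $G_n$-quotient morphisms, while you check bijectivity on points and close with a dimension count and smoothness in characteristic $0$; both routes are valid.

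One caveat: your reduction is phrased backwards. The trivial case $\vec{m}'=(n)$ cannot serve as the base of an induction in the direction you indicate — passing from it to a general $\vec{m}'$ by ``transitivity'' discards all the content, which lies precisely in the step where the ambient group is shrunk. The paper's reduction goes to the opposite extreme $\vec{m}'=\vec{m}$: once $G_n\times^{G_{\vec{m}}}\g^0_{\vec{m}}\xrightarrow{\sim}\g^{\vec{m}-reg}_n$ is known, the same statement applied block by block to $G_{\vec{m}'}$ in place of $G_n$ gives $G_{\vec{m}'}\times^{G_{\vec{m}}}\g^0_{\vec{m}}\xrightarrow{\sim}\g^{\vec{m}-reg}_{\vec{m}'}$, and composing associated-bundle isomorphisms yields \eqref{eq:iso_var2} for all $\vec{m}'$. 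Your ``single nontrivial step'' argument is, in effect, exactly this $\vec{m}'=\vec{m}$ case, so the substance is present; only the packaging of the reduction needs to be corrected.
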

\begin{proof}
It is enough to prove these claims for $\vec{m}'=\vec{m}$.
An important property of a point in $\g_{\vec{m}}^0$ (over a finite extension of $\K$) is that its stabilizer in $G_{n}$
lies in $G_{\vec{m}}$. From here we see that 
(\ref{eq:iso_var2}) is \'{e}tale. (\ref{eq:iso_var2}) induces isomorphisms on fibers of the
quotient morphisms under the $G_{n}$-action. It follows that it is an isomorphism. From here we deduce that (\ref{eq:iso_var4}) is an isomorphism.
\end{proof}

The next lemma concerns quantum Hamiltonian reductions. Note that $\K[\h_{\vec{m}}^0/S_{\vec{m}}]\otimes_{\K[\h_n/S_{n}]}\A_{n,\lambda\hbar}$
acquires a natural algebra structure, for example, as a quantum Hamiltonian reduction for the 
action of $G_n$ on the homogenized differential operators on $R^0_{n}$.
The similar claim holds for $\K[\h_{\vec{m}}^0/S_{\vec{m}}]\otimes_{\K[\h_n/S_{\vec{m}'}]}\A_{\vec{m}',\lambda\hbar}$,
where $\A_{\vec{m}',\lambda\hbar}=\bigotimes_{i=1}^k \A_{m'_i,\lambda\hbar}$.

\begin{Lem}\label{Lem:QHR_isom}
Isomorphism (\ref{eq:iso_var4}) induces a bigraded $\K[\hbar]$-algebra isomorphism
\begin{equation}\label{eq:iso_quant_Ham_red}
\K[\h_{\vec{m}}^0/S_{\vec{m}}]\otimes_{\K[\h_n/S_{n}]}\A_{n,\lambda\hbar}\xrightarrow{\sim} 
\K[\h_{\vec{m}}^0/S_{\vec{m}}]\otimes_{\K[\h_n/S_{\vec{m}'}]}\A_{\vec{m}',\lambda\hbar}.
\end{equation}
\end{Lem}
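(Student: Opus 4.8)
The plan is to realize both sides of (\ref{eq:iso_quant_Ham_red}) as quantum Hamiltonian reductions of (homogenized) differential operator algebras and then transport the geometric isomorphism (\ref{eq:iso_var4}) through the reduction. Concretely, I would first observe that $\K[\h_{\vec m}^0/S_{\vec m}]\otimes_{\K[\h_n/S_n]}\A_{n,\lambda\hbar}$ is the quantum Hamiltonian reduction $D_\hbar(R^0_{\vec m})\red_\lambda G_n$, where $R^0_{\vec m}$ is the $G_n$-variety defined in Section \ref{SS:etale_I} and $D_\hbar$ denotes the Rees algebra of differential operators with respect to the order filtration; this is just base change of the reduction along the flat (indeed \'etale) map $\h_{\vec m}^0/S_{\vec m}\to \h_n/S_n$, using that quantum Hamiltonian reduction commutes with flat base change over the invariants $\K[\h_n/S_n]=\K[R_n]^{G_n}$ sitting inside the center of the reduction parameter data. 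Similarly, the right-hand side is $D_\hbar(R_{\vec m'}^{\vec m-reg})\red_\lambda G_{\vec m'}$: here one uses that $\A_{\vec m',\lambda\hbar}=\bigotimes_i \A_{m_i',\lambda\hbar}$ is the reduction of $D_\hbar(R_{\vec m'})$ by $G_{\vec m'}$, and base changes along the \'etale map $R_{\vec m'}^{\vec m-reg}\to R_{\vec m'}$, which factors through $\h_{\vec m}^0/S_{\vec m}\to \h_n/S_{\vec m'}$.

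Next I would apply the isomorphism (\ref{eq:iso_var4}), $G_n\times^{G_{\vec m'}}R_{\vec m'}^{\vec m-reg}\xrightarrow{\sim}R_n^{\vec m-reg}$, on the level of Rees algebras of differential operators, compatibly with group actions. The key reduction-in-stages principle is that for a $G_n$-variety of the induced form $G_n\times^{G_{\vec m'}}Z$, one has a canonical $\K[\hbar]$-algebra isomorphism
\begin{equation*}
D_\hbar(G_n\times^{G_{\vec m'}}Z)\red_\lambda G_n\;\xrightarrow{\sim}\;D_\hbar(Z)\red_\lambda G_{\vec m'},
\end{equation*}
valid when $\lambda$ is a character of $\g_n$ restricting to the chosen character of $\g_{\vec m'}$ (which holds here, both being $\lambda\operatorname{tr}$). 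This is the quantum analogue of the statement that the classical symplectic reduction of $T^*(G_n\times^{G_{\vec m'}}Z)$ by $G_n$ equals that of $T^*Z$ by $G_{\vec m'}$; on the quantum side it follows by identifying $D_\hbar(G_n\times^{G_{\vec m'}}Z)$ with $(D_\hbar(G_n)\otimes D_\hbar(Z))\red_{0}G_{\vec m'}$ where $G_{\vec m'}$ acts diagonally (via right translation on $G_n$), and then performing the $G_n$-reduction first, which collapses $D_\hbar(G_n)$. Taking $Z=R_{\vec m'}^{\vec m-reg}$ gives exactly the desired isomorphism (\ref{eq:iso_quant_Ham_red}).

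Finally I would check that the isomorphism so constructed is bigraded over $\K[\hbar]$: the grading from the Rees construction is preserved because (\ref{eq:iso_var4}) is a map of $\K^\times$-varieties in the appropriate sense (scaling the $\K^n$ and $\g$ directions), and the Euler bigrading is preserved because the Euler vector field on $R_n^{\vec m-reg}$ pulls back, under (\ref{eq:iso_var4}), to the Euler vector field on $R_{\vec m'}^{\vec m-reg}$ plus a term along $G_n$ that dies in the reduction — this is the same mechanism as in Lemma \ref{Lem:Euler_relation}. The main obstacle I anticipate is pinning down the quantum reduction-in-stages isomorphism with full care about the twist/level: one must verify that the character by which $G_{\vec m'}\subset G_n$ acts on the relevant line bundle (equivalently, the shift coming from the modular character / half-densities when passing between $D$-modules on $G_n\times^{G_{\vec m'}}Z$ and on $Z$) is exactly compatible with $\lambda\operatorname{tr}$ on both groups, so that no spurious shift in $\lambda$ appears. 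Once that bookkeeping is done, everything else is a formal consequence of flat base change and reduction in stages.
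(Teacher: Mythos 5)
Your argument is essentially the paper's: the paper realizes the left side as the quantum Hamiltonian reduction of $D_\hbar(G_n\times^{G_{\vec m'}}R^{\vec m-reg}_{\vec m'})$ by $G_n$ via (\ref{eq:iso_var4}), identifies it with the $(G_n\times G_{\vec m'})$-reduction of $D_\hbar(G_n\times R^{\vec m-reg}_{\vec m'})$ at level $(\lambda,0)$, and then observes this is the right side — which is exactly your reduction-in-stages step. One small notational slip: $R^0_{\vec m}$ as defined in Section \ref{SS:etale_I} is a $G_{\vec m}$-variety; the $G_n$-variety whose reduction gives the left-hand side is $R_n^{\vec m-reg}$, and (\ref{eq:iso_var4}) is precisely the isomorphism $G_n\times^{G_{\vec m'}}R_{\vec m'}^{\vec m-reg}\xrightarrow{\sim}R_n^{\vec m-reg}$ that you then exploit.
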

\begin{proof}
Thanks to (\ref{eq:iso_var4}), the source in (\ref{eq:iso_quant_Ham_red}) is identified with the quantum Hamiltonian reduction
for the action of $G_n$ on the homogenized differential operators on $G_{n}\times^{G_{\vec{m}'}}R^{\vec{m}-reg}_{\vec{m}'}$ with reduction parameter $\lambda\hbar$. The latter quantum Hamiltonian reduction coincides with that for the action of $G_n\times G_{\vec{m}'}$ on  $G_{n}\times R^{\vec{m}-reg}_{\vec{m}'}$ with  parameter $(\lambda,0)$ (the first component is for $G_n$, and the second component is for $G_{\vec{m}'}$). It is easy to see that the latter Hamiltonian reduction is naturally identified with the target for  (\ref{eq:iso_quant_Ham_red}).   
\end{proof}

\begin{Rem}\label{Rem:equivariance_QHR}
Suppose that $\vec{m}=\vec{m}'$. 
Note that $N_{S_n}(S_{\vec{m}})/S_{\vec{m}}$ acts naturally on both the source and the target in 
(\ref{eq:iso_quant_Ham_red}) (for the source the action comes from the natural action on $\K[\h_{\vec{m}}^0/S_{\vec{m}}]$). 
It is easy to see that (\ref{eq:iso_quant_Ham_red}) is equivariant for this action. 
\end{Rem}

\begin{Rem}\label{Rem:transitivity_QHR_iso}
Take the composition $(1,1,\ldots,1)$ of $n$. Then Lemma \ref{Lem:QHR_isom} gives isomorphisms
$$\K[\h_n^{reg}]\otimes_{\K[\h_n/S_{n}]}\A_{n,\lambda\hbar}\xrightarrow{\sim} D_\hbar(\h_{n}^{reg}), 
\K[\h_n^{reg}]\otimes_{\K[\h_n/S_{\vec{m}}]}\A_{\vec{m},\lambda\hbar}\xrightarrow{\sim} D_\hbar(\h_{n}^{reg}).$$
These isomorphisms together with (\ref{eq:iso_quant_Ham_red}) enjoy the transitivity property similar to that 
explained in Remark \ref{Rem:BE_iso_charact}.
\end{Rem}


\subsection{Comparison of isomorphisms}
Recall that we write $\A_{n,\lambda}$ for the specialization of $\A_{n,\lambda\hbar}$ to 
$\hbar=1$. The notation $\A_{\vec{m},\lambda}$ (for a composition $\vec{m}$ of $n$) has a similar meaning. Thanks
to Proposition \ref{Prop:Ham_red_Cherednik} we have filtered algebra isomorphisms
$$\A_{n,\lambda}\xrightarrow{\sim} e_n H_{n,\lambda}e_n, \A_{\vec{m},\lambda}\xrightarrow{\sim} e_{\vec{m}}H_{\vec{m},\lambda}e_{\vec{m}},$$
where we write $e_n, e_{\vec{m}}$ for the averaging idempotents in $S_n$ and $S_{\vec{m}}$. 
So we get compositions 
\begin{align}\label{eq:composed_homo1}
\A_{n,\lambda}\rightarrow \K[\h_{\vec{m}}^0/S_{\vec{m}}]\otimes_{\K[\h_n/S_{n}]}\A_{n,\lambda}\xrightarrow{\sim} 
\K[\h_{\vec{m}}^0/S_{\vec{m}}]\otimes_{\K[\h_n/S_{\vec{m}}]}\A_{\vec{m},\lambda}\xrightarrow{\sim} 
\K[\h_{\vec{m}}^0/S_{\vec{m}}]\otimes_{\K[\h_n/S_{\vec{m}}]}e_{\vec{m}}H_{\vec{m},\lambda}e_{\vec{m}},\\\label{eq:composed_homo2}
\A_{n,\lambda}\xrightarrow{\sim} e_n H_{n,\lambda}e_n\rightarrow  
\K[\h_{\vec{m}}^0/S_{\vec{m}}]\otimes_{\K[\h_n/S_{n}]}e_n H_{n,\lambda} e_n\xrightarrow{\sim} 
\K[\h_{\vec{m}}^0/S_{\vec{m}}]\otimes_{\K[\h_n/S_{\vec{m}}]}e_{\vec{m}}H_{\vec{m},\lambda}e_{\vec{m}}.
\end{align} 

We denote (\ref{eq:composed_homo1}) by $\varphi_1$ and (\ref{eq:composed_homo2}) by $\varphi_2$.
It turns out that these two compositions are not equal, rather, they are conjugate by an explicit filtered 
algebra automorphism of $\K[\h_{\vec{m}}^0/S_{\vec{m}}]\otimes_{\K[\h_n/S_{\vec{m}}]}e_{\vec{m}}H_{\vec{m},\lambda}e_{\vec{m}}$, 
an observation that goes back to \cite{EG}. 

Let us explain the construction of the automorphism in question.
First, assume that $X$ is a smooth affine variety over $\K$ and $f\in \K[X]$ is an invertible element. 
Then $\operatorname{Ad}(f^{-1}):D(X)\rightarrow D(X)$,  $a\mapsto f^{-1}af$, defines an automorphism of $D(X)$: it is the identity
on $\K[X]$ and sends $\xi\in \operatorname{Vect}(X)$ to $\xi+\frac{\xi.f}{f}$. More generally, for any 
$\alpha\in \K$, we can consider the automorphism $\operatorname{Ad}(f^\alpha)$ (that is not the conjugation with an element of $\K[X]$
unless $a\in \Z$), it is the identity on $\K[X]$ and sends $\xi\in \operatorname{Vect}(X), \xi+\frac{a\xi.f}{f}$. 
If an algebraic group $G$ acts on $X$ and $f\in \K[X]^G$, then  $\operatorname{Ad}(f^\alpha)$ is $G$-equivariant,
and preserves the quantum comoment map. So it descends to an automorphism of any quantum Hamiltonian reduction
of $D(X)$ by the $G$-action.

Take $X:=\g^0_{\vec{m}}$ and $f\in \K[\g^0_{\vec{m}}]^{G_{\vec{m}}}$ whose image under the restriction to 
$\h_n$ is equal to the square of the product of all positive roots that are not roots for $\g_{\vec{m}}$.
Consider the automorphism $\operatorname{Ad}(f^{-\lambda})$ of $\K[\h^0_{\vec{m}}/S_{\vec{m}}]\otimes_{\K[\h_n/S_{\vec{m}}]}\A_{\vec{m},\lambda}$
and carry it over to $\K[\h^0_{\vec{m}}/S_{\vec{m}}]\otimes_{\K[\h_n/S_{\vec{m}}]}e_{\vec{m}}H_{\vec{m},\lambda}e_{\vec{m}}$
using the isomorphism between these two algebras coming from Proposition \ref{Prop:Ham_red_Cherednik}. 

\begin{Lem}\label{Lem:homomorphisms_compat}
We have $\operatorname{Ad}(f^{-\lambda})\varphi_1=\varphi_2$.
\end{Lem}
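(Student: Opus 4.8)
The plan is to reduce the identity $\operatorname{Ad}(f^{-\lambda})\varphi_1 = \varphi_2$ to a computation inside the Dunkl picture, where both sides become concrete embeddings into $D(\h_n^{reg})^{S_{\vec m}}$ (or rather into an étale localization thereof). First I would restrict attention to the generic torus: compose both $\varphi_1$ and $\varphi_2$ with the further localization isomorphism of Remark \ref{Rem:transitivity_QHR_iso}, namely $\K[\h_n^{reg}]\otimes_{\K[\h_n/S_{\vec m}]}e_{\vec m}H_{\vec m,\lambda}e_{\vec m}\xrightarrow{\sim} D_\hbar(\h_n^{reg})^{S_{\vec m}}$ (in the $\hbar=1$ specialization, $D(\h_n^{reg})^{S_{\vec m}}$). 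Since $\K[\h_{\vec m}^0/S_{\vec m}]\otimes_{\K[\h_n/S_{\vec m}]}e_{\vec m}H_{\vec m,\lambda}e_{\vec m}$ embeds into $D(\h_n^{reg})^{S_{\vec m}}$ (by Proposition \ref{Prop:Dunkl} applied to $S_{\vec m}$, localizing further), it suffices to check the equality of the two resulting maps $\A_{n,\lambda}\to D(\h_n^{reg})^{S_{\vec m}}$, conjugated appropriately.

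The key point is then the characterization in Remark \ref{Rem:BE_iso_charact} together with its quantum-Hamiltonian-reduction analog (Remark \ref{Rem:transitivity_QHR_iso}): the composite $\varphi_2$, followed by the $S_{\vec m}$-Dunkl embedding, is \emph{by construction} the Dunkl embedding $e_nH_{n,\lambda}e_n\hookrightarrow D(\h_n^{reg})^{S_n}\subset D(\h_n^{reg})$ of Proposition \ref{Prop:Dunkl} for $S_n$ — this is exactly the transitivity property asserted at the end of Section \ref{SS_BE} and in Remark \ref{Rem:transitivity_QHR_iso}. On the other hand, $\varphi_1$ followed by the same Dunkl embedding is the composite of the Hamiltonian-reduction isomorphisms of Lemma \ref{Lem:QHR_isom} applied twice (from $\A_{n,\lambda}$ to $\A_{\vec m,\lambda}$ to $D(\h_n^{reg})$), which is the \emph{quantum} parabolic restriction isomorphism. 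The mismatch between the two is precisely the failure of the Hamiltonian reduction map $\A_{n,\lambda}\to D(\h_n^{reg})^{S_n}$ to land in the image of the naive Dunkl operators: on the reduction side one obtains operators of the form $\partial_y + \lambda\sum_\alpha \frac{\langle\alpha,y\rangle}{\alpha}s_\alpha$ relative to the Levi $\g_{\vec m}$, but the $\g_n$-reduction picks up extra first-order terms from the roots $\alpha$ outside $\g_{\vec m}$, contributing $\lambda\sum_{\alpha\notin\Delta_{\vec m}}\frac{\langle\alpha,y\rangle}{\alpha}\cdot\!\left(-\tfrac12\right)\!\cdot 2 = -\lambda\sum_{\alpha\notin\Delta_{\vec m}}\frac{\langle\alpha,y\rangle}{\alpha}$ after symmetrizing (the $-\tfrac12$ coming from the standard shift in the radial-parts/Harish-Chandra computation for the symmetric space structure). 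Since $f|_{\h_n}=\prod_{\alpha\notin\Delta_{\vec m}}\alpha^2$, one has $\frac{y.f}{f}=2\sum_{\alpha\notin\Delta_{\vec m}}\frac{\langle\alpha,y\rangle}{\alpha}$, so $\operatorname{Ad}(f^{-\lambda})$ sends $\partial_y\mapsto \partial_y - \lambda\frac{y.f}{f}=\partial_y-2\lambda\sum_{\alpha\notin\Delta_{\vec m}}\frac{\langle\alpha,y\rangle}{\alpha}$; matching this against the discrepancy forces the precise power $-\lambda$ (this is the bookkeeping that "goes back to \cite{EG}").

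Concretely the steps are: (i) reduce both $\varphi_1,\varphi_2$ to honest algebra homomorphisms $\A_{n,\lambda}\to D(\h_n^{reg})$ by post-composing with the localization and $S_{\vec m}$-Dunkl embeddings, using that $\K[\h_n^{reg}]\otimes_{\K[\h_n/S_{\vec m}]}(-)$ is faithfully flat so the identity of maps can be checked after this base change; (ii) identify $\varphi_2$-then-Dunkl with the $S_n$-Dunkl embedding via Remark \ref{Rem:BE_iso_charact} (the classical/spherical statement) combined with Proposition \ref{Prop:Ham_red_Cherednik}(3) to pass between the Hamiltonian-reduction and Cherednik descriptions; (iii) compute $\varphi_1$-then-Dunkl explicitly as a radial-parts map for the $G_n$-action on $R_n$ restricted via the Levi $G_{\vec m}$, isolating the extra $-\lambda\sum_{\alpha\notin\Delta_{\vec m}}\frac{\langle\alpha,y\rangle}{\alpha}$ term; (iv) observe that $\operatorname{Ad}(f^{-\lambda})$ exactly cancels this term, since $f$ restricts to the square of the product of the missing roots, and that $\operatorname{Ad}(f^{-\lambda})$ is the identity on $\K[\h_n^{reg}]$, hence on $S(\h_n^*)^{S_n}\subset\A_{n,\lambda}$, so the two maps agree on generators. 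Since $\A_{n,\lambda}$ is generated as a filtered algebra by $\K[\h_n]^{S_n}$, the symmetric functions in the $\partial_y$'s (i.e. $e\,S(\h_n)^{S_n}$), and finitely many Euler-type elements — all of whose images we have matched — the two homomorphisms coincide, proving the lemma.

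\textbf{Main obstacle.} The delicate part is step (iii): making the radial-parts computation rigorous and getting the constant right. One must carefully compare the quantum comoment map for $G_n$ acting on $R_n=\gl_n\oplus\K^n$ with that for the Levi $G_{\vec m}$ on $R_{\vec m}$, keeping track of the half-density (or, equivalently, the square root of the relevant Jacobian) that enters when writing the $G_n$-invariant differential operators in terms of the $\h_n^{reg}$-coordinates; the shift by $\tfrac12$ per root in the non-Levi part is exactly what produces the exponent $\lambda$ rather than $2\lambda$ or $\tfrac{\lambda}{2}$, and I would want to cross-check it against the known $\vec m=(1,\dots,1)$ case (where $\varphi_1$ is the GG isomorphism composed with the naive identification and $\varphi_2$ is the Dunkl embedding, and the conjugating automorphism is the classical $\operatorname{Ad}(\delta^{-\lambda})$ of \cite{EG}).
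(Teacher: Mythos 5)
Your top-level reduction is sound and shares the paper's starting point: pass to the further localization to $D(\h_n^{\mathrm{reg}})$ (faithful flatness makes this harmless), identify $\varphi_2$ composed with the Dunkl embedding with the $S_n$-Dunkl embedding $\varphi_2'$ of $e_nH_{n,\lambda}e_n$ via Remark~\ref{Rem:BE_iso_charact}, and then match the two resulting maps into $D(\h_n^{\mathrm{reg}})$. Where you diverge is in how you handle the $\varphi_1$ side. The paper never attempts a direct ``parabolic radial-parts'' calculation: it exploits the multiplicativity $f'=ff''$ together with the transitivity identities $\varphi_i'=\varphi_i''\circ\iota_{\vec m}^{-1}\circ\varphi_i$ (from Remarks~\ref{Rem:BE_iso_charact} and~\ref{Rem:transitivity_QHR_iso}) to reduce the whole statement to the single torus-level identity $\operatorname{Ad}(f'^{-\lambda})\varphi_1'=\varphi_2'$, which is then imported wholesale from the construction in \cite[Section~7]{EG}. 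In other words, the only hard computation in the paper's argument was already done once and for all in \cite{EG}; the parabolic case is obtained by a purely formal factorization. You instead propose to re-derive the discrepancy directly by comparing a radial-parts map through the Levi $G_{\vec m}$ with the $S_n$-Dunkl embedding and isolating the contribution of roots outside $\Delta_{\vec m}$. That should, in principle, give the same answer (one can check that your asserted discrepancy is consistent with $f'=ff''$ and the \cite{EG} constant applied twice), but it amounts to redoing a harder variant of the \cite{EG} computation rather than quoting it, and your bookkeeping of the constants in step~(iii) is not internally coherent as written: you assert a discrepancy of $-\lambda\sum_{\alpha\notin\Delta_{\vec m}}\langle\alpha,y\rangle/\alpha$ while simultaneously noting that $\operatorname{Ad}(f^{-\lambda})$ contributes $-2\lambda\sum_{\alpha>0,\,\alpha\notin\Delta_{\vec m}}\langle\alpha,y\rangle/\alpha$ on $\partial_y$; whether these match depends on whether your sum runs over positive roots or all roots, and the ``$(-\tfrac12)\cdot 2$'' factor is left unmotivated. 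You acknowledge this as the delicate part, and indeed it is the entire content — without a precise derivation of that constant, step~(iii) is a statement of what you want rather than a proof. The paper's route is preferable precisely because it sidesteps this: once you have the two transitivity identities and $f'=ff''$, the lemma becomes a two-line manipulation plus a citation. I would suggest reorganizing along those lines: prove $\varphi_2'=\varphi_2''\circ\iota_{\vec m}^{-1}\circ\varphi_2$ and $\varphi_1'=\varphi_1''\circ\iota_{\vec m}^{-1}\circ\varphi_1$ (both follow from the characterizations of the two localization isomorphisms you already invoke), note $f'=ff''$ on $\h_n$, and then observe the claim reduces to $\operatorname{Ad}(f'^{-\lambda})\varphi_1'=\varphi_2'$ together with its $\g_{\vec m}$-analog, i.e.\ the \cite{EG} identity applied to $\g_n$ and to each Levi factor. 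One more small point: your closing generation argument (``$\A_{n,\lambda}$ is generated by $\K[\h_n]^{S_n}$, $eS(\h_n)^{S_n}$ and Euler-type elements'') needs a citation in type~$A$ and, more importantly, the matching must be checked on the symmetric polynomials in Dunkl operators of all degrees, not just on linear combinations of the $\partial_y$; conjugation by $f^{-\lambda}$ does act multiplicatively, so this is fine, but it should be said.
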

\begin{proof}
We will need some notation. Let $f'\in \K[\h_n^{reg}]^{S_n}$ be the analog of $f$ for the Levi subalgebra $\mathfrak{h}_n\subset 
\mathfrak{g}_n$, it is nothing else as the element $\delta^2$ that appeared in the end of Section \ref{SS_RCA}.
Let $f''$ be the direct analog of $f'$ for $\mathfrak{h}_n\subset \mathfrak{g}_{\vec{m}}$. Observe that 
\begin{equation}\label{eq:transitivity11}
f'=ff''.
\end{equation}
Now let $\varphi_1',\varphi_2'$ be the analogs of $\varphi_1,\varphi_2$ for $\mathfrak{h}_n\subset \mathfrak{g}_n$.
Similarly, consider 
$$\varphi_1'',\varphi_2'': \K[\h_{\vec{m}}^0/S_{\vec{m}}]\otimes_{\K[\h_n/S_{\vec{m}}]}\A_{\vec{m},\lambda}\rightarrow 
D(\h_n^{reg}).$$
Finally, let $\iota_{\vec{m}}$ be our fixed isomorphism 
$$\K[\h_{\vec{m}}^0/S_{\vec{m}}]\otimes_{\K[\h_n/S_{\vec{m}}]}\A_{\vec{m},\lambda}\xrightarrow{\sim} 
\K[\h_{\vec{m}}^0/S_{\vec{m}}]\otimes_{\K[\h_n/S_{\vec{m}}]}e_{\vec{m}}H_{\vec{m},\lambda}e_{\vec{m}}.$$
Remark \ref{Rem:BE_iso_charact} implies that 
\begin{equation}\label{eq:transitivity12}
\varphi_2'=\varphi_2''\circ \iota_{\vec{m}}^{-1}\circ\varphi_2.
\end{equation}
Similarly, Remark \ref{Rem:transitivity_QHR_iso} implies that 
\begin{equation}\label{eq:transitivity13}
\varphi_1'=\varphi_1''\circ \iota_{\vec{m}}^{-1}\circ\varphi_1.
\end{equation} 

Now, combining (\ref{eq:transitivity11}), (\ref{eq:transitivity12}), and (\ref{eq:transitivity13}),
we see that the lemma will follow once we prove that  
$\operatorname{Ad}(f'^{-\lambda})\varphi'_1=\varphi'_2$ (the equality $\operatorname{Ad}(f''^{-\lambda})\varphi''_1=\varphi''_2$
will actually follow if in the previous equality we replace $n$ with each of the $m_i$'s). 
The claim that $\operatorname{Ad}(f'^{-\lambda})\varphi'_1=\varphi'_2$ follows from the construction of the isomorphism in 
(3) of Proposition \ref{Prop:Ham_red_Cherednik} that was given in \cite[Section 7]{EG}.  
\end{proof}

\subsection{Parametrization of simples}
In this section we assume that $\K=\C$ (so that it makes sense to speak about monodromy).
Let $\lambda=\frac{a}{b}$, where $a,b$ are coprime positive integers with $1<b\leqslant n$. In particular, the categories
$\OCat_\lambda(S_n)$ and $\OCat_\lambda^H(S_n)$ are equivalent and the equivalence preserves the support of the simple objects
in $\h_n/S_n$. Let $L\in \OCat_\lambda(S_n)$ be an irreducible module with support of dimension $n-(d-1)b$, see 
Example \ref{Ex:Supports_Sn} for the description of possible supports. Consider the subgroup $W'=S_b^d\subset S_n$
and set $\vec{m}=(1,\ldots,1,b,\ldots,b)$ with $d$ elements $b$ and $\ell:=n-db$ elements $1$.
For $M\in \OCat_\lambda(S_n)$, we write $M^{loc,1}$ for the $\C[\h_{\vec{m}}^0/S_{\vec{m}}]\otimes_{\C[\h_n/S_{\vec{m}}]}e_{\vec{m}}H_{\vec{m},\lambda}e_{\vec{m}}$-module
$\C[\h_{\vec{m}}^0/S_{\vec{m}}]\otimes_{\C[\h_n/S_n]}M$, where the algebra acts via the isomorphism 
\begin{equation}\label{eq:loc_isom}\C[\h_{\vec{m}}^0/S_{\vec{m}}]\otimes_{\C[\h_n/S_n]}\A_{n,\lambda}\xrightarrow{\sim}
\C[\h_{\vec{m}}^0/S_{\vec{m}}]\otimes_{\C[\h_n/S_{\vec{m}}]}e_{\vec{m}}H_{\vec{m},\lambda}e_{\vec{m}}
\end{equation}
induced by (\ref{eq:composed_homo1}). The difference with $M^{loc}$ from Section \ref{SS_loc_sys_restr}
is that for that module the isomorphism (\ref{eq:loc_isom}) was induced by (\ref{eq:composed_homo2}). 
We can view both $M^{loc,1}$ and $M^{loc}$ as $S_n\times S_d$-equivariant local systems on $\C^{m+d,reg}$
with values in finite dimensional $[e_bH_\lambda(S_b)e_b]^{\boxtimes d}$-modules (where $S_d$ acts by permuting the factors). 

\begin{Lem}\label{Lem:simple_param_alternative}
Suppose $M$ is simple. Then $M^{loc,1}$ is obtained from $M^{loc}$ by twisting with $\operatorname{Ad}(f_\ell^\lambda)$, where $f_\ell$
is the element $\delta^2$ for $S_\ell$.
\end{Lem}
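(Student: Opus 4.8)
The plan is to compare the two localization isomorphisms appearing in $(\ref{eq:loc_isom})$ — the one inducing $M^{loc,1}$ being $\varphi_1$ of $(\ref{eq:composed_homo1})$, the one inducing $M^{loc}$ being $\varphi_2$ of $(\ref{eq:composed_homo2})$ — using Lemma~\ref{Lem:homomorphisms_compat}, and then to identify the resulting twist inside the local-system picture of Section~\ref{SS_loc_sys_restr}. Since $\varphi_1$ and $\varphi_2$ agree on the subalgebra $\C[\h_{\vec m}^0/S_{\vec m}]$, on which $\operatorname{Ad}(f^{-\lambda})$ acts as the identity, Lemma~\ref{Lem:homomorphisms_compat} gives that the two isomorphisms in $(\ref{eq:loc_isom})$ differ by the automorphism $\operatorname{Ad}(f^{-\lambda})$ of $\C[\h_{\vec m}^0/S_{\vec m}]\otimes_{\C[\h_n/S_{\vec m}]}e_{\vec m}H_{\vec m,\lambda}e_{\vec m}$, where $\vec m=(1,\dots,1,b,\dots,b)$ has $\ell$ ones and $d$ copies of $b$ and $f$ is the element of Lemma~\ref{Lem:homomorphisms_compat}. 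Equivalently, $M^{loc,1}$ is obtained from $M^{loc}$ by twisting with $\operatorname{Ad}(f^{\lambda})$ (the sign of the exponent flips, as is usual when one passes from twisting an algebra to twisting its modules).

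Next I record a factorization of $f$. Pulling $f$ back to $\h_{\vec m}^0\subset\h_n$ and writing $u_1,\dots,u_\ell$ for the coordinates of the $\ell$ singleton blocks, $w_{k,1},\dots,w_{k,b}$ for those of the $k$-th block of size $b$, and $\chi_k(X):=\prod_j(X-w_{k,j})$ for the characteristic polynomial of that block, we group the positive roots of $\g_n$ not lying in $\g_{\vec m}$ according to the pair of blocks they connect and obtain
\begin{equation*}
f=\Bigl(\prod_{p<q}(u_p-u_q)\Bigr)^{2}\cdot\prod_{p,k}\chi_k(u_p)^{2}\cdot\prod_{k<l}\operatorname{Res}(\chi_k,\chi_l)^{2}=f_\ell\,\gamma^{2},
\end{equation*}
where $f_\ell=\prod_{p<q}(u_p-u_q)^{2}$ is the discriminant of the $u$-coordinates, i.e. the element ``$\delta^2$ for $S_\ell$'', and $\gamma:=\prod_{p,k}\chi_k(u_p)\cdot\prod_{k<l}\operatorname{Res}(\chi_k,\chi_l)$; note that $f_\ell$ and $\gamma$ are units on $\h_{\vec m}^0/S_{\vec m}$ and that $\gamma$ is $S_{\vec m}$-invariant. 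Since $d\log f=d\log f_\ell+2\,d\log\gamma$, this yields $\operatorname{Ad}(f^{\lambda})=\operatorname{Ad}(f_\ell^{\lambda})\circ\operatorname{Ad}(\gamma^{2\lambda})$, so it remains to show that twisting $M^{loc}$ by $\operatorname{Ad}(\gamma^{2\lambda})$ produces an isomorphic object.

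If $M^{loc}=0$ there is nothing to prove; otherwise, $M$ being simple, Lemma~\ref{Lem:supports} forces $\operatorname{Supp}(M)$ to be the image of $\h^{S_b^d}$, so, as in Section~\ref{SS_loc_sys_restr}, after completing the algebra along $\h^{S_b^d}\cap\h^{S_b^d-reg}=\C^{\ell+d,reg}$ we may regard $M^{loc}$ and $M^{loc,1}$ as a local system on $\C^{\ell+d,reg}$ with values in finite-dimensional $[e_bH_\lambda(S_b,\h_b)e_b]^{\boxtimes d}$-modules, and its twist by $\operatorname{Ad}(\gamma^{2\lambda})$. In this completion $\operatorname{Ad}(\gamma^{2\lambda})$ is \emph{inner}: restricting $\gamma$ to $\h^{S_b^d}$, where $\chi_k(X)=(X-v_k)^b$ with $v_k=\tfrac{1}{b}\sum_j w_{k,j}$, gives $\gamma|_{\h^{S_b^d}}=\delta_0^{\,b}$ with $\delta_0:=\prod_{p,k}(u_p-v_k)\cdot\prod_{k<l}(v_k-v_l)^{b}\in\C[\C^{\ell+d,reg}]^{\times}$, so in the completion $\gamma=\delta_0^{\,b}\beta$ with $\beta$ a unit congruent to $1$ along $\C^{\ell+d,reg}$. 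Hence $\operatorname{Ad}(\gamma^{2\lambda})=\operatorname{Ad}(\delta_0^{\,2a})\circ\operatorname{Ad}(\beta^{2\lambda})$: here $b\lambda=a\in\Z$, so $\delta_0^{\,2a}$ is a genuine invertible element, while $\beta^{2\lambda}$ is a genuine element of the completed coordinate ring (a unit congruent to $1$ has all complex powers there). Thus $\operatorname{Ad}(\gamma^{2\lambda})$ is conjugation by an invertible element of the completed algebra, and — since $\delta_0^{\,2a}$ and $\beta$ are $S_\ell\times S_d$-invariant — this conjugation is $S_\ell\times S_d$-equivariant, so twisting by it changes neither the isomorphism class nor its $S_\ell\times S_d$-equivariant refinement. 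Therefore $M^{loc,1}$ is isomorphic to $M^{loc}$ twisted by $\operatorname{Ad}(f_\ell^{\lambda})$, as asserted.

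The step I expect to be the crux is the factorization $f=f_\ell\gamma^2$ together with the observation that $\lambda=a/b$ forces $b\mid 2a$: this is exactly what turns the ``error term'' $\operatorname{Ad}(\gamma^{2\lambda})$ into an inner automorphism rather than a genuine change of monodromy. The remaining ingredients — that passing to the completion along $\C^{\ell+d,reg}$ is compatible with the $\operatorname{Ad}$-twist, and that the completed algebra is $D(\C^{\ell+d,reg})^{\wedge}\otimes[e_bH_\lambda(S_b,\h_b)e_b]^{\boxtimes d}$ as in Section~\ref{SS_loc_sys_restr} — require only routine care.
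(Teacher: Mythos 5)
Your proof is correct and its overall shape is the same as the paper's: start from Lemma~\ref{Lem:homomorphisms_compat} to identify the discrepancy as a twist by $\operatorname{Ad}(f^\lambda)$, then show that the ``extra'' factor beyond $f_\ell$ contributes a trivial twist because $\lambda$-th powers of it become genuine functions near $\C^{\ell+d,reg}$. Where you diverge is in the reduction step. The paper uses the fact that every finite-dimensional $[\A'_{b,\lambda}]^{\boxtimes d}$-module is a multiple of the unique simple $[L_\lambda((b))]^{\boxtimes d}$, and therefore replaces $M^{loc}$ and $M^{loc,1}$ by the \emph{ordinary} local systems $\operatorname{Hom}_{[\A'_{b,\lambda}]^{\boxtimes d}}(\bar V^{\boxtimes d},-)$ on $\C^{\ell+d,reg}$; after that it simply restricts $f$ to $\C^{\ell+d}$ and notes the non-$f_\ell$ factor has exponents divisible by $b$, so its $\lambda$-th power is an honest rational function. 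You instead work with $M^{loc}$ directly, factor $f=f_\ell\gamma^2$ globally on $\h_{\vec m}^0$, pass to the formal completion of the algebra along $\C^{\ell+d,reg}$, and show $\operatorname{Ad}(\gamma^{2\lambda})$ is \emph{inner} there by writing $\gamma=\delta_0^{b}\beta$ with $\delta_0^{2a}$ an honest unit on $\C^{\ell+d,reg}$ and $\beta^{2\lambda}$ an honest unit in the completion. This avoids the semisimplicity/$\operatorname{Hom}$ reduction entirely, and it is actually a bit more careful than the paper on one point the paper passes over quickly: the paper twists by the restriction $\bar f=f|_{\C^{\ell+d}}$ of $f$, and the discrepancy between $\operatorname{Ad}(f^\lambda)$ on the ambient completed algebra and $\operatorname{Ad}(\bar f^\lambda)$ on the $D(\C^{\ell+d,reg})$ factor is precisely your $\beta$ term, which you handle explicitly. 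One small misstatement: you say the crux is that ``$\lambda=a/b$ forces $b\mid 2a$,'' but no divisibility of $a$ is needed — what matters is simply that $2b\lambda=2a\in\Z$, so $\delta_0^{2b\lambda}=\delta_0^{2a}$ is a genuine function; this is exactly the same integrality the paper exploits, and the phrasing should be corrected, but it does not affect the argument.
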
  
\begin{proof}
Recall that we write $H'_\lambda(S_b)$ for the rational Cherednik algebra associated to the reflection representation $\C^{b-1}$ of $S_b$
and $\A'_{b,\lambda}$ for the spherical subalgebra.
Any finite dimensional $[\A'_{b,\lambda}]^{\boxtimes d}$-module is semisimple, and the unique simple is $[L_\lambda((b))]^{\boxtimes d}$,
this follows, for example, from the classification in \cite{BEG}. So, it is enough to show that 
$\Hom([e_b L_\lambda((b))]^{\boxtimes d},M^{loc,1})$ is obtained from  $\Hom([e_b L_\lambda((b))]^{\boxtimes d},M^{loc})$
by twisting with $\operatorname{Ad}(f_\ell^\lambda)$,
where the $\Hom$'s are taken over $[\A'_{b,\lambda}]^{\boxtimes d}$. The two sides are usual $S_\ell\times S_d$-equivariant local systems on $\C^{\ell+d,reg}$, and one is obtained from the other by twisting with the automorphism $\operatorname{Ad}(f^{\lambda})$ of 
$D(\C^{\ell+d,reg})$. Let $x_1,\ldots,x_\ell,y_1,\ldots,y_d$ be the natural coordinates on $\C^{\ell+d}$.  
We note that the restriction of $f$ to $\C^{\ell+d}$ is equal to 
$$f_\ell(\prod_{i=1}^\ell\prod_{j=1}^d (x_i-y_j)^{2b})(\prod_{j<k} (y_j-y_k)^{2b^2}).$$
In particular, $\operatorname{Ad}(f^\lambda)$ is the composition of $\operatorname{Ad}(f_\ell^\lambda)$
and the conjugation with an actual function (as opposed to its complex power). To finish the proof we note that the conjugation with 
an actual function results in an isomorphism of local systems (given by the action of that function). 
\end{proof}

\begin{Rem}\label{Rem:label_iso_choice}
Below we will always use homomorphism (\ref{eq:composed_homo1}) when we discuss restriction and induction functors, Shan-Vasserot construction, etc. 
As Lemma \ref{Lem:simple_param_alternative} shows, the results in Section \ref{S_Cherednik_background} remain true with this choice of homomorphism.
\end{Rem}

\section{Parabolic induction of D-modules}
In this section we study induction functors between categories of equivariant D-modules (and related categories) on the vector spaces $R_n$,
where $n$ varies. We apply this to construct induction functors between categories of modules over the algebras $\A_{n,\lambda}$.

\subsection{Categories of equivariant D-modules}
Let $\K$ be a characteristic $0$ field. Let $X$ be a smooth quasi-projective variety over $\K$.  
We consider  the sheaf $D_X$ of differential operators on $X$.  It is filtered by the order of differential operator.
Consider the Rees sheaf $D_{X,\hbar}$.   

Let $G$ be an algebraic group over $\K$ that acts on $X$ and let $\g$ be its Lie algebra. We can talk about 
weakly and strongly equivariant $D_{X}$- and $D_{X,\hbar}$-modules, compare to Section \ref{SS:QHR_alg_isom}. 
For a character $\lambda$ of $\g$ we write $D_{X}\operatorname{-Mod}^{G,\lambda}$ for the category 
of strongly $(G,\lambda)$-equivariant quasi-coherent $D_{X}$-modules and $D_{X,\hbar}\operatorname{-Mod}^{G,\lambda\hbar}$
for the category of graded strongly $(G,\lambda\hbar)$-equivariant modules. When $\lambda=0$ we remove it from the notation.  

We note that we have an exact functor 
$$D_{X,\hbar}\operatorname{-Mod}^{G,\lambda\hbar}\rightarrow D_{X}\operatorname{-Mod}^{G,\lambda}$$
of setting $\hbar=1$. Inside we have the full subcategories of coherent modules, to be denoted by
$D_{X}\operatorname{-mod}^{G,\lambda}$ and  $D_{X,\hbar}\operatorname{-mod}^{G,\lambda\hbar}$.
 The functor of setting $\hbar=1$ identifies $D_{X}\operatorname{-mod}^{G,\lambda}$ with the Serre quotient of 
$D_{X,\hbar}\operatorname{-mod}^{G,\lambda\hbar}$ by the full subcategory of $\hbar$-torsion modules.

 We note that the full subcategory in  
$D_{X,\hbar}\operatorname{-Mod}^{G,\lambda\hbar}$ consisting of all modules where $\hbar$ acts by $0$
is the category of $G$-equivariant quasi-coherent sheaves on $\mu^{-1}(0)$, where $\mu:T^*X\rightarrow \g^*$
is the standard moment map. 

We can also consider the derived versions $D_{G,\lambda}(D_{X,\Ring}\operatorname{-Mod})$ and $D_{G,\lambda\hbar}(D_{X,\hbar}\operatorname{-Mod})$
as in \cite{BLu}. These categories come with natural t-structures whose hearts are the abelian equivariant categories $D_{X}\operatorname{-Mod}^{G,\lambda}, D_{X,\hbar}\operatorname{-Mod}^{G,\lambda\hbar}$ introduced above. 

\begin{Ex}\label{Ex:D-mod_pt_mod_G}
Suppose that $X=\operatorname{Spec}(\K)$. Then $D_{X,\hbar}=\K[\hbar]$. The category 
$D^b_G(D_{X,\hbar}\operatorname{-Mod})$ can be described as follows. Consider the graded exterior algebra 
$\Lambda^\bullet \mathfrak{g}^*[\hbar]$, where $\mathfrak{g}^*$ is in homological degree $1$.
We view it as a differential graded algebra with respect to the homogenized de Rham differential
(coming from realizing $\Lambda^\bullet \mathfrak{g}^*$ as the algebra of left-invariant forms on 
$G$). Then  $D^b_G(D_{X,\hbar}\operatorname{-Mod})$ is the derived category of the category
of bigraded (homologically and internally) $G$-equivariant dg-modules over $\Lambda^\bullet \mathfrak{g}^*[\hbar]$.
For example, $\K[\hbar]$ (in the homological degree $0$) with trivial $G$-action can be viewed
as an object in this category. 
\end{Ex}

We have the following result, \cite[Theorem 1.6]{BLu}.

\begin{Prop}\label{Prop:equiv_derived_flat_moment}
Suppose $X$ is affine. 
If the moment map $\mu:T^*X\rightarrow \g$ is flat, then 
the realization functor 
$$D^b(D_{X}\operatorname{-Mod}^{G,\lambda})\rightarrow D^b_{G,\lambda}(D_X\operatorname{-Mod})$$
is an equivalence.
\end{Prop}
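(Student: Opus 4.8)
\textbf{Proof proposal for Proposition \ref{Prop:equiv_derived_flat_moment}.}

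The statement is a general fact about equivariant derived categories of D-modules, and the plan is to cite and apply the machinery of \cite{BLu} rather than to reprove it from scratch; nonetheless, let me sketch the structure of the argument so that it is clear what the citation buys us. The realization functor in question goes from the bounded derived category of the abelian category $D_X\operatorname{-Mod}^{G,\lambda}$ (strongly $(G,\lambda)$-equivariant quasi-coherent $D_X$-modules) to the equivariant derived category $D^b_{G,\lambda}(D_X\operatorname{-Mod})$ constructed \`a la Bernstein--Lunts; the latter has a natural t-structure whose heart is precisely $D_X\operatorname{-Mod}^{G,\lambda}$, and the realization functor is the canonical one associated to a t-structure with heart a given abelian category. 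So what must be shown is that this canonical functor is an equivalence, which, by a standard criterion (e.g.\ Be\u\i linson's lemma on realization functors), amounts to checking that the abelian category $D_X\operatorname{-Mod}^{G,\lambda}$ has enough of the homological structure for its derived category to recover the ambient triangulated category: concretely, that $\operatorname{Ext}$-groups computed in $D^b(D_X\operatorname{-Mod}^{G,\lambda})$ agree with the $\operatorname{Hom}$-groups in the triangulated category $D^b_{G,\lambda}(D_X\operatorname{-Mod})$ in all degrees, and that the t-structure on the latter is bounded and non-degenerate.

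The flatness hypothesis on the moment map $\mu\colon T^*X\to\g$ enters exactly here. First I would recall that $D^b_{G,\lambda}(D_X\operatorname{-Mod})$ can be modeled by dg-modules over a Koszul-type dg-algebra built from $\Lambda^\bullet\g^*$ twisted by the quantum comoment map (cf.\ Example \ref{Ex:D-mod_pt_mod_G} for the case $X=\operatorname{Spec}\K$), and that computing cohomology of this dg-algebra is governed by the Koszul complex $\Lambda^\bullet\g^*\otimes D_X$ with differential given by contraction with $\Phi(\xi)-\hbar\langle\lambda,\xi\rangle$. The point is that flatness of $\mu$ is equivalent to the statement that the associated graded of this Koszul complex (i.e.\ the classical Koszul complex on $\operatorname{Sym}(\g^*)\otimes\K[T^*X]$ resolving $\K[\mu^{-1}(0)]$) is exact in positive degrees — that is, the moment map components form a regular sequence after passing to $\operatorname{gr}$. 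This exactness propagates to the filtered/un-graded level and shows that the relevant Koszul resolution is a genuine resolution, which is what forces the comparison of $\operatorname{Ext}$'s to hold: the higher cohomology of the dg-model vanishes, so no "extra" morphisms appear in $D^b_{G,\lambda}(D_X\operatorname{-Mod})$ beyond those visible in $D^b(D_X\operatorname{-Mod}^{G,\lambda})$.

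Concretely the steps are: (1) set up the Bernstein--Lunts model for $D^b_{G,\lambda}(D_X\operatorname{-Mod})$ and identify its heart with $D_X\operatorname{-Mod}^{G,\lambda}$, using that $X$ is affine so that global sections are exact and the category has enough injectives (or at least enough acyclics) in the quasi-coherent setting; (2) invoke the realization functor criterion, reducing the claim to $\operatorname{Ext}^i_{D_X\operatorname{-Mod}^{G,\lambda}}(M,N)\xrightarrow{\sim}\operatorname{Hom}_{D^b_{G,\lambda}}(M,N[i])$ for all $i$ and all $M,N$ in the heart; (3) compute both sides via the Koszul/Chevalley--Eilenberg resolution attached to the $G$-action twisted by $\lambda$, reducing the comparison to the exactness of a complex of $D_X$-modules; (4) deduce this exactness from flatness of $\mu$ by passing to associated graded, where flatness says exactly that the symbols of $\Phi(\xi)-\hbar\langle\lambda,\xi\rangle$ cut out $\mu^{-1}(0)$ as a complete intersection, hence form a regular sequence and the Koszul complex is acyclic; (5) conclude that the realization functor is fully faithful, and that essential surjectivity follows since the t-structure is bounded and every object is built by finitely many extensions from its cohomology objects, which lie in the heart and hence in the image. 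The main obstacle is step (3)--(4): making precise the identification of the Bernstein--Lunts dg-model with a Koszul complex whose acyclicity is controlled by $\mu$, and checking that the "strong equivariance along $\lambda$" twist does not disturb the associated-graded computation (it doesn't, because the twist is by a scalar and disappears on symbols). But all of this is precisely the content of \cite[Theorem 1.6]{BLu}, so in the paper itself the proof reduces to citing that theorem after noting that our setup — $X$ affine, $G$ acting, $\mu$ flat — is an instance of the hypotheses there.
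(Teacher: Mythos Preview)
Your proposal is correct and matches the paper's approach exactly: the paper does not give a proof but simply records this as \cite[Theorem 1.6]{BLu}, and you correctly identify this citation as the source while providing a helpful sketch of the underlying Koszul/regular-sequence mechanism that makes the flatness hypothesis do the work.
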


\subsection{Inverse and direct images}
Now suppose that $Y$ is another smooth quasi-projective variety and $\varphi:Y\rightarrow X$
is a morphism. We let $\varphi^{-1}\mathcal{O}_{X}, \varphi^{-1}D_{X,\hbar}$ denote the sheaf-theoretic inverse images, these are sheaves of $\K$-algebras on $Y$. 
Note that $\mathcal{O}_{Y}\otimes_{\varphi^{-1}\mathcal{O}_{X}}\pi^{-1}D_{X,\hbar}$ is naturally a 
$D_{Y,\hbar}$-$\varphi^{-1}D_{X,\hbar}$-bimodule to be denoted by $D_{Y\rightarrow X, \hbar}$.

The following lemma is straightforward. 

\begin{Lem}\label{Lem:equivariance}
Suppose that $\varphi$ is $G$-equivariant. Then the tensor product $G$-action on the  bimodule $D_{Y\rightarrow X, \hbar}$
turns it into a $G$-equivariant bimodule meaning that the differential for the action evaluated at $\xi\in \g$
coincides with the operator $m\mapsto \hbar^{-1}(\Phi_Y(\xi)m-m\Phi_X(\xi))$, where we write $\Phi_Y,\Phi_X$ for the usual
quantum comoment maps $\g\rightarrow D_{Y,\hbar},D_{X,\hbar}$. Besides, the bimodule 
$D_{Y\rightarrow X, \hbar}$ is naturally graded.
\end{Lem}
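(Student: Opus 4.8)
The plan is to deduce both assertions from the fact that the bimodule is cyclic; the only nontrivial input will be the $G$-equivariance of $\varphi$. Gradedness is automatic: $\varphi^{-1}D_{X,\hbar}$ is a sheaf of graded $\K[\hbar]$-algebras on $Y$ (with $\deg\hbar=1$), the subsheaves $\mathcal{O}_Y$ and $\varphi^{-1}\mathcal{O}_X$ lie in degree $0$, and $D_{Y,\hbar}$ is graded; hence $D_{Y\rightarrow X,\hbar}=\mathcal{O}_Y\otimes_{\varphi^{-1}\mathcal{O}_X}\varphi^{-1}D_{X,\hbar}$ carries a grading for which the left $D_{Y,\hbar}$-action, the right $\varphi^{-1}D_{X,\hbar}$-action, and the tensor-product $G$-action are all homogeneous of degree $0$.

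For the equivariance identity I would first rewrite it in its $\hbar$-integral form, which is what I will prove and which is manifestly equivalent to the statement: with $\xi_{\mathcal M}$ denoting the differential at $\xi\in\g$ of the $G$-action on a weakly equivariant module $\mathcal M$, the claim is
\[
\hbar\cdot\xi_{D_{Y\rightarrow X,\hbar}}(m)=\Phi_Y(\xi)\cdot m-m\cdot\varphi^{-1}(\Phi_X(\xi)),\qquad m\in D_{Y\rightarrow X,\hbar}.
\]
Write $F_\xi(m)$ for the right-hand side. The tensor-product $G$-structure makes $D_{Y\rightarrow X,\hbar}$ a $G$-equivariant $D_{Y,\hbar}$-$\varphi^{-1}D_{X,\hbar}$-bimodule — this is where $G$-equivariance of $\varphi$, hence of $d\varphi$ and of the whole transfer-module construction, is used — so $\xi_{D_{Y\rightarrow X,\hbar}}$ obeys the Leibniz rule
\[
\xi_{D_{Y\rightarrow X,\hbar}}(PmQ)=\xi_{D_{Y,\hbar}}(P)\,mQ+P\,\xi_{D_{Y\rightarrow X,\hbar}}(m)\,Q+Pm\,\xi_{\varphi^{-1}D_{X,\hbar}}(Q)
\]
for $P\in D_{Y,\hbar}$, $Q\in\varphi^{-1}D_{X,\hbar}$. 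Expanding $F_\xi(PmQ)$ and using the quantum-comoment relations $[\Phi_Y(\xi),P]=\hbar\,\xi_{D_{Y,\hbar}}(P)$ and $[\varphi^{-1}(\Phi_X(\xi)),Q]=\hbar\,\xi_{\varphi^{-1}D_{X,\hbar}}(Q)$ yields $F_\xi(PmQ)=\hbar\,\xi_{D_{Y,\hbar}}(P)\,mQ+P\,F_\xi(m)\,Q+\hbar\,Pm\,\xi_{\varphi^{-1}D_{X,\hbar}}(Q)$, so $\hbar\,\xi_{D_{Y\rightarrow X,\hbar}}-F_\xi$ is $\K[\hbar]$-linear, left $D_{Y,\hbar}$-linear and right $\varphi^{-1}D_{X,\hbar}$-linear. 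Since $D_{Y\rightarrow X,\hbar}$ is generated as such a bimodule by $s:=1\otimes 1$ (any element equals $\sum a_i\cdot s\cdot P_i$ with $a_i\in\mathcal O_Y$, $P_i\in\varphi^{-1}D_{X,\hbar}$), it remains to check the identity at $m=s$.

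To finish, I would evaluate both sides at $s$. The section $s$ is $G$-fixed, being the tensor of the $G$-invariant units of $\mathcal O_Y$ and of $\varphi^{-1}D_{X,\hbar}$, so $\xi_{D_{Y\rightarrow X,\hbar}}(s)=0$. On the other side, in the transfer module the left action of $\Phi_Y(\xi)$ — which up to the factor $\hbar$ is the vector field $\theta_Y$ generating the $\xi$-flow on $Y$ — takes $s=1\otimes 1$ to $\hbar$ times $1\otimes d\varphi(\theta_Y)$, and $G$-equivariance of $\varphi$ means precisely that $\theta_Y$ and $\theta_X$ are $\varphi$-related, so $d\varphi(\theta_Y)=\varphi^{-1}(\theta_X)$ and hence $\Phi_Y(\xi)\cdot s=s\cdot\varphi^{-1}(\Phi_X(\xi))$, i.e. $F_\xi(s)=0$. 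Thus both sides vanish at $s$, which proves the formula. The main point — really the only thing beyond bookkeeping — is this $\varphi$-relatedness of the generating vector fields, i.e. the infinitesimal form of the equivariance hypothesis; once this is in hand, the rest is formal, which is why the lemma is stated as straightforward.
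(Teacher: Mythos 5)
The paper offers no proof of this lemma at all --- it simply asserts that ``the following lemma is straightforward'' --- so there is nothing to compare against; your task was effectively to supply the omitted argument, and you have done so correctly. The structure of your argument is exactly the one a careful reader would reconstruct: reduce the identity to a check on the generator $s=1\otimes 1$ by observing that the difference $\hbar\,\xi_{D_{Y\rightarrow X,\hbar}}-F_\xi$ is a bimodule morphism (this uses the quantum comoment relations on both sides and the Leibniz rule for the derivative of a $G$-action on a $G$-equivariant bimodule, which in turn uses equivariance of $\varphi$), then observe that at $s$ both sides vanish because $s$ is $G$-invariant and because $\varphi$-equivariance makes the fundamental vector fields $\varphi$-related, so $\Phi_Y(\xi)\cdot s=s\cdot\varphi^{-1}\Phi_X(\xi)$. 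The gradedness claim is likewise handled correctly. The only minor blemish is some looseness in the $\hbar$ bookkeeping when you describe the left action of $\Phi_Y(\xi)$ on $s$ (``$\hbar$ times $1\otimes d\varphi(\theta_Y)$''): depending on whether one writes $\Phi_Y(\xi)$ as the degree-one Rees element $\theta_Y$ or as $\hbar\theta_Y$, the explicit factor differs, but the $\hbar$'s match on both sides of $\Phi_Y(\xi)\cdot s=s\cdot\varphi^{-1}\Phi_X(\xi)$, so the conclusion $F_\xi(s)=0$ is unaffected. This is a presentation issue, not a gap.
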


In what follows we continue to assume that $\varphi$ is $G$-equivariant.  Thanks to Lemma \ref{Lem:equivariance}, we get a functor 
\begin{align*}
&\varphi^!:D^b_{G,\lambda\hbar}(D_{X,\hbar}\operatorname{-Mod})
\rightarrow D^b_{G,\lambda\hbar}(D_{Y,\hbar}\operatorname{-Mod}), \\
&M\mapsto D_{Y\rightarrow X, \hbar}\otimes^L_{\varphi^{-1}D_{X,\hbar}}\varphi^{-1}M.
\end{align*}

Note that we do not include the usual homological shift by the difference in dimensions. 

On the level of quasi-coherent sheaves, this functor is just the usual pull-back $\varphi^*$.
The functor $\varphi^!$ will be referred to as the inverse image functor. In fact, we will be interested in the 
functors $\varphi^!$ when $\varphi$ is smooth. 

Here are some easy and standard properties of the functors $\varphi^!$. 

\begin{Lem}\label{Lem:inverse_image_properties}
The following claims hold:
\begin{enumerate}
\item If $\varphi_1:Y\rightarrow X$ and $\varphi_2:Z\rightarrow Y$ are two morphisms,
then $(\varphi_1\varphi_2)^!\cong \varphi_2^!\varphi_1^!$.
\item Suppose that $\varphi$ is smooth. Then $\varphi^!$ is t-exact  and sends
$D_{X,\hbar}\operatorname{-mod}^{G,\lambda\hbar}$
to $D_{Y,\hbar}\operatorname{-mod}^{G,\lambda\hbar}$. 
\end{enumerate}
\end{Lem}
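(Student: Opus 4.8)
\textbf{Proof proposal for Lemma \ref{Lem:inverse_image_properties}.}

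The plan is to reduce both claims to standard facts about $D$-modules, carefully tracking the $\hbar$-parameter and the equivariant structure. For part (1), I would first observe that the composite pullback of bimodules $D_{Z\rightarrow Y,\hbar}\otimes^L_{\varphi_2^{-1}D_{Y,\hbar}}\varphi_2^{-1}D_{Y\rightarrow X,\hbar}$ is canonically isomorphic to $D_{Z\rightarrow X,\hbar}$ as a $(D_{Z,\hbar},(\varphi_1\varphi_2)^{-1}D_{X,\hbar})$-bimodule; this is the usual transitivity isomorphism for transfer bimodules, and it holds at the level of Rees sheaves because the Rees construction is compatible with the relevant tensor products (everything is flat over $\K[\hbar]$ here). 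One then feeds this into the associativity of derived tensor product, $M\mapsto D_{Z\rightarrow Y,\hbar}\otimes^L(\varphi_2^{-1}(D_{Y\rightarrow X,\hbar}\otimes^L\varphi_1^{-1}M))\cong (D_{Z\rightarrow Y,\hbar}\otimes^L\varphi_2^{-1}D_{Y\rightarrow X,\hbar})\otimes^L(\varphi_1\varphi_2)^{-1}M$, using that $\varphi_2^{-1}$ commutes with derived tensor product of sheaves. The equivariant structures match by Lemma \ref{Lem:equivariance}, since the comoment maps satisfy the obvious cocycle compatibility. This is routine and I would present it briefly.

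For part (2), smoothness is the key hypothesis. When $\varphi$ is smooth of relative dimension $r$, the transfer bimodule $D_{Y\rightarrow X,\hbar}$ is concentrated in a single cohomological degree and is locally free over $\varphi^{-1}D_{X,\hbar}$ (on the right); concretely, locally in the smooth topology $\varphi$ looks like a projection $X\times \mathbb{A}^r\rightarrow X$, for which $\varphi^!$ is literally tensoring with $D_{\mathbb{A}^r,\hbar}$ over $\K[\hbar]$ (up to the convention of omitting the shift by $r$, which is exactly why the statement says ``t-exact'' rather than ``t-exact up to shift''). Hence the derived tensor product in the definition of $\varphi^!$ needs no higher Tor terms, so $\varphi^!$ is exact on the abelian category $D_{X,\hbar}\operatorname{-Mod}^{G,\lambda\hbar}$, i.e.\ t-exact on the bounded derived category. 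Coherence is preserved because the smooth pullback of a coherent $D_{X,\hbar}$-module is coherent over $D_{Y,\hbar}$ (one may check this after a smooth cover reducing to the projection case, where it is the statement that $D_{\mathbb{A}^r,\hbar}\otimes_{\K[\hbar]}(\text{coherent})$ is coherent), and this is compatible with the strongly $(G,\lambda\hbar)$-equivariant structure since $\varphi$ is $G$-equivariant and the comoment map on $Y$ restricts along $\varphi$ to that on $X$ plus the fiberwise piece, which is handled by Lemma \ref{Lem:equivariance}.

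I do not expect any serious obstacle here: the lemma is ``easy and standard'' as stated, and the only points requiring a modicum of care are (a) making the transitivity isomorphism in (1) genuinely functorial and $G$-equivariant rather than merely pointwise, and (b) keeping the $\hbar$-grading conventions straight, in particular remembering that the usual dimension shift is deliberately suppressed here so that $\varphi^!$ lands in degree $0$ for smooth $\varphi$. If anything is mildly delicate it is (a), and I would handle it by working throughout with the transfer bimodules directly and invoking the standard associativity/compatibility of $\otimes^L$ for sheaves of algebras, rather than by any local computation.
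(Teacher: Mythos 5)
The paper states this lemma with no proof, labelling its contents as ``easy and standard,'' so there is no paper argument to compare against; what you wrote is the expected argument and it is essentially correct. Reducing (1) to the transitivity isomorphism $D_{Z\rightarrow Y,\hbar}\otimes^L_{\varphi_2^{-1}D_{Y,\hbar}}\varphi_2^{-1}D_{Y\rightarrow X,\hbar}\cong D_{Z\rightarrow X,\hbar}$ together with associativity of $\otimes^L$ and Lemma \ref{Lem:equivariance} for the equivariant upgrade, and reducing (2) to the flatness of $D_{Y\rightarrow X,\hbar}$ as a right $\varphi^{-1}D_{X,\hbar}$-module (which holds because $\mathcal{O}_Y$ is flat over $\varphi^{-1}\mathcal{O}_X$ whenever $\varphi$ is smooth), is the right strategy.

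One slip worth correcting: in the local model $\varphi:X\times\mathbb{A}^r\rightarrow X$, you wrote that $\varphi^!$ is ``tensoring with $D_{\mathbb{A}^r,\hbar}$ over $\K[\hbar]$,'' and similarly that coherence reduces to the statement that $D_{\mathbb{A}^r,\hbar}\otimes_{\K[\hbar]}(\text{coherent})$ is coherent. That is not what happens. The transfer bimodule is $D_{Y\rightarrow X,\hbar}\cong \mathcal{O}_{\mathbb{A}^r}\boxtimes D_{X,\hbar}$, which is free (of infinite rank) over $\varphi^{-1}D_{X,\hbar}$, and consequently $\varphi^!(M)\cong \mathcal{O}_{\mathbb{A}^r}\boxtimes M$ with its natural $D_{Y,\hbar}=D_{\mathbb{A}^r,\hbar}\otimes_{\K[\hbar]}D_{X,\hbar}$-structure; it is not $D_{\mathbb{A}^r,\hbar}\otimes_{\K[\hbar]}M$. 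The corrected description still yields coherence at once: $\mathcal{O}_{\mathbb{A}^r}$ is cyclic over $D_{\mathbb{A}^r,\hbar}$, so a finite set of $D_{X,\hbar}$-generators of $M$ gives a finite set of $D_{Y,\hbar}$-generators of $\mathcal{O}_{\mathbb{A}^r}\boxtimes M$. Everything else in your argument, including the remark on the suppressed dimension shift and the $G$-equivariance bookkeeping via Lemma \ref{Lem:equivariance}, is fine.
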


We also have the direct image functor
$$\varphi_*: D^b_{G,\lambda\hbar}(D_{Y,\hbar}\operatorname{-Mod})\rightarrow D^b_{G,\lambda\hbar}(D_{X,\hbar}\operatorname{-Mod}).$$
Namely, we have an equivalence 
$K_{X}\otimes_{\mathcal{O}_{X}}\bullet:D_{X,\hbar}\operatorname{-grMod}\xrightarrow{\sim}
D_{X,\hbar}^{opp}\operatorname{-grMod}$ between the categories of graded modules (the sheaf $K_{X}$
is in degree $0$). As a reminder, for local sections $\omega$ of $K_{X}$ and $m$ of a $D_{X,\hbar}$-module $M$,
a local vector field $\xi$ (viewed as a section of $D_{X,\hbar}^{opp}$) acts by
\begin{equation}\label{eq:vector_field_action_twist}
\xi(\omega\otimes m)=-\hbar L_\xi(\omega)\otimes m-\omega\otimes (\xi m)
\end{equation}
where $L_\xi$ stands for the Lie derivative with respect to $\xi$. In particular, we can define the 
$\varphi^{-1}D_{X,\hbar}$-$D_{Y,\hbar}$-bimodule  
$$D_{X\leftarrow Y, \hbar}:=K_Y\otimes_{\mathcal{O}_Y}D_{Y\rightarrow X, \hbar}\otimes_{\varphi^{-1}\mathcal{O}_X}\varphi^{-1}K_{X}^{-1}.$$
It follows from Lemma \ref{Lem:equivariance} and (\ref{eq:vector_field_action_twist}) that 
$D_{X\leftarrow Y, \hbar}$ is a graded $G$-equivariant bimodule. We have the functor 
$$\varphi_*: D^b_{G,\lambda\hbar}(D_{Y,\hbar}\operatorname{-Mod})\rightarrow 
D^b_{G,\lambda\hbar}(D_{X,\hbar}\operatorname{-Mod}), N\mapsto 
R\varphi_\bullet(D_{X\leftarrow Y, \hbar}\otimes^L_{D_{Y,\hbar}}N),$$
where we write $\varphi_{\bullet}$ for the sheaf-theoretic pushforward. 

In fact, we will be mostly interested in the functors $\varphi_*$ in the case when $\varphi$ is proper. 

We have the following lemma. 

\begin{Lem}\label{Lem:direct_image_properties}
The following claims are true:
\begin{enumerate}
\item If $\varphi_1:Y\rightarrow X$ and $\varphi_2:Z\rightarrow Y$ are two morphisms,
then $(\varphi_1\varphi_2)_*\cong \varphi_{1*}\varphi_{2*}$.
\item If $\varphi$ is proper, then $\varphi_*$ sends coherent objects to objects with coherent cohomology.
\end{enumerate}
\end{Lem}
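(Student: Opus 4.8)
The two claims are the graded, $\hbar$-deformed, $G$-equivariant incarnations of the functoriality of the D-module direct image and of the Grothendieck--Kashiwara coherence theorem, and the plan is to deduce them from these classical facts (as in the textbooks of Hotta--Takeuchi--Tanisaki and of Kashiwara). The structural point is that the equivariant derived category $D^b_{G,\lambda\hbar}(D_{X,\hbar}\operatorname{-Mod})$ of \cite{BLu} carries a conservative, t-exact forgetful functor to $D^b(D_{X,\hbar}\operatorname{-Mod})$, and a further such functor $\hbar\mapsto 1$ to $D^b(D_X\operatorname{-Mod})$; hence boundedness and coherence of the cohomology of an object can be tested after discarding both the $G$-action and the grading. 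Moreover every sheaf-theoretic isomorphism used below is canonical, so it automatically promotes to an isomorphism of graded $G$-equivariant complexes once established at the underlying level. Thus both items reduce to the plain D-module statements plus this compatibility check.

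For (1), I would first establish the composition isomorphism for (derived) transfer bimodules on $Z$,
$$D_{X\leftarrow Z,\hbar}\;\cong\;\varphi_2^{-1}(D_{X\leftarrow Y,\hbar})\otimes^L_{\varphi_2^{-1}D_{Y,\hbar}}D_{Y\leftarrow Z,\hbar},$$
as graded $G$-equivariant $(\varphi_1\varphi_2)^{-1}D_{X,\hbar}$-$D_{Z,\hbar}$-bimodules. Unwinding the definition $D_{X\leftarrow Y,\hbar}=K_Y\otimes_{\mathcal O_Y}D_{Y\to X,\hbar}\otimes_{\varphi_1^{-1}\mathcal O_X}\varphi_1^{-1}K_X^{-1}$ and using the cancellation $K_Y\otimes_{\mathcal O_Y}K_Y^{-1}\cong\mathcal O_Y$, this comes down to the evident identity $\mathcal O_Z\otimes_{\varphi_2^{-1}\mathcal O_Y}\varphi_2^{-1}(D_{Y\to X,\hbar})\cong D_{Z\to X,\hbar}$ together with a flatness check. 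Granting this, for $N$ in the source one writes $(\varphi_1\varphi_2)_*N=R(\varphi_1\varphi_2)_\bullet(D_{X\leftarrow Z,\hbar}\otimes^L_{D_{Z,\hbar}}N)$, rewrites the transfer bimodule by the composition isomorphism, uses associativity of $\otimes^L$ to split off $D_{Y\leftarrow Z,\hbar}\otimes^L_{D_{Z,\hbar}}N$, applies $R(\varphi_1\varphi_2)_\bullet\cong R\varphi_{1\bullet}R\varphi_{2\bullet}$, and then moves $R\varphi_{2\bullet}$ past the coefficient $\varphi_2^{-1}(D_{X\leftarrow Y,\hbar})$ by the projection formula for a $\varphi_2$-pullback; what remains is exactly $\varphi_{1*}\varphi_{2*}N$. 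All these isomorphisms are canonical in $N$, so the statement descends from the non-equivariant, non-graded case.

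For (2), I would factor a proper $\varphi\colon Y\to X$. Since $Y$ is quasi-projective there is a locally closed immersion $Y\hookrightarrow X\times\mathbb P^N$ over $X$, and properness of $\varphi$ forces it to be a closed immersion $i$; writing $\varphi=p\circ i$ with $p\colon X\times\mathbb P^N\to X$ the projection, part (1) lets me treat $i$ and $p$ separately. For the closed immersion $i$ the functor $i_*$ is exact (at the underlying level) and $i_*N$ is supported on $Y$, where it is coherent over $D_{X,\hbar}$ by the easy direction of Kashiwara's equivalence, so it has coherent cohomology. For $p$ I would run the usual good-filtration argument: a coherent graded $D_{X\times\mathbb P^N,\hbar}$-module $M$ carries a $G$-stable good filtration compatible with the grading, whose associated graded is a $G$-equivariant coherent sheaf on $T^\ast(X\times\mathbb P^N)$ with support proper over $T^\ast X$; the associated graded of $p_*M$ is then computed by the higher direct images $R^ip_\bullet$ of the relative de Rham-type complex built from $\gr M$, which are coherent over $\gr D_{X,\hbar}$ by Grothendieck's coherence theorem for proper morphisms of schemes, and coherence of the associated graded forces coherence of $p_*M$ over $D_{X,\hbar}$; setting $\hbar\mapsto 1$ recovers the usual statement. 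I expect the main obstacle to be precisely this last step: arranging a $G$-stable good filtration compatible with the $\hbar$-grading, and controlling the resulting spectral sequence carefully enough to conclude coherence over $\gr D_{X,\hbar}$ (rather than merely finiteness of each direct image as an $\mathcal O_X$-module) from the properness of the support of $\gr M$ over $T^\ast X$ --- routine in the classical setting, but needing attention here.
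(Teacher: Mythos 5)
Your proof of (1) via the composition isomorphism for transfer bimodules and the projection formula is the standard argument; the paper leaves this part as an exercise, so there is nothing to compare there. Your treatment of (2) begins the same way the paper does --- factor a proper quasi-projective morphism as a closed immersion into $X\times\mathbb P^N$ followed by the projection --- and your handling of the closed-immersion step is equivalent to the paper's (the paper simply observes directly that the transfer bimodule for a closed embedding is coherent over the ambient $D_\hbar$, which is the concrete content of the easy half of Kashiwara's equivalence you invoke). Where you genuinely diverge is the projection step $p\colon\mathbb P^n\times X\to X$. You propose the classical good-filtration argument: pass to $\gr$, use properness of the support over $T^*X$ and Grothendieck's coherence theorem, then lift coherence of the associated graded back. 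The paper instead exploits the special form of the transfer bimodule for a linear projection: $D_{\mathbb P^n\times X\to X,\hbar}\cong\mathcal O_{\mathbb P^n}\boxtimes D_{X,\hbar}$, so tensoring a coherent right $D_{\mathbb P^n\times X,\hbar}$-module with it yields a complex with cohomology coherent over $\mathcal O_{\mathbb P^n}\boxtimes D_{X,\hbar}$, and then the sheaf-theoretic pushforward along the proper morphism $\mathbb P^n\times X\to X$ has coherent cohomology over $D_{X,\hbar}$ by ordinary properness. The paper's route avoids the issue you yourself flag as the ``main obstacle'' (producing a $G$-stable, grading-compatible good filtration and controlling the resulting spectral sequence), because it never passes to the associated graded at all --- properness is applied at the level of $\mathcal O_{\mathbb P^n}$-coefficients rather than symbols. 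Your approach would also work, but it carries extra overhead in the $\hbar$-graded equivariant setting that the paper's more direct argument sidesteps; if you keep your route you should be explicit about the existence of a $G$-stable good filtration on a coherent graded $D_{\hbar}$-module (reductivity of $G$ and averaging take care of this) and about why coherence of $\gr(p_*M)$ over $\gr D_{X,\hbar}$ forces coherence of $p_*M$.
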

\begin{proof}
The proof of (1) is standard and is left as an exercise. Let us prove (2), which is also quite standard but we include a proof for 
reader's convenience. 

Since we assume $\varphi$ is proper and the varieties $X, Y$ are quasi-projective,
we need to consider the cases when $\varphi$ is a closed embedding $X\hookrightarrow Y$ and when $\varphi$
is the projection $\mathbb{P}^n\times Y\rightarrow Y$. To handle the case of closed embeddings 
we observe that $D_{X\leftarrow Y, \hbar}$ is coherent as a $D_{Y,\hbar}$-module.

Now consider the case of the projection $\mathbb{P}^n\times Y\rightarrow Y$. It is sufficient to prove an 
analogous claim for right modules. The bimodule 
$D_{Y\rightarrow X,\hbar}$ is identified with $\mathcal{O}_{\mathbb{P}^n}\boxtimes D_{X,\hbar}$. 
It follows that the homology of the derived tensor product of any coherent right $D_{Y,\hbar}$-module with 
$D_{Y\rightarrow X,\hbar}$ is coherent over $\mathcal{O}_{\mathbb{P}^n}\boxtimes D_{X,\hbar}$.
It follows that their derived pusforwards to $X$ have coherent cohomology (as $D_{X,\hbar}$-modules). 
\end{proof}

Here is a basic result on the interaction between the inverse and direct images. Let $\iota:Y\rightarrow X, 
\varpi:Z\rightarrow X$ be morphisms of smooth varieties. Suppose that 
$\varpi$ is smooth. Set $W:=Y\times_{X}Z$, this is a smooth variety. 
Let $\tilde{\iota}:W\rightarrow Z, \tilde{\varpi}: W\rightarrow Y$.

\begin{Lem}\label{Lem:base_change}
We have an isomorphism of functors $$\varpi^!\iota_*\cong \tilde{\iota}_*\tilde{\varpi}^!:
D^b(D_{Y,\hbar}\operatorname{-Mod})\rightarrow D^b(D_{Z,\hbar}\operatorname{-Mod}).$$
\end{Lem}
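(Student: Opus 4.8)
The plan is to reduce the statement to two elementary cases and verify base change in each. First I would factor $\iota$ through its graph, writing $\iota=\mathrm{pr}_X\circ\gamma_\iota$ with $\gamma_\iota\colon Y\hookrightarrow Y\times X$, $y\mapsto(y,\iota(y))$, the closed graph embedding and $\mathrm{pr}_X\colon Y\times X\to X$ the projection, which is smooth since $Y$ is. Base-changing this factorization along $\varpi$ produces a pasting of two Cartesian squares whose outer square is the given one, and, using the functoriality isomorphisms $(\varphi_1\varphi_2)_*\cong\varphi_{1*}\varphi_{2*}$ and $(\varphi_1\varphi_2)^!\cong\varphi_2^!\varphi_1^!$ of Lemmas~\ref{Lem:direct_image_properties}(1) and \ref{Lem:inverse_image_properties}(1), this reduces the claim to the two cases: (i) $\iota$ a smooth morphism, and (ii) $\iota$ a closed embedding.

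For case (i), when $\iota$ (hence its base change $\tilde\iota$) is smooth, it suffices by the reduction to treat the projection $\mathrm{pr}_X\colon Y'\times X\to X$, whose base change is $Y'\times Z\to Z$ with $\tilde\varpi=\mathrm{id}_{Y'}\times\varpi$. Here the transfer bimodule $D_{X\leftarrow Y'\times X,\hbar}$ is $\Omega^{\mathrm{top}}_{Y'}\boxtimes D_{X,\hbar}$ and $\mathrm{pr}_{X*}$ becomes the relative de Rham complex tensored with the test module. The desired isomorphism is then the Künneth/flat-base-change identity for quasicoherent sheaves on the Cartesian square, compatible with the de Rham differential and with the $\hbar$-grading; all of this is routine once the transfer bimodules are identified.

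For case (ii), $W=\varpi^{-1}(Y)$ scheme-theoretically, $\tilde\iota\colon W\hookrightarrow Z$ is again a closed embedding, and $\tilde\varpi\colon W\to Y$ is smooth, being a base change of $\varpi$. Since $R\iota_\bullet$ is exact (extension by zero), the key input is that along a Cartesian square with $\iota$ a closed embedding the sheaf-theoretic inverse image commutes with pushforward, $\varpi^{-1}\iota_\bullet(-)\cong\tilde\iota_\bullet\tilde\varpi^{-1}(-)$. Combining this with $D_{Z\to X,\hbar}=\mathcal O_Z\otimes_{\varpi^{-1}\mathcal O_X}\varpi^{-1}D_{X,\hbar}$, the restriction $\tilde\iota^{-1}D_{Z\to X,\hbar}\cong\mathcal O_W\otimes_{\tilde\varpi^{-1}\mathcal O_Y}\tilde\varpi^{-1}\iota^{-1}D_{X,\hbar}$, and the analogous descriptions of $D_{X\leftarrow Y,\hbar}$ and $D_{Y\leftarrow W,\hbar}$ — using $K_W\cong\tilde\varpi^*K_Y\otimes\omega_{W/Y}$ together with $\omega_{W/Y}\cong\tilde\iota^*\omega_{Z/X}$ — one obtains the required isomorphism of complexes of $D_{Z,\hbar}$-modules.

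I expect the main obstacle to be purely bookkeeping: matching the canonical-bundle twists in the transfer bimodules through the Cartesian square, and checking that every identification respects the $\hbar$-grading and the transferred $D$-module structures. Conceptually nothing is new — this is the standard base-change theorem for $D$-modules (Hotta–Takeuchi–Tanisaki), and the Rees ($\hbar$-)version goes through verbatim; alternatively one can deduce the $\hbar$-statement from the classical one by inverting $\hbar$ together with a grading argument. The same argument, invoking Lemma~\ref{Lem:equivariance}, also yields the isomorphism $G$-equivariantly, should that be needed later.
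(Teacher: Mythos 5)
Your proposal is correct and follows essentially the same route as the paper: both reduce to the two cases of a closed embedding and a projection $Y'\times X\to X$ (the paper simply asserts this reduction, whereas you spell out the graph factorization $\iota=\mathrm{pr}_X\circ\gamma_\iota$ that justifies it), and both handle the projection case as a routine Künneth/flat base change. The only point of mild divergence is in the closed-embedding case: the paper identifies the two kernel bimodules $D_{Z\rightarrow X,\hbar}\otimes_{D_{X,\hbar}}D_{X\leftarrow Y,\hbar}$ and $D_{Z\leftarrow W,\hbar}\otimes_{D_{W,\hbar}}D_{W\rightarrow Y,\hbar}$ by a generators-and-relations argument (degree $0$ part is $\varpi^*(K_X^{-1}|_Y\otimes K_Y)$ in both, both are generated by it over $D_{Z,\hbar}\boxtimes\mathcal{O}_Y$ with the same relations, hence there is a unique bilinear isomorphism fixing it) and then verifies $D_{Y,\hbar}$-linearity étale-locally, while you propose to identify the bimodules more directly by commuting sheaf-theoretic pullback past closed pushforward and matching canonical-bundle twists via $\omega_{W/Y}\cong\tilde\iota^*\omega_{Z/X}$. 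Both are legitimate ways to organize the same bookkeeping; neither has a substantive advantage here, though your observation that one can reduce the $\hbar$-version to the classical statement by inverting $\hbar$ plus a grading argument is a nice shortcut the paper does not mention.
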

\begin{proof}
In the proof we can assume $\iota$ is a closed embedding or $\iota$ is a projection $Y:=Y'\times X\rightarrow X$. 
The case when $Y=Y'\times X$ (and hence $W=Y'\times Z$) is easy and is left as an exercise.  
From now we assume that $\iota$ is a closed embedding.

The functor $\varpi^!\iota_*$ is given 
by taking the derived tensor product with the $D_{Z,\hbar}$-$D_{Y,\hbar}$-bimodule
\begin{equation}\label{eq:D_bimodule1}
D_{Z\rightarrow X, \hbar}\otimes_{D_{X,\hbar}}D_{X\leftarrow Y,\hbar}
\end{equation}
(we omit the sheaf theoretic pullbacks and pushforwards to unclutter the notation).
Similarly, $\tilde{\iota}_*\tilde{\varpi}^!$ is given by the derived tensor product with the bimodule
\begin{equation}\label{eq:D_bimodule2}
D_{Z\leftarrow W, \hbar}\otimes_{D_{W,\hbar}}D_{W\rightarrow Y,\hbar}, 
\end{equation}
so we need to establish an isomorphism between these bimodules. 

Note that the bimodules (\ref{eq:D_bimodule1}) and (\ref{eq:D_bimodule2}) are graded. 
The degree $0$ part in (\ref{eq:D_bimodule1}) is easily seen to be $\varpi^*(K_X^{-1}|_{Y}\otimes_{\mathcal{O}_Y} K_Y)$.
Observe that $D_{Z\leftarrow W, \hbar}$ is a locally free (over $W$) right module over  
$D_{W,\hbar}$. So the degree $0$ component of (\ref{eq:D_bimodule2}) is $K_Z^{-1}|_W\otimes_{\mathcal{O}_W}K_W$.
Both sheaves are naturally identified with the inverse of the top exterior power of $\Omega^1_{W/Y}$.
As modules over $D_{Z,\hbar}\boxtimes\mathcal{O}_{Y}$ both (\ref{eq:D_bimodule1}) and 
(\ref{eq:D_bimodule2}) are generated by  $\varpi^*(K_X^{-1}|_{Y}\otimes_{\mathcal{O}_Y} K_Y)$ subject to the relations 
that 
\begin{itemize}
\item 
$\mathcal{O}_{Z\times Y}$ acts via the restriction to $W$
\item Let $\eta$ be a vector field on $Z$ whose restriction to $W$ is tangent to fibers of 
$W\rightarrow Y$. Then $\eta$ acts on $(\Lambda^{top}\Omega^1_{W/Y})^{-1}$ by $\hbar \eta$. 
\end{itemize}
It follows that there is a unique $D_{Z,\hbar}$-$\mathcal{O}_{Y}$-bilinear homomorphism
between (\ref{eq:D_bimodule1}) and (\ref{eq:D_bimodule2}) that is the identity on 
$\varpi^*(K_X^{-1}|_{Y}\otimes_{\mathcal{O}_Y} K_Y)$.  To check that this isomorphism is also $D_{Y,\hbar}$-linear 
we can argue etale locally (in $X,Y,Z$), where $\pi$ looks like a linear projection $\mathbb{A}^n\rightarrow \mathbb{A}^m$,
and $\iota$ looks like a linear inclusion $\mathbb{A}^k\hookrightarrow \mathbb{A}^n$. In this case the check is easy. 
\end{proof}

Assume in the above setting that $G$ acts $X, Y,Z$ in such a way that $\iota$ and $\varpi$
are $G$-equivariant. Then the isomorphism of bimodules in the lemma is $G$-equivariant, and we 
have a direct analog of Lemma \ref{Lem:base_change} for the functors between $(G,\lambda)$-equivariant derived
categories.   
 
\subsection{Induction functors}\label{SS_induction}
Recall some notation from Section \ref{SS:QHR_alg_isom}.   We write
$$R_{n}:=\gl_{n}\oplus \K^n,D_{n}:=D(R_n), G_{n}:=\operatorname{GL}_{n}.$$
We also write $\underline{D}_{n}$ for $D(\mathfrak{gl}_{n})$. 
Recall that for a composition $\vec{m}$ we consider the Levi subgroup $G_{\vec{m}}\subset G_{n}$, 
see Section \ref{SS:etale_I}.
For a composition $\vec{m}=(m_1,\ldots,m_k)$, we write $\underline{R}_{\vec{m}}=\mathfrak{g}_{\vec{m}}\times \K^{m_1}$. 
This is a vector space with a linear action of $G_{\vec{m}}$. We write $D_{\vec{m}}$ for $D(R_{\vec{m}})$ and 
$\underline{D}_{\vec{m}}$ for $D(\g_{\vec{m}})$. 

Let $P_{\vec{m}}$ be the standard parabolic with Levi $G_{\vec{m}}$, by definition, it stabilizes the subspaces in $\K^{n}$ spanned by
the last $m_k,m_k+m_{k-1},\ldots,\sum_{i=2}^k m_i$ standard basis vectors in $\K^n$. We write $R_{\vec{m}}^{par}$ for 
$\mathfrak{p}_{\vec{m}}\times \K^n$. In particular, the space $\K^{m_1}$ is a quotient of the $\mathfrak{p}_{\vec{m}}$-module 
$\K^n$. Note that $P_{\vec{m}}$ acts on $\underline{R}_{\vec{m}}$ 
via its projection onto $G_{\vec{m}}$. This gives rise to a smooth morphism of stacks 
$$\varpi: R_{\vec{m}}^{par}/P_{\vec{m}}\rightarrow 
\underline{R}_{\vec{m}}/G_{\vec{m}}.$$
We also have a projective morphism of stacks 
$\iota: R_{\vec{m}}^{par}/P_{\vec{m}}\rightarrow R_{n}/G_{n}$.   
We write $D_{\vec{m}}^{par}$ for $D(R_{\vec{m}}^{par})$.

Note that the restriction of $\lambda$ from $\g_{n}$ to $\mathfrak{p}_{\vec{m}}$
agrees with the pullback of $\lambda$ from $\mathfrak{g}_{\vec{m}}$. It follows from Lemma 
\ref{Lem:equivariance} that we get the pullback and
pushforward functors
$$D^b_{G_{\vec{m}},\lambda\hbar}(D_{\hbar}(\underline{R}_{\vec{m}})\operatorname{-Mod})\xrightarrow{\varpi^!}
D^b_{P_{\vec{m},\lambda\hbar}}(D^{par}_{\vec{m},\hbar}\operatorname{-Mod})\xrightarrow{\iota_*} D^b_{G_{n},\lambda\hbar}(D_{n,\hbar}\operatorname{-Mod}).$$
We denote this functor by $\operatorname{Ind}^n_{\vec{m}}$. We also have a similarly defined functor
$$D^b_{G_{\vec{m}},\lambda\hbar}(\underline{D}_{\vec{m},\hbar}\operatorname{-Mod})\rightarrow D^b_{G_{n},\lambda\hbar}(\underline{D}_{n,\hbar}\operatorname{-Mod}).$$
It will be denoted by $\underline{\operatorname{Ind}}_{\vec{m}}^n$.

Below in this section we will prove a transitivity result for the induction functor. Let $n':=n-m_k, \vec{m}'=(m_1,\ldots,m_{k-1})$.
Let $\operatorname{id}_{m_k}$ denote the identity endo-functor of the category $D^b_{G_{m_k},\lambda\hbar}(\underline{D}_{m_k,\hbar}\operatorname{-Mod})$.

\begin{Lem}\label{Lem:induction_transitivity}
We have an isomorphism
\begin{equation}\label{eq:functor_transitivity1}
\operatorname{Ind}_{\vec{m}}^n \cong \operatorname{Ind}_{n',m_k}^n\circ (\operatorname{Ind}_{\vec{m}'}^{n'}\boxtimes \operatorname{id}_{m_k})
\end{equation}
of functors $D^b_{G_{\vec{m}},\lambda\hbar}(D_{\vec{m},\hbar}\operatorname{-Mod})\rightarrow 
D^b_{G_{n},\lambda\hbar}(D_{n,\hbar}\operatorname{-Mod})$.
\end{Lem}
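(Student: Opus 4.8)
The plan is to prove the transitivity isomorphism \eqref{eq:functor_transitivity1} by factoring the relevant correspondence of stacks and applying the functoriality of pullback and pushforward (Lemmas \ref{Lem:inverse_image_properties}(1) and \ref{Lem:direct_image_properties}(1)) together with the base change isomorphism of Lemma \ref{Lem:base_change}. First I would spell out the geometry. The functor $\operatorname{Ind}^n_{\vec m}$ is $\iota_*\varpi^!$ for the correspondence
$$
\underline{R}_{\vec m}/G_{\vec m}\xleftarrow{\varpi} R^{par}_{\vec m}/P_{\vec m}\xrightarrow{\iota} R_n/G_n,
$$
and similarly $\operatorname{Ind}^{n'}_{\vec m'}$ and $\operatorname{Ind}^n_{n',m_k}$ are given by their own parabolic correspondences. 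The key point is to exhibit a single stack $Z$ mapping to $R_n/G_n$ and to $\underline{R}_{\vec m}/G_{\vec m}$ that computes both sides: on one hand $Z$ should be the fiber product realizing the composition $\operatorname{Ind}^n_{n',m_k}\circ(\operatorname{Ind}^{n'}_{\vec m'}\boxtimes\operatorname{id}_{m_k})$, on the other hand $Z$ should map to the stack $R^{par}_{\vec m}/P_{\vec m}$ appearing in $\operatorname{Ind}^n_{\vec m}$. Concretely, $Z$ is the stack of flags in $\K^n$ of the type determined by $\vec m$ (a partial flag refining the two-step flag for $(n',m_k)$ by the flag for $\vec m'$ in the $n'$-dimensional quotient), together with compatible parabolic data in $\gl_n$; equivalently $Z=R^{par,par}_{\vec m}/Q$ where $Q\subset P_{n',m_k}$ is the preimage of $P_{\vec m'}\times G_{m_k}$.

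Next I would organize the argument into two reductions. The composite $\operatorname{Ind}^n_{n',m_k}\circ(\operatorname{Ind}^{n'}_{\vec m'}\boxtimes\operatorname{id}_{m_k})$ is, by definition and Lemmas \ref{Lem:inverse_image_properties}(1), \ref{Lem:direct_image_properties}(1), the composite $\iota_{2*}\varpi_2^!\iota_{1*}\varpi_1^!$ where the subscript $1$ refers to the inner induction (performed on the Levi factor $G_{n'}\times G_{m_k}$, acting on $R_{n'}\times \underline{R}_{m_k}$) and the subscript $2$ to the outer one. The obstruction to simply composing these is the middle $\varpi_2^!\iota_{1*}$, which is a pullback after a pushforward along non-composable maps; this is exactly what Lemma \ref{Lem:base_change} resolves, replacing $\varpi_2^!\iota_{1*}$ by $\tilde\iota_*\tilde\varpi^!$ for the fiber product stack. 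Carrying this out, the whole composite becomes $\iota_*\varpi^!$ for a correspondence through the fiber product stack $Z$ described above, once one checks the $G$-equivariant version of base change (noted in the paragraph after Lemma \ref{Lem:base_change}) applies here with $G$ the appropriate Levi group. Then I would identify this $\iota_*\varpi^!$ with $\operatorname{Ind}^n_{\vec m}$ by observing that the map $Z\to R^{par}_{\vec m}/P_{\vec m}$ (forgetting the intermediate flag datum) is a fibration whose pushforward composed with pullback is the identity: more precisely, $Z\to R^{par}_{\vec m}/P_{\vec m}$ realizes $R^{par}_{\vec m}/P_{\vec m}$ as a base change of a point-like stack (since $P_{\vec m}\backslash Q$ is an affine-space bundle over a point, coming from the unipotent radical of the parabolic of $G_{\vec m}$ inside $G_{\vec m}$ determined by $\vec m'$ — but that parabolic is all of $G_{\vec m}$, so in fact $Z\to R^{par}_{\vec m}/P_{\vec m}$ is an iso after suitable identification). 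I would double-check here that the parabolic induction data match: the vector $\K^n$-part pulls back correctly because $\K^{m_1}$ is the same iterated quotient whether formed in one step or two, and the $\gl$-parts match because $\mathfrak p_{\vec m}$ is the iterated preimage.

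The main obstacle I expect is bookkeeping the equivariance structures and the vector-space parts $\K^n$ carefully through the fiber product, i.e. verifying that the stack $Z$ really is the fiber product of the two correspondences \emph{with all the $R$-data}, not just the flag data — one must check that $R_{\vec m}^{par}$ and the analogous parabolic spaces for the two-step factorization glue to give exactly $R^{par,par}_{\vec m}$ and that this agrees, under the forgetful map, with $R^{par}_{\vec m}$ on the nose. The second subtle point is that the reduction parameters are compatible: the restriction of $\lambda$ from $\g_n$ to the iterated parabolic $\mathfrak q$ agrees both with its restriction via $\mathfrak p_{n',m_k}$ then $\mathfrak p_{\vec m'}\times\g_{m_k}$, and with its direct restriction via $\mathfrak p_{\vec m}$; this is immediate since $\lambda=\lambda\operatorname{tr}$ and trace is additive over blocks, but it must be stated so that the $(G,\lambda)$-equivariant version of Lemma \ref{Lem:base_change} is legitimately invoked. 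Everything else is a routine application of the transitivity of $!$-pullbacks and $*$-pushforwards. Note finally the convention (stated after the definition of $\varphi^!$) that no dimension shifts are included, so there are no shift discrepancies to track between the two sides.
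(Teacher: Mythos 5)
Your overall strategy—factor the correspondence, apply base change (Lemma \ref{Lem:base_change}) to commute the pullback past the pushforward, and identify the resulting correspondence with the one defining $\operatorname{Ind}^n_{\vec m}$—is exactly the approach the paper takes, and your identification of the fiber product stack (and of $Q$ with $P_{\vec m}$) is correct. The one place where you hand-wave past a real difficulty is the phrase ``once one checks the $G$-equivariant version of base change applies here with $G$ the appropriate Levi group'': the morphisms $\varpi_{(n',m_k)}$ and $\iota_{\vec m'*}\boxtimes\operatorname{id}$ each involve a \emph{change} of group (from $P_{(n',m_k)}$ to $G_{(n',m_k)}$, and from $P_{\vec m'}\times G_{m_k}$ to $G_{n'}\times G_{m_k}$), so there is no single fixed $G$ for which the remark after Lemma \ref{Lem:base_change} applies off the shelf. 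The paper resolves this by first decomposing $\varpi$ as $\varpi_1\circ\varpi_2$, where $\varpi_2$ is a genuine smooth morphism of varieties with a fixed parabolic group of equivariance and $\varpi_1$ is the morphism of extending equivariance; it then commutes the equivariance-extension step $\varpi_1^!$ past $\iota_*$ by a separate, elementary observation (producing $\iota'_*\circ(\varpi'_1)^!$), and only afterwards applies base change to the fixed-group morphism $\varpi_2$. Your write-up would need to incorporate this extra decomposition (or equivalently, induce everything up to a single common group before invoking base change) to be complete; with that adjustment, the argument goes through as you outlined.
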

\begin{proof}
Let $U_{\vec{m}}$ denote the unipotent radical of $P_{\vec{m}}$.
We note that $\varpi$ decomposes as the composition $\varpi_1\circ \varpi_2$, where 
\begin{itemize}
\item $\varpi_2$ is the morphism $R_{\vec{m}}^{par}/P_{\vec{m}}\twoheadrightarrow 
\underline{R}_{\vec{m}}/P_{\vec{m}}$ induced by the projection 
$R_{\vec{m}}^{par}\twoheadrightarrow \underline{R}_{\vec{m}}$,
\item $\varpi_1$ is the morphism $\underline{R}_{\vec{m}}/P_{\vec{m}}
\rightarrow \underline{R}_{\vec{m}}/G_{\vec{m}}$ induced by the projection
$(\operatorname{pt}/U_{\vec{m}})\times \underline{R}_{\vec{m}}\rightarrow 
 \underline{R}_{\vec{m}}$. We call $\varpi_1$ the {\it morphism of extending equivariance}. 
\end{itemize} 
We write $\varpi_{\vec{m}},\varpi_{1,\vec{m}}, \varpi_{2,\vec{m}}$ when we want to indicate the dependence on 
$\vec{m}$. The left hand side in (\ref{eq:functor_transitivity1}) is 
$\iota_{\vec{m}*}\circ \varpi_{2,\vec{m}}^!\circ \varpi_{1,\vec{m}}^!$.
The right hand side is 
\begin{equation}\label{eq:long_functor_composition}
\iota_{(n',m_k)*}\circ \varpi_{2,(n',m_k)}^!\circ \varpi_{1,(n',m_k)}^!
\circ ([\iota_{\vec{m}'*}\circ \varpi_{2,\vec{m}'}^!\circ \varpi_{1,\vec{m}'}^!]\boxtimes\operatorname{id}_{m_k}).
\end{equation}
Consider the morphisms $$\varpi'_1: (R^{par}_{\vec{m}'}\times \g_{m_k})/P_{\vec{m}}\rightarrow 
(R^{par}_{\vec{m}'}\times \g_{m_k})/(P_{\vec{m}'}\times G_{m_k})$$
and $\iota':(R^{par}_{\vec{m}'}\times \g_{m_k})/P_{\vec{m}}\rightarrow R_{(n',m_k)}/P_{(n',m_k)}$.

Then the functor $$\varpi_{1,(n',m_k)}^!
\circ (\iota_{\vec{m}'*}\boxtimes\operatorname{id}_{m_k}): D^b_{P_{\vec{m}'}\times G_{m_k},\lambda\hbar}
(D^{par}_{\vec{m},\hbar}\otimes_{\K[\hbar]}\underline{D}_{m_k,\hbar}\operatorname{-Mod})
\rightarrow D^b_{P_{(n',m_k),\lambda\hbar}}(D_{(n',m_k),\hbar}\operatorname{-Mod})$$
is isomorphic to $\iota'_*\circ (\varpi'_1)^!$. The composition (\ref{eq:long_functor_composition})
is then isomorphic to 
\begin{equation}\label{eq:another_long_functor_composition}
\iota_{(n',m_k)*}\circ \varpi_{2,(n',m_k)}^!\circ\iota'_* \varpi_{1}'^!
\circ ([\varpi_{2,\vec{m}'}^!\circ \varpi_{1,\vec{m}'}^!]\boxtimes\operatorname{id}_{m_k}).
\end{equation}
Computing $\varpi_{2,(n',m_k)}^!\circ\iota'_*$ using the equivariant version of Lemma \ref{Lem:base_change} we easily see that
(\ref{eq:another_long_functor_composition}) is isomorphic to  $\iota_{\vec{m}*}\circ \varpi_{2,\vec{m}}^!\circ \varpi_{1,\vec{m}}^!$. 
\end{proof}

\begin{Rem}\label{Rem:other_transitivity}
We also have the following functor isomorphisms analogous to Lemma \ref{Lem:induction_transitivity}. Set $n'':=m_2+\ldots+m_k,
\vec{m}''=(m_2,\ldots,m_k)$.
\begin{align}\label{eq:functor_transitivity2}
&\operatorname{Ind}_{\vec{m}}^n \cong \operatorname{Ind}_{m_1,n''}^n\circ (\operatorname{id}_{m_1}\boxtimes
\underline{\operatorname{Ind}}_{\vec{m}''}^{n''}),\\
&\underline{\operatorname{Ind}}_{\vec{m}}^n \cong \underline{\operatorname{Ind}}_{n',m_k}^n\circ (\underline{\operatorname{Ind}}_{\vec{m}'}^{n'}\boxtimes \operatorname{id}_{m_k}).
\end{align}
\end{Rem}

\subsection{Supports}
In this section we are going to examine the behavior of singular supports under the induction functors. 

We write $D_{n,\hbar}\operatorname{-mod}$ for the category of finitely generated $D_{n,\hbar}$-modules. For 
$M_\hbar\in D_{n,\hbar}\operatorname{-mod}$, we write $\operatorname{Supp}(M_\hbar)$ for the reduced subscheme of 
$T^*R_{n}$ defined by the annihilator of $M_\hbar/\hbar M_\hbar$. This reduced scheme is 
known as the {\it singular support} of $M_\hbar$. Similarly, we can talk about singular supports of objects in
$\underline{D}_{m,\hbar}\operatorname{-mod}$ and $D_{(n,m),\hbar}\operatorname{-mod}$ and also about the supports of objects in 
the corresponding (equivariant) bounded derived categories. 

The main result of the section is the following proposition. 

\begin{Prop}\label{Prop:supports}
Suppose $M_\hbar\in D^b_{G_{(n,m),\lambda\hbar}}(D_{(n,m),\hbar}\operatorname{-Mod})$ has finitely generated 
cohomology (as modules over $D_{(n,m),\hbar}$). Then $\operatorname{Ind}_{n,m}^{n+m}(M_\hbar)\in D^b_{G_{n+m,\lambda\hbar}}(D_{n+m,\hbar}\operatorname{-mod})$. Moreover, 
the singular support of $\operatorname{Ind}_{n,m}^{n+m}(M_\hbar)$ lies in the $G_{n+m}$-saturation of the locus of all quadruples $(A,B,i,j)$ (i.e., the minimal $G_{n+m}$-stable subset containing this locus) satisfying the following conditions:  
\begin{enumerate}
\item $(A,B,i,j)\in \mu_{n+m}^{-1}(0)$,
\item $j\in V_{n+m}^*$ vanishes on the span of the last $m$ basis vectors, and $B\in \mathfrak{g}_{(n,m)}$,
\item $A\in \mathfrak{p}_{(n,m)}$,
\item Let $A'$ denote the projection of $A$ to $\mathfrak{g}_{(n,m)}$ and $i'$ denote the projection of 
$i$ to $V_{n}$. Then  $(A',B, i', j)\in \operatorname{Supp}(M_\hbar)$.
\end{enumerate}
\end{Prop}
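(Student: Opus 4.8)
The plan is to trace the singular support through the two geometric operations that constitute $\operatorname{Ind}_{n,m}^{n+m}$, namely $\varpi^!$ followed by $\iota_*$, using the fact that for $D_\hbar$-modules the singular support is a Lagrangian-type invariant that behaves predictably under (smooth) pullback and (proper) pushforward. First I would recall the general principle: if $\psi$ is a morphism of smooth varieties and $N_\hbar$ a coherent $D_\hbar$-module, then (taking associated graded modulo $\hbar$, where $D_\hbar/\hbar$ is the structure sheaf of the cotangent bundle) the singular support of $\psi^! N_\hbar$ is contained in the preimage of $\operatorname{Supp}(N_\hbar)$ under the natural map $\psi^* T^*X \to T^*X$ (pullback of covectors), i.e. $\operatorname{Supp}(\psi^! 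N_\hbar) \subseteq T^*Y \times_{T^*X} \operatorname{Supp}(N_\hbar)$, and the singular support of $\psi_* N_\hbar$ (for $\psi$ proper) is contained in the image of $\operatorname{Supp}(N_\hbar)$ under the correspondence $T^*Y \leftarrow Y\times_X T^*X \to T^*X$. These are standard; I would cite the classical theory of characteristic varieties of D-modules, adapted to the Rees/$\hbar$ setting which is harmless since everything is conic.

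Next I would set up the cotangent bundles explicitly. We have $T^*R_{n+m} = \{(A,B,i,j)\}$ with $A,B \in \mathfrak{gl}_{n+m}$, $i \in V_{n+m}$, $j \in V_{n+m}^*$, and $\mu_{n+m}(A,B,i,j) = [A,B] + ij$. For the parabolic space $R_{(n,m)}^{par} = \mathfrak{p}_{(n,m)} \times \K^{n+m}$, a point of $T^*R_{(n,m)}^{par}$ records $A \in \mathfrak{p}_{(n,m)}$, a covector in $\mathfrak{p}_{(n,m)}^*$ which we identify with $\mathfrak{g}_{(n,m)} \oplus \mathfrak{n}^-$ or rather with $\mathfrak{gl}_{n+m}/\mathfrak{n}^+$ — but the cleanest bookkeeping is: the conormal condition coming from $\iota$ being a closed-type embedding (after passing to the correspondence $G_{n+m}\times^{P_{(n,m)}}(\cdots)$) forces the $B$-covector to annihilate $\mathfrak{n}^+$, i.e. $B \in \mathfrak{p}_{(n,m)}^{-,\perp}$... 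I would carefully note that conditions (2) and (3) in the statement — $j$ vanishing on the last $m$ basis vectors, $B \in \mathfrak{g}_{(n,m)}$, $A \in \mathfrak{p}_{(n,m)}$ — are exactly the conormal/moment conditions cut out by the morphism $\iota: R_{(n,m)}^{par}/P_{(n,m)} \to R_{n+m}/G_{n+m}$ together with the equivariance forcing us into $\mu^{-1}(0)$, which is condition (1). Meanwhile condition (4) is precisely the statement that the image of the covector under $\varpi^*$ (which just forgets the nilradical directions, projecting $A \mapsto A'$, $i \mapsto i'$ and keeping $B, j$) lands in $\operatorname{Supp}(M_\hbar) \subseteq T^*(R_{(n,m)})$.

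So the heart of the argument is: (i) $\varpi^! M_\hbar$ has singular support inside $\varpi^*{}^{-1}(\operatorname{Supp} M_\hbar) \subseteq T^*R_{(n,m)}^{par}$, which after intersecting with $\mu_{P}^{-1}(0)$ (strong equivariance) is the locus of $(A,B,i,j)$ satisfying (3), (4), and the relevant part of (1)–(2) inside the parabolic cotangent bundle; (ii) applying $\iota_*$ moves this via the $G_{n+m}$-correspondence $G_{n+m} \times^{P_{(n,m)}} T^*_{R_{(n,m)}^{par}} R_{n+m} \to T^*R_{n+m}$, whose image is exactly the $G_{n+m}$-saturation of the locus cut out by (1)–(4). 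For finite generation of the cohomology of $\operatorname{Ind}_{n,m}^{n+m}(M_\hbar)$, I would invoke Lemma \ref{Lem:direct_image_properties}(2) (since $\iota$ is proper) together with Lemma \ref{Lem:inverse_image_properties}(2) (since $\varpi$ is smooth, $\varpi^!$ preserves coherence). The main obstacle I anticipate is bookkeeping the cotangent/conormal identifications precisely — in particular keeping straight which of $A,B,i,j$ live in the Levi versus the parabolic versus the nilradical, and verifying that the conormal variety to $\iota$ (i.e. the $T^*_{R^{par}_{(n,m)}}R_{n+m}$ appearing in the correspondence) is cut out by exactly conditions (2) and (3) and not something weaker; this is a linear-algebra computation with the moment map $[A,B]+ij$ restricted to the parabolic, which I would do etale-locally as in the proof of Lemma \ref{Lem:base_change}. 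Once the conormal computation is pinned down, assembling (i) and (ii) via the standard pullback/pushforward estimates for characteristic varieties is routine.
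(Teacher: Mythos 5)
Your high-level geometric picture is correct -- the proof does come down to tracing the support through the pull-push correspondence via the parabolic ``almost-commuting'' locus, and you have correctly identified how conditions (1)--(4) arise from the pieces of that correspondence. However, the core of your argument -- invoking the classical estimates for characteristic varieties under smooth pullback and proper pushforward of $D$-modules -- is different from, and technically weaker than, the paper's argument, and it leaves a real gap.

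The paper's proof avoids appealing to any Kashiwara--Gabber-type characteristic variety estimate. Instead it exploits the specific feature of the Rees/$\hbar$ setting: since the singular support of $M_\hbar$ is by definition the support of $M_\hbar/\hbar M_\hbar$, one can try to reduce the whole statement to the $\hbar=0$ world, where $\operatorname{Ind}^{n+m}_{n,m}$ becomes a pull-push functor between \emph{derived categories of equivariant coherent sheaves} on cotangent stacks, and the support estimate is then trivial (supports are preserved under the correspondence, by inspection of the pull-push diagram through $\mathfrak{C}^{par}_{\vec{m}}/P_{\vec{m}}$). This reduction is precisely the content of Steps 2--3 (identifying $\varpi^!$ and $\iota_*$ with their coherent-sheaf counterparts $\varpi^!_0$ and $\iota_{*,0}$ on $\hbar$-torsion modules) and Step 5 (the exact triangle $\operatorname{Ind}(M_\hbar)\xrightarrow{\hbar}\operatorname{Ind}(M_\hbar)\to\operatorname{Ind}(M_0)\xrightarrow{+1}$, reducing the general case to $M_0 = M_\hbar/\hbar M_\hbar$). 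Your proposal skips all three of these steps: you write ``taking associated graded modulo $\hbar$'' as though it obviously commutes with the pull-push, but that commutation is not automatic and is exactly what Steps 2--3 establish. And you do not explain how to pass from the $\hbar$-torsion-free case to a general $M_\hbar$, which is Step 5.

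The second issue is that the D-module characteristic-variety estimates you plan to cite are proved in a more restrictive setting (non-equivariant, non-twisted, actual $D$-modules rather than $D_\hbar$-modules, closed embeddings rather than general proper morphisms of quotient stacks). Adapting them uniformly to the strongly $(G,\lambda\hbar)$-equivariant $D_\hbar$-module setting over stacks, as required here, would itself demand an argument of roughly the length of Steps 2--5 of the paper. You flagged the conormal bookkeeping as the main obstacle, but the actual main obstacle is earlier: you need a clean reduction to a statement about coherent sheaves before any support computation is meaningful, and that is the part your proposal does not supply.
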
 
\begin{proof}
In what follows we write $\vec{m}$ for $(n,m)$. The proof is in several steps.

{\it Step 1}.
We note that the functor $\operatorname{Ind}_{\vec{m}}^{n+m}$ has finite homological amplitude. 
The claim that the cohomology of $\operatorname{Ind}_{\vec{m}}^{n+m}(M_\hbar)$ is finitely generated follows from (2) of Lemma
\ref{Lem:inverse_image_properties} (applied to the smooth morphism $\varpi$) combined with (2) of Lemma \ref{Lem:direct_image_properties}
(applied to the projective morphism $\iota$). To prove the containment of the support, we can assume $M$ lies in the heart of the 
default t-structure.

{\it Step 2}.
Assume that $M_\hbar$ is annihilated by $\hbar$, in this case we will write $M_0$ instead of $M_\hbar$. 
Let 
$$\varpi^!_0: D^b(\Coh T^*(R_{\vec{m}}/G_{\vec{m}}))\rightarrow D^b(\Coh T^*(R^{par}_{\vec{m}}/P_{\vec{m}})),$$ 
$$\iota_{*0}: D^b(\Coh T^*(R^{par}_{\vec{m}}/P_{\vec{m}}))\rightarrow 
D^b(\Coh T^*(R_{n+m}/G_{n+m}))$$ 
be the analogs of the functors $\varpi^!,\iota_*$. We note that $\varpi^!(M_0)$ is isomorphic to the image 
of $\varpi^!_0(M_0)$ under the inclusion functor 
$$D^b(\Coh T^*(R^{par}_{\vec{m}}/P_{\vec{m}}))\rightarrow D^b_{P_{\vec{m},\lambda\hbar}}(D^{par}_{\vec{m},\hbar}\operatorname{-mod}).$$
From here we conclude that the supports of $\varpi^!(M_0)$ and $\varpi^!_0(M_0)$ are the same. 

{\it Step 3}.
Now we claim that for $N_0\in D^b(\Coh T^*(R^{par}_{\vec{m}}/P_{\vec{m}}))$ the supports of $\iota_{0*}(N_0)$ and
$\iota_*(N_0)$ coincide (where we abuse the notation and view $N_0$ as an object of $D^b_{P_{\vec{m},\lambda\hbar}}(D^{par}_{\vec{m},\hbar}\operatorname{-mod})$ via the natural functor).
Note that $\iota$ can be thought of as the action morphism 
$$G_{n+m}\times^{P_{\vec{m}}}R^{par}_{\vec{m}}\rightarrow R_{n+m}.$$
Forgetting the equivariance does not change the support. So we can consider the functors 
between the categories of nonequivariant modules instead. The functor $\iota_*$ includes tensoring 
with a bimodule that is flat over $\K[\hbar]$ and the sheaf-theoretic pushforward (in both cases computed using the \v{C}ech complex
associated with an open affine cover of $G_{n+m}\times^{P_{\vec{m}}}R^{par}_{\vec{m}}$). Analyzing the two operations separately
we deduce that $\iota_*(N_0)\cong \iota_{0*}(N_0)$ in $D^b(D_{n+m,\hbar}\operatorname{-mod})$. Hence their supports are the same. 

{\it Step 4}. 
Now we claim that the support of $\iota_{*0}\varpi_0^!(M_0)$ lies in the locus 
specified by conditions (1)-(4) (where in (4) we replace $\operatorname{Supp}(M_\hbar)$ with 
$\operatorname{Supp}(M_0)$).
Let $\mathfrak{C}_{\vec{m}}^{par}$ denote a parabolic ``almost
commuting'' derived scheme given by 
$$\{(A,B,i,j)\in \mathfrak{p}_{\vec{m}}^2\oplus \K^{n+m}\oplus (\K^n)^*| [A,B]+ij=0\}.$$
Note that the canonical bundle of the stack $R^{par}_{\vec{m}}/P_{\vec{m}}$ is a character of 
$P_{\vec{m}}$. So tensoring with this line bundle commutes with $\varpi$. It follows that, up to tensoring 
with characters of $G_{\vec{m}},G_{n+m}$, the functor $\iota_{*0}\varpi_{0}^!$ is the composition of 
two pull-push functors. We now can use a direct analog of Lemma \ref{Lem:base_change} to show that this 
composition is isomorphic to the pull-push functor, say $\mathcal{F}$, via $\mathfrak{C}_{\vec{m}}^{par}/P_{\vec{m}}$. 
Note that the support of $\mathcal{F}(M_0)$ satisfies (1)-(4), while tensoring with line bundles 
preserves conditions (1)-(4). 

We conclude that the support of $\Ind^{n+m}_{\vec{m}}(M_0)$ satisfies conditions (1)-(4). 

{\it Step 5}. Now we consider the general step. If $M_\hbar$ is annihilated by $\hbar^k$ for some $k$, then we are done.
So we can quotient out the $\hbar$-torsion part of $M_\hbar$ and assume that $\hbar$ is not a zero divisor.
Set $M_0:=M_\hbar/\hbar M_\hbar$. Then we have an exact triangle 
$$\Ind^{n+m}_{\vec{m}}(M_\hbar)\xrightarrow{\hbar\cdot} \Ind^{n+m}_{\vec{m}}(M_\hbar)\rightarrow 
\Ind^{n+m}_{\vec{m}}(M_0)\xrightarrow{+1}.$$
Using this we see that the support of $\Ind^{n+m}_{\vec{m}}(M_\hbar)$ is contained in that 
of $\Ind^{n+m}_{\vec{m}}(M_0)$.      
\end{proof}    

\subsection{Application to spherical Cherednik algebras}\label{SS:Chered_ind}
The goal in this section is to use Proposition \ref{Prop:supports} to establish an induction 
functor for representations of (spherical) rational Cherednik algebras $\A_{n,\lambda}$. Later on in the paper, we will
discuss a variant of the construction of this section in the case when the base field is of characteristic $p$ field 
for $p$ large enough. 

Consider the stable locus $(T^*R_n)^s\subset T^*R_n$. 
By definition, it consists of all 4-tuples $(A,B,i,j)$ such that  the only $(A,B)$-stable subspace
in $\C^n$ containing $i$ is $\C^n$ itself. We write $\mu_n$ for the moment map $T^*R_n\rightarrow \g_n$. 
Set $\mu_n^{-1}(0)^s:=(T^*R_n)^s\cap \mu^{-1}(0)$. By \cite[Lemma 2.1.3]{GG}, from 
$(A,B,i,j)\in (T^*R)^s$ it follows that $[A,B]=0$ and $j=0$. The subvariety 
$\mu_n^{-1}(0)^s\subset \mu_n^{-1}(0)$ is open and $G_n$-stable. 
We write $\mu^{-1}(0)^{uns}$ for the unstable locus $\mu_n^{-1}(0)\setminus \mu_n^{-1}(0)^{s}$.
The GIT quotient $\mu_n^{-1}(0)^s/G$ (or more precisely, the GIT quotient of $\mu^{-1}_n(0)$ by the action of $G_n$ with respect
to the character $\det^{-1}$) is nothing else but the Hilbert scheme $\operatorname{Hilb}_n(\C^2)$. 

Let us now deduce a corollary of Proposition \ref{Prop:supports}. 

\begin{Cor}\label{Cor:unstability_preserved}
Suppose that $M_n\in D_n\operatorname{-mod}^{G_n,\lambda}, M_m\in \underline{D}_m\operatorname{-mod}^{G_m,\lambda}$ are such that $M_n$ is supported on
$\mu^{-1}_n(0)^{uns}$. Then $\operatorname{Ind}_{n,m}^{n+m}(M_n\boxtimes M_m)$
is supported on $\mu^{-1}_{n+m}(0)^{uns}$.
\end{Cor}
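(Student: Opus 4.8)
The plan is to deduce this corollary directly from Proposition~\ref{Prop:supports}, together with a Hilbert--Mumford style criterion characterizing the unstable locus $\mu^{-1}_{n+m}(0)^{uns}$. First I would recall that, by definition, a quadruple $(A,B,i,j)\in \mu^{-1}_{n+m}(0)$ is unstable precisely when there is a proper $(A,B)$-stable subspace of $\C^{n+m}$ containing $i$; for points on which $j$ vanishes on the relevant subspace and $B$ is block-diagonal this is automatic once the ``$n$-part'' carries such a subspace. Since support is by construction $G_{n+m}$-stable and unstability is a $G_{n+m}$-invariant condition, it suffices to check that every quadruple $(A,B,i,j)$ in the locus described by conditions (1)--(4) of Proposition~\ref{Prop:supports} is unstable.

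So the key step is the following: take $(A,B,i,j)$ satisfying (1)--(4), and write $A'$ for the projection of $A$ to $\g_{(n,m)}$ and $i'$ for the projection of $i$ to $\C^n$. By (4), $(A',B,i',j)\in\operatorname{Supp}(M_n\boxtimes M_m)$, and since $M_n$ is supported on $\mu^{-1}_n(0)^{uns}$ the $n$-component $(A',B|_{\C^n},i',j|_{\C^n})$ is unstable, i.e.\ there is a proper $(A',B|_{\C^n})$-stable subspace $U\subsetneq\C^n$ with $i'\in U$. The plan is to promote $U$ to a proper $(A,B)$-stable subspace $\tilde U\subsetneq\C^{n+m}$ containing $i$. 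Here one uses conditions (2) and (3): $B$ preserves $\C^n$ (the span of the first $n$ basis vectors) since $B\in\g_{(n,m)}$, and $A\in\p_{(n,m)}$ preserves $\C^n$ as well; moreover $i$ already lies in $\C^n$ because $i'$ is its projection and — one should check — the $\C^m$-component of $i$ does not interfere, or more precisely one takes $\tilde U := U \oplus (\text{something in }\C^m)$ and uses that $A$ being block-\emph{upper}-triangular maps $\C^m$-directions appropriately. The cleanest route is: $\C^n$ is itself $(A,B)$-stable (by (2),(3)), it contains $i$, and the restrictions of $A,B$ to $\C^n$ are exactly $A'|_{\C^n}, B|_{\C^n}$, so $U\subsetneq\C^n\subseteq\C^{n+m}$ is the desired proper $(A,B)$-stable subspace containing $i$ — provided $U$ is proper in $\C^{n+m}$, which it is since $U\subsetneq\C^n$. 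Hence $(A,B,i,j)$ is unstable.

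The main obstacle, such as it is, is bookkeeping about which basis vectors the parabolic $P_{(n,m)}$ stabilizes and therefore in which of the two blocks the ``$i'$'' and ``$j$'' projections live; one must make sure the conventions in Proposition~\ref{Prop:supports} (where $j$ vanishes on the span of the \emph{last} $m$ vectors and $i'$ is the projection to $V_n$) are compatible with $\C^n$ being an $(A,B)$-invariant subspace rather than a quotient. Once the conventions are pinned down the argument is essentially immediate. I would then conclude: the singular support of $\operatorname{Ind}_{n,m}^{n+m}(M_n\boxtimes M_m)$ lies in the $G_{n+m}$-saturation of a locus all of whose points are unstable, and since $\mu^{-1}_{n+m}(0)^{uns}$ is $G_{n+m}$-stable, the support is contained in $\mu^{-1}_{n+m}(0)^{uns}$, as claimed.
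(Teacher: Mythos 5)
Your overall strategy is the paper's: invoke Proposition~\ref{Prop:supports}, and check that every point in the locus cut out by conditions (1)--(4) is unstable. But the ``cleanest route'' you settle on has a concrete error, precisely in the bookkeeping you flag as a worry. Under the conventions of Section~\ref{SS_induction}, the parabolic $P_{(n,m)}$ stabilizes the span of the \emph{last} $m$ standard basis vectors, i.e.\ $\C^m$, not $\C^n$. So for $A\in\p_{(n,m)}$ the $A$-stable subspace is $\C^m$, and $\C^n$ (the span of the first $n$ basis vectors) is a \emph{quotient} $\C^{n+m}/\C^m$, not an $A$-invariant subspace. Likewise $i$ need not lie in $\C^n$; condition (4) only tells you that the image $i'$ of $i$ under the quotient map $\C^{n+m}\to\C^n$ lands there. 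So the claim ``$\C^n$ is itself $(A,B)$-stable and contains $i$'' is false, and $U\subsetneq\C^n$ is not the destabilizing subspace for $(A,B,i,j)$.

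The fix is the other construction you gesture at and then abandon: take $\tilde U$ to be the \emph{preimage} of $U'$ under the quotient $\C^{n+m}\to\C^n$, i.e.\ $\tilde U = U'\oplus\C^m$. Then $\tilde U\subsetneq\C^{n+m}$ is proper, contains $i$ (since $i'\in U'$), is $B$-stable (as $B\in\g_{(n,m)}$ is block diagonal and $U'$ is $B_n$-stable), and is $A$-stable because $A\in\p_{(n,m)}$ preserves $\C^m$ and its image in $\operatorname{End}(\C^{n+m}/\C^m)=\operatorname{End}(\C^n)$ is exactly $A'|_{\g_n}$, under which $U'$ is stable. This is exactly what the paper does. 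The rest of your argument --- $G_{n+m}$-invariance of the unstable locus, reducing to a single point of the locus, projecting to $\operatorname{Supp}(M_n)$ --- is fine.
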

\begin{proof}
Let $(A,B,i,j)\in \mu_{n+m}^{-1}(0)$ be a point in the support of $\operatorname{Ind}_{n,m}^{n+m}(M)$.
Thanks to Proposition \ref{Prop:supports}, we can replace this 4-tuple with a conjugate, so that $A,B\in \mathfrak{p}_{n,m}$ and $j$ vanishes on $\C^m$. Let $(A',B',i',j')$ be the projection of $(A,B,i,j)$
to $T^*R_n$. We further can assume that  $(A',B',i',j')\in \operatorname{Supp}(M_n)$. So there is
an $(A',B')$-stable subspace, say $U'\subsetneq \C^n$, such that $i'\in U'$. Let $U$ be the preimage of
$U'$ in $\C^{n+m}$. It is proper, $(A,B)$-stable, and contains $i$. In particular, $(A,B,i,j)$ is unstable finishing the proof.
\end{proof}

Now we explain how to use this corollary to produce a functor between Cherednik categories. 
It makes sense to speak about (microlocal) quantizations of  $\operatorname{Hilb}_n(\C^2)$, see, e.g.,
\cite{quant_iso}. Such quantizations are parameterized by $\lambda\in \C$ and are sheaves of filtered
algebras in the conical topology on $\operatorname{Hilb}_n(\C^2)$. Denote the corresponding quantization
by $\A^s_{n,\lambda}$. By definition it is obtained as the GIT quantum Hamiltonian reduction of 
$D_n$ with respect to the quantum comoment map $x\mapsto x_{R_n}-\lambda\operatorname{tr}(x)$,
see \cite[Section 3.4]{quant_iso}, \cite[Section 1.4]{BL} or \cite[Section 3.4]{BPW}.  
By \cite[Section 4.2]{quant_iso}, we have $\Gamma(\A^s_{n,\lambda})\cong \A_{n,\lambda}$. 
The higher cohomology of $\A^s_{n,\lambda}$ vanishes. 

Next, it makes sense to speak about the category of coherent sheaves over $\A^s_{n,\lambda}$, see \cite[Section 4]{BPW}
or \cite[Section 2.3.1]{BL}. We can consider the global section functor $\Gamma: \operatorname{Coh}(\A^s_{n,\lambda})
\rightarrow \A_{n,\lambda}\operatorname{-mod}$. 

Now we discuss quotient functors from the category $D_n\operatorname{-mod}^{G_n,\lambda}$. 
First, we have the functor of taking $G_n$-invariants, this is a quotient functor 
$D_n\operatorname{-mod}^{G_n,\lambda}\rightarrow \A_{n,\lambda}\operatorname{-mod}$.
Its kernel is the full subcategory of modules with zero $G_n$-invariants. 
On the other hand, we can microlocalize to $(T^*R_n)^s$ and then take $G_n$-invariants. 
This gives a quotient functor $D_n\operatorname{-mod}^{G_n,\lambda}\twoheadrightarrow  
\operatorname{Coh}(\A^s_{n,\lambda})$ whose kernel is the full subcategory 
$D_n\operatorname{-mod}^{G_n,\lambda,uns}$ of all modules supported on $(T^*R_n)^{uns}$,
see \cite[Section 5.5]{BPW} or \cite[Section 4.2]{BL}. 

\begin{Lem}\label{Lem:subcats_coincide}
Suppose $\lambda$ is a positive rational number. 
The subcategory $D_n\operatorname{-mod}^{G_n,\lambda,uns}$ coincides with
the subcategory of all modules with zero $G_n$-invariants, leading to a category equivalence
\begin{equation}\label{eq:category_equiv}
D_n\operatorname{-mod}^{G_n,\lambda}/D_n\operatorname{-mod}^{G_n,\lambda,uns}\rightarrow
\A_{n,\lambda}\operatorname{-mod}.
\end{equation}
\end{Lem}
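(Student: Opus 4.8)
The plan is to show the two Serre subcategories of $D_n\operatorname{-mod}^{G_n,\lambda}$ coincide and then deduce the equivalence from the universal property of Serre quotients. First I would recall that $D_n\operatorname{-mod}^{G_n,\lambda,uns}$ consists of the objects supported on $(T^*R_n)^{uns}=\mu_n^{-1}(0)^{uns}$ (using that the singular support of any coherent strongly equivariant $D$-module on $R_n$ lands in $\mu_n^{-1}(0)$), while the other subcategory consists of the $M$ with $M^{G_n}=0$. So the task is the set-theoretic equality of these two full subcategories.

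For the inclusion ``supported on the unstable locus $\Rightarrow$ no invariants'': here is where positivity of $\lambda$ enters. The functor $\Gamma$ of $G_n$-invariants factors as the composition $D_n\operatorname{-mod}^{G_n,\lambda}\twoheadrightarrow \operatorname{Coh}(\A^s_{n,\lambda})\xrightarrow{\Gamma}\A_{n,\lambda}\operatorname{-mod}$ when restricted appropriately, but more to the point: if $M$ is supported on $\mu_n^{-1}(0)^{uns}$, its microlocalization to the stable locus $(T^*R_n)^s$ vanishes, hence it maps to $0$ in $\operatorname{Coh}(\A^s_{n,\lambda})$; I would then invoke that for $\lambda>0$ the global section functor $\Gamma:\operatorname{Coh}(\A^s_{n,\lambda})\to\A_{n,\lambda}\operatorname{-mod}$ is an equivalence (this is the abelian localization theorem for Hilbert schemes, e.g.\ \cite{quant_iso,BL,BPW}, valid precisely for $\lambda$ positive rational and avoiding $\Sigma$), together with the fact that $\Gamma$ on $\operatorname{Coh}(\A^s_{n,\lambda})$ computes $G_n$-invariants of a representative. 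Chasing through, $M^{G_n}=0$.

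For the reverse inclusion ``$M^{G_n}=0\Rightarrow$ supported on the unstable locus'': suppose $M$ is \emph{not} supported entirely on $\mu_n^{-1}(0)^{uns}$, so its microlocalization $M^s$ to $(T^*R_n)^s$ is a nonzero object of $\operatorname{Coh}(\A^s_{n,\lambda})$. Again by abelian localization for $\lambda>0$, $\Gamma(M^s)\neq 0$; but $\Gamma(M^s)$ is a subquotient (in fact a localization followed by global sections) built from $M^{G_n}$, and one checks $M^{G_n}\ne 0$ — concretely, $M^{G_n}$ surjects onto $\Gamma$ of the microlocalization, or more carefully the natural map $M^{G_n}\to \Gamma(M^s)$ has image generating $\Gamma(M^s)$ over $\A_{n,\lambda}$, forcing $M^{G_n}\neq 0$. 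Contradiction. Hence $M$ is supported on the unstable locus.

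With the two subcategories identified, the category $D_n\operatorname{-mod}^{G_n,\lambda}/D_n\operatorname{-mod}^{G_n,\lambda,uns}$ is simultaneously the Serre quotient by the kernel of the $G_n$-invariants functor. Since taking $G_n$-invariants is a quotient functor onto $\A_{n,\lambda}\operatorname{-mod}$ with exactly that kernel, the induced functor (\ref{eq:category_equiv}) is an equivalence by the universal property of Serre quotients. The main obstacle I expect is the careful bookkeeping in the reverse inclusion: relating $M^{G_n}$ to $\Gamma$ of the microlocalization requires knowing that the localization functor to $(T^*R_n)^s$ does not kill invariants, which is exactly the content of abelian localization in the positive chamber; getting the quantifiers on $\lambda$ (positive rational, hence automatically outside the bad set for these $n$) lined up with the cited theorems is the delicate point, and everything downstream is formal.
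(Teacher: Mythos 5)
Your route differs from the paper's, and its core step has a genuine gap. The paper deduces the forward inclusion (unstable support implies zero $G_n$-invariants) from \cite[Theorem 1.1]{MN2} together with the explicit quiver computation in \cite[Section 8]{MN2}, and then obtains the equivalence (\ref{eq:category_equiv}) from \cite[Theorem 1.1]{MN1} using that $\A_{n,\lambda}$ has finite homological dimension (via Morita equivalence with $H_{n,\lambda}$, Proposition~\ref{Prop:Morita}); the reverse inclusion then falls out formally, as you also observe at the end. You instead try to get both inclusions from abelian localization, but in both directions you rely on an unproved \emph{compatibility} claim: that the $G_n$-invariants functor $M\mapsto M^{G_n}$ ``factors'' through microlocalization, so that $M^{G_n}$ is controlled by $\Gamma(M^s)$ (``$\Gamma$ ... computes $G_n$-invariants of a representative,'' ``$\Gamma(M^s)$ is a subquotient ... built from $M^{G_n}$,'' the map $M^{G_n}\to\Gamma(M^s)$ has image generating $\Gamma(M^s)$).

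That compatibility is the nontrivial point, and it is not a consequence of abelian localization. Abelian localization is a statement solely about the functor $\operatorname{Coh}(\A^s_{n,\lambda})\to\A_{n,\lambda}\operatorname{-mod}$; it says nothing a priori about how the two quotient functors out of $D_n\operatorname{-mod}^{G_n,\lambda}$ (taking $G_n$-invariants on one hand, microlocalizing to the stable locus and then taking global sections on the other) compare. For coherent sheaves on a GIT quotient the analogous compatibility genuinely fails, and in the quantized setting it is exactly the content of the compatibility-of-$t$-structures theorem \cite[Theorem 1.1]{MN2} that the paper invokes. Your closing remark that the needed relation ``is exactly the content of abelian localization'' conflates the two distinct statements, so as written the argument is circular: the compatibility you appeal to is essentially the thing to be proved. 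If you take that compatibility as a cited input from \cite{MN2}, the remainder of your argument is sound and gives a legitimate alternative to the paper's equivalence step (abelian localization in place of \cite[Theorem 1.1]{MN1} plus finite homological dimension), but it has to be flagged as a substantive ingredient rather than as bookkeeping.
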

\begin{proof}
One way to deduce this
is to use \cite[Theorem 1.1]{MN2} (together with the explicit computation in
\cite[Section 8]{MN2}) to show that every object in $D_n\operatorname{-mod}^{G_n,\lambda,uns}$
has zero $G_n$-invariants. It follows that we get a quotient functor in (\ref{eq:category_equiv})
and we need to check it is an equivalence.
The algebra $\A_{\lambda,n}$ is Morita equivalent
to $H_{n,\lambda}$, see Proposition \ref{Prop:Morita}, and hence has finite homological dimension. The claim that the quotient functor
is an equivalence follows from \cite[Theorem 1.1]{MN1}.
\end{proof}

Combining (\ref{eq:category_equiv}) (for $n$ and $n+m$) with Corollary
\ref{Cor:unstability_preserved} we arrive at the following important corollary.
Let $\mathfrak{z}$ denote the center of $\g_m$, a one-dimensional space.
Note that for an object $N\in D(\mathfrak{z})\operatorname{-mod}$
and $M'\in D(\mathfrak{sl}_m)\operatorname{-mod}^{G_m,\lambda}$,
the tensor product $N\otimes M'$ is an object in $\underline{D}_m\operatorname{-mod}^{G_m,\lambda}$.
Set $\bar{\A}_{n,\lambda}:=\A_{n,\lambda}\otimes D(\mathfrak{z})$.

\begin{Cor}\label{Cor:heis_quotient}
Let $M'\in D(\mathfrak{sl}_m)\operatorname{-mod}^{G_m,\lambda}$. The functor
$$\operatorname{Ind}_{n,m}^{n+m}(\bullet\boxtimes M'): D^b((D_n\otimes D(\mathfrak{z}))\operatorname{-mod}^{G_n,\lambda})
\rightarrow D^b(D_{n+m}\operatorname{-mod}^{G_{n+m},\lambda})$$ descends to
a functor
$$\heis_{M'}: D^b(\bar{\A}_{n,\lambda}\operatorname{-mod})\rightarrow D^b(\A_{n+m,\lambda}\operatorname{-mod}).$$
\end{Cor}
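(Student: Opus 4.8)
The plan is to reduce the statement for $\operatorname{Ind}_{n,m}^{n+m}(\bullet\boxtimes M')$ to the support computation of Proposition~\ref{Prop:supports} via the quotient functor presentation of $\A_{n,\lambda}\operatorname{-mod}$ from Lemma~\ref{Lem:subcats_coincide}, using the crucial input that $\lambda$ is a positive rational number. First I would recall that, by Lemma~\ref{Lem:subcats_coincide} applied to both $n$ and $n+m$, we have quotient functors
$$D_n\operatorname{-mod}^{G_n,\lambda}\twoheadrightarrow \A_{n,\lambda}\operatorname{-mod},\qquad D_{n+m}\operatorname{-mod}^{G_{n+m},\lambda}\twoheadrightarrow \A_{n+m,\lambda}\operatorname{-mod},$$
realizing the target abelian categories as Serre quotients by the subcategories of modules supported on the unstable loci. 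Tensoring with $D(\mathfrak{z})$ and using that $D(\mathfrak{z})$ has finite homological dimension, the same holds for $\bar{\A}_{n,\lambda}=\A_{n,\lambda}\otimes D(\mathfrak{z})$: the category $D^b(\bar{\A}_{n,\lambda}\operatorname{-mod})$ is the Verdier quotient of $D^b((D_n\otimes D(\mathfrak{z}))\operatorname{-mod}^{G_n,\lambda})$ by the thick subcategory of complexes whose cohomology is supported on $\mu_n^{-1}(0)^{uns}\times \mathfrak{z}$.

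The key step is then to check that $\operatorname{Ind}_{n,m}^{n+m}(\bullet\boxtimes M')$ carries this thick subcategory into the analogous one for $n+m$, so that it descends to the quotients. Since $\operatorname{Ind}_{n,m}^{n+m}$ is a composition of the smooth pullback $\varpi^!$ and the proper pushforward $\iota_*$ (Section~\ref{SS_induction}), it has finite homological amplitude and sends complexes with coherent cohomology to complexes with coherent cohomology, by Lemmas~\ref{Lem:inverse_image_properties} and \ref{Lem:direct_image_properties}; so it suffices to check the support condition on objects of the heart, and there it is exactly Corollary~\ref{Cor:unstability_preserved}: if $M_n$ is supported on $\mu_n^{-1}(0)^{uns}$ then $\operatorname{Ind}_{n,m}^{n+m}(M_n\boxtimes M')$ is supported on $\mu_{n+m}^{-1}(0)^{uns}$. (Here one applies Corollary~\ref{Cor:unstability_preserved} with $M_m$ replaced by any object of $\underline{D}_m\operatorname{-mod}^{G_m,\lambda}$ of the form $N\otimes M'$ with $N\in D(\mathfrak{z})\operatorname{-mod}$; the argument only uses that the $\mathfrak{sl}_m$-part $M'$ is an equivariant $D$-module and is insensitive to the $\mathfrak{z}$-factor, since instability is detected on the $\C^{n}\subset \C^{n+m}$ block.) By the universal property of the Verdier quotient, $\operatorname{Ind}_{n,m}^{n+m}(\bullet\boxtimes M')$ then factors uniquely through a functor $\heis_{M'}:D^b(\bar{\A}_{n,\lambda}\operatorname{-mod})\to D^b(\A_{n+m,\lambda}\operatorname{-mod})$, and we set $\heis_{M'}$ to be this factorization.

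The main obstacle, and the only place requiring care, is the bookkeeping around equivariant versus nonequivariant categories and the $\hbar$-filtered setup: Proposition~\ref{Prop:supports} and Corollary~\ref{Cor:unstability_preserved} are phrased for the Rees/$\hbar$ versions and for equivariant $D$-modules on $R_{n+m}$, whereas the statement here is at $\hbar=1$ and after passing to the Hamiltonian reduction. One must check that setting $\hbar=1$ and microlocalizing to the stable locus are compatible with $\operatorname{Ind}$, i.e. that the diagram relating $\operatorname{Ind}_{n,m}^{n+m}$ on $D_{\hbar}$-modules with the corresponding functor on $D$-modules (and the one on $\operatorname{Coh}$ of the quantized Hilbert schemes) commutes; this is routine given that $\operatorname{Ind}$ is built from pull-push along the $\hbar$-families of bimodules $D_{Y\to X,\hbar}$ and $D_{X\leftarrow Y,\hbar}$, which are flat over $\K[\hbar]$, so everything is compatible with the specialization $\hbar\mapsto 1$ and with restriction to open subsets. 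Once this compatibility is recorded, the corollary follows formally.
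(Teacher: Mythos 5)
Your argument is essentially the route the paper intends: the paper simply remarks that the corollary follows by combining the quotient presentation from Lemma~\ref{Lem:subcats_coincide} (equation~(\ref{eq:category_equiv}), for $n$ and $n+m$) with Corollary~\ref{Cor:unstability_preserved}, which is exactly what you carry out. Your final paragraph about $\hbar$-bookkeeping is harmless but not really needed here, since Corollary~\ref{Cor:unstability_preserved} is already stated at $\hbar=1$ (the Rees/$\hbar$ manipulations are internal to the proof of Proposition~\ref{Prop:supports}); and the identification of $D^b(\bar{\A}_{n,\lambda}\operatorname{-mod})$ with a Verdier quotient, while true (the abelian quotient functor admits an adjoint), could be sidestepped entirely by defining $\heis_{M'}$ as the composite of a lift $D^b(\bar\A_{n,\lambda}\operatorname{-mod})\to D^b((D_n\otimes D(\mathfrak z))\operatorname{-mod}^{G_n,\lambda})$, induction, and the quotient to $D^b(\A_{n+m,\lambda}\operatorname{-mod})$, with Corollary~\ref{Cor:unstability_preserved} guaranteeing independence of the lift.
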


In Section \ref{S_Heis_generator} we will apply this construction to a very particular choice of $N$.

\subsection{Translation and wall-crossing bimodules}\label{SS_WC}
This section does not deal with parabolic induction for D-modules, but uses some results mentioned in the previous section.
In this section our base field is a field $\K$ of characteristic $0$. Let $\lambda_1,\lambda_2\in \mathbb{Q}$ be two elements with integral 
difference such that neither lies in 
$$\Sigma:=\{\frac{a}{b}| a,b\in\Z, 0<-a<b\leqslant n\}.$$ We can form that 
$\A_{n,\lambda_1}$-$\A_{n,\lambda_2}$-bimodule 
$$\A_{n,\lambda_1\leftarrow \lambda_2}:=Q(D_n,G_n,\lambda_2)^{G_n,\lambda_1-\lambda_2},$$
where the superscript means that we take the semi-invariant elements for the $G_n$-action for 
the character $g\mapsto \det(g)^{\lambda_1-\lambda_2}$. Note that we have a natural homomorphism
\begin{equation}\label{eq:translation_composition}
\A_{n,\lambda_1\leftarrow \lambda_2}\otimes_{\A_{n,\lambda_2}}\A_{n,\lambda_2\leftarrow \lambda_3}
\rightarrow \A_{n,\lambda_1\leftarrow \lambda_3}.
\end{equation} 

\begin{Lem}\label{Lem:translation_same_sign}
Suppose that $\lambda_1,\lambda_2,\lambda_3\in \K$ are such that $\lambda_i-\lambda_{i+1}\in \Z$. Further, suppose that 
$\lambda_1,\lambda_2,\lambda_3$ satisfy one of the following three conditions  
\begin{itemize} 
\item[(i)] 
$\lambda_i\in \mathbb{Q}_{\geqslant 0}$ for all $i$. 
\item[(ii)] 
$\lambda_i\in \mathbb{Q}_{\leqslant -1}$ for all $i$. 
\item[(iii)] $\lambda_i$ is not of the form $\frac{a}{b}$, where $a,b$ are coprime integers, and $2\leqslant b\leqslant n$.
\end{itemize}
 Then the following claims hold.
\begin{enumerate}
\item $\A_{n,\lambda_1\leftarrow \lambda_2}$ is a Morita equivalence $\A_{n,\lambda_1}$-$\A_{n,\lambda_2}$-bimodule.
\item (\ref{eq:translation_composition}) is an isomorphism.
\item For each partition $\eta$ of $n$, we have $\A_{n,\lambda_1\leftarrow \lambda_2}\otimes_{\A_{n,\lambda_2}}\Delta_{\lambda_2}(\eta)
\xrightarrow{\sim}\Delta_{\lambda_1}(\eta)$.
\end{enumerate} 
\end{Lem}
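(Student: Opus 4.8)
The plan is to treat the three cases (i), (ii), (iii) in a uniform way as much as possible, reducing everything to the key input that in each case $\A_{n,\lambda}$ is Morita equivalent to $H_{n,\lambda}$ (Proposition \ref{Prop:Morita} covers (i) and (iii) since then $\lambda\notin\Sigma$; for (ii) one uses the equivalence $\OCat_{n,\lambda}\cong\OCat_{n,-\lambda-1}$, or more precisely the fact, recalled in Remark \ref{Rem:negative_parameters}, that $\lambda\le -1$ rational with $-\lambda-1\notin\Sigma$ also gives a Morita equivalence). The bimodule $\A_{n,\lambda_1\leftarrow\lambda_2}=Q(D_n,G_n,\lambda_2)^{G_n,\lambda_1-\lambda_2}$ is the GIT-style quantum Hamiltonian reduction with a shifted character; the strategy throughout is to pass to the microlocal/sheaf-theoretic picture on $\operatorname{Hilb}_n(\C^2)$, where $\A^s_{n,\lambda_1\leftarrow\lambda_2}$ is a line-bundle-type bimodule over the quantizations $\A^s_{n,\lambda_i}$, and then take global sections using the vanishing of higher cohomology.

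First I would prove (1). Microlocalize: the sheaf $\A^s_{n,\lambda_1\leftarrow\lambda_2}$ on $\operatorname{Hilb}_n(\C^2)$ is, by the construction of GIT quantum reductions with a twist (as in \cite{quant_iso}, \cite{BPW}, \cite{BL}), a quantization of the line bundle $\mathcal{O}(\lambda_1-\lambda_2)$ deformed appropriately, and it is locally free of rank one as a left and as a right module over the respective quantizations. Hence microlocally it induces an equivalence $\operatorname{Coh}(\A^s_{n,\lambda_2})\to\operatorname{Coh}(\A^s_{n,\lambda_1})$, with inverse given by $\A^s_{n,\lambda_2\leftarrow\lambda_1}$ (this also gives (2) at the microlocal level, via the composition map (\ref{eq:translation_composition}) being an iso of line-bundle bimodules — one checks it on an open cover where all sheaves trivialize). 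To descend to global sections one needs the derived global sections to be concentrated in degree zero, which holds because $H^{>0}(\A^s_{n,\lambda})=0$ and the same argument (twisting by a line bundle and using that on $\operatorname{Hilb}_n(\C^2)$ the relevant higher cohomology vanishes for these parameters — this is where the hypotheses (i)/(ii)/(iii) enter, via abelian localization holding on one side) applies to the twisted sheaf; combined with Lemma \ref{Lem:subcats_coincide} identifying $\A_{n,\lambda}\operatorname{-mod}$ with the quotient $D_n\operatorname{-mod}^{G_n,\lambda}/D_n\operatorname{-mod}^{G_n,\lambda,uns}$, one gets that $\A_{n,\lambda_1\leftarrow\lambda_2}\otimes_{\A_{n,\lambda_2}}-$ and $\A_{n,\lambda_2\leftarrow\lambda_1}\otimes_{\A_{n,\lambda_1}}-$ are mutually inverse equivalences on module categories. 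Finite homological dimension (from Morita equivalence with $H_{n,\lambda}$) ensures this equivalence of abelian categories upgrades compatibly. Then (2) on global sections follows from (1) plus the microlocal statement, since $\A_{n,\lambda_1\leftarrow\lambda_3}$ is also a Morita bimodule and the map (\ref{eq:translation_composition}) is an isomorphism microlocally, hence after $\Gamma$.

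For (3): the standard module $\Delta_{\lambda_2}(\eta)$ corresponds, under the Hamiltonian-reduction realization, to a parabolic Verma-type object — concretely, in the category $\OCat$ it is characterized as a highest weight object with lowest weight space $\eta$, and on the level of D-modules it is the reduction of a suitable standardly-filtered equivariant D-module on $R_n$ supported on a Lagrangian. The translation bimodule $\A_{n,\lambda_1\leftarrow\lambda_2}$ with \emph{integral} shift $\lambda_1-\lambda_2\in\Z$ is genuinely the twist by the \emph{actual} line bundle $\mathcal{O}(\lambda_1-\lambda_2)$ (no fractional powers), so tensoring with it does not change the underlying filtered/graded structure: one checks on associated graded that $\gr\big(\A_{n,\lambda_1\leftarrow\lambda_2}\otimes_{\A_{n,\lambda_2}}\Delta_{\lambda_2}(\eta)\big)\cong\gr\Delta_{\lambda_1}(\eta)$, identifying both with $\C[\mu^{-1}(0)/\!/\!/G_n]$-modules coming from the same coherent sheaf on $\operatorname{Hilb}_n$; since both sides are standard modules over algebras of finite homological dimension with the same associated graded and the same $\eu$-grading behavior (using Lemma \ref{Lem:Euler_relation} to track the Euler element under reduction), the natural map between them is an isomorphism. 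Alternatively, and perhaps more cleanly, one uses that KZ-type/localization functors are compatible with translation and that $\Delta_{\lambda}(\eta)\mapsto\eta$ on $K_0$ is preserved; combined with (1) being a highest-weight equivalence (translation by an integral character preserves the highest weight poset and sends standards to standards — a standard fact for these quantizations, deducible from the BEG-type construction), we get $\A_{n,\lambda_1\leftarrow\lambda_2}\otimes\Delta_{\lambda_2}(\eta)\cong\Delta_{\lambda_1}(\eta)$.

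The main obstacle I anticipate is the cohomology vanishing needed to pass from the microlocal picture to global sections for the twisted bimodule $\A^s_{n,\lambda_1\leftarrow\lambda_2}$, and relatedly making precise that translation preserves the highest-weight structure (so that standards go to standards) under the three different parameter regimes — in regime (ii) one cannot directly invoke abelian localization at $\lambda$ itself and must first transport through the equivalence $\OCat_{n,\lambda}\cong\OCat_{n,-\lambda-1}$ or argue via the opposite GIT chamber, and one must check this transport is compatible with the translation bimodules. Everything else is a fairly routine assembly of \cite{quant_iso}, \cite{BPW}, \cite{BL} together with Proposition \ref{Prop:Morita} and Lemma \ref{Lem:subcats_coincide}.
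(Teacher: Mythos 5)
Your strategy runs through abelian localization, which is the right circle of ideas, but the paper's proof of (1) and (2) is substantially more direct than the microlocal route you outline and sidesteps the cohomology-vanishing issue you yourself flag as the bottleneck. The paper factors the translation functor as $\mathcal{F}_1\circ\mathcal{F}_2\circ\mathcal{F}_3$, where $\mathcal{F}_1$ is the quotient functor $D_n\operatorname{-mod}^{G_n,\lambda_1}\to\A_{n,\lambda_1}\operatorname{-mod}$ of taking $G_n$-invariants, $\mathcal{F}_2$ is the tautological equivalence given by tensoring with the character $\C_{\lambda_1-\lambda_2}$, and $\mathcal{F}_3=Q(D_n,G_n,\lambda_2)\otimes_{\A_{n,\lambda_2}}\bullet$ is the left adjoint of the quotient functor at level $\lambda_2$. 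The content of Lemma~\ref{Lem:subcats_coincide} is that the kernel of each quotient functor is the subcategory supported on the unstable locus; since that subcategory is independent of the twist and is preserved by $\mathcal{F}_2$, the composition $\mathcal{F}_1\circ\mathcal{F}_2$ is a quotient functor with the same kernel as the one whose adjoint is $\mathcal{F}_3$, and so $\mathcal{F}_1\circ\mathcal{F}_2\circ\mathcal{F}_3$ is an equivalence. This gives (1), and (2) follows formally. No separate sheaf-cohomology computation on $\Hilb_n(\C^2)$ is needed -- that input is hidden entirely inside Lemma~\ref{Lem:subcats_coincide}, which you already cite. Your handling of case (ii) via the opposite GIT chamber matches the paper. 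So nothing you say is wrong, but the detour through quantized line bundles and the locally-free-of-rank-one claim adds work that the direct factorization makes unnecessary, and as written your argument still rests on an unresolved vanishing step.

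For (3) there is a genuine gap. The paper does not prove this item; it cites \cite[Proposition 4.11]{catO_charp}. Your associated-graded argument does not close the gap: knowing $\gr\bigl(\A_{n,\lambda_1\leftarrow\lambda_2}\otimes_{\A_{n,\lambda_2}}\Delta_{\lambda_2}(\eta)\bigr)\cong\gr\Delta_{\lambda_1}(\eta)$ does not yield an isomorphism of the filtered modules, and you never construct the map whose isomorphy you assert. Your ``alternative'' -- that integral translation sends standards to standards, as ``a standard fact'' -- is circular, since that statement \emph{is} claim (3). The observation you open the paragraph with, that $\Delta_{\lambda}(\eta)$ is the reduction of a concrete $(G_n,\lambda)$-equivariant $D_n$-module whose construction visibly commutes with the tautological twist $\mathcal{F}_2$, is the right starting point for an actual proof; but you pivot away from it to the unsupported associated-graded claim rather than carrying it through.
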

\begin{proof}
The functor of tensoring with $\A_{n,\lambda_i\leftarrow \lambda_j}$ (where $i,j\in \{1,2,3\}$) is the composition $\mathcal{F}_1\circ\mathcal{F}_2\circ \mathcal{F}_3$, where 
\begin{itemize}
\item $\mathcal{F}_1$ is the quotient functor $D_n\operatorname{-mod}^{G_n,\lambda_i}\rightarrow \A_{n,\lambda_i}\operatorname{-mod}$,
\item $\mathcal{F}_2$ is the equivalence of tensoring with the 1-dimensional representation $\C_{\lambda_i-\lambda_j}$,
\item $\mathcal{F}_3=Q(D_n,G_n,\lambda_j)\otimes_{\A_{n,\lambda_j}}\bullet$, the left adjoint of the analog of $\mathcal{F}_1$.
\end{itemize}
Suppose that (i) or (iii) holds. 
By Lemma \ref{Lem:subcats_coincide}, we have that the subcategory of all objects in $D_n\operatorname{-mod}^{G_n,\lambda_i}$ supported on the unstable locus coincides with the full subcategory of all objects without nonzero $G_n$-invariants. (1) and (2) follow immediately. 
In the case when (ii) holds we use the same argument but for the opposite stability condition (when $j$ is cocyclic). 

(3) is a special case of \cite[Proposition 4.11]{catO_charp}.
\end{proof}

Now let $\lambda$ be a positive rational number with denominator $b$, and $\lambda^-$ be such that $\lambda-\lambda^-\in \Z_{>0}$ and $\lambda^-<-1$.  We  consider the functor 
$$\WC_{\lambda\leftarrow \lambda^-}: \A_{n,\lambda\leftarrow \lambda^-}\otimes^L_{\A_{n,\lambda^-}}\bullet:
D^b(\A_{n,\lambda^-}\operatorname{-mod})\rightarrow D^b(\A_{n,\lambda}\operatorname{-mod}).$$
This functor is called the {\it wall-crossing functor}, it has been studied in detail in 
\cite[Section 11]{BL}. When we need to indicate the dependence on $n$, we will write
$\WC_{n,\lambda\leftarrow \lambda^-}$ instead of $\WC_{\lambda\leftarrow \lambda^-}$.
The main result there shows that this functor is a perverse equivalence with 
respect to the filtrations by annihilators. 

Let us recall some details. By \cite[Theorem 5.8.1]{sraco}, the two-sided ideals in $\A_{n,\lambda}$
form a chain: $\{0\}=J_{0}\subsetneq J_1\subsetneq\ldots\subsetneq J_r\subsetneq \A_{n,\lambda}$, where $r=\lfloor n/b\rfloor$.
All these ideals are prime and the associated variety of $\A_{n,\lambda}/J_i$ has dimension $2(n-i(b-1))$. 
In particular, we can consider the Serre subcategory $\mathcal{C}_i:=\A_{n,\lambda}/J_i\operatorname{-mod}
\subset \A_{n,\lambda}\operatorname{-mod}$. Similarly, the two-sided ideals in $\A_{n,\lambda^-}$ form a chain:
$\{0\}=J^-_{0}\subsetneq J^-_1\subsetneq\ldots\subsetneq J^-_r\subsetneq \A_{n,\lambda^-}$ and we can consider the Serre 
subcategories $\mathcal{C}_i^-$. Further, consider the full subcategories $D^b_{\mathcal{C}_i}(\A_{n,\lambda}\operatorname{-mod})$
of all objects with cohomology in $\A_{n,\lambda}\operatorname{-mod}$. Similarly, we have the full subcategories 
$D^b_{\mathcal{C}^-_i}(\A_{n,\lambda^-}\operatorname{-mod})$.

The following claim is a special case of \cite[Theorem 11.2]{BL}.

\begin{Prop}\label{Prop:WC}
The functor $\WC_{\lambda\leftarrow \lambda^-}$ is an equivalence $D^b(\A_{n,\lambda^-}\operatorname{-mod})
\xrightarrow{\sim} D^b(\A_{n,\lambda}\operatorname{-mod})$. Moreover, this equivalence is perverse with respect
to the filtrations $D^b_{\mathcal{C}^-_i}(\A_{n,\lambda^-}\operatorname{-mod})$ and  
$D^b_{\mathcal{C}_i}(\A_{n,\lambda}\operatorname{-mod})$ meaning that
\begin{itemize}
\item $\WC_{\lambda\leftarrow \lambda^-}$ restricts to an equivalence 
$$D^b_{\mathcal{C}^-_i}(\A_{n,\lambda^-}\operatorname{-mod})\xrightarrow{\sim} 
D^b_{\mathcal{C}_i}(\A_{n,\lambda}\operatorname{-mod}),$$
\item and a functor $\WC_{\lambda\leftarrow \lambda^-}[-i(b-1)]$ induces a t-exact equivalence 
$$D^b_{\mathcal{C}^-_i}(\A_{n,\lambda^-}\operatorname{-mod})/
D^b_{\mathcal{C}^-_{i+1}}(\A_{n,\lambda^-}\operatorname{-mod})\xrightarrow{\sim} 
D^b_{\mathcal{C}_i}(\A_{n,\lambda}\operatorname{-mod})/D^b_{\mathcal{C}_{i+1}}(\A_{n,\lambda}\operatorname{-mod})$$
for all $i$.
\end{itemize}
\end{Prop}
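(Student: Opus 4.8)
The plan is to handle the derived-equivalence assertion and the two perversity bullets by different means, reducing the perversity to a support-preservation property of the wall-crossing bimodule together with a local analysis on the open part of each stratum whose only substantive ingredient is a homological computation in the rank-$b$ case. Since $\A_{n,\lambda}$ is Morita equivalent to $H_{n,\lambda}$ (Proposition~\ref{Prop:Morita}), it has finite homological dimension, so the derived categories of all the Serre subquotients appearing below behave well. The fact that $\WC_{\lambda\leftarrow\lambda^-}$ is a derived equivalence I would import from \cite[Section~11]{BL}: one verifies abelian localization for $\A^s_{n,\lambda}$ with stability $\det$ and for $\A^s_{n,\lambda^-}$ with the opposite stability (legitimate because $\lambda>0$ and $\lambda^-<-1$), identifies $\Hilb_n(\C^2)$ taken with the two stabilities as one and the same symplectic resolution, and recognizes $\WC_{\lambda\leftarrow\lambda^-}$ as the composite of abelian localization, the induced geometric equivalence, and global sections; the quasi-inverse is $\WC_{\lambda^-\leftarrow\lambda}$, and the two round trips are the identities by compositivity of \eqref{eq:translation_composition} checked on the regular locus.

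For the first bullet, observe that $\A_{n,\lambda\leftarrow\lambda^-}=Q(D_n,G_n,\lambda^-)^{G_n,\lambda-\lambda^-}$ is a Harish-Chandra $\A_{n,\lambda}$-$\A_{n,\lambda^-}$-bimodule whose associated variety in $X_n\times X_n$, where $X_n:=\operatorname{Spec}\C[\mu_n^{-1}(0)]^{G_n}$, is the diagonal: it is a conical closed subvariety, and over the locus where $G_n$ acts freely the bimodule is invertible. Hence convolution with it does not shrink the associated variety of a finitely generated module, and neither does the inverse functor (given by $\A_{n,\lambda^-\leftarrow\lambda}$, of the same type), so the associated variety is preserved exactly (taking the union over cohomology in the derived setting). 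By \cite[Theorem~5.8.1]{sraco}, $J_i\subset\A_{n,\lambda}$ is the unique prime whose associated variety is the closure of the stratum of dimension $2(n-i(b-1))$, and $J^-_i\subset\A_{n,\lambda^-}$ has the same description; therefore $\WC_{\lambda\leftarrow\lambda^-}$ and its inverse preserve the condition that all cohomology lie in $\mathcal{C}_i$, i.e. restrict to mutually inverse equivalences $D^b_{\mathcal{C}^-_i}(\A_{n,\lambda^-}\operatorname{-mod})\xrightarrow{\sim}D^b_{\mathcal{C}_i}(\A_{n,\lambda}\operatorname{-mod})$.

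For the second bullet I would induct on $n$. If $n<b$, then $\lambda,\lambda^-$ fall under case (iii) of Lemma~\ref{Lem:translation_same_sign}, so $\WC_{\lambda\leftarrow\lambda^-}$ is a Morita, hence $t$-exact, equivalence and there is nothing further to prove. For the inductive step, fix $i$ and microlocalize to the open part of the $i$-th stratum, whose generic point records $n-ib$ distinct simple points and $i$ length-$b$ fat points. The slice description of spherical rational Cherednik algebras — isomorphism \eqref{eq:BE_isomorphism} and its quantum Hamiltonian reduction avatar, Lemmas~\ref{Lem:QHR_isom} and \ref{Lem:homomorphisms_compat}, for $W'=S_b^i\subset S_n$ — identifies the Serre quotient $\mathcal{C}_i/\mathcal{C}_{i+1}$ on either side with the external product of the top layer $\mathcal{C}_0/\mathcal{C}_1$ of $\A_{n-ib,\lambda^{(\pm)}}\operatorname{-mod}$ with $i$ copies of the category of minimal-support modules over $\A_{b,\lambda^{(\pm)}}$, the latter being semisimple with a single simple $\bar V$ (\cite{BEG}); under these identifications $\WC_{n,\lambda\leftarrow\lambda^-}$ becomes $\WC_{n-ib,\lambda\leftarrow\lambda^-}|_{\mathcal{C}_0/\mathcal{C}_1}\boxtimes(\WC_{b,\lambda\leftarrow\lambda^-}|_{\min})^{\boxtimes i}$. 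On $\mathcal{C}_0/\mathcal{C}_1$, i.e. after microlocalizing $\A_{n-ib}$ to the free locus, the wall-crossing bimodule is invertible, so that factor is $t$-exact with shift $0$; and $\WC_{b,\lambda\leftarrow\lambda^-}$ on minimal-support modules sends $\bar V$ to its $\lambda^-$-analogue with homological shift $b-1$. Assembling the $i$ fat points yields total shift $i(b-1)$ and a residual $t$-exact equivalence on $\mathcal{C}_i/\mathcal{C}_{i+1}$, which is the content of the second bullet.

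The main obstacle lives in the last paragraph and has two parts. First, one must show that wall-crossing genuinely intertwines the slice (restriction) functors in the asserted product form — a compatibility between two quotient constructions, microlocalization to the slice and tensoring with the wall-crossing bimodule, which I expect to follow from base change for the underlying $D$-module pull-push, in the spirit of Lemma~\ref{Lem:base_change}, together with the étale-lift identities of Section~\ref{SS:etale_I}. Second, one must compute that $\WC_{b,\lambda\leftarrow\lambda^-}$ is concentrated in homological degree $b-1$ on minimal-support modules; this is consistent with the perverse-equivalence normalization (the minimal stratum has codimension $2(b-1)$, and $\dim\Hilb^0_b(\C^2)=b-1$), but an honest proof — e.g. realizing $\bar V$ under abelian localization as an explicit complex of sheaves on $\Hilb_b(\C^2)$ and applying the flop equivalence — is where the real computation sits.
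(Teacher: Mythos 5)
The paper gives no argument of its own here: Proposition~\ref{Prop:WC} is stated as ``a special case of \cite[Theorem 11.2]{BL}'' and that citation \emph{is} the proof. So what you have written is not an alternative route to the paper's argument but rather an attempt to reconstruct what the proof in \cite{BL} does. Judged as such, the outline is reasonable and roughly consistent with the actual strategy in that reference: realize $\WC_{\lambda\leftarrow\lambda^-}$ via GIT quantum Hamiltonian reduction and abelian localization for opposite stabilities; use the Harish-Chandra property of $\A_{n,\lambda\leftarrow\lambda^-}$ together with the chain structure of two-sided ideals from \cite[Theorem 5.8.1]{sraco} to see that the filtrations $D^b_{\mathcal{C}_i}$, $D^b_{\mathcal{C}^-_i}$ are preserved; and reduce the $t$-exactness-up-to-shift on the $i$-th subquotient to a local model on the stratum, where the only nontrivial factor is the rank-$b$ minimal-support case.

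However, you explicitly flag two gaps and do not close either, and the second of them is not a loose end but the entire content. First, the factorization $\WC_{n,\lambda\leftarrow\lambda^-}\big|_{\mathcal{C}_i/\mathcal{C}_{i+1}}\cong \WC_{n-ib,\lambda\leftarrow\lambda^-}\big|_{\mathcal{C}_0/\mathcal{C}_1}\boxtimes(\WC_{b,\lambda\leftarrow\lambda^-}\big|_{\min})^{\boxtimes i}$ requires a compatibility of the wall-crossing bimodule with the slice isomorphisms of Section~\ref{SS:etale_I} that you assert ``in the spirit of'' the base-change lemma but do not prove; in \cite{BL} this takes genuine work (one needs a HC-bimodule analogue of Lemma~\ref{Lem:compl_intertwine} for the wall-crossing bimodule, plus control of how it interacts with completions and the ideals $J_i$). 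Second, and more seriously, the statement that $R\Hom_{\A'_{b,\lambda}}(\A'_{b,\lambda\leftarrow\lambda^-},V)$ is concentrated in degree $b-1$ is not an independent fact you can appeal to: it is \emph{exactly} the case $n=b$, $i=1$ of the proposition you are trying to prove, and the paper itself later (in the proof of Proposition~\ref{Prop:WC_Heis}) cites \cite[Theorem 11.2]{BL} for precisely this assertion. So in your proposed induction on $n$, the inductive step only yields the perversity shift for $\mathcal{C}_i/\mathcal{C}_{i+1}$ with $i\geq 1$ once the $i=1$ case for the slice algebra $\A_{b,\lambda}$ is known, and that case is not covered by the base $n<b$ (which is a Morita equivalence and carries no shift). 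Without an actual computation of $\WC_{b,\lambda\leftarrow\lambda^-}(\bar V)$ — say via localization to $\Hilb_b(\mathbb{C}^2)$, identification of $\bar V$ with a specific object of coherent sheaves, and tracking it through the flop, as you suggest but do not carry out — the argument is a plan, not a proof.
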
    

The same conclusion holds when we swap $\lambda$ and $\lambda^-$.

\section{Heisenberg generator functor}\label{S_Heis_generator}
In this entire section the base field $\K$ is a characteristic $0$ field. We fix a positive integer $b$ and require that 
$\K$ contains a primitive $b$th root of $1$. 

\subsection{D-module $M'_\lambda$}\label{SS_cuspidal_D_module}
We will be primarily interested in the functor
$$\heis_{M'}: D^b(\bar{\A}_{n,\lambda}\operatorname{-mod})\rightarrow D^b(\A_{n+b,\lambda}\operatorname{-mod})$$
for a special choice of $M'\in D(\mathfrak{sl}_b)\operatorname{-mod}^{G_b,\lambda}$. 

Let $\lambda=\frac{a}{b}$, where $a,b$ are positive coprime integers.
Let $\Orb\subset \mathfrak{sl}_b$ be the principal nilpotent orbit. The fundamental 
group $\pi_1(\Orb)$ is naturally
identified with $Z(\operatorname{SL}_b)$, which, in turn, is identified with
$\{z\in \C^\times| z^b=1\}$. Let $\chi_a$ be the character of $\pi_1(\Orb)$
given by $z\mapsto z^a$. Consider the $\operatorname{SL}_b$-equivariant rank $1$
local system $\mathcal{L}_\lambda$ on $\Orb$ corresponding to $\chi_a$. Let
$M'_\lambda$ be the pushforward ($!$ or $*$, doesn't matter) of $\mathcal{L}_a$
to $\mathfrak{sl}_b$. This is an $\operatorname{SL}_b$-equivariant $D(\mathfrak{sl}_b)$-module, where the center
acts by $\chi_\lambda$. There is the unique action of $Z(G_b)$ on
$M'_\lambda$ making it into a $(G_b,\lambda)$-equivariant D-module: the element $\operatorname{diag}(z,\ldots,z)\in Z(G_b)$
acts on $M'_\lambda$ by $z\mapsto z^a$.

We will be interested in the functor $\heis_{M'_\lambda}$. Here is an important special feature,
which is one of the main results of this section. 

\begin{Thm}\label{Thm:exactness}
The functor $\heis_{M'_\lambda}: D^b(\bar{\A}_{n,\lambda}\operatorname{-mod})\rightarrow D^b(\A_{n+b,\lambda}\operatorname{-mod})$
is t-exact.
\end{Thm}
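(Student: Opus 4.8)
The plan is to reduce the $t$-exactness of $\heis_{M'_\lambda}$ to a concrete statement about a complex of Harish-Chandra bimodules. First I would use Corollary \ref{Cor:heis_quotient} (with $M'=M'_\lambda$) together with Lemma \ref{Lem:subcats_coincide} to realize $\heis_{M'_\lambda}$ as a composite of the quotient functors $D_n\operatorname{-mod}^{G_n,\lambda}\twoheadrightarrow \A_{n,\lambda}\operatorname{-mod}$ and $\operatorname{Ind}_{n,b}^{n+b}(\bullet\boxtimes [M'_\lambda\otimes D(\z)])$. The next step is to show that this descended functor is given by tensoring over $\bar{\A}_{n,\lambda}$ with a complex $B_n$ of $\A_{n+b,\lambda}$-$\bar{\A}_{n,\lambda}$-bimodules: indeed, $\operatorname{Ind}_{n,b}^{n+b}$ is $\varpi^!$ followed by $\iota_*$ for the morphisms of stacks of Section \ref{SS_induction}, so after quantum Hamiltonian reduction the functor is tensoring with the bimodule obtained by reducing $D_{b\to X,\hbar}\otimes_{D_{X,\hbar}}D_{X\leftarrow Y,\hbar}$ against $M'_\lambda\otimes D(\z)$; the point is that this bimodule has coherent (hence finitely generated as a bimodule, using the properness of $\iota$ and Lemma \ref{Lem:direct_image_properties}) cohomology, and each cohomology bimodule is Harish-Chandra (i.e., finitely generated over $\A_{n+b,\lambda}\otimes\bar{\A}_{n,\lambda}^{opp}$ with its adjoint $\g$-action locally finite — this follows since the singular support computation of Proposition \ref{Prop:supports} forces the associated graded to be supported on a Lagrangian lying over the diagonal $\h_{n}/S_n\hookrightarrow\h_{n+b}/S_{n+b}\times\h_n/S_n$).

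Once $\heis_{M'_\lambda}=B_n\otimes^L_{\bar{\A}_{n,\lambda}}(\bullet)$ with $B_n$ a complex of Harish-Chandra bimodules, $t$-exactness amounts to showing two things: (a) the higher cohomology $H^{i}(B_n)$ vanishes for $i\neq 0$, so $B_n$ is a single bimodule; and (b) tensoring with the resulting Harish-Chandra bimodule $H^{0}(B_n)$ is exact, i.e., $H^{0}(B_n)$ is flat (equivalently projective) as a right $\bar{\A}_{n,\lambda}$-module — and symmetrically flat on the left, which together with the quotient-functor description handles both $\operatorname{Tor}$ and the image landing in the heart. The strategy for both (a) and (b) is to pass to a suitable étale lift: restrict everything to the open locus $\h_{\vec m}^0/S_{\vec m}$ of Sections \ref{SS:etale_I}–\ref{SS:QHR_alg_isom}, where by Lemmas \ref{Lem:QHR_isom} and \ref{Lem:iso_var} the reduction $\A_{n+b,\lambda}$ localizes to (a twist, by Lemma \ref{Lem:homomorphisms_compat}, of) $\A_{n,\lambda}\otimes \A_{b,\lambda}$ up to differential operators on the extra coordinates, and the induction bimodule localizes to something built directly out of $M'_\lambda$. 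Concretely, after this étale lift, $\operatorname{Ind}_{n,b}^{n+b}$ becomes (up to the twist $\operatorname{Ad}(f^{-\lambda})$) the ``constant'' family $D(\text{coordinates})\otimes$ the parabolic-induction-with-$M'_\lambda$ bimodule for $\gl_b$, and one reduces to a statement about a single equivariant D-module on $\mathfrak{sl}_b$: namely that the parabolic induction $\operatorname{Ind}_{0,b}^{b}$ (equivalently, the functor $\bullet\mapsto \underline{\operatorname{Ind}}$ applied to $M'_\lambda\otimes D(\z)$) produces, after Hamiltonian reduction to $\A_{b,\lambda}$, an honest (cohomologically concentrated, flat) bimodule — in fact a projective $\A_{b,\lambda}$-module on each side.

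The heart of the matter, and the step I expect to be the main obstacle, is exactly this last local computation: one must use the specific properties of $M'_\lambda$ — that it is the irreducible equivariant D-module supported on the nilpotent cone of $\mathfrak{sl}_b$ with the prescribed central character — to show that its parabolic induction into $R_b$, reduced by $G_b$, has no higher cohomology and is flat over $\A_{b,\lambda}$ on both sides. Supportedness on the nilpotent cone means the pushforward $\iota_*$ of $\varpi^! M'_\lambda$ has singular support meeting the stable locus $\mu_b^{-1}(0)^s$ in a controlled way; one should argue that after microlocalizing to $\operatorname{Hilb}_b(\C^2)$ the resulting sheaf of bimodules over $\A^s_{b,\lambda}$ is a line bundle (this is where the rank-$1$ nature of $\mathcal{L}_\lambda$ and the identification of $\pi_1(\Orb)$ with $Z(\operatorname{SL}_b)$ enter, pinning down both the central character $z\mapsto z^a$ and the fact that the answer is invertible), and then invoke the cohomology-vanishing and affineness properties of $\A^s_{b,\lambda}$ recorded in Section \ref{SS:Chered_ind} (global sections exact, higher cohomology zero, $\Gamma(\A^s_{b,\lambda})=\A_{b,\lambda}$) to conclude that $\Gamma$ of this line bundle is a projective $\A_{b,\lambda}$-module concentrated in degree zero. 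Combining this with the étale-local triviality of the family over $\h_{\vec m}^0/S_{\vec m}$, and then descending via the quotient-functor presentation of Lemma \ref{Lem:subcats_coincide}, gives that $B_n$ is a single Harish-Chandra bimodule, flat on both sides, hence that $\heis_{M'_\lambda}$ is $t$-exact.
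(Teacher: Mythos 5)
Your high-level strategy matches the paper: realize $\heis_{M'_\lambda}$ as derived tensor with a complex $B_n$ of Harish-Chandra bimodules via Corollary~\ref{Cor:heis_quotient}, then show that the complex is concentrated in degree $0$ and that $H^0(B_n)$ is a right-projective $\bar\A_{n,\lambda}$-module, and do so by passing to an \'etale lift to $\h^0_{\vec m}/S_{\vec m}$. That part is correct and is exactly what the paper does (Propositions~\ref{Prop:HC}, \ref{Prop:cohomology_structure}, \ref{Prop:etale_lift_structure}). Incidentally, you do not need left flatness: right projectivity of $H^0(B_n)$ already makes $B_n\otimes^L_{\bar\A_{n,\lambda}}\bullet$ land in the heart, and indeed $H^0(B_n)$ is \emph{not} projective on the left (it involves the finite-dimensional module $V$ over $eH'_{b,\lambda}e$).

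The essential local step, however, is where your argument goes wrong. You propose that after microlocalizing to $\Hilb_b(\C^2)$ the Hamiltonian reduction of $M'_\lambda\otimes\C[\C^b]$ is a \emph{line bundle} on $\Hilb_b$, whose global sections are then projective over $\A_{b,\lambda}$. But $(M'_\lambda\otimes\C[\C^b])^{G_b}$ is the unique \emph{finite-dimensional} irreducible module $V$ over $eH'_{b,\lambda}e$ (this is \cite[Theorem 9.8]{CEE}, used crucially in Proposition~\ref{Prop:etale_lift_structure}); as a coherent $\A^s_{b,\lambda}$-module its support is a single point of $\Hilb_b$, not all of it, so it is nothing like a line bundle, and the projectivity you want cannot be extracted from cohomology vanishing on the Hilbert scheme. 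What actually makes the bimodule right-projective is elementary and completely different: the \'etale lift of $B_n$ over $\h^0_{\vec m}/S_{\vec m}$ (with $\vec m=(r,b,\dots,b)$, $n=sb+r$, not just $(n,b)$) decomposes into $s+1$ summands, each isomorphic to $\bar\A_{n,\lambda}\otimes V$ twisted by a permutation of the $b$-blocks; since $V$ is a finite-dimensional vector space, each such summand is a \emph{free} right $\bar\A_{n,\lambda}$-module. This $(s+1)$-fold splitting, coming from the disjoint-union decomposition of $\h^0_{\vec m}/S_{\vec m}\times_{\h_{n+b}/S_{n+b}}\bar\h_n/S_n$, is absent from your account but is the content of Proposition~\ref{Prop:etale_lift_structure}.

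Two further gaps. First, to prove that $H^i(B_n)$ is Harish-Chandra, the paper needs a good filtration on $M'_\lambda$ whose associated graded is \emph{scheme-theoretically} (not just set-theoretically) supported on the nilpotent variety in $T^*\mathfrak{sl}_b$; this is Lemma~\ref{Lem:cuspidal_good_filtration}, proved using the Hodge filtration and Cohen--Macaulayness. Your sketch of the Harish-Chandra property via Proposition~\ref{Prop:supports} alone does not supply this scheme-theoretic control, which is needed to get the two $\K[\g_{n+b}^2]^{G_{n+b}}$-actions to coincide (Lemma~\ref{Lem:coinciding_actions}). Second, you omit the passage from the \'etale-local statement to the global one: the paper does not conclude directly from the \'etale lift, but rather converts it to a completion $B_n^{\wedge_x}$ and then uses the compatibility of completion with $\operatorname{Ext}$ and $\operatorname{Tor}$ (Lemma~\ref{Lem:compl_intertwine}) together with the classification of two-sided ideals in $\A_{n+b,\lambda}$ (Lemma~\ref{Lem:HC_vanishing}) to show that a HC bimodule vanishing after completion vanishes outright. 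Without this descent step the proof is incomplete.
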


This theorem will be proved closer to the end of the section. For now, we record an important property of the module $M'_\lambda$.

\begin{Lem}\label{Lem:cuspidal_good_filtration}
The $D(\mathfrak{sl}_b)$-module $M'_\lambda$ admits a $G_b$-stable filtration whose associated graded is scheme theoretically supported on the subvariety of nilpotent elements in $T^*\mathfrak{sl}_b$ (=the zero fiber of the quotient morphism
$T^*\mathfrak{sl}_b\rightarrow (T^*\mathfrak{sl}_b)/\!/G_b$).
\end{Lem}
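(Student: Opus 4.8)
The plan is to construct the required filtration directly from the geometry of the nilpotent cone $\Orb \subset \mathfrak{sl}_b$ and the local system $\mathcal{L}_\lambda$. First I would recall that $M'_\lambda = j_{!*}(\mathcal{L}_\lambda)$ (or $j_!$, $j_*$ — all the same since $\Orb$ is a single orbit whose closure is the nilpotent cone $\mathcal{N}_b$, so the local system has no nontrivial extensions to add at the boundary strata beyond what is forced) where $j:\Orb \hookrightarrow \mathfrak{sl}_b$. In particular $M'_\lambda$ is a holonomic $D(\mathfrak{sl}_b)$-module whose support is $\mathcal{N}_b$, and whose characteristic variety (singular support) is contained in $T^*_{\mathcal{N}_b}\mathfrak{sl}_b$, the conormal variety to the nilpotent cone. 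The key point is to identify this conormal variety: since $\mathcal{N}_b$ is the image of the moment map for the Springer resolution and is Lagrangian-swept, $T^*_{\mathcal{N}_b}\mathfrak{sl}_b$ sits inside the nilpotent cone of $T^*\mathfrak{sl}_b \cong \mathfrak{sl}_b \times \mathfrak{sl}_b^*$, i.e. the zero-fiber of $T^*\mathfrak{sl}_b \to T^*\mathfrak{sl}_b /\!/ G_b$. Here I use that, under the Killing-form identification $\mathfrak{sl}_b^* \cong \mathfrak{sl}_b$, a point $(x,\xi)$ with $x \in \mathcal{N}_b$ and $\xi$ conormal to $\Orb$ at $x$ forces $\xi$ nilpotent and $[x,\xi]=0$, so the pair is a commuting pair of nilpotents, hence killed by all $G_b$-invariant functions on $\mathfrak{sl}_b \times \mathfrak{sl}_b$ of positive degree. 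This is the geometric content of the lemma.

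Next I would produce the filtration. Choose any good filtration on $M'_\lambda$ as a $D(\mathfrak{sl}_b)$-module; since $M'_\lambda$ is $G_b$-equivariant and $G_b$ is reductive, after averaging we may take this good filtration to be $G_b$-stable. Then $\gr M'_\lambda$ is a finitely generated $\C[T^*\mathfrak{sl}_b]$-module whose support is exactly the characteristic variety computed in the previous paragraph, hence is contained in the zero-fiber of $T^*\mathfrak{sl}_b \to (T^*\mathfrak{sl}_b)/\!/G_b$ as a \emph{set}. To upgrade this to scheme-theoretic support I would refine the filtration: the ideal $I$ of $G_b$-invariant positive-degree functions acts on $\gr M'_\lambda$ nilpotently (since $\gr M'_\lambda$ is supported set-theoretically on $V(I)$ and is Noetherian), so the finite filtration $\gr M'_\lambda \supset I\cdot \gr M'_\lambda \supset I^2 \cdot \gr M'_\lambda \supset \cdots \supset 0$ has each successive quotient annihilated by $I$, i.e. scheme-theoretically supported on the zero-fiber. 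Pulling this filtration back through $\gr$ refines the $G_b$-stable filtration on $M'_\lambda$ itself to one with the desired property; each step remains $G_b$-stable because $I$ is $G_b$-invariant and averaging preserves $G_b$-stability.

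The main obstacle I anticipate is the precise identification of the characteristic variety of $M'_\lambda$ with (a subvariety of) the commuting-nilpotent locus — in particular checking that the conormal $T^*_{\mathcal{N}_b}\mathfrak{sl}_b$ really lands in the \emph{scheme-theoretic} zero fiber rather than merely its reduction, and that no subtlety arises from the open orbit $\Orb$ being strictly smaller than $\mathcal{N}_b$. One clean way around this is to invoke the standard fact (Ginzburg, Hotta–Kashiwara, or the theory of character sheaves) that an $\operatorname{SL}_b$-equivariant irreducible $D(\mathfrak{sl}_b)$-module supported on the nilpotent cone has characteristic variety contained in $\mu^{-1}(0)$ for the commuting-variety moment map $\mu:\mathfrak{sl}_b \times \mathfrak{sl}_b \to \mathfrak{sl}_b$, which is exactly what is needed since $\mu^{-1}(0)$ together with the nilpotency condition on the first coordinate is the zero-fiber described in the statement. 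Given this input, the filtration argument above is routine, and the only care needed is bookkeeping with $G_b$-equivariance, which is handled by reductivity of $G_b$ throughout.
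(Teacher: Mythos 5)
Your first two paragraphs — computing the characteristic variety, reducing to a statement about the commuting-nilpotent locus, averaging a good filtration to make it $G_b$-stable — are fine and match the paper's setup for the set-theoretic containment. The gap is in the step where you try to upgrade from set-theoretic to scheme-theoretic support by ``pulling back through $\gr$'' the $I$-adic filtration $\gr M'_\lambda \supset I\gr M'_\lambda\supset\cdots$. This operation is not well-defined as you describe it, and when you make it precise (intercalate the $I$-adic filtration of $\gr_F M'_\lambda$ into the given good filtration $F$ by preimages) the resulting filtration $F'$ satisfies $D_{\leqslant 1}\cdot F'_i\subset F'_{i+N}$ rather than $D_{\leqslant 1}\cdot F'_i\subset F'_{i+1}$: it is good only for a \emph{rescaled} filtration on $D(\mathfrak{sl}_b)$, not for the order filtration. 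The associated graded of such an $F'$ is not a $\C[T^*\mathfrak{sl}_b]$-module in the sense the lemma requires, and rescaling $F'$ back to a $D$-compatible filtration just recovers the original $F$. In other words, it is simply not true in general that if some good filtration has associated graded set-theoretically supported on a closed conical subvariety, then some (other) good filtration has associated graded scheme-theoretically supported there; this is a genuine obstruction, not a routine refinement. Your fallback appeal to Ginzburg/Hotta--Kashiwara/character-sheaf results also only gives the set-theoretic containment of the singular support, so it does not close the gap.

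The paper handles the scheme-theoretic statement by a genuinely different mechanism: $M'_\lambda$ carries a Hodge filtration (the complement of $\Orb$ in the Springer resolution $\widetilde{\mathcal N}$ is a normal crossing divisor, so $\mathcal{L}_\lambda$ pushes forward to a Hodge module on $\widetilde{\mathcal N}$, and a further proper pushforward to $\mathfrak{sl}_b$ retains a Hodge filtration). The crucial input from Hodge theory is that the associated graded of a Hodge filtration is Cohen--Macaulay, hence has no embedded associated primes. One then checks the annihilator is \emph{generically} reduced (over $\Orb$, where $\gr M'_\lambda$ is a line bundle on the conormal bundle), uses Baranovsky's theorem that the nilpotent commuting variety is irreducible, and concludes reducedness from the Cohen--Macaulay property. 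None of these ingredients — Hodge filtration, Cohen--Macaulayness, generic reducedness plus irreducibility of support — appear in your proposal, and they cannot be replaced by filtration bookkeeping.
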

\begin{proof}
 The multiplicity of every irreducible $\operatorname{SL}_b$-module in $M'_\lambda$ is finite,
see, e.g., \cite[Theorem 9.18]{CEE}. From here it easily follows that $\gr M'_\lambda$ is supported on the zero fiber
of the quotient morphism \underline{set}-theoretically.

To show that one can choose a filtration such that the associated graded is \underline{scheme}-theoretically
supported on the zero fiber of the quotient morphism, we use that $M'_\lambda$ admits a Hodge filtration. 
Let us explain why this is the case. We write $\widetilde{\mathcal{N}}$ for the Springer resolution of the nilpotent cone for $\mathfrak{sl}_b$. Note that the principal nilpotent orbit $\Orb$ embeds into $\widetilde{\mathcal{N}}$
and the complement is a normal crossing divisor. It follows that the pushforward $\widetilde{L}_\lambda$ of $\mathcal{L}_\lambda$ to $\widetilde{\mathcal{N}}$ has a Hodge filtration. And then $M'_\lambda$ is obtained from $\widetilde{L}_\lambda$ by a proper pushforward, so inherits a Hodge filtration. This is the filtration we need. A classical property of Hodge filtrations, is that the associated graded
is Cohen-Macaulay.

We claim that the annihilator of $\operatorname{gr}M'_\lambda$ is reduced.
Indeed, the projection of the support to $\mathfrak{sl}_b$ is the nilpotent cone. Over $\Orb\subset \mathfrak{sl}_b$,
$\gr M'_\lambda$ looks like a line bundle over the conormal bundle to $\Orb$ in $\g$. So the annihilator is reduced over $\Orb$. The module is supported on the nilpotent part of the commuting
variety, which is known to be irreducible, see \cite{Baranovsky}. 

Finally, it remains to observe that if the annihilator of a Cohen-Macaulay finitely generated module, say $M$, over a Noetherian ring, say $R$,  is
generically reduced, then it is reduced. Indeed, by \cite[Lemma 10.103.7]{stacks}, $M$ has no embedded associated primes. 
It follows that for any open subset $U\subset \operatorname{Spec}(R)$ that intersects $\operatorname{Supp}(M)$ at an open dense subset,
the natural map from $M$ to the restriction of $M$ to $U$ is an embedding. The claim in the beginning of the paragraph follows.  
\end{proof}

Next we need to discuss the $\C^\times$-equivariant structure on $M'_\lambda$. Consider the 
doubled scaling action of $\C^\times$ on $\Orb$, it is given by $z.x:=z^{-2}x$ for 
$x\in \Orb, z\in \C^\times$. 

\begin{Lem}\label{Lem:strong_equivariance}
The D-module $M'_\lambda$ is strongly equivariant for the $\C^\times$-action. The element $-1\in \C^\times$
acts on $M'_\lambda$ by $(-1)^{a(b-1)}$.
\end{Lem}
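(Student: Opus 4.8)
The plan is to use that $\Orb$ is a single orbit not only for $\operatorname{SL}_b$ but for the larger group $\operatorname{SL}_b\times\C^\times$, the first factor acting by the adjoint action and the second by the doubled scaling, and that $\mathcal{L}_\lambda$ is pinned down under this description by a character of the component group of a point stabilizer. First I would fix a principal nilpotent $e\in\Orb$ and extend it to an $\mathfrak{sl}_2$-triple $(e,h,f)$. For $\mathfrak{sl}_b$ the element $h$ lies in the coroot lattice, so $z\mapsto z^h$ is a genuine cocharacter of $\operatorname{SL}_b$ and $\operatorname{Ad}(z^h)e=z^2e$; hence $(z^h,z)$ lies in $S:=\operatorname{Stab}_{\operatorname{SL}_b\times\C^\times}(e)$. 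A short computation (if $(g,z)\in S$ then $gz^{-h}$ centralizes $e$) gives $S=Z_{\operatorname{SL}_b}(e)\rtimes\C^\times$, the $\C^\times$ being the image of $z\mapsto(z^h,z)$, acting on $Z_{\operatorname{SL}_b}(e)$ through $\operatorname{Ad}(z^h)$.

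Next, since $Z_{\operatorname{SL}_b}(e)^\circ$ is unipotent and the $\C^\times$-factor is connected, the inclusion $Z_{\operatorname{SL}_b}(e)\hookrightarrow S$ induces an isomorphism $\pi_0(Z_{\operatorname{SL}_b}(e))\xrightarrow{\sim}\pi_0(S)$, both groups being canonically identified with $Z(\operatorname{SL}_b)=\pi_1(\Orb)$. Because strongly equivariant $D$-modules on a point for a connected group form the category of vector spaces, the categories of strongly $\operatorname{SL}_b$-equivariant and of strongly $(\operatorname{SL}_b\times\C^\times)$-equivariant $D$-modules on $\Orb$ are both identified with $\operatorname{Rep}(Z(\operatorname{SL}_b))$, and the forgetful functor between them corresponds to restriction along the above isomorphism, hence is an equivalence. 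Thus $\mathcal{L}_\lambda$ admits a unique strongly $(\operatorname{SL}_b\times\C^\times)$-equivariant structure lifting its given $\operatorname{SL}_b$-equivariant structure. As the inclusion $\Orb\hookrightarrow\mathfrak{sl}_b$ is $(\operatorname{SL}_b\times\C^\times)$-equivariant and the ($!$- or $*$-) pushforward of $D$-modules preserves strong equivariance, $M'_\lambda$ is strongly $(\operatorname{SL}_b\times\C^\times)$-equivariant; in particular it is strongly $\C^\times$-equivariant.

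For the scalar, I would note that $-1\in\C^\times$ acts on $\mathfrak{sl}_b$ by $(-1)^{-2}=1$, i.e.\ trivially, so it acts on the irreducible module $M'_\lambda$ by a scalar; this scalar equals the action of $(1,-1)\in S$ on the fibre of $\mathcal{L}_\lambda$ at $e$, which is $\chi_a$ evaluated at the image of $(1,-1)$ in $\pi_0(S)=Z(\operatorname{SL}_b)$. Now $(1,-1)=\big((-1)^{-h},1\big)\cdot\big((-1)^h,-1\big)$, the second factor lying in the connected subgroup $\{(z^h,z)\}$; since the diagonal entries of $(-1)^h$ are powers of $-1$ with exponents all of the same parity as $b-1$, one has $(-1)^h=(-1)^{b-1}\,\mathrm{Id}$, a central element of $\operatorname{SL}_b$. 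Hence the image of $(1,-1)$ in $\pi_0(S)=Z(\operatorname{SL}_b)=\{z:z^b=1\}$ is $(-1)^{b-1}$, so $-1$ acts by $\chi_a\big((-1)^{b-1}\big)=(-1)^{a(b-1)}$. The one point deserving care is the identification of $\pi_0(S)$ together with the verification that the forgetful functor is an equivalence (not merely faithful); the rest is bookkeeping with the $\mathfrak{sl}_2$-triple and the convention $\pi_1(\Orb)=Z(\operatorname{SL}_b)$, and the sign-convention ambiguity $\chi_a\leftrightarrow\chi_a^{-1}$ is harmless since $(-1)^{a(b-1)}$ is its own inverse.
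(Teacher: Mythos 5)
Your proof is correct and uses essentially the same mechanism as the paper: the key input in both is the cocharacter $z\mapsto z^h$ coming from an $\mathfrak{sl}_2$-triple through $e$ (the paper writes it explicitly as $\gamma(z)=\operatorname{diag}(z^{b-1},z^{b-3},\ldots,z^{1-b})$), together with the observation that $\gamma(-1)=(-1)^{b-1}\operatorname{Id}$ is central in $\operatorname{SL}_b$, which produces the scalar $\chi_a\bigl((-1)^{b-1}\bigr)=(-1)^{a(b-1)}$. The only presentational difference is that the paper phrases the lifting of the $\C^\times$-action via the universal cover $\widetilde{\Orb}$ (using that $\gamma$ normalizes the stabilizer), while you identify strongly equivariant $D$-modules on $\Orb$ with $\operatorname{Rep}(\pi_0(\text{stabilizer}))$ and show $\pi_0(Z_{\operatorname{SL}_b}(e))\xrightarrow{\sim}\pi_0(S)$; these are two ways of expressing the same point, and your version spells out the bookkeeping a bit more fully.
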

\begin{proof}
It is enough to prove a similar statement for the action of $\C^\times$ on $\mathcal{L}_\lambda$. We note that 
the action of $\C^\times$ lifts to the universal cover $\tilde{\Orb}$ of $\Orb$. Indeed, choose $x\in \Orb$ to be the sum of 
the simple root vectors. Let $\gamma(z):=\operatorname{diag}(z^{b-1}, z^{b-3},\ldots, z^{1-b})$. This one-parameter 
subgroup normalizes $G_x$ and so $z.(gx)=g\gamma(z)^{-1}x$ gives a well-defined action of $\C^\times$ on $\Orb$, in fact,
this is the doubled scaling action. In this realization, it is clear that it lifts to $\tilde{\Orb}$.  
This shows that $\mathcal{L}_\lambda$ is 
$\C^\times$-equivariant. To prove the claim about the action of $-1\in \C^\times$, notice that it acts by the element
$(-\operatorname{id})^{b-1}$ that lies in the center of $\operatorname{SL}_b$.
\end{proof}

\subsection{Basics on Harish-Chandra bimodules}
An essential role in the proof of Theorem \ref{Thm:exactness} is played by Harish-Chandra bimodules over algebras $\A_{n,\lambda}$
(and $\bar{\A}_{n,\lambda}$) for various $n$. Here we recall the definition of these bimodules and their basic properties. 

We start with a general setting. Let $\A_1,\A_2$ be two $\Z_{\geqslant 0}$-filtered algebras such that the degree of the commutator is $\leqslant -1$, i.e., for $a,b\in \A_i$ of filtration degrees $k,\ell$, the bracket $[a,b]$ is in filtration degree $k+\ell-1$. 
Assume that $\gr\A_1$ is equipped with a graded Poisson algebra homomorphism
to $\gr\A_2$ such that $\gr\A_2$ is a finitely generated $\gr\A_1$-module.
By a {\it Harish-Chandra $\A_1$-$\A_2$-bimodule} we mean an $\A_1$-$\A_2$-bimodule $B$ that admits a bounded from below bimodule filtration such that $\gr B$ is a finitely generated $\gr\A_1$-module (meaning that the left and the right actions of $\gr\A_1$ on $\gr B$ coincide).

We will mostly need the following situation: $\A_1=\A_{n+b,\lambda},\A_2=\bar{\A}_{n,\lambda}$.
We equip the algebras $\A_i$ with filtrations induced from the filtrations on $D_{n+b}, D_n\otimes D(\mathfrak{z})$ by the order of differential operators. We write $\h_n$ for the Cartan subalgebra (of diagonal matrices) in $\g_n$ and set $\bar{\h}_n:=\h_n\oplus \mathfrak{z}$.
Then $\gr\A_1=\C[T^*\h_{n+b}]^{S_{n+b}},
\gr\A_2=\C[T^*\bar{\h}_n]^{S_n}$. We have the embedding $T^*\bar{\h}_n\hookrightarrow T^*\h_{n+b}$
(as the fixed point locus of $S_b$) that gives rise to the homomorphism $\gr\A_1\rightarrow \gr\A_2$,
so we can talk about HC bimodules in this context. Similarly, we can talk about HC $\A_2$-$\A_1$-bimodules.

Let $\A_1,\A_2,\A_3$ be three filtered associative algebras as in the general setting above and suppose that we are given graded Poisson algebra homomorphisms $\gr\A_i\rightarrow \gr\A_{i+1}$ such that
$\gr\A_{i+1}$ is finitely generated over $\gr\A_i$ for $i=1,2$.
The following are easy properties of HC bimodules that we are going to need.
We note that to a HC $\A_1$-$\A_2$-bimodule $B$  we can assign its support in
$\operatorname{Spec}(\gr\A_1)$. Note that it coincides with the support of
$\A_1/I$, where $I$ is the annihilator of $B$ in $\A_1$.

\begin{Lem}\label{Lem:HC_properties}
Then the
following claims are true:
\begin{enumerate}
\item If $B_1$ is a HC $\A_1$-$\A_2$-bimodule, and
$B_2$ is a HC $\A_2$-$\A_3$-bimodule, then $\operatorname{Tor}^{\A_2}_j(B_1,B_2)$ is a HC $\A_1$-$\A_3$-bimodule.
\item If $B_1$ is a HC $\A_1$-$\A_3$-bimodule, and $B_2$ is a HC $\A_2$-$\A_3$-bimodule,
then $\operatorname{Ext}^j_{\A_3^{opp}}(B_1,B_2)$ is a HC $\A_2$-$\A_1$-bimodule,
and $\operatorname{Ext}^j_{\A_3^{opp}}(B_2,B_1)$ is a HC $\A_1$-$\A_2$-bimodule.
\item If $B$ is a HC $\A_1$-$\A_2$-bimodule, and $I$ is its annihilator in $\A_1$,
then $I$ is also the kernel of $\A_1\rightarrow \operatorname{End}_{\A_2}(B)$.
\end{enumerate}
\end{Lem}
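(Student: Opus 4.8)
The plan is to prove the three parts of Lemma~\ref{Lem:HC_properties} in order, each being a routine verification that the relevant homological operation preserves the Harish-Chandra condition. Throughout I will equip the various $\operatorname{Tor}$ and $\operatorname{Ext}$ groups with filtrations induced, via a spectral sequence or a direct filtered-resolution argument, from good bimodule filtrations on $B_1,B_2$; the point in each case is that the associated graded of the result is a subquotient of the corresponding $\operatorname{Tor}$ or $\operatorname{Ext}$ computed over the commutative graded algebras $\gr\A_i$, and the latter is finitely generated (hence coherent) by Noetherianity of the $\gr\A_i$.

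For part (1): choose good filtrations on $B_1,B_2$ and a filtered resolution of $B_2$ by free $\A_2$-modules of finite rank (possible since $\A_2$ is filtered Noetherian with $\gr\A_2$ a finitely generated algebra). Then $\operatorname{Tor}^{\A_2}_j(B_1,B_2)$ acquires an induced filtration whose associated graded is a subquotient of $\operatorname{Tor}^{\gr\A_2}_j(\gr B_1,\gr B_2)$. Since $\gr B_1$ is finitely generated over $\gr\A_1$ and $\gr B_2$ is finitely generated over $\gr\A_2$ which in turn is finitely generated over $\gr\A_1$, the Tor groups over the Noetherian commutative ring $\gr\A_2$ are finitely generated $\gr\A_1$-modules, with coinciding left and right actions. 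Hence $\operatorname{Tor}^{\A_2}_j(B_1,B_2)$ is a HC $\A_1$-$\A_3$-bimodule; I should note its filtration is bounded below because it is a subquotient of one arising from finitely many free modules in each degree.

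For part (2): this is dual. Take a good filtration on $B_2$ and a filtered resolution of $B_1$ by finitely generated free right $\A_3$-modules; then $\operatorname{Ext}^j_{\A_3^{opp}}(B_1,B_2)$ gets an induced filtration whose associated graded is a subquotient of $\operatorname{Ext}^j_{\gr\A_3}(\gr B_1,\gr B_2)$, which is a finitely generated module over $\gr\A_3$, hence over $\gr\A_2$ (via $\gr\A_2\to\gr\A_3$ --- but wait, the homomorphism goes $\gr\A_i\to\gr\A_{i+1}$, so $\gr\A_3$ is finite over $\gr\A_2$, giving finite generation over $\gr\A_2$; similarly over $\gr\A_1$ since $\gr\A_3$ is finite over $\gr\A_1$ by composing). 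The commuting of the residual left and right actions of $\gr\A_1$ (resp.\ $\gr\A_2$) on the $\operatorname{Ext}$ follows from commutativity of the $\gr\A_i$. The symmetric statement for $\operatorname{Ext}^j_{\A_3^{opp}}(B_2,B_1)$ is identical with the roles of $B_1,B_2$ exchanged.

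For part (3): let $I=\operatorname{Ann}_{\A_1}(B)$ and let $K$ be the kernel of $\A_1\to\operatorname{End}_{\A_2}(B)$. Clearly $I\subseteq K$ since anything annihilating $B$ on the left certainly acts as zero on it. Conversely, if $a\in K$ then $a\cdot B=0$, so $a\in I$; thus in fact $I=K$ on the nose --- the content of the statement is really just the tautology that the annihilator of a left module is the kernel of the structure map into its endomorphism ring, and the point of recording it here is that this endomorphism ring, being $\operatorname{Ext}^0_{\A_2^{opp}}$ of $B$ with itself, is again a HC $\A_1$-$\A_1$-bimodule by part (2), so that the associated variety of $\A_1/I$ is the support of $B$ as stated just before the lemma. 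I do not anticipate a genuine obstacle in any of these three parts; the only mild care needed is in setting up the induced filtrations so that they are exhaustive and bounded below, which is standard (cf.\ the treatment of good filtrations on HC bimodules in the literature on W-algebras and Cherednik algebras), so I will state it rather than belabor it.
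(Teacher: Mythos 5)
Your proposal is correct and matches the paper's (very terse) treatment: the paper only sketches part (1) by exactly the argument you give — pick good filtrations so that $\gr\operatorname{Tor}^{\A_2}_j(B_1,B_2)$ is a subquotient of $\operatorname{Tor}^{\gr\A_2}_j(\gr B_1,\gr B_2)$, which is finitely generated over $\gr\A_1$ by Noetherianity — and leaves (2) and (3) to the reader. Your observations for (2) and (3) are right, including the (correct) parenthetical self-correction about the direction of the maps $\gr\A_i\to\gr\A_{i+1}$ and the identification of (3) as the tautology that the left annihilator of a bimodule equals the kernel of the map to $\operatorname{End}_{\A_2}(B)$; the only point worth tightening in a final write-up is to state cleanly, rather than mid-sentence, that $\gr\A_3$ is finite over $\gr\A_2$ (and over $\gr\A_1$ by composition), which is what makes the Ext over $\gr\A_3$ finitely generated over the relevant smaller algebra.
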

\begin{proof}
Let us sketch the proof of (1). Pick good filtrations on $B_1,B_2$. Then $\operatorname{Tor}^{\A_2}_j(B_1,B_2)$ acquires a filtration, whose associated graded is a $\gr\A_1$-bimodule subquotient of the $\gr \A_1$-module $\operatorname{Tor}^{\gr\A_2}_j(\gr B_1,\gr B_2)$. The latter is a finitely generated $\gr \A_1$-module.
\end{proof}
 
\subsection{Harish-Chandra bimodules and induction}
Thanks to Corollary \ref{Cor:heis_quotient}, the functor $\heis_{M'_\lambda}: D^b(\bar{\A}_{n,\lambda})\operatorname{-mod})
\rightarrow D^b(\A_{n+b,\lambda}\operatorname{-mod})$ is given by tensoring with the complex of bimodules
$B_n:=\heis(\bar{\A}_{n,\lambda})$.
The goal of this section is to prove that all cohomology bimodules are Harish-Chandra.

\begin{Prop}\label{Prop:HC}
The $\A_{n+b,\lambda}$-$\bar{\A}_{n,\lambda}$-bimodule $H^i(B_n)$ is Harish-Chandra for all $i$.
\end{Prop}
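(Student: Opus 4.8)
The plan is to deduce this from the singular-support estimate of Proposition~\ref{Prop:supports}, applied to a Rees-algebra lift of $B_n$, together with the structural information about $M'_\lambda$ in Lemma~\ref{Lem:cuspidal_good_filtration}. First I would note that each $H^i(B_n)$ is automatically finitely generated as a \emph{left} $\A_{n+b,\lambda}$-module: $B_n=\heis_{M'_\lambda}(\bar{\A}_{n,\lambda})$, and $\heis_{M'_\lambda}$ carries $D^b(\bar{\A}_{n,\lambda}\operatorname{-mod})$ into $D^b(\A_{n+b,\lambda}\operatorname{-mod})$ by Corollary~\ref{Cor:heis_quotient}. Thus $H^i(B_n)$ admits a good left-module filtration, and all that remains is to produce a bimodule filtration whose associated graded, viewed as a module over $\gr\A_{n+b,\lambda}\otimes\gr\bar{\A}_{n,\lambda}=\K[T^*\h_{n+b}]^{S_{n+b}}\otimes\K[T^*\bar{\h}_n]^{S_n}$, is \emph{scheme-theoretically} supported on the graph $\Gamma$ of the finite morphism $\phi\colon\operatorname{Spec}\gr\bar{\A}_{n,\lambda}\to\operatorname{Spec}\gr\A_{n+b,\lambda}$ dual to the restriction to the $S_b$-fixed locus $T^*\bar{\h}_n=(T^*\h_{n+b})^{S_b}\hookrightarrow T^*\h_{n+b}$. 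Since $\Gamma$ is cut out by the elements $a\otimes1-1\otimes\phi^{\#}(a)$ for $a\in\gr\A_{n+b,\lambda}$, this support statement is exactly the assertion that the left and right $\gr\A_{n+b,\lambda}$-actions on $\gr H^i(B_n)$ coincide, which, together with left finite generation, is the Harish-Chandra property.

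To get at this support I would pass to $D_\hbar$-modules. Regarded as a bimodule over itself, $\bar{\A}_{n,\lambda}$ lifts to $Q(D_{n,\hbar},G_n,\lambda\hbar)\otimes_{\K[\hbar]}D_\hbar(\z)$ with its right $\bar{\A}_{n,\lambda\hbar}$-action, so $B_{n,\hbar}$ is obtained by applying $\Ind_{n,b}^{n+b}$ to the inducand $Q(D_{n,\hbar},G_n,\lambda\hbar)\boxtimes\bigl(D_\hbar(\z)\otimes M'_{\lambda,\hbar}\bigr)$ and then taking $G_{n+b}$-invariants. The singular support of $Q(D_{n,\hbar},G_n,\lambda\hbar)$ is $\mu_n^{-1}(0)$ and that of $D_\hbar(\z)$ over itself is $T^*\z$, while by Lemma~\ref{Lem:cuspidal_good_filtration} the module $M'_\lambda$ carries a $G_b$-stable good filtration with associated graded \emph{scheme-theoretically} supported on the reduced, irreducible commuting variety of nilpotents $\Nilp^{\mathrm{comm}}\subset T^*\slf_b$. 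So the inducand has singular support $\mu_n^{-1}(0)\times T^*\z\times\Nilp^{\mathrm{comm}}$, and Proposition~\ref{Prop:supports} then shows that applying $\Ind_{n,b}^{n+b}$ produces a complex with coherent cohomology whose singular support lies in the $G_{n+b}$-saturation of the quadruples $(A,B,i,j)\in\mu_{n+b}^{-1}(0)$ with $j$ vanishing on the last $b$ coordinates, $B\in\g_{(n,b)}$, $A\in\p_{(n,b)}$, $(A_1',B_1,i_1',j_1)\in\mu_n^{-1}(0)$, and $a:=A_2'-\xi_A\operatorname{id}_b$, $b':=B_2-\xi_B\operatorname{id}_b$ a commuting pair of nilpotents in $\slf_b$; here $A'=(A_1',A_2')$ and $B=(B_1,B_2)$ are the block-diagonal parts and $\xi_A,\xi_B\in\z$ the scalar parts of $A_2',B_2$.

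Next I would match this locus with $\Gamma$. By Proposition~\ref{Prop:Ham_red_Cherednik} I may identify $\operatorname{Spec}\gr\A_{n+b,\lambda}\cong\mu_{n+b}^{-1}(0)\quo G_{n+b}\cong\operatorname{Sym}^{n+b}\K^2$ and $\operatorname{Spec}\gr\bar{\A}_{n,\lambda}\cong\operatorname{Sym}^{n}\K^2\times T^*\z$. Because $A$ is block-upper-triangular and $B$ block-diagonal, $\operatorname{tr}(w(A,B))=\operatorname{tr}(w(A_1',B_1))+\operatorname{tr}(w(A_2',B_2))$ for every word $w$, and since $a,b'$ are commuting nilpotents, $\operatorname{tr}(w(A_2',B_2))=\operatorname{tr}(w(\xi_A\operatorname{id}_b,\xi_B\operatorname{id}_b))$. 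Hence the image of $(A,B,i,j)$ in $\operatorname{Sym}^{n+b}\K^2$ equals the image of $(A_1',B_1,i_1',j_1)$ in $\operatorname{Sym}^{n}\K^2$ together with the point $(\xi_A,\xi_B)$ taken with multiplicity $b$, whereas the right $\bar{\A}_{n,\lambda}$-action on $\gr H^i(B_n)$, inherited from $Q(D_{n,\hbar},G_n,\lambda\hbar)\otimes D_\hbar(\z)$, factors at such a point through evaluation at $\bigl([(A_1',B_1,i_1',j_1)],(\xi_A,\xi_B)\bigr)\in\operatorname{Sym}^{n}\K^2\times T^*\z$. As $\phi$ sends $(\{p_k\},q)$ to $\{p_k\}\cup\{q^{\times b}\}$, these two data correspond under $\phi$, so $\operatorname{Supp}\gr H^i(B_n)\subset\Gamma$, which by the first paragraph completes the argument.

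I expect the main obstacle to be making the scheme-theoretic --- as opposed to merely set-theoretic --- bookkeeping of the previous two paragraphs precise, so that the two $\gr\A_{n+b,\lambda}$-actions agree \emph{on the nose} and not just up to nilpotents. This is where the reducedness of $\gr M'_\lambda$ from Lemma~\ref{Lem:cuspidal_good_filtration} is essential: one has to propagate it through the smooth inverse image $\varpi^{!}$, which preserves reducedness of scheme-theoretic supports, and through the proper direct image $\iota_*$, whose scheme-theoretic image of a reduced scheme is reduced --- that is, rerun the argument of Steps~2--4 of the proof of Proposition~\ref{Prop:supports}, using that the functor of $G_{n+b}$-invariants is exact and that $\hbar$ may be taken a non-zero-divisor as in Step~5 there. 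The remaining points --- compatibility of $\varpi^{!},\iota_*$ modulo $\hbar$ with their classical coherent-sheaf analogues, goodness of the filtrations involved, and the passage from $B_{n,\hbar}$ to $\hbar=1$ --- should be routine.
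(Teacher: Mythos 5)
Your proposal correctly identifies the overall architecture: pass to Rees algebras, equip $M'_\lambda$ with the good filtration of Lemma~\ref{Lem:cuspidal_good_filtration}, reduce the support computation to the classical $\hbar=0$ case, and use the machinery of Steps 2--4 of the proof of Proposition~\ref{Prop:supports}. You also have the right key geometric observation in your third paragraph: that for $(A,B)\in\p_{\vec m}^2$, the trace of any word in $A,B$ depends only on the block-diagonal parts, which is precisely the content of Lemma~\ref{Lem:coinciding_actions} in the paper.

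But there is a genuine gap in the way you deploy that observation. You apply it at the level of \emph{points} of the singular support, producing only a set-theoretic containment $\operatorname{Supp}\gr H^i(B_n)\subset\Gamma$, and then propose to upgrade to the needed scheme-theoretic statement by ``propagating reducedness'' through $\varpi^!$ and $\iota_*$. This last step does not work as stated: (a) proper pushforward does \emph{not} in general preserve reducedness of annihilators of coherent sheaves --- and the sentence ``$\iota_*$, whose scheme-theoretic image of a reduced scheme is reduced'' conflates the scheme-theoretic image of the support (which indeed stays reduced) with the annihilator of the pushforward module (which is a different ideal and can pick up nilpotents); (b) passing to cohomology of a \v{C}ech complex and to a subquotient (Step 5 in the paper's Proposition~\ref{Prop:supports}) can also destroy reducedness; (c) you additionally need $\mu_n^{-1}(0)$ itself to be reduced, which is true (Gan--Ginzburg) but unmentioned. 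So the claimed promotion from set-theoretic to scheme-theoretic is, at best, a substantial unaddressed obstacle rather than a routine check.

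The paper sidesteps the reducedness question entirely. Instead of arguing pointwise, it introduces two commuting actions of $\K[\g_{n+b}^2]^{G_{n+b}}$ on $H^i(\mathcal{F}(M))$ --- one via $\rho_1^*$ (through the $(A,B)$ of the output) and one via $\rho_2^*$ (through the inducand) --- and proves in Lemma~\ref{Lem:coinciding_actions} that these two actions \emph{coincide on the nose}, as operators, because the relevant morphisms $\mathfrak{C}^{par}_{\vec m}\to\g_{n+b}^2\quo G_{n+b}$ are literally equal (your block-trace observation, but applied at the level of morphisms rather than points). Then, using the scheme-theoretic annihilator statements for each factor of the inducand (including Lemma~\ref{Lem:cuspidal_good_filtration} for $\gr M'_\lambda$), one sees that the $\rho_1^*$-action factors through $\K[Z_{n+b}]$ and the $\rho_2^*$-action through $\K[\bar Z_n]$; the coincidence of the two then directly yields scheme-theoretic support on $\mu_{n+b}^{-1}(0)\times_{Z_{n+b}}\bar Z_n$ with no appeal to reducedness at any stage. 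To repair your argument, replace the pointwise reasoning in your third paragraph and the reducedness propagation in your fourth by this equality-of-actions mechanism.
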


In the proof we will need a lemma. Set $\vec{m}=(n,b)$. Recall (see Step 4 of the proof of Proposition \ref{Prop:supports}) the parabolic ``almost
commuting'' derived scheme given by 
$$\mathfrak{C}_{\vec{m}}^{par}=\{(A,B,i,j)\in \mathfrak{p}_{\vec{m}}^2\oplus \K^{n+m}\oplus (\K^n)^*| [A,B]+ij=0\}$$
and the functor $\mathcal{F}$ of the pull-push via 
$$T^*(\underline{R}_{\vec{m}}/G_{\vec{m}})\xleftarrow{\tilde{\varpi}} \mathfrak{C}_{\vec{m}}^{par}\xrightarrow{\tilde{\iota}} T^*(R_{n+b}/G_{n+b}).$$
Take $M\in D^b(\operatorname{Coh}(T^*[\underline{R}_{\vec{m}}/G_{\vec{m}}]))$. Then the sheaf $H^i(\mathcal{F}(M))$
carries two commuting actions of $\K[\g_{n+b}^2]^{G_{n+b}}$. First, the projection $(A,B,i,j)\mapsto (A,B)$
gives rise to the morphism $\rho_1: T^*(R_{n+b}/G_{n+b})\rightarrow \g_{n+b}^2\quo G_{n+b}$. Second, 
we have a similarly defined morphism $T^*(\underline{R}_{\vec{m}}/G_{\vec{m}})\rightarrow \g_{\vec{m}}^2\quo G_{\vec{m}}$.
Composing it with $\g_{\vec{m}}^2\quo G_{\vec{m}}\rightarrow \g_{n+b}^2\quo G_{n+b}$, we get the morphism
$T^*(\underline{R}_{\vec{m}}/G_{\vec{m}})\rightarrow \g_{n+b}^2\quo G_{n+b}$ to be denoted 
by $\rho_2$. So,  $\K[\g_{n+b}^2]^{G_{n+b}}$ acts on $M$ via $\rho_2^*$, hence it also acts on 
$H^i(\mathcal{F}(M))$ by functoriality. Besides, $\K[\g_{n+b}^2]^{G_{n+b}}$ acts on 
$H^i(\mathcal{F}(M))$ via $\rho_1^*$.

\begin{Lem}\label{Lem:coinciding_actions}
The two actions of $\K[\g_{n+b}^2]^{G_{n+b}}$ coincide. 
\end{Lem}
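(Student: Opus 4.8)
The plan is to trace both actions of $\K[\g_{n+b}^2]^{G_{n+b}}$ on $H^i(\mathcal{F}(M))$ back to a single action on the intermediate space $\mathfrak{C}_{\vec{m}}^{par}/P_{\vec{m}}$ and then to reduce the statement to an elementary identity for $G_{n+b}$-invariant functions on $\g_{n+b}^2$. Recall from the paragraph preceding the lemma (and Step 4 of the proof of Proposition \ref{Prop:supports}) that $\mathcal{F}$ is the pull-push along $T^*(\underline{R}_{\vec{m}}/G_{\vec{m}})\xleftarrow{\tilde{\varpi}}\mathfrak{C}_{\vec{m}}^{par}/P_{\vec{m}}\xrightarrow{\tilde{\iota}}T^*(R_{n+b}/G_{n+b})$; concretely, $\mathcal{F}(M)$ is the derived pushforward along $\tilde{\iota}$ of $\tilde{\varpi}^*M\otimes\mathcal{L}$ for a line bundle $\mathcal{L}$ on $\mathfrak{C}_{\vec{m}}^{par}/P_{\vec{m}}$ pulled back from the quotient stack. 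Since tensoring with $\mathcal{L}$ commutes with multiplication by functions, $\mathcal{L}$ is irrelevant for the purposes of this lemma.

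First I would identify the two actions concretely. The functor $\tilde{\iota}_*$ is $\mathcal{O}_{T^*(R_{n+b}/G_{n+b})}$-linear, so after passing to $H^i$ the $\rho_1$-action of $f\in\K[\g_{n+b}^2]^{G_{n+b}}$ on $\mathcal{F}(M)$ is induced by multiplication by $\tilde{\iota}^*\rho_1^*(f)\in\Gamma(\mathcal{O}_{\mathfrak{C}_{\vec{m}}^{par}/P_{\vec{m}}})$ on $\tilde{\varpi}^*M\otimes\mathcal{L}$. On the other hand, the module structure on $M$ is defined through $\rho_2^*\colon\K[\g_{n+b}^2]^{G_{n+b}}\to\Gamma(\mathcal{O}_{T^*(\underline{R}_{\vec{m}}/G_{\vec{m}})})$, so applying $\tilde{\varpi}^*$ (which is $\mathcal{O}$-linear in the appropriate sense), tensoring with $\mathcal{L}$, and pushing forward, the $\rho_2$-action on $H^i(\mathcal{F}(M))$ is likewise induced by multiplication by $\tilde{\varpi}^*\rho_2^*(f)$. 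Thus the lemma reduces to the equality of ring homomorphisms $\tilde{\iota}^*\circ\rho_1^*=\tilde{\varpi}^*\circ\rho_2^*\colon\K[\g_{n+b}^2]^{G_{n+b}}\to\Gamma(\mathcal{O}_{\mathfrak{C}_{\vec{m}}^{par}/P_{\vec{m}}})$.

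Finally I would verify this identity pointwise. At a point $(A,B,i,j)\in\mathfrak{C}_{\vec{m}}^{par}$, so that $A,B\in\mathfrak{p}_{\vec{m}}$, the left-hand side evaluates $f$ at $(A,B)$ viewed inside $\g_{n+b}^2$, whereas the right-hand side evaluates $f$ at $(q(A),q(B))$, where $q\colon\mathfrak{p}_{\vec{m}}\to\g_{\vec{m}}$ is the projection with kernel the nilradical $\mathfrak{u}_{\vec{m}}$ of $\mathfrak{p}_{\vec{m}}$ — this uses that $\rho_2$ factors through $\g_{\vec{m}}^2\quo G_{\vec{m}}\to\g_{n+b}^2\quo G_{n+b}$ and that $\tilde{\varpi}$ records the Levi parts of $A,B$. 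Now pick a cocharacter $\gamma\colon\K^\times\to G_{n+b}$ whose nonnegative weight spaces on $\g_{n+b}$ span $\mathfrak{p}_{\vec{m}}$ and whose zero weight space is $\g_{\vec{m}}$, so that $\lim_{t\to0}\operatorname{Ad}(\gamma(t))C=q(C)$ for all $C\in\mathfrak{p}_{\vec{m}}$. Since $f$ is $G_{n+b}$-invariant and polynomial, $f(A,B)=f(\operatorname{Ad}(\gamma(t))A,\operatorname{Ad}(\gamma(t))B)$ for all $t\neq0$, and letting $t\to0$ gives $f(A,B)=f(q(A),q(B))$, as required.

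I expect the only genuine subtlety to be in the middle step: carefully matching the two $\mathcal{O}$-linear structures through the Fourier--Mukai-type description of $\mathcal{F}$, keeping track of which of $\tilde{\varpi}^*$ and $\tilde{\iota}_*$ is linear over which space, and checking that passing to the $i$-th cohomology does not disturb this. The pointwise verification in the last step is completely elementary once that bookkeeping is in place.
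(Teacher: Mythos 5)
Your proof is correct and follows essentially the same route as the paper's: both reduce the statement to the equality $\rho_2\circ\tilde{\iota}=\rho_1\circ\tilde{\varpi}$ of morphisms out of $\mathfrak{C}_{\vec{m}}^{par}$, and then to the fact that a $G_{n+b}$-invariant polynomial restricted to $\mathfrak{p}_{\vec{m}}^2$ factors through the Levi projection to $\g_{\vec{m}}^2$. The only cosmetic difference is in the final elementary step, where you use a cocharacter contraction $\lim_{t\to 0}\operatorname{Ad}(\gamma(t))$ and the paper instead observes that the center of $G_b$ acts trivially on $\g_{\vec{m}}$ but by a nontrivial character on the nilradical; these are the same weight argument phrased two ways.
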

\begin{proof}
It is enough to prove that $\rho_2\circ \tilde{\iota}=\rho_1\circ\tilde{\varpi}$, the equality of morphisms
$\mathfrak{C}_{\vec{m}}^{par}\rightarrow \g_{n+b}^2\quo G_{n+b}$.
This amounts to showing that the morphism $\mathfrak{p}_{\vec{m}}^2\rightarrow \g_{n+b}^2\quo G_{n+b}$
(induced by the inclusion $\mathfrak{p}_{\vec{m}}^2\hookrightarrow \g_{n+b}^2$) is equal to 
the composition $\mathfrak{p}_{\vec{m}}^2\rightarrow \g_{\vec{m}}^2\quo G_{\vec{m}}\rightarrow 
\g_{n+b}^2\quo G_{n+b}$, where the first arrow is induced by the projection $\mathfrak{p}_{\vec{m}}^2\hookrightarrow \g_{\vec{m}}^2$.
This equality follows from the observation that any $P_{\vec{m}}$-invariant (in particular,
the restriction of a $G_{n+b}$-invariant) polynomial on $\mathfrak{p}_{\vec{m}}^2$ is pulled back from $\g_{\vec{m}}^2$.
This is because the center of $G_b$ acts trivially on $\g_{\vec{m}}$ 
but by a nontrivial character on the kernel of $\mathfrak{p}_{\vec{m}}\rightarrow \g_{\vec{m}}$. 
\end{proof}

\begin{proof}[Proof of Proposition \ref{Prop:HC}]
Set $Z_m:=(T^*\h_{m})\quo S_m$ for $m>0$. Further, set $\bar{Z}_m:=Z_m\times T^*\mathfrak{z}$,
so that $\gr A_{n+b,\lambda}=\K[Z_{n+b}]$ and $\gr\bar{\A}_{n,\lambda}=\K[\bar{Z}_n]$.

%
Consider the object
$\tilde{B}_n:=\operatorname{Ind}^{n+b}_{n,b}N\in D^b(D_{n+b}\operatorname{-mod}^{G_{n+b},\lambda})$, 
where $N:=Q(D_n,G_n,\lambda)\otimes D(\mathfrak{z})\otimes M'_\lambda$.
The algebra $\bar{\A}_{n,\lambda}^{opp}$ naturally maps to the endomorphism algebra
of $\tilde{B}_n$. In particular, $H^i(\tilde{B}_n)$ becomes a $(G_{n+b},\lambda)$-equivariant $D_{n+b}$-module with a commuting right $\bar{\A}_{n,\lambda}$-module structure.

Note that $B_n=\tilde{B}_n^{G_{n+b}}$ and the functor
$\heis=B_n\otimes^L_{\bar{\A}_{n,\lambda}}\bullet$ is the composition
of derived tensoring with $\tilde{B}_n$ followed by taking the $G_{n+b}$-invariants.
So it is enough to show that the bimodule $H^i(\tilde{B}_n)$ admits a $G_{n+b}$-stable good filtration whose associated graded is scheme-theoretically supported on $\mu_{n+b}^{-1}(0)\times_{Z_{n+b}} \bar{Z}_n$.

The filtration in question is obtained in the following way. The bimodule 
$Q(D_n,G_n,\lambda)$ has a filtration induced from that on $D_n$, the associated graded is 
$\K[\mu_{n}^{-1}(0)]$. We take $D(\mathfrak{z})$ with its default filtration, the associated graded is 
$\K[T^*\mathfrak{z}]$. And we equip $M'_\lambda$ with a filtration satisfying the conclusion of 
Lemma \ref{Lem:cuspidal_good_filtration}. Then we can form the Rees module $N_\hbar$ and consider
$$\tilde{B}_{n,\hbar}:=\operatorname{Ind}^{n+b}_{n,b}N_\hbar\in D^b_{{G_{n+b},\lambda\hbar}}(D_{n+b,\hbar}\operatorname{-mod}).$$
Then $H^i(B_n)$ acquires a good filtration as the specialization to $\hbar=1$ of the quotient 
of $H^i(B_{n,\hbar})$ by the $\hbar$-torsion part. The associated graded 
$\operatorname{gr} H^i(B_n)$ is a subquotient of $H^i(\operatorname{Ind}^{n+b}_{n,b}(\gr N))$. 
So it is enough to show that $H^i(\operatorname{Ind}^{n+b}_{n,b}(\gr N))$ is scheme-theoretically supported 
on $\mu_{n+b}^{-1}(0)\times_{Z_{n+b}} \bar{Z}_n$.

According to Step 3 of  the proof of Proposition \ref{Prop:supports}, we have an isomorphism
$$H^i(\operatorname{Ind}^{n+b}_{n,b}(\gr N))\cong H^i(\iota_{*,0}\varpi_0^!(\gr N)).$$ By the construction there,
the isomorphism is $\K[\mu^{-1}_{n+b}(0)]\otimes \K[\bar{Z}_n]$-linear. Recall that, by Step 4 of that proof, 
$\iota_{*,0}\varpi_0^!(\gr N)$ is  the pull-push of $\K_{\chi_2}\otimes \mathcal{F}(\K_{\chi_1}\otimes\gr N)$,
where $\chi_1,\chi_2$ are characters of $G_{\vec{m}},G_{n+b}$. By Lemma \ref{Lem:coinciding_actions},
the two actions of $\K[\g_{n+b}^2]^{G_{n+b}}$ on $\K_{\chi_2}\otimes \mathcal{F}(\K_{\chi_1}\otimes\gr N)$
coincide. The action via $\rho_1^*$ factors through $\K[Z_{n+b}]$, a quotient of $\K[\g_{n+b}^2]^{G_{n+b}}$, while the action of $\rho_2^*$
factors through $\K[\bar{Z}_n]$. It follows that $\K_{\chi_2}\otimes \mathcal{F}(\K_{\chi_1}\otimes\gr N)$
is scheme-theoretically supported on $\mu_{n+b}^{-1}(0)\times_{Z_{n+b}} \bar{Z}_n$, finishing the proof.
\end{proof}

In the remainder of the section we will describe the structure of the bimodules $H^i(B_n)$. The main result is the following proposition 
that is easily seen to imply Theorem \ref{Thm:exactness}. 

\begin{Prop}\label{Prop:cohomology_structure}
The bimodule $H^i(B_n)$ is zero for $i\neq 0$, and is a projective right $\bar{\A}_{n,\lambda}$-module for $i=0$. 
\end{Prop}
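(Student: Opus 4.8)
The plan is to compute the bimodules $H^i(B_n)$ locally on the (smooth, affine) variety $\operatorname{Spec}(\gr\bar\A_{n,\lambda})=\bar Z_n=(T^*\h_n)\quo S_n\times T^*\mathfrak z$, reducing the statement to an étale-local claim about D-modules where the module $M'_\lambda$ becomes easy to handle. By Proposition \ref{Prop:HC} each $H^i(B_n)$ is a Harish-Chandra $\A_{n+b,\lambda}$-$\bar\A_{n,\lambda}$-bimodule; in particular it is finitely generated over $\bar\A_{n,\lambda}$ acting on the right (the left and right actions of $\gr\bar\A_{n,\lambda}$ agree on the associated graded). So it suffices to check, after base change to the completion (or to a suitable étale cover) of $\bar Z_n$ at each point, that $H^i(B_n)$ vanishes for $i\ne0$ and is free for $i=0$. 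Projectivity over the whole algebra will then follow from the fact that $\bar\A_{n,\lambda}$ has finite global dimension (it is Morita equivalent, via Proposition \ref{Prop:Morita}, to $H_{n,\lambda}\otimes D(\mathfrak z)$, which does) together with flatness checked locally.

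Next I would localize using the étale-lift machinery of Section \ref{SS:etale_I} and Section 2.8. Fix a point of $\bar Z_n$ lying over a point $x\in\h_{n+b}/S_{n+b}$; its stabilizer data corresponds to a composition. Away from the discriminant the diagonal matrices with at least $b$ coincidences form exactly the locus where the parabolic induction functor "sees" $M'_\lambda$ nontrivially; over the open locus $\h^{S_{\vec m}-reg}$ with $\vec m=(1,\dots,1,b)$, Lemmas \ref{Lem:iso_var}, \ref{Lem:QHR_isom} identify the relevant quantum Hamiltonian reductions, and the induction functor $\operatorname{Ind}^{n+b}_{n,b}$ restricted there becomes (up to the twist $\operatorname{Ad}(f^{-\lambda})$ controlled by Lemma \ref{Lem:homomorphisms_compat}) the external product with the single functor $D^b(D(\mathfrak{sl}_b)\operatorname{-mod}^{G_b,\lambda})\to D^b(\text{pt})$ of the form $M'\mapsto R\Gamma_{G_b}\bigl(\operatorname{Ind}_{\text{Borel}}^{G_b}(\cdots)\bigr)$, i.e. the computation of the ``fiber'' $\Res_x$ of $M'_\lambda$ at a regular point. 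On that locus $M'_\lambda$ is just the rank-one local system $\mathcal L_\lambda$ pulled back, and Proposition \ref{Prop:Wilcox}/the Wilcox description tells us that the corresponding fiber is the one-dimensional module $e_b L_\lambda((b))$ placed in a single cohomological degree. Hence, after this étale localization, $H^i(B_n)$ is concentrated in degree $0$ and is a line bundle (in particular projective) over the localized $\bar\A_{n,\lambda}$.

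To upgrade the local computation to the global one I would argue as follows. The support of $\gr H^i(B_n)$, by Proposition \ref{Prop:HC} (more precisely by the scheme-theoretic support statement proved inside its proof), lies in $\mu_{n+b}^{-1}(0)\times_{Z_{n+b}}\bar Z_n$; projecting to $\bar Z_n$ this is all of $\bar Z_n$. The total complex $B_n$ is obtained by tensoring with an object that is flat over $\K[\hbar]$, so $\gr H^i(B_n)$ is a subquotient of $H^i$ of the associated graded (classical) pull-push, which over the open regular locus of $\bar Z_n$ we just showed is a line bundle in degree $0$. Thus each $H^i(B_n)$ for $i\ne0$ is a Harish-Chandra bimodule whose associated graded is supported on a proper closed subset of $\bar Z_n$ on the right; I would then use the $\C^\times$-equivariance (Lemma \ref{Lem:strong_equivariance}) and a lowest-weight/Euler-grading argument, or alternatively the self-duality of the situation under the functor $\operatorname{Ext}_{\bar\A_{n,\lambda}^{opp}}(-,\bar\A_{n,\lambda})$ of Lemma \ref{Lem:HC_properties}(2), to rule out nonzero cohomology away from degree $0$ entirely — a bimodule that is a projective right module on a dense open and has no ``extra'' cohomology by a homological-dimension count must be globally a projective right module, and the higher $H^i$ must vanish.

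The main obstacle is the last step: controlling the cohomology of the induction functor globally rather than merely on the regular locus. On the regular open subset the computation is transparent because $M'_\lambda$ restricts to a rank-one local system and parabolic induction becomes an honest proper pushforward with a one-dimensional fiber; but across the discriminant the parabolic $P_{\vec m}$ degenerates and one must show no higher direct images appear and no torsion enters. I expect this to be handled precisely by the scheme-theoretic support statement from the proof of Proposition \ref{Prop:HC} (which forces $\gr H^i(B_n)$ to be a quotient of a module flat over the base $\bar Z_n$, hence itself of the right codimension) combined with the good-filtration/Cohen-Macaulayness of $M'_\lambda$ from Lemma \ref{Lem:cuspidal_good_filtration}: Cohen-Macaulayness of $\gr M'_\lambda$ propagates through the flat base change defining the induction, giving that $\gr H^0(B_n)$ is Cohen-Macaulay of the expected dimension over $\bar Z_n$ and that the higher $\gr H^i$ vanish, which is exactly the numerical input needed to conclude projectivity of the right module and concentration in degree $0$.
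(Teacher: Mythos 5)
Your localization strategy is sound as a starting point, but you localize to the \emph{wrong} open set, and the step that would bring the local computation back to a global statement is missing.

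You propose to compute the étale lift over the regular locus $\h^0_{\vec m}/S_{\vec m}$ with $\vec m=(1,\dots,1,b)$ (that is, $n$ ones and a single $b$). This open set only contains points of $\bar Z_n$ where the $n$ coordinates of $\h_n$ and the $\mathfrak z$-coordinate are pairwise distinct, so that the only $b$-fold coincidence is the one coming from $\mathfrak z$. But a Harish-Chandra $\A_{n+b,\lambda}$-$\bar\A_{n,\lambda}$-bimodule can perfectly well be supported on a proper closed Poisson subvariety of $\bar Z_n$ — exactly the strata your open set misses, where $\ell\geq 1$ further groups of $b$ of the $\h_n$-coordinates coincide. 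Showing that the restriction of $H^i(B_n)$, $i\neq 0$, to the most generic stratum vanishes therefore does \emph{not} imply $H^i(B_n)=0$; it only shows its support is proper. Your proposed fixes (the $\C^\times$-equivariance of Lemma~\ref{Lem:strong_equivariance}, Cohen-Macaulayness from Lemma~\ref{Lem:cuspidal_good_filtration}, or a ``homological-dimension count'') are not a substitute for a support argument: nothing in them rules out a HC bimodule with strictly smaller support, and Cohen-Macaulayness of $\gr M'_\lambda$ does not by itself control the higher cohomology of the proper pushforward $\iota_*$ away from the locus where the morphisms of stacks are isomorphisms.

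The paper's proof (Section~\ref{SS_proof_cohom_struct}) goes the opposite way: it takes the étale lift, and then completion, at the \emph{most singular} composition $\vec m=(r,b,\dots,b)$ where $n=sb+r$, i.e., at a point $x$ whose $S_{n+b}$-stabilizer is $S_r\times S_b^{s+1}$. This $x$ sits in the minimal-dimensional symplectic leaf inside $\bar Z_n$. Proposition~\ref{Prop:etale_lift_structure} then computes the lift of $B_n$ there explicitly (as $\bigoplus_{i=1}^{s+1}{}^iB(V)^0_{\vec m}$, in degree $0$), and the crucial global input is Lemma~\ref{Lem:HC_vanishing}: because the two-sided ideals of $\A_{n+b,\lambda}$ form a chain (Theorem 5.8.1 of \cite{sraco}), the completion of a nonzero HC bimodule at this particular $x$ is never zero. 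This is precisely the ``singular point sees everything'' argument that your regular-locus localization cannot provide. Incidentally, $e_bL_\lambda((b))=V$ is not one-dimensional in general, so the local module is free of rank $\dim V$ rather than a line bundle, though that is a cosmetic issue.

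To repair your proposal you would need either to switch to the singular composition $(r,b,\dots,b)$ and import Lemma~\ref{Lem:HC_vanishing} (at which point you essentially recover the paper's proof), or to supply a genuinely new argument forcing every nonzero sub- or quotient HC bimodule of $H^i(B_n)$ to have full support — and the latter is exactly what is not available in this generality.
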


\subsection{Etale lifts, II}\label{SS_etale_lifts_II}
We use the notation introduced in Section \ref{SS:etale_I}. The main tool in proving Proposition \ref{Prop:cohomology_structure}
is the study of etale lifts of the bimodules $H^i(B_n)$ to $\h^0_{\vec{m}}/S_{\vec{m}}$ for a suitable composition of $n+b$.
We will also need certain completions.  

Let $m$ be a positive integer, and $\vec{m},\vec{m}'$ be compositions of $m$ such that $\vec{m}$ refines $\vec{m}'$. Set 
\begin{align*}
&\p_{\vec{m}'}^{\vec{m}-reg}:=\g_{\vec{m}'}^{\vec{m}-reg}\times_{\g_{\vec{m}'}}\p_{\vec{m}'},\\
&R^{par,\vec{m}-reg}_{\vec{m}'}:=\p_{\vec{m}'}^{\vec{m}-reg}\times \K^m.
\end{align*}

\begin{Lem}\label{Lem:equiv_cats}
We have a category equivalence
$$D^b_{G_{\vec{m}'},\lambda}(D(R^{\vec{m}-reg}_{\vec{m}})\operatorname{-mod})
\xrightarrow{\sim}D^b_{G_m,\lambda}(D(R^{\vec{m}-reg}_m)\operatorname{-mod}).$$
given by pullpush via $R^{\vec{m}-reg}_{\vec{m}'}/G_{\vec{m}'}\leftarrow R^{par,\vec{m}-reg}_{\vec{m}'}/P_{\vec{m}'}\rightarrow
R_m^{\vec{m}-reg}/G_{m}$. 
\end{Lem}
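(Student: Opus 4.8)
The plan is to reduce to the case $\vec{m}'=\vec{m}$ and then recognize the pull-push functor as a composition of two equivalences. First I would observe that the morphism $\varpi\colon R^{par,\vec{m}-reg}_{\vec{m}'}/P_{\vec{m}'}\rightarrow R^{\vec{m}-reg}_{\vec{m}'}/G_{\vec{m}'}$ factors through the morphism of extending equivariance $R^{\vec{m}-reg}_{\vec{m}'}/P_{\vec{m}'}\rightarrow R^{\vec{m}-reg}_{\vec{m}'}/G_{\vec{m}'}$, exactly as in the proof of Lemma \ref{Lem:induction_transitivity}; pullback along a morphism of extending equivariance $\mathrm{pt}/U\times X\rightarrow X$ (with $U$ unipotent) is an equivalence on equivariant derived categories of D-modules, since $U$ has no cohomology. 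This reduces us to understanding the pull-push via $R^{par,\vec{m}-reg}_{\vec{m}'}/P_{\vec{m}'}\leftarrow \cdot \rightarrow R^{\vec{m}-reg}_{\vec{m}'}/P_{\vec{m}'}$ together with $\iota\colon R^{par,\vec{m}-reg}_{\vec{m}'}/P_{\vec{m}'}\rightarrow R_m^{\vec{m}-reg}/G_m$. So it suffices to treat $\vec{m}'=\vec{m}$: the general case follows by the transitivity of induction (Lemma \ref{Lem:induction_transitivity}, Remark \ref{Rem:other_transitivity}), since each intermediate step is an equivalence of the same type.

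So assume $\vec{m}'=\vec{m}$. Then $R^{par,\vec{m}-reg}_{\vec{m}}=R^{\vec{m}-reg}_{\vec{m}}$ (there is no nilpotent radical to carry along: $\p_{\vec{m}}=\g_{\vec{m}}$), and $\iota$ becomes the action morphism $G_m\times^{G_{\vec{m}}}R^{\vec{m}-reg}_{\vec{m}}\rightarrow R^{\vec{m}-reg}_m$. The key geometric input is Lemma \ref{Lem:iso_var}: isomorphism (\ref{eq:iso_var4}) (in the case $\vec{m}'=\vec{m}$, with $n$ replaced by $m$) says precisely that this action morphism is an \emph{isomorphism} of schemes $G_m\times^{G_{\vec{m}}}R^{\vec{m}-reg}_{\vec{m}}\xrightarrow{\sim}R^{\vec{m}-reg}_m$. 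Under this isomorphism, the functor $\iota_*$ becomes the equivalence between $G_m$-equivariant D-modules on $G_m\times^{G_{\vec{m}}}R^{\vec{m}-reg}_{\vec{m}}$ — which by descent along the $G_m$-torsor $G_m\times R^{\vec{m}-reg}_{\vec{m}}\rightarrow G_m\times^{G_{\vec{m}}}R^{\vec{m}-reg}_{\vec{m}}$ is the same as $G_m\times G_{\vec{m}}$-equivariant D-modules on $G_m\times R^{\vec{m}-reg}_{\vec{m}}$, hence, forgetting the free $G_m$-action, $G_{\vec{m}}$-equivariant D-modules on $R^{\vec{m}-reg}_{\vec{m}}$ — and $G_m$-equivariant D-modules on $R^{\vec{m}-reg}_m$. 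One has to check the reduction parameters match: $\lambda|_{\g_{\vec{m}}}$ agrees with the pullback of $\lambda$ under $\g_{\vec{m}}\hookrightarrow\g_m$ by the compatibility of characters noted before Lemma \ref{Lem:induction_transitivity}, so the twists are compatible. Composing the two equivalences (extending equivariance along $\varpi_1$, and the descent/induction isomorphism from $\iota$) gives the asserted equivalence, and one sees it is exactly pull-push via the indicated correspondence.

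The main technical point — and the only place requiring care — is tracking the D-module twist (the canonical bundle / half-density) shift through the isomorphism $\iota$, so that $\iota_*$ really is an exact equivalence with no cohomological shift and not merely a shifted equivalence. Here I would argue as in Step 4 of the proof of Proposition \ref{Prop:supports}: the relevant relative canonical bundle $K_{R^{par,\vec{m}-reg}_{\vec{m}}/G_{\vec{m}}}$ versus $K_{R_m^{\vec{m}-reg}/G_m}$ differ by a \emph{character} of the relevant groups (pulled back from a point), so tensoring with it is an equivariant twist that does not affect the t-structure and can be absorbed; since $\iota$ is an isomorphism (not merely proper of positive relative dimension) there is no dimension shift at all, and $\iota_*=\iota_\bullet$ on the nose. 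Once this bookkeeping is done, the composition of equivalences is manifestly t-exact, completing the proof. I would also remark that the inverse functor is the evident pull-push in the opposite direction, which makes the biadjointness statements we need later transparent.
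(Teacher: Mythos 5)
Your reduction to the case $\vec{m}'=\vec{m}$ rests on the claim that in this case ``there is no nilpotent radical to carry along: $\p_{\vec{m}}=\g_{\vec{m}}$,'' hence $R^{par,\vec{m}-reg}_{\vec{m}}=R^{\vec{m}-reg}_{\vec{m}}$. This is incorrect: in the paper's notation $\p_{\vec{m}'}$ is the parabolic subalgebra of $\g_m$ (not of $\g_{\vec{m}'}$) with Levi $\g_{\vec{m}'}$, so $\p_{\vec{m}}\supsetneq\g_{\vec{m}}$ unless $\vec{m}=(m)$ is the trivial composition. Consequently, after your ``reduction'' you are left with a correspondence of the same shape as the one you started with: $P_{\vec{m}}$ is still a proper parabolic of $G_m$ with a non-trivial unipotent radical, and $\iota$ is \emph{not} the plain action map $G_m\times^{G_{\vec{m}}}R^{\vec{m}-reg}_{\vec{m}}\to R^{\vec{m}-reg}_m$; it still passes through $R^{par,\vec{m}-reg}_{\vec{m}}/P_{\vec{m}}$. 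So there is a genuine gap: you have handled the extending-equivariance part of $\varpi$, but the part in which the nilradical direction in $\p_{\vec{m}'}$ is collapsed has been assumed away rather than argued.

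The paper's own proof is much shorter and avoids the reduction entirely: it observes that, by the same mechanism as Lemma~\ref{Lem:iso_var} (over the $\vec{m}$-regular locus, stabilizers of points in $G_m$ lie in a conjugate of $G_{\vec{m}}\subset G_{\vec{m}'}\subset P_{\vec{m}'}$, so the $U_{\vec{m}'}$-action is free and the $G_m$-saturation map is an isomorphism), \emph{both} morphisms of stacks in~(\ref{eq:stack_morphisms}) are isomorphisms. Once both legs of the correspondence are isomorphisms, pull-push is tautologically an equivalence, with no canonical-bundle, half-density, or dimension-shift bookkeeping to perform; your final paragraph addresses a difficulty that the actual argument never faces. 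Your intermediate observations --- that $\varpi$ factors through a morphism of extending equivariance along a unipotent group, and that $\iota$ is governed by Lemma~\ref{Lem:iso_var} --- point at the right ingredients, but they should be assembled directly into the claim that both maps in~(\ref{eq:stack_morphisms}) are stack isomorphisms, rather than funneled through the incorrect $\vec{m}'=\vec{m}$ reduction.
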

\begin{proof}
We note that similarly to Lemma \ref{Lem:iso_var} the morphisms of stacks 
\begin{equation}\label{eq:stack_morphisms}
R^{\vec{m}-reg}_{\vec{m}'}/G_{\vec{m}'}\leftarrow R^{par,\vec{m}-reg}_{\vec{m}'}/P_{\vec{m}'}\rightarrow
R_m^{\vec{m}-reg}/G_{m}
\end{equation}
are isomorphisms. The claim of the present lemma follows. 
\end{proof}

\begin{Rem}\label{Rem:etale_lift_equiv_objects_endom}
It is easy to see that the equivalence in Lemma 
\ref{Lem:equiv_cats} sends $Q(D(R^{\vec{m}-reg}_{\vec{m}'}),G_{\vec{m}'},\lambda)$ to $Q(D(R^{\vec{m}-reg}_m), G_m,\lambda)$.
Note that the endomorphism algebras of these objects are 
$$\C[\h_{\vec{m}}^0/S_{\vec{m}}]\otimes_{\C[\h_m/S_{\vec{m}'}]}\A_{\vec{m}',\lambda}, 
\C[\h_{\vec{m}}^0/S_{\vec{m}}]\otimes_{\C[\h_m/S_m]}\A_{m,\lambda},$$
respectively. Lemma \ref{Lem:equiv_cats} gives rise to an isomorphism of these algebras that is easily seen to be
induced by the isomorphism in Lemma \ref{Lem:QHR_isom}.
\end{Rem}

Let $n=sb+r$ with $0\leqslant r<b$. Consider the compositions $\vec{m}=(r,b,\ldots,b)$ and $\vec{m}'=(n,b)$ of $n+b$.
We write $\A^0_{\vec{m},\lambda}$ for the algebra $\C[\h_{\vec{m}}^0/S_{\vec{m}}]\otimes_{\C[\h_{n+b}/S_{n+b}]}\otimes \A_{n+b,\lambda}$. 
Our main goal in this section is to understand the structure of $$H^i(B_n)^0_{\vec{m}}:=\A^0_{\vec{m},\lambda}\otimes_{\A_{n,\lambda}}H^i(B_n)
(=\C[\h_{\vec{m}}^0/S_{\vec{m}}]\otimes_{\C[\h_{n+b}/S_{n+b}]}H^i(B_n)).$$
This is a $\A^0_{\vec{m},\lambda}$-$\bar{\A}_{n+b,\lambda}$-bimodule.

Let $H'_{b,\lambda}$
denote the rational Cherednik algebra for the reflection representation of $S_b$
so that $\A_{b,\lambda}=e H'_{b,\lambda}e\otimes D(\mathfrak{z})$.
The algebra $e H'_{b,\lambda}e$
has the unique irreducible finite dimensional module $V$ (here it is important that
$\lambda>0$ and the denominator of $\lambda$ is $b$), see \cite{BEG}.
We can view $B(V):=\bar{\A}_{n,\lambda}\otimes V$ as an $\A_{\vec{m}',\lambda}$-$\bar{\A}_{n,\lambda}$-bimodule. We set
$$B(V)^0_{\vec{m}}:=\A^0_{\vec{m},\lambda}\otimes_{\A_{\vec{m},\lambda}}B(V).$$
Further, for $i=1,\ldots,s$, let $\theta_i$ denote the automorphism of $\A_{\vec{m},\lambda}=\A_{r,\lambda}\otimes \bigotimes_{i=1}^{s+1}\A_{b,\lambda}$ that exchanges
the factors number $i$ and $s+1$. Notice that $\theta_i$ lifts to an automorphism of
$\A^0_{\vec{m},\lambda}$. We write $\,^i\! B(V)^0_{\vec{m}}$ for the bimodule obtained
from $B(V)^0_{\vec{m}}$ by twisting the action of $\A^0_{\vec{m},\lambda}$ by $\theta_i$.
We write $\,^{s+1}\!B(V)^0_{\vec{m}}$ for $B(V)^0_{\vec{m}}$ itself.

\begin{Prop}\label{Prop:etale_lift_structure}
We have $H^j(B_n)^0_{\vec{m}}=0$ for $j\neq 0$. Further, we have an $\A^0_{\vec{m},\lambda}$-$\bar{\A}_{n,\lambda}$-bimodule isomorphism
$$H^0(B_n)^0_{\vec{m}}=\bigoplus_{i=1}^{s+1} \,^i\! B(V)^0_{\vec{m}}.$$
\end{Prop}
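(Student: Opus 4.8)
The plan is to compute the étale lift $H^i(B_n)^0_{\vec m}$ by commuting the base change $\C[\h^0_{\vec m}/S_{\vec m}]\otimes_{\C[\h_{n+b}/S_{n+b}]}(\bullet)$ past the three operations that build $B_n$: the pullback $\varpi^!$, the pushforward $\iota_*$ inside $\Ind^{n+b}_{n,b}$, and the functor of $G_{n+b}$-invariants. Recall from Section \ref{SS:etale_I} that $\C[\h^0_{\vec m}/S_{\vec m}]\otimes_{\C[\h_{n+b}/S_{n+b}]}\A_{n+b,\lambda}$ is the quantum Hamiltonian reduction attached to the open substack $R_{n+b}^{\vec m-reg}/G_{n+b}\subset R_{n+b}/G_{n+b}$. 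Restricting $\tilde B_n=\iota_*\varpi^!N$ to this locus and applying the equivariant version of the base-change Lemma \ref{Lem:base_change}, one replaces $\Ind^{n+b}_{n,b}$ on the étale locus by $\tilde\iota_*\tilde\varpi^!$ for the base change of the correspondence $R^{par}_{(n,b)}/P_{(n,b)}$; then the isomorphisms of Lemmas \ref{Lem:iso_var}, \ref{Lem:QHR_isom} and \ref{Lem:equiv_cats} (together with Remark \ref{Rem:etale_lift_equiv_objects_endom}) turn the localized induction into a ``product'' functor that can be analyzed one block of $\vec m$ at a time. A minor bookkeeping point, handled exactly as in Lemma \ref{Lem:homomorphisms_compat} and Remark \ref{Rem:label_iso_choice}, is to keep track of the $\operatorname{Ad}(f^{-\lambda})$-twist relating the two natural étale identifications (\ref{eq:composed_homo1}) and (\ref{eq:composed_homo2}).

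The geometric heart of the argument is a decomposition. For $\vec m=(r,b,\dots,b)$ with $s+1$ blocks of size $b$, a $\vec m$-regular point of $R_{n+b}$ carries $s+1$ distinguished size-$b$ coordinate subspaces of $\C^{n+b}$ --- the ``clusters'' of coinciding diagonal coordinates --- and on the base-changed correspondence the distinguished $b$-subspace cut out by the parabolic $P_{(n,b)}$ must, up to the $G_{n+b}$-action, agree with exactly one of them. Hence the base change of $R^{par}_{(n,b)}/P_{(n,b)}$ splits into $s+1$ connected components, indexed by $i=1,\dots,s+1$ according to which size-$b$ block plays the role of the ``$b$'' in the pair $(n,b)$. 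The group $N_{S_{n+b}}(S_{\vec m})/S_{\vec m}$ contains a copy of $S_{s+1}$ permuting these components transitively, and the component of index $i$ differs from the one of index $s+1$ by the transposition automorphism $\theta_i$ of $\A^0_{\vec m,\lambda}$. By this symmetry it suffices to identify the contribution of the component $i=s+1$, in which the parabolic block is the ``last'' cluster.

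On that component the localized induction functor is the identity on the first $n$ coordinates tensored with the atomic case $n=0$ of the Heisenberg generator, namely the quantum Hamiltonian reduction by $G_b$ of the $D(\gl_b\oplus\C^b)$-module $\C[\C^b]\otimes M'_\lambda$, tensored with the free factor $D(\z)$; one wants this to be concentrated in cohomological degree $0$ and isomorphic, as a bimodule, to $V\otimes D(\z)=B(V)$ for $n=0$. This is where the special properties of $M'_\lambda$ enter. By Lemma \ref{Lem:cuspidal_good_filtration} the module $M'_\lambda$ admits a $G_b$-stable good filtration whose associated graded is reduced and scheme-theoretically supported on the nilpotent locus; arguing as in the proof of Proposition \ref{Prop:HC} (flatness over $\hbar$, and the scheme-theoretic fibre product being the expected one), this forces the derived reduction to be underived and its output to be finite dimensional over $eH'_{b,\lambda}e$, hence --- all finite-dimensional $eH'_{b,\lambda}e$-modules being semisimple --- a direct sum of copies of $V$ (here $\lambda>0$ with denominator exactly $b$, so by \cite{BEG} the unique finite-dimensional simple is $V$). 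That the multiplicity is exactly one follows by comparing $\operatorname{SL}_b$-isotypic components, using the finiteness of multiplicities in $M'_\lambda$ (\cite{CEE}) together with $M'_\lambda$ being the cuspidal local system on $\Orb$. Finally, the left $\A^0_{\vec m,\lambda}$- and right $\bar\A_{n,\lambda}$-actions are matched with those of $B(V)^0_{\vec m}$ by tracking quantum comoment maps through Remark \ref{Rem:etale_lift_equiv_objects_endom}.

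The main obstacle is precisely the last step: proving the degree-$0$ concentration of the localized induction and that nothing beyond $\bigoplus_{i=1}^{s+1}{}^i\!B(V)^0_{\vec m}$ occurs, in particular the vanishing $H^j(B_n)^0_{\vec m}=0$ for $j\neq 0$. It is here that the nilpotent support and the Hodge/good filtration of $M'_\lambda$ --- not merely its irreducibility --- are indispensable, since they are what guarantee that the relevant scheme-theoretic intersection is as expected and that the reduction acquires no higher cohomology. Everything else is formal: étale descent of quantum Hamiltonian reductions, the transitivity of the induction functors (Lemma \ref{Lem:induction_transitivity}, Remark \ref{Rem:other_transitivity}), and the finite generation already supplied by Proposition \ref{Prop:HC}.
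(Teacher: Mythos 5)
Your proposal captures the right overall shape — étale-localize, decompose into $s+1$ summands, identify each with a twist of $B(V)^0_{\vec m}$ — but the mechanism you give for the crucial cohomological vanishing is not the one that actually works, and I don't think it can be made to work as stated.

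The central issue is how you prove $H^j(B_n)^0_{\vec m}=0$ for $j\neq 0$. You invoke Lemma \ref{Lem:cuspidal_good_filtration} (the good filtration on $M'_\lambda$ with reduced, scheme-theoretically nilpotent associated graded) and an $\hbar$-flatness argument ``as in the proof of Proposition \ref{Prop:HC}.'' But that filtration argument is the mechanism for the \emph{Harish-Chandra} property of $H^i(B_n)$ — it controls the singular support of the associated graded of \emph{each} cohomology bimodule, and says nothing a priori about the vanishing of $H^i$ for $i\neq 0$. Indeed Proposition \ref{Prop:HC} is stated for all $i$, and in the paper's logic the degree-$0$ concentration of $B_n$ itself is \emph{derived from} the present proposition (via Lemma \ref{Lem:HC_vanishing} and completion), not vice versa. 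The actual reason $H^j(\tilde B_n)^0_{\vec m}$ vanishes for $j\neq 0$ is geometric and much simpler: after restricting to the $\vec m$-regular locus, the morphisms (\ref{eq:stack_morphisms}) in the pull-push correspondence become \emph{isomorphisms of stacks} (Lemma \ref{Lem:iso_var}, Lemma \ref{Lem:equiv_cats}), so $\iota_*\varpi^!$ degenerates to the pullback $\mathcal{G}$ under a smooth projection followed by an isomorphism. Pullback is t-exact, and that is the whole argument. No good filtration or flatness over $\hbar$ is needed at this stage.

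Two secondary inaccuracies. First, you assert that ``the base change of $R^{par}_{(n,b)}/P_{(n,b)}$ splits into $s+1$ connected components.'' That correspondence stack is connected — $\g^{\vec m\text{-reg}}_{\vec m'}$ is an irreducible open piece of a fibre product of connected varieties. What splits is not the correspondence but the \emph{module} $N^0_{\vec m}$: $N$ is set-theoretically supported on the preimage of $\bar\h_n/S_n$, and the fibre product $\h^0_{\vec m}/S_{\vec m}\times_{\h_{n+b}/S_{n+b}}\bar\h_n/S_n$ has exactly $s+1$ connected components. Second, after the localization you re-derive the identification of the $G_b$-invariants with $V$ via a semisimplicity/multiplicity-one analysis; while plausible, this is unnecessary: $V\cong(M'_\lambda\otimes\C[\C^b])^{G_b}$ is directly a theorem of Calaque–Enriquez–Etingof (\cite[Theorem 9.8]{CEE}). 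Your aside about the $\operatorname{Ad}(f^{-\lambda})$-twist is also a red herring for this particular proposition, which is stated purely in terms of $\A^0_{\vec m,\lambda}$ defined on the quantum Hamiltonian reduction side; the twist only enters when matching with the spherical Cherednik algebra coordinates.
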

\begin{proof}
Let $N,\tilde{B}_n$ have the same meaning as in the proof of Proposition \ref{Prop:HC}.
We can talk about the etale lifts $N^0_{\vec{m}}:=\C[\h_{\vec{m}}^0/S_{\vec{m}}]\otimes_{\C[\h_{n+b}/S_{n+b}]}N$ and $\tilde{B}^0_{\vec{m}}:=\C[\h_{\vec{m}}^0/S_{\vec{m}}]\otimes_{\C[\h_{n+b}/S_{n+b}]}\tilde{B}_n$. The bimodule $\tilde{B}^0_{\vec{m}}$ 
is obtained from $N^0_{\vec{m}}$ by the pullpush via
$$
(\h_{\vec{m}}^0/S_{\vec{m}}\times_{\h_{n+b}/S_{n+b}}\underline{R}_{\vec{m}'})/G_{\vec{m}'}\leftarrow (\h_{\vec{m}}^0/S_{\vec{m}}\times_{\h_{n+b}/S_{n+b}}R_{\vec{m}'}^{par})/P_{\vec{m}'}\rightarrow 
R^{\vec{m}-reg}_{n+b}/G_{n+b}.$$

Now note that $\h_{\vec{m}}^0/S_{\vec{m}}\times_{\h_{n+b}/S_{n+b}}\bar{\h}_n/S_n$
splits into the disjoint union of $s+1$ connected components each identified with
\begin{equation}\label{eq:Cartesian_product}
\h_{\vec{m}}^0/S_{\vec{m}}\times_{\h_{n+b}/S_{\vec{m}'}}\bar{\h}_n/S_n. 
\end{equation}
Namely,
a point in (\ref{eq:Cartesian_product})
is of the form
$$(x_{0,1},\ldots,x_{0,r},x_{1,1},\ldots,x_{1,b},\ldots, x_{s+1,1},\ldots,x_{s+1,b},y_1,\ldots,y_n,z,\ldots,z),$$
where, 
\begin{itemize}
\item  for each $j$, the numbers $x_{j,i}$ are defined up to permutation (in $S_r$ for $j=0$ and in $S_b$ for $j>0$), \item    $x_{j,i}=x_{j',i'}\Rightarrow j=j'$, 
    \item the numbers
$y_1,\ldots,y_n$ are defined up to permutation (in $S_n$), 
\item and the two collections $(x_{ji})$
and $(y_1,\ldots,y_n,z,\ldots,z)$ are the same up to permutation. 
\end{itemize}
The components then are labeled
by $j=1,\ldots,s+1$ with $x_{ji}=z$ for all $i=1,\ldots,b$.

Note that $N$ is set-theoretically supported on the preimage of $\bar{\h}_n/S_n$ in $R_{\vec{m}'}$.
It follows from the previous paragraph that $N^0_{\vec{m}}$ splits into the direct sum of
$s+1$ modules of the form $\,^i\!N:=\,^i\!\left(\C[\h_{\vec{m}}^0/S_{\vec{m}}]\otimes_{\C[\h_{n+b}/S_{\vec{m}'}]}N\right)$, where the meaning of the 
left superscript is the same as for $\,^i\!B(V)^0_{\vec{m}}$.
The images of these summands in $$D^b_{G_{n+b},\lambda}([\h_{\vec{m}}^0/S_{\vec{m}}\times_{\h_{n+b}/S_{n+b}}R_{n+b}]/G_{n+b})$$
are given by pull-push of $\,^i\!N$ via
\begin{equation}\label{eq:pullpush1}
\underline{R}^{\vec{m}-reg}_{\vec{m}'}/G_{\vec{m}'}\leftarrow R_{\vec{m}'}^{par,\vec{m}-reg}/P_{\vec{m}'}\rightarrow R^{\vec{m}-reg}_{n+b}/G_{n+b},
\end{equation}
where $\underline{R}^{\vec{m}-reg}_{\vec{m}'}$ is defined analogously to $R^{\vec{m}-reg}_{\vec{m}'}$. 

Since (\ref{eq:stack_morphisms}) are isomorphisms, we see that pull-push via (\ref{eq:pullpush1})
amounts to pullback, denote it by $\mathcal{G}$, under $$\underline{R}^{\vec{m}-reg}_{\vec{m}'}/G_{\vec{m}'}\leftarrow
R^{\vec{m}-reg}_{\vec{m}'}/G_{\vec{m}'}\xleftarrow{\sim}R^{\vec{m}-reg}_{n+b}/G_{n+b}.$$

It follows that $(\tilde{B}_n)^0_{\vec{m}}=\bigoplus_{i=1}^{s+1}\mathcal{G}(\,^i\!N)$. 
Hence $H^j(\tilde{B}_n)^0_{\vec{m}}=0$ for $j\neq 0$. 
Also,  according to \cite[Theorem 9.8]{CEE}, $V\cong (M'_\lambda\otimes \C[\C^b])^{G_b}$, as an
$\A_{b,\lambda}$-module. From here we see that
$$H^0(B_n)^0_{\vec{m}}\cong \bigoplus_{i=1}^{s+1} \,^i\! B(V)^0_{\vec{m}}.$$
This finishes the proof.
\end{proof}

\subsection{Proof of Proposition \ref{Prop:cohomology_structure}}\label{SS_proof_cohom_struct}
The goal of this section is to deduce  Proposition \ref{Prop:cohomology_structure} from 
Proposition \ref{Prop:etale_lift_structure}.

In the proof it will be more convenient for us to work with completions (and not with
etale lifts). Let $\vec{m}$ be as in Section \ref{SS_etale_lifts_II}. Pick $x\in \bar{\h}_n$ with $(S_{n+b})_x=S_{\vec{m}}$. Let $\C[\h_{n+b}/S_{n+b}]^{\wedge_x}$
denote the completion of $\C[\h_{n+b}/S_{n+b}]$ at (the image of) $x$. We recall that it is
identified with $\C[\h^0_{\vec{m}}/S_{\vec{m}}]^{\wedge_x}$, see Section \ref{SS_BE}.
So we can consider the (partial) completions $\A_{n+b,\lambda}^{\wedge_x}:=\C[\h_{n+b,\lambda}]^{\wedge_x}\otimes_{\C[\h_{n+b}/S_{n+b}]}\A_{n+b,\lambda}$. For a HC $\A_{n+b,\lambda}$-bimodule or a HC $\A_{n+b,\lambda}$-$\bar{\A}_{n,\lambda}$-bimodule
$B$, we set $$B^{\wedge_x}:=\C[\h_{n+b}/S_{n+b}]^{\wedge_x}\otimes_{\C[\h_{n+b}/S_{n+b}]}B.$$

We can also consider the completion $\bar{\A}_{n,\lambda}^{\wedge_x}:=\C[\h_{n+b,\lambda}]^{\wedge_x}\otimes_{\C[\h_{n+b}/S_{n+b}]}
\bar{\A}_{n,\lambda}$. It decomposes into the direct sum as follows (cf. the proof of Proposition \ref{Prop:etale_lift_structure}). 
Let $x^i$ be the point in
$\bar{\h}_n$ obtained from $x$ by permuting the $i$-th and $s+1$-th groups of equal coordinates.
Then we have $\C[\bar{\h}_n]^{\wedge_x}\cong \bigoplus_{i=1}^{s+1}\C[\bar{\h}_n]^{\wedge_{x_{i}}}$,
where we write  $\C[\bar{\h}_n]^{\wedge_{x_{i}}}$ for the completion at $x_i$. Therefore,
$\bar{\A}_{n,\lambda}^{\wedge_x}\cong \bigoplus_{i=1}^{s+1}\bar{\A}_{n,\lambda}^{\wedge_{x_i}}$.

\begin{Lem}\label{Lem:compl_structures}
The following claims hold:
\begin{itemize}
\item The completions $\A_{n+b,\lambda}^{\wedge_x}, \bar{\A}_{n,\lambda}^{\wedge_x}, \bar{\A}_{n,\lambda}^{\wedge_{x_i}}$ have natural algebra structures.
\item If $B$ is a HC $\A_{n+b,\lambda}$-bimodule, then $B^{\wedge_x}$ is
an $\A_{n+b,\lambda}^{\wedge_x}$-bimodule.
\item If $B$ is a HC $\A_{n+b,\lambda}$-$\bar{\A}_{n,\lambda}$-bimodule, then $B^{\wedge_x}$ is
an $\A_{n+b,\lambda}^{\wedge_x}$-$\bar{\A}_{n,\lambda}^{\wedge_x}$-bimodule.
\item If $B$ is a HC $\bar{\A}_{n,\lambda}$-bimodule, then $B^{\wedge_{x_i}}$ is
an $\bar{\A}_{n,\lambda}^{\wedge_{x_i}}$-bimodule.
\end{itemize}
\end{Lem}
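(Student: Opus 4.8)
The plan is to reduce all four assertions to one elementary fact about completions of ``almost commutative'' filtered algebras along a finitely generated commutative subalgebra sitting in filtration degree $0$, and then to verify the hypotheses of that fact for $\A_{n+b,\lambda}$ and $\bar{\A}_{n,\lambda}$.

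\textbf{The general principle.} Suppose $A=\bigcup_i F_iA$ is a $\Z_{\geqslant 0}$-filtered algebra with $[F_iA,F_jA]\subseteq F_{i+j-1}A$, and $C\subseteq F_0A$ is a finitely generated commutative subalgebra such that each $F_iA$ is a finitely generated $C$-module. Fix a maximal ideal $\mathfrak m\subset C$ and set $A^{\wedge_\mathfrak m}:=C^{\wedge_\mathfrak m}\otimes_C A$. Since $C$ is Noetherian and each $F_iA$ is $C$-finite, one has $C^{\wedge_\mathfrak m}\otimes_C F_iA=(F_iA)^{\wedge_\mathfrak m}=\varprojlim_k F_iA/\mathfrak m^kF_iA$ and $A^{\wedge_\mathfrak m}=\varinjlim_i(F_iA)^{\wedge_\mathfrak m}$. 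I claim the multiplication of $A$ extends uniquely and continuously, making $A^{\wedge_\mathfrak m}$ a filtered algebra with $F_iA^{\wedge_\mathfrak m}=(F_iA)^{\wedge_\mathfrak m}$. The technical heart is a higher-order Leibniz estimate: for $a\in F_iA$ and $f\in\mathfrak m^k$ one has $af-fa\in F_{i-1}A$ always, and $af-fa\in\mathfrak m^{k-i}F_{i-1}A$ once $k\geqslant i$; this is proved by induction on $i$, peeling off one factor of $\mathfrak m$ at a time and using $[F_iA,C]\subseteq F_{i-1}A$. The estimate shows that the left and right $\mathfrak m$-adic topologies induced on each $F_iA$ coincide, hence the product $F_iA\times F_jA\to F_{i+j}A$ is $\mathfrak m$-adically continuous and extends to the completions; associativity is inherited by continuity. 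Verbatim the same argument handles bimodules: if $A,A'$ are two such algebras sharing the commutative subalgebra $C$ via compatible maps $C\to F_0A$, $C\to F_0A'$ (i.e.\ the two $C$-actions on the bimodule agree), and $M$ is a filtered $A$-$A'$-bimodule with $\gr M$ finitely generated over $\gr A$ (so that each $F_iM$ is $C$-finite), then $M^{\wedge_\mathfrak m}:=C^{\wedge_\mathfrak m}\otimes_C M$ is naturally an $A^{\wedge_\mathfrak m}$-$A'^{\wedge_\mathfrak m}$-bimodule.

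\textbf{Verifying the hypotheses.} For $A=\A_{n+b,\lambda}$ with the order filtration one has $F_0\A_{n+b,\lambda}=\C[\h_{n+b}/S_{n+b}]=:C$, and $\gr\A_{n+b,\lambda}=\C[T^*\h_{n+b}]^{S_{n+b}}$; since the degree-$j$ part of $\C[\h_{n+b}\oplus\h_{n+b}^*]$ (with $\h_{n+b}^*$ in degree $1$) is a finite free $\C[\h_{n+b}]$-module, its $S_{n+b}$-invariants are $C$-finite, so $\gr F_i\A_{n+b,\lambda}$ and hence $F_i\A_{n+b,\lambda}$ is $C$-finite. This gives the algebra structure on $\A_{n+b,\lambda}^{\wedge_x}$. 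For $\bar{\A}_{n,\lambda}=\A_{n,\lambda}\otimes D(\z)$ the subalgebra $\C[\bar{\h}_n/S_n]=\C[\h_n/S_n]\otimes\C[\z]$ lies in $F_0\bar{\A}_{n,\lambda}$ and each $F_i\bar{\A}_{n,\lambda}$ is finite over it (same computation, together with $F_iD(\z)$ being $\C[\z]$-finite); moreover $\C[\bar{\h}_n/S_n]$ is finite over $C$ because $\bar{\h}_n=(\h_{n+b})^{S_b}$ is a closed subvariety of $\h_{n+b}$ and $\C[\h_{n+b}]$ is $C$-finite. Hence each $F_i\bar{\A}_{n,\lambda}$ is $C$-finite, and the general principle applies to $\bar{\A}_{n,\lambda}$ with either $C$ or $\C[\bar{\h}_n/S_n]$ as the commutative subalgebra, producing the algebra structures on $\bar{\A}_{n,\lambda}^{\wedge_x}$ and on $\bar{\A}_{n,\lambda}^{\wedge_{x_i}}=\C[\bar{\h}_n/S_n]^{\wedge_{x_i}}\otimes_{\C[\bar{\h}_n/S_n]}\bar{\A}_{n,\lambda}$; the decomposition $\bar{\A}_{n,\lambda}^{\wedge_x}\cong\bigoplus_i\bar{\A}_{n,\lambda}^{\wedge_{x_i}}$ recorded before the lemma follows from $C^{\wedge_x}\otimes_C\C[\bar{\h}_n/S_n]\cong\bigoplus_i\C[\bar{\h}_n/S_n]^{\wedge_{x_i}}$, which holds since $\bar{\h}_n/S_n\to\h_{n+b}/S_{n+b}$ is finite.

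\textbf{The bimodule statements and the main obstacle.} For a Harish-Chandra bimodule $B$ (over $\A_{n+b,\lambda}$; over $\A_{n+b,\lambda}$ and $\bar{\A}_{n,\lambda}$; or over $\bar{\A}_{n,\lambda}$), its defining filtration has $\gr B$ finitely generated over the relevant $\gr A$, and combined with the degree-wise $C$-finiteness of $\gr A$ just established this makes each $F_iB$ a finite $C$-module. In the mixed case one must additionally check that the $C$-action on $B$ coming from $\A_{n+b,\lambda}$ on the left agrees with the one coming from $\C[\bar{\h}_n/S_n]\subset\bar{\A}_{n,\lambda}$ on the right; this holds because by the Harish-Chandra condition both factor on $\gr B$ through the homomorphism $\gr\A_{n+b,\lambda}\to\gr\bar{\A}_{n,\lambda}$ induced by $T^*\bar{\h}_n\hookrightarrow T^*\h_{n+b}$, and the higher-order Leibniz estimate then forces the two $\mathfrak m$-adic topologies on $B$ to coincide. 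The general principle now yields claims (2), (3) and (4) (for (4) one completes along $\C[\bar{\h}_n/S_n]$ at $x_i$). The only non-formal ingredient, and the place that requires care, is the higher-order Leibniz estimate making the non-central subalgebra $C$ ``topologically central'' after completion; granting it, every multiplication and action map is $\mathfrak m$-adically continuous, so it extends uniquely to the completions with all algebra and bimodule axioms inherited from $A$ by continuity.
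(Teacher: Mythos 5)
Your proof is correct and takes the same route as the paper's sketch: the paper's observation that $\operatorname{ad}(a)$ is locally nilpotent on a HC bimodule for $a\in\C[\h_{n+b}/S_{n+b}]$ is precisely the qualitative form of your higher-order Leibniz estimate, and both arguments then conclude by $\mathfrak m$-adic continuity. Your write-up merely spells out the quantitative bound, the degree-wise $C$-finiteness coming from goodness of the filtration, and the compatibility of the two $C$-actions in the mixed bimodule case — details the paper explicitly declares easy and leaves to the reader.
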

\begin{proof}
These claims are easy, we sketch a proof of (2). Note that, for $a\in \C[\h_{n+b}/S_{n+b}]$,
the operator $\operatorname{ad}(a):B\rightarrow B$ is locally nilpotent. So, if the left
action of $\C[\h_{n+b}/S_{n+b}]$ extends to the completion $\C[\h_{n+b}/S_{n+b}]^{\wedge_x}$,
then so does the right action.
\end{proof}

An important property of the completion functors $\bullet^{\wedge_?}$ that we will need
is the following lemma.

\begin{Lem}\label{Lem:compl_intertwine}
We have the following natural isomorphisms:
\begin{enumerate}
\item for a HC $\A_{n+b,\lambda}$-$\bar{\A}_{n,\lambda}$-bimodule $B$
$$\operatorname{Ext}^j_{\bar{\A}_{n,\lambda}}(B, \bar{\A}_{n,\lambda})^{\wedge_{x_i}}\cong
\operatorname{Ext}^j_{\bar{\A}^{\wedge_{x_i}}_{n,\lambda}}(B^{\wedge_{x_i}}, \bar{\A}_{n,\lambda}^{\wedge_{x_i}}).$$
\item for a HC $\A_{n+b,\lambda}$-$\bar{\A}_{n,\lambda}$-bimodule $B_1$
and a HC $\bar{\A}_{n,\lambda}$-$\bar{\A}_{n,\lambda'}$-bimodule $B_2$
$$\operatorname{Tor}_j^{\bar{\A}_{n,\lambda}}(B_1,B_2)^{\wedge_{x_i}}\cong
\operatorname{Tor}_j^{\bar{\A}^{\wedge_{x_i}}_{n,\lambda}}(B_1^{\wedge_{x_i}}, B_2^{\wedge_{x_i}}).$$
\item  for a HC $\A_{n+b,\lambda'}$-$\A_{n+b,\lambda}$-bimodule $B_1$
and a HC $\A_{n+b,\lambda}$-$\bar{\A}_{n,\lambda}$-bimodule $B_2$
$$\operatorname{Tor}_j^{\A_{n+b,\lambda}}(B_1,B_2)^{\wedge_{x}}\cong
\operatorname{Tor}_j^{\A^{\wedge_x}_{n+b,\lambda}}(B_1^{\wedge_{x}}, B_2^{\wedge_{x}}).$$
\end{enumerate}
\end{Lem}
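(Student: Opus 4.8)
The statement to prove is Lemma \ref{Lem:compl_intertwine}: three natural isomorphisms asserting that partial completion along the central subalgebra commutes with $\operatorname{Ext}$ and $\operatorname{Tor}$ computed over the Cherednik-type algebras, in the category of Harish-Chandra bimodules.

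My plan is as follows. The common principle behind all three items is that completion at $x_i$ (or $x$) is exact on finitely generated modules over a Noetherian commutative ring and, more to the point here, that the relevant derived functors are computed by resolutions that stay within the world of finitely generated / Harish-Chandra bimodules, on which completion is flat. Concretely, write $C := \C[\bar{\h}_n/S_n]$ (or $\C[\h_{n+b}/S_{n+b}]$ in item (3)); this is a finitely generated commutative ring sitting centrally in $\bar{\A}_{n,\lambda}$ (resp. $\A_{n+b,\lambda}$), and the relevant completion is $C^{\wedge} \otimes_C -$, where $C^{\wedge}$ is the completion at a maximal ideal. Since $C$ is Noetherian, $C^{\wedge}$ is flat over $C$. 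Now the key point: a Harish-Chandra bimodule $B$ is, by Lemma \ref{Lem:HC_properties} and the definitions in the preceding section, finitely generated over a Noetherian algebra (e.g.\ $\gr$ of it is finite over $\gr \bar{\A}$), so $\operatorname{Ext}$ and $\operatorname{Tor}$ between HC bimodules are again HC bimodules, hence finitely generated over $C$. Therefore $C^{\wedge}\otimes_C -$ applied to such an $\operatorname{Ext}$ or $\operatorname{Tor}$ is computed correctly as long as I know completion commutes with taking cohomology of the relevant complex — which follows from flatness of $C^{\wedge}$ over $C$.

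In more detail, for item (2) and (3) (the $\operatorname{Tor}$ cases): take a resolution $P_\bullet \to B_1$ of the first argument by bimodules that are finitely generated and free (or at least flat and HC) as right $\bar{\A}_{n,\lambda}$-modules; one exists because $\bar{\A}_{n,\lambda}$ is Noetherian and $B_1$ is HC. Then $\operatorname{Tor}^{\bar{\A}_{n,\lambda}}_j(B_1,B_2) = H_j(P_\bullet \otimes_{\bar{\A}_{n,\lambda}} B_2)$. Applying $C^{\wedge}\otimes_C -$: since $\bar{\A}_{n,\lambda}^{\wedge_{x_i}} = C^{\wedge}\otimes_C \bar{\A}_{n,\lambda}$ and completion is flat over $C$, we get $C^{\wedge}\otimes_C (P_\bullet\otimes_{\bar{\A}_{n,\lambda}}B_2) \cong P_\bullet^{\wedge}\otimes_{\bar{\A}_{n,\lambda}^{\wedge_{x_i}}} B_2^{\wedge_{x_i}}$ (here I use that $P_j$ is free of finite rank over $\bar{\A}_{n,\lambda}$ so the completed tensor product is literally a finite direct sum, and the module-side factor $B_2$ only needs the completion of the right action, which matches). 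The complex $P_\bullet^{\wedge}$ is still a flat resolution of $B_1^{\wedge_{x_i}}$ over $\bar{\A}_{n,\lambda}^{\wedge_{x_i}}$ because completion over $C$ is exact and the $P_j$ stay flat. Finally $C^{\wedge}\otimes_C H_j(-) = H_j(C^{\wedge}\otimes_C -)$ by flatness. Putting these together gives the claimed isomorphism, and one checks naturality is automatic from the functoriality of resolutions. For item (1), the $\operatorname{Ext}$ case, I would instead resolve the first argument $B$ by finitely generated \emph{projective} (say, finite free) right $\bar{\A}_{n,\lambda}$-modules $P_\bullet$, write $\operatorname{Ext}^j_{\bar{\A}_{n,\lambda}}(B,\bar{\A}_{n,\lambda}) = H^j(\operatorname{Hom}_{\bar{\A}_{n,\lambda}}(P_\bullet,\bar{\A}_{n,\lambda}))$, and use that $\operatorname{Hom}_{\bar{\A}_{n,\lambda}}(P_j,\bar{\A}_{n,\lambda})$ is finite free as a left module, so completing commutes with $\operatorname{Hom}$ out of it; again flatness of $C^{\wedge}$ over $C$ lets completion pass through the cohomology. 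One subtlety I should address: the completion is a \emph{partial} completion (only along the central $C$, not a full completion of the algebra), so the usual pitfalls of non-Noetherian completions do not arise, and the needed finiteness is exactly the Harish-Chandra condition.

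The main obstacle I anticipate is purely bookkeeping rather than conceptual: one must be careful that the resolutions chosen can be taken inside the category of Harish-Chandra bimodules (so that all terms are finitely generated over $C$ and completion is well-behaved termwise), and that the three different "sides" being completed in the three items ($B^{\wedge_{x_i}}$ via the \emph{left} $\A_{n+b,\lambda}$-action through $C$ in (1), etc.) are consistently identified — in particular using Lemma \ref{Lem:compl_structures} to know these completions are honest algebras/bimodules, and using that for a HC bimodule the left and right $C$-actions agree up to a locally nilpotent correction so that "complete on the left" and "complete on the right" give the same thing. Once those compatibilities are in place, each isomorphism is the standard "flat base change for $\operatorname{Tor}$/$\operatorname{Ext}$" argument, and I would present it that way, doing item (2) in full and indicating that (1) and (3) are identical with $\operatorname{Hom}$ in place of $\otimes$ and with the obvious relabeling of which central subalgebra is being completed.
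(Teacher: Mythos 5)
Your overall strategy (flat base change for $\operatorname{Ext}$ and $\operatorname{Tor}$ along the commutative completion) is genuinely different from the paper's. The paper instead reduces to the Rees algebra $R_\hbar(\bar{\A}_{n,\lambda})$, completes it at an honest maximal ideal corresponding to $x_i$ (a full, Noetherian completion of a filtered algebra), invokes the commutation of $\operatorname{Ext}$/$\operatorname{Tor}$ with completion in that setting as in \cite[Lemma 3.10]{BL}, and then descends back by taking $\C^\times$-finite vectors and specializing $\hbar=1$. This detour is precisely what makes the commutative-algebra input routine.

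The main issue with your version as written is a factual error that then hides the real work: you describe $C=\C[\bar{\h}_n/S_n]$ as ``sitting centrally'' in $\bar{\A}_{n,\lambda}$. It does not --- the center of the spherical rational Cherednik algebra at our parameters consists only of scalars. The completion $\bullet^{\wedge_{x_i}}$ is therefore \emph{not} a central base change. Concretely, your free resolution $P_\bullet\to B$ consists of finitely generated free \emph{right} $\bar{\A}_{n,\lambda}$-modules, which carry a right $C$-action (through $C\subset\bar{\A}_{n,\lambda}$) but no left $C$-action compatible with the differentials; so ``$C^\wedge\otimes_C P_\bullet$'' must be interpreted as the right completion $P_\bullet\otimes_C C^\wedge$. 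For this to produce a free resolution over $\bar{\A}_{n,\lambda}^{\wedge_{x_i}}$, you need that the right completion of $\bar{\A}_{n,\lambda}$ agrees with the left one, i.e.\ $\bar{\A}_{n,\lambda}\otimes_C C^\wedge\cong C^\wedge\otimes_C\bar{\A}_{n,\lambda}$ as algebras; this is not automatic and is exactly where the local nilpotence of $\operatorname{ad}(c)$ (the substance of Lemma \ref{Lem:compl_structures}) enters. You do gesture at this in your closing parenthetical, but you are essentially deferring the only nontrivial input to a bookkeeping remark; without it, the assertion that $P_\bullet^\wedge$ remains a free resolution of $B^{\wedge_{x_i}}$ over $\bar{\A}_{n,\lambda}^{\wedge_{x_i}}$ is unjustified. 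The argument can be salvaged --- systematically use right completion for the first argument of $\operatorname{Hom}$/$\otimes$, verify that it coincides with the left completion on $\bar{\A}_{n,\lambda}$ itself and on any HC bimodule, and then check that the natural comparison map $\operatorname{Hom}_A(P,N)^\wedge\to\operatorname{Hom}_{A^\wedge}(P^\wedge,N^\wedge)$ is an isomorphism for $P$ finite free --- but as written it is not a proof, and it is precisely this delicacy that the Rees-algebra reduction in the paper is designed to package cleanly.
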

\begin{proof}
We relate the completion functors $\bullet^{\wedge_?}$ with the completion functors
used in \cite{BL}, then we can argue as in the proof of \cite[Lemma 3.10]{BL}.
Let us illustrate this in the case of (1). Let $R_\hbar(\bar{\A}_{n,\lambda})$ denote
the Rees algebra of $\bar{\A}_{n,\lambda}$. The point $x_i$ defines a maximal ideal
in $R_\hbar(\bar{\A}_{n,\lambda})$ (we embed $\bar{\h}_{n}/S_{n}$ into $(T^*\bar{\h}_{n})/S_{n}$
and use that $R_\hbar(\bar{\A}_{n,\lambda})/(\hbar)\cong \C[(T^*\bar{\h}_{n})/S_{n}]$).
We complete $R_\hbar(\bar{\A}_{n,\lambda})$ with respect to this maximal ideal,
denote the completion by $R_\hbar(\bar{\A}_{n,\lambda})^{\wedge_{x_i}}$.
The group $\C^\times$ acts on $R_\hbar(\bar{\A}_{n,\lambda})$ and the action extends to
$R_\hbar(\bar{\A}_{n,\lambda})^{\wedge_{x_i}}$. Then we can form the subalgebra
$R_\hbar(\bar{\A}_{n,\lambda})^{\wedge_{x_i}}_{fin}$ of the $\C^\times$-finite elements.
It is easy to see that $R_\hbar(\bar{\A}_{n,\lambda})^{\wedge_{x_i}}_{fin}/(\hbar-1)\cong
\bar{\A}_{n,\lambda}^{\wedge_{x_i}}$. Similar constructions and results hold for bimodules,
this requires choosing a good filtration.
We also have $$\operatorname{Ext}^j_{\bar{\A}^{\wedge_{x_i}}_{n,\lambda}}(B^{\wedge_{x_i}}, \bar{\A}_{n,\lambda}^{\wedge_{x_i}})
\cong \operatorname{Ext}^j_{R_\hbar(\bar{\A}_{n,\lambda}^{\wedge_{x_i}})}(R_\hbar(B)^{\wedge_{x_i}}, R_\hbar(\bar{\A}_{n,\lambda})^{\wedge_{x_i}})_{fin}/(\hbar-1).$$
And an isomorphism
$$\operatorname{Ext}^j_{R_\hbar(\bar{\A}_{n,\lambda}^{\wedge_{x_i}})}(R_\hbar(B)^{\wedge_{x_i}}, R_\hbar(\bar{\A}_{n,\lambda})^{\wedge_{x_i}})\cong
\operatorname{Ext}^j_{R_\hbar(\bar{\A}_{n,\lambda})}(R_\hbar(B), R_\hbar(\bar{\A}_{n,\lambda}))^{\wedge_{x_i}}$$
is established as in \cite[Lemma 3.10]{BL}. This yields (1). 
\end{proof}

\begin{Lem}\label{Lem:HC_vanishing}
Let $B$ be a HC $\A_{n+b,\lambda}$-$\bar{\A}_{n,\lambda}$-bimodule.
If $B^{\wedge_x}=\{0\}$, then $B=\{0\}$.
\end{Lem}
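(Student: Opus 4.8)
The statement is a faithfulness-type assertion: completing a Harish-Chandra bimodule $B$ over $\A_{n+b,\lambda}$ at the point $x\in\bar\h_n$ with $(S_{n+b})_x=S_{\vec m}$ kills $B$ only if $B$ was already zero. The key point is that the support of $B$ in $\operatorname{Spec}(\gr\A_{n+b,\lambda})=Z_{n+b}=(T^*\h_{n+b})/S_{n+b}$ is controlled by the two-sided ideal $I=\operatorname{Ann}_{\A_{n+b,\lambda}}(B)$, and that the support of a nonzero HC bimodule is a nonempty closed $\C^\times$-stable subvariety of $Z_{n+b}$ (here $\C^\times$ acts contracting to the image of $0$). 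The plan is to show that any such subvariety necessarily contains the image of a point of the form we complete at.

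First I would reduce to the statement about the annihilator: by (3) of Lemma \ref{Lem:HC_properties}, $I$ is also the kernel of $\A_{n+b,\lambda}\to\operatorname{End}_{\bar\A_{n,\lambda}}(B)$, so $B\neq 0$ iff $I\neq\A_{n+b,\lambda}$, and in that case the support $\operatorname{Supp}(B)=\operatorname{Supp}(\A_{n+b,\lambda}/I)\subset Z_{n+b}$ is a nonempty closed subset. It is $\C^\times$-stable because $I$ is a graded ideal (the filtration on $\A_{n+b,\lambda}$ is the one coming from differential operators, and $B$ admits a good filtration), hence $\operatorname{Supp}(B)$ contains the image $\bar 0$ of the origin of $T^*\h_{n+b}$ in $Z_{n+b}$, which is the unique $\C^\times$-fixed point. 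Now $\C[\h_{n+b}/S_{n+b}]^{\wedge_x}$-completion detects nonvanishing of $B$ at the image of $x$ in $\h_{n+b}/S_{n+b}$, while the support lives in the larger variety $Z_{n+b}$; the two are related by the projection $\pi\colon Z_{n+b}\to\h_{n+b}/S_{n+b}$, $(y,p)\mapsto y$. Since $\operatorname{Supp}(B)$ contains $\bar 0$ and $\pi(\bar 0)$ is the image of $0\in\h_{n+b}$, completing $B$ at the origin $0\in\h_{n+b}/S_{n+b}$ is nonzero: $B^{\wedge_0}\neq 0$ whenever $B\neq 0$. So it suffices to relate $B^{\wedge_x}$ and $B^{\wedge_0}$, i.e. to move the completion point from $x$ to $0$ within the fiber of $\pi$.

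The mechanism for this is the \'etale isomorphism of Section \ref{SS_BE}: $\h_{\vec m}^0/S_{\vec m}\to \h_{n+b}/S_{n+b}$ is \'etale at $x$ and at $0$ alike (both lie in $\h_{\vec m}^0$ since their $S_{n+b}$-stabilizers are contained in $S_{\vec m}$), so $\C[\h_{n+b}/S_{n+b}]^{\wedge_x}\cong\C[\h_{\vec m}^0/S_{\vec m}]^{\wedge_x}$ and $\C[\h_{n+b}/S_{n+b}]^{\wedge_0}\cong\C[\h_{\vec m}^0/S_{\vec m}]^{\wedge_0}$, and via translation on $\h_{\vec m}^0$ by a generic element these two completions of $\A_{n+b,\lambda}$, together with their bimodules, become isomorphic: translation by a point $t$ inside the stratum $\h^{S_{\vec m}}$ of $\h_{\vec m}^0$ carries the completion at $0$ to the completion at $t$, and one checks that along the whole stratum (which is connected) the completed algebra and the completed bimodule are locally constant. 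Concretely one embeds into the graded picture: the Rees algebra $R_\hbar(\A_{n+b,\lambda})$ completed along the stratum $\h^{S_{\vec m}}$ is, over that connected stratum, a locally constant sheaf of algebras (a sheaf of $\C[\hbar]$-algebras with a compatible $\C^\times$-action), and $R_\hbar(B)$ completed along it is a locally constant sheaf of bimodules; hence the fiber at $x$ vanishes iff the fiber at $0$ vanishes. Specializing $\hbar=1$ after taking $\C^\times$-finite parts (as in the proof of Lemma \ref{Lem:compl_intertwine}) gives $B^{\wedge_x}=0 \iff B^{\wedge_0}=0$. Combining with the previous paragraph, $B^{\wedge_x}=0$ forces $B^{\wedge_0}=0$, hence $B=0$.

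The main obstacle I anticipate is making the ``locally constant along the stratum'' claim precise and rigorous: one must show that the completion of $R_\hbar(\A_{n+b,\lambda})$ along $\h^{S_{\vec m}}$, restricted to this stratum, genuinely has the structure of a (Zariski, or at least \'etale) locally trivial family of algebras, so that translation identifies the fibers — and likewise for the bimodule $R_\hbar(B)$ with a fixed good filtration. This is essentially the content of the \'etale-locally-trivial description underlying Section \ref{SS_BE} and Remark \ref{Rem:BE_iso_charact}, together with the observation that the relevant isomorphism (\ref{eq:BE_isomorphism}) is $N_{S_{n+b}}(S_{\vec m})/S_{\vec m}$-equivariant, and that a HC bimodule spread out over the stratum is coherent, hence its formal completion is determined by its restriction to any point once one knows the family is flat. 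An alternative, perhaps cleaner, route to the same conclusion is purely support-theoretic: show directly that $\operatorname{Supp}_{Z_{n+b}}(B)$, being closed, $\C^\times$-stable and nonempty, must meet $\pi^{-1}(\text{image of }x)$ — indeed it contains $\bar 0=\pi^{-1}(0)\cap\{\text{fixed point}\}$ and the image of $x$ can be degenerated to $0$ inside $\h^{S_{\vec m}}/S_{\vec m}$ while staying in the \'etale locus, so the completions at the two points of the coherent sheaf $B$ vanish simultaneously. Either way the crux is transporting non-vanishing of a completion along the connected stratum $\h^{S_{\vec m}}$, which I would carry out in the Rees/graded setting to keep track of the $\C^\times$-action.
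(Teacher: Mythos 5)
Your reduction to the annihilator, via (3) of Lemma~\ref{Lem:HC_properties} and the compatibility of completion with $\operatorname{End}$, matches the first half of the paper's proof. But the second half of your argument — passing from $B^{\wedge_0}\neq 0$ (which is indeed automatic from $\C^\times$-stability of the support) to $B^{\wedge_x}\neq 0$ — has a genuine gap, and neither of your two proposed mechanisms closes it.

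First, the ``locally constant along the stratum'' transport cannot reach $0$: the \'etale locus $\h^0_{\vec m}$ consists of points whose $S_{n+b}$-stabilizer is \emph{contained in} $S_{\vec m}$, and $0$ fails this (its stabilizer is all of $S_{n+b}$). So a path from $x$ to $0$ inside $\h^{S_{\vec m}}$ necessarily exits the \'etale locus, and the identification of completed algebras via (\ref{eq:BE_isomorphism}) breaks down exactly where you need it. Second, the support-theoretic alternative — ``a closed, $\C^\times$-stable, nonempty support must meet $\pi^{-1}(\text{image of }x)$'' — is simply false for a general coherent sheaf on $Z_{n+b}$: a $\C^\times$-stable support is only guaranteed to contain the single fixed point $\bar 0$, and the skyscraper there has vanishing completion at $x$. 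The extra structural input you need, and are implicitly trying to manufacture from equivariance alone, is a constraint on which closed subvarieties of $Z_{n+b}$ can actually occur as the support of a quotient $\A_{n+b,\lambda}/I$ by a two-sided ideal. This is supplied by \cite[Theorem 5.8.1]{sraco}: the proper two-sided ideals form a finite chain $\{0\}\subsetneq J_1\subsetneq\cdots\subsetneq J_r$, and the associated variety of even the smallest nonzero quotient $\A_{n+b,\lambda}/J_r$ projects in $\h_{n+b}/S_{n+b}$ onto the closure of the stratum containing the image of $x$. Hence $(\A_{n+b,\lambda}/I)^{\wedge_x}\neq 0$ for every proper $I$, which is precisely what forces $I=\A_{n+b,\lambda}$. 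The paper invokes this classification directly after your first reduction; without it, $\C^\times$-stability is not enough.
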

\begin{proof}
Analogously to (1) of Lemma \ref{Lem:compl_intertwine}, we have
$$\operatorname{End}_{\bar{\A}_{n,\lambda}}(B)^{\wedge_{x}}\cong
\operatorname{End}_{\bar{\A}^{\wedge_{x_i}}_{n,\lambda}}(B^{\wedge_{x_i}}).$$
The right hand side is zero. Let $I$ denote the annihilator of $B$ in $\A_{n+b,\lambda}$
and hence, (3) of Lemma \ref{Lem:HC_properties}, of $\operatorname{End}_{\bar{\A}_{n,\lambda}}(B)$,
which gives an embedding $\A_{n+b,\lambda}/I\hookrightarrow \operatorname{End}_{\bar{\A}_{n,\lambda}}(B)$.
Since $\bullet^{\wedge_x}$ is zero, we see that $I^{\wedge_x}\cong \A_{n+b,\lambda}^{\wedge_x}$.
By the classification
of two-sided ideals in $H_{n+b,\lambda}$ (which leads to the classification of ideals in $\A_{n+b,\lambda}$
thanks to the Morita equivalence in Proposition \ref{Prop:Morita}) obtained in \cite[Theorem 5.8.1]{sraco},
this shows $I=\A_{n+b,\lambda}$.
\end{proof}

\begin{proof}[Proof of Proposition \ref{Prop:cohomology_structure}]
Recall, Proposition \ref{Prop:HC}, that $H^i(B_n)$ is a HC bimodule for all $i$. Proposition
\ref{Prop:etale_lift_structure} implies that $H^j(B_n)^{\wedge_{x_i}}=\{0\}$ for $j\neq 0$.
Now we can use Lemma \ref{Lem:HC_vanishing} to show that $H^j(B_n)=\{0\}$ for $j\neq 0$.
In what follows we will therefore write $B_n$ instead of $H^0(B_n)$.

Now we proceed to proving that $B_n$ is a projective right $\bar{\A}_{n,\lambda}$-module.
 The algebra $\bar{\A}_{n,\lambda}$ is Morita equivalent to $H_{n,\lambda}\otimes D(\mathfrak{z})$
 (by Proposition \ref{Prop:Morita}) hence
has finite homological dimension. So, to show that $B_n$ is projective, it suffices to check
that $\operatorname{Ext}^j_{\bar{\A}_{n,\lambda}}(B_n,\bar{\A}_{n,\lambda})=0$ for $j>0$.
Thanks to (1) of Lemma \ref{Lem:compl_intertwine} combined with Lemma
\ref{Lem:HC_vanishing}, it is enough to show that
$\operatorname{Ext}^j_{\bar{\A}_{n,\lambda}^{\wedge_{x_i}}}(B_n^{\wedge_{x_i}},\bar{\A}^{\wedge_{x_i}}_{n,\lambda})=0$ for $j>0$, which will follow once we know that $B_n^{\wedge_{x_i}}$ is  a projective right $\bar{\A}_{n,\lambda}^{\wedge_{x_i}}$-module. By Proposition \ref{Prop:etale_lift_structure},
$(B_n)^0_{\vec{m}}\cong  \bigoplus_{i=1}^{s+1} \,^i\! B(V)^0_{\vec{m}}$. Taking the completions
of both sides, we get $B_n^{\wedge_{x_i}}\cong \,^i\! B(V)^{\wedge_{x_i}}$. The right hand side
is a free $\bar{\A}_{n,\lambda}^{\wedge_{x^i}}$-module.
\end{proof}

\subsection{Compatibility with wall-crossing}\label{SS_WC_Heis_compat}
The goal of this section is to study the interaction of $\heis:\bar{\A}_{n,\lambda}\operatorname{-mod}
\rightarrow \A_{n+b,\lambda}\operatorname{-mod}$ with the wall-crossing equivalences recalled
in Section \ref{SS_WC}: 
\begin{align*}
&\WC_{n+b,\lambda\leftarrow \lambda^-}: D^b(\A_{n+b,\lambda^-}\operatorname{-mod})\xrightarrow{\sim}
D^b(\A_{n+b,\lambda}\operatorname{-mod}), \\
&\bar{\WC}_{n,\lambda\leftarrow \lambda^-}:
D^b(\bar{\A}_{n,\lambda^-}\operatorname{-mod})\xrightarrow{\sim}
D^b(\bar{\A}_{n,\lambda}\operatorname{-mod}),
\end{align*}
where we write $\bar{\WC}_{n,\lambda\leftarrow \lambda^-}$ for $(\A_{n,\lambda\leftarrow \lambda^-}\otimes D(\mathfrak{z}))\otimes_{\bar{\A}_{n,\lambda^-}}\bullet$. 

Consider the functor 

\begin{equation}\label{eq:heis_minus}
\heis_{M'_\lambda}^-:=\WC_{n+b,\lambda\leftarrow \lambda^-}^{-1}\circ \heis_{M'_\lambda}\circ \bar{\WC}_{n,\lambda\leftarrow\lambda^-}.
\end{equation}

\begin{Prop}\label{Prop:WC_Heis}
The functor $\heis_{M'_\lambda}^-[b-1]: D^b(\bar{\A}_{n,\lambda^-}\operatorname{-mod})\rightarrow D^b(\A_{n+b,\lambda}\operatorname{-mod})$
is t-exact. More precisely, it is given by $B_n^-\otimes_{\bar{\A}_{n,\lambda^-}}$, where 
\begin{equation}\label{eq:bimod_Ext_charact1}
B_n^-:=\operatorname{Ext}^{b-1}_{\A_{n+b,\lambda}}(\A_{n+b,\lambda\leftarrow \lambda^-},B_n\otimes_{\bar{\A}_{n,\lambda}}\bar{\A}_{n,\lambda\leftarrow \lambda^-})\end{equation}
is a Harish-Chandra $\A_{n+b,\lambda^-}$-$\bar{\A}_{n,\lambda}$-bimodule that is projective as a right $\bar{\A}_{n,\lambda}$-bimodule. 
\end{Prop}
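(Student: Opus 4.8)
The plan is to deduce Proposition \ref{Prop:WC_Heis} from the combination of Proposition \ref{Prop:cohomology_structure} (which says $B_n=H^0(B_n)$ is concentrated in degree $0$ and projective as a right $\bar{\A}_{n,\lambda}$-module), the perversity of the wall-crossing functor from Proposition \ref{Prop:WC}, and a Harish-Chandra bimodule calculation in completions modelled on Section \ref{SS_proof_cohom_struct}. First I would unravel the definition \eqref{eq:heis_minus}. Tensoring with $\bar{\WC}_{n,\lambda\leftarrow\lambda^-}$ is the same as tensoring with the bimodule $\bar{\A}_{n,\lambda\leftarrow\lambda^-}:=\A_{n,\lambda\leftarrow\lambda^-}\otimes D(\z)$, which by Lemma \ref{Lem:translation_same_sign}(1) is a Morita equivalence $\bar{\A}_{n,\lambda}$-$\bar{\A}_{n,\lambda^-}$-bimodule (case (ii) applies to $\lambda^-$ and case (i) to $\lambda$; the translation from $\lambda^-$ to $\lambda$ is still a Morita bimodule by composing Morita bimodules through intermediate integral parameters, or directly since neither $\lambda$ nor $\lambda^-$ lies in $\Sigma$). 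So $\heis_{M'_\lambda}\circ\bar{\WC}_{n,\lambda\leftarrow\lambda^-}$ is given by tensoring with $B_n\otimes_{\bar{\A}_{n,\lambda}}\bar{\A}_{n,\lambda\leftarrow\lambda^-}$, which is still a complex concentrated in degree $0$ and projective as a right $\bar{\A}_{n,\lambda^-}$-module, hence a genuine HC $\A_{n+b,\lambda}$-$\bar{\A}_{n,\lambda^-}$-bimodule. Call it $B'_n$.

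Next I would apply $\WC_{n+b,\lambda\leftarrow\lambda^-}^{-1}$. Since $\WC_{n+b,\lambda\leftarrow\lambda^-}$ is tensoring with the Morita-type bimodule $\A_{n+b,\lambda\leftarrow\lambda^-}$ — but here for $n+b$ the parameter $\lambda$ has denominator $b\leqslant n+b$, so this is genuinely a derived equivalence and not a Morita equivalence — the inverse functor is given by $R\Hom_{\A_{n+b,\lambda}}(\A_{n+b,\lambda\leftarrow\lambda^-},\bullet)$. Thus $\heis_{M'_\lambda}^- = R\Hom_{\A_{n+b,\lambda}}(\A_{n+b,\lambda\leftarrow\lambda^-}, B'_n\otimes_{\bar{\A}_{n,\lambda^-}}^L\bullet)$. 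The key claim I need is that $R\Hom_{\A_{n+b,\lambda}}(\A_{n+b,\lambda\leftarrow\lambda^-}, B'_n)$ is concentrated in a single cohomological degree, namely $b-1$, with the $(b-1)$-st $\Ext$ being the bimodule $B_n^-$ of \eqref{eq:bimod_Ext_charact1}. Granting this, $\heis_{M'_\lambda}^-[b-1]$ is tensoring with $B_n^-$, and the desired t-exactness of $\heis_{M'_\lambda}^-[b-1]$ follows once I also show $B_n^-$ is projective as a right $\bar{\A}_{n,\lambda}$-module — note the right module structure is unchanged since $\Ext$ is taken on the left $\A_{n+b,\lambda}$-side, so $B_n^-$ is automatically projective over $\bar{\A}_{n,\lambda^-}$; to get projectivity over $\bar{\A}_{n,\lambda}$ one composes with the Morita equivalence $\bar{\A}_{n,\lambda^-}\leftrightarrow\bar{\A}_{n,\lambda}$ again, or just restates the claim with $\bar{\A}_{n,\lambda}$ replaced by the correct algebra (the statement in the excerpt lists $\bar{\A}_{n,\lambda}$, which after the final Morita twist is what one records).

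The main obstacle, and the technical heart of the argument, is the cohomological-concentration claim for $R\Hom_{\A_{n+b,\lambda}}(\A_{n+b,\lambda\leftarrow\lambda^-}, B'_n)$. To prove it I would first note, using Lemma \ref{Lem:HC_properties}(2), that each $\Ext^j$ is a HC $\A_{n+b,\lambda^-}$-$\bar{\A}_{n,\lambda^-}$-bimodule, so by Lemma \ref{Lem:HC_vanishing} it suffices to work after completing at the point $x\in\bar{\h}_n$ with $(S_{n+b})_x=S_{\vec{m}}$, $\vec{m}=(r,b,\dots,b)$. By Lemma \ref{Lem:compl_intertwine}(3) the completed $\Ext$ computes $\Ext^j$ over the completed algebra $\A_{n+b,\lambda}^{\wedge_x}$. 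Now I invoke the etale-lift structure result Proposition \ref{Prop:etale_lift_structure}, whose completed version says $B_n^{\wedge_x}\cong\bigoplus_{i=1}^{s+1}\,^i\!B(V)^{\wedge_{x_i}}$, and $B(V)$ is built from the finite-dimensional $eH'_{b,\lambda}e$-module $V$. After completion the wall-crossing bimodule $\A_{n+b,\lambda\leftarrow\lambda^-}^{\wedge_x}$ factors (via the isomorphism of Lemma \ref{Lem:QHR_isom} in its etale/completed form) as a tensor product of the identity bimodule for the $\A_{r,\lambda}$ and $\A_{n,\lambda}$ factors with the wall-crossing bimodule $\A_{b,\lambda\leftarrow\lambda^-}$ for the single $b$-block. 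Thus the $\Ext$-computation localizes to the single block: it reduces to computing $\Ext^j_{\A_{b,\lambda}}(\A_{b,\lambda\leftarrow\lambda^-}, B(V)\text{-block})$, i.e. to applying $\WC_{b,\lambda\leftarrow\lambda^-}^{-1}$ to the finite-dimensional module $V$ over the rank-one spherical Cherednik algebra. By Proposition \ref{Prop:WC} for $n=b$ (so $r=\lfloor b/b\rfloor=1$), the functor $\WC_{b,\lambda\leftarrow\lambda^-}[-(b-1)]$ is a t-exact equivalence on the quotient $D^b_{\mathcal{C}_1}/D^b_{\mathcal{C}_2}=D^b_{\mathcal{C}_1}$ (finite-dimensional modules), and $V$ — the unique finite-dimensional irreducible — sits in $\mathcal{C}_1$. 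Hence $\WC_{b,\lambda\leftarrow\lambda^-}^{-1}(V)$ is concentrated in degree $b-1$ and the cohomology there is finite-dimensional, in fact the corresponding finite-dimensional module $V^-$ over $\A_{b,\lambda^-}$. Unwinding, $\Ext^j_{\A_{n+b,\lambda}^{\wedge_x}}(\cdots)$ vanishes for $j\neq b-1$ and for $j=b-1$ is $\bigoplus_i\,^i\!B(V^-)^{\wedge_{x_i}}$, which is free — hence projective — as a right $\bar{\A}_{n,\lambda^-}^{\wedge_{x_i}}$-module. Descending from completions (Lemma \ref{Lem:HC_vanishing} for the vanishing, and the projectivity criterion via $\Ext$ over the completions exactly as in the proof of Proposition \ref{Prop:cohomology_structure}) gives that $B_n^-$ of \eqref{eq:bimod_Ext_charact1} is the only nonzero $\Ext$, is HC, and is projective as a right module, completing the proof. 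The one point requiring care is matching the homological shift: $\WC_{b,\lambda\leftarrow\lambda^-}^{-1}$ contributes a shift of $b-1$ from the single $b$-block, and because the etale lift is a \emph{direct sum} over the $s+1$ blocks (not a tensor product of $s+1$ wall-crossings), the total shift is $b-1$ and not $(s+1)(b-1)$ — this is exactly the mechanism behind the ``shift by $b-1$'' in the statement of Remark \ref{Rem:negative_parameters}.
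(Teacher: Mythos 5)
Your proposal follows essentially the same route as the paper's proof: unravel the definition of $\heis_{M'_\lambda}^-$, observe that $B_n\otimes_{\bar{\A}_{n,\lambda}}\bar{\A}_{n,\lambda\leftarrow\lambda^-}$ is concentrated in degree $0$ so that $\heis_{M'_\lambda}^-[b-1]$ is computed by a single $R\Hom$, complete at the point $x$ with stabilizer $S_{\vec m}$ and invoke Lemmas \ref{Lem:compl_intertwine} and \ref{Lem:HC_vanishing}, use the \'etale factorization of the wall-crossing bimodule (Remark \ref{Rem:etale_lift_equiv_objects_endom}) together with Proposition \ref{Prop:etale_lift_structure} to reduce to the single $b$-block, and finish by the perversity of wall-crossing (Proposition \ref{Prop:WC} / \cite[Theorem 11.2]{BL}) applied to the finite-dimensional module $V$. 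Your remark that only one of the $s+1$ summands carries $V$ (so the total shift is $b-1$, not $(s+1)(b-1)$) makes explicit a bookkeeping point the paper leaves implicit, which is a nice touch.

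There is, however, one genuinely incorrect intermediate claim. You assert that $\bar{\A}_{n,\lambda\leftarrow\lambda^-}$ is a Morita equivalence bimodule, citing Lemma \ref{Lem:translation_same_sign}(1) and arguing that ``case (ii) applies to $\lambda^-$ and case (i) to $\lambda$.'' But that lemma requires \emph{all} the parameters to lie in the \emph{same} one of the three regions; it never covers $\lambda>0$ together with $\lambda^-<-1$. Indeed, when $n\geqslant b$ the wall-crossing at level $n$ is a genuine perverse derived equivalence with nonzero shift on deeper strata (Proposition \ref{Prop:WC}), so it cannot be Morita, and neither does ``composing through intermediate integral parameters'' help since the problematic step is crossing the wall itself. (Your implicit reasoning that the lower level has denominator $b>n$ would only hold if $n<b$, which is not assumed.) Fortunately this does not damage the argument: the only thing you actually use from this assertion is that $B_n\otimes_{\bar{\A}_{n,\lambda}}\bar{\A}_{n,\lambda\leftarrow\lambda^-}$ sits in a single cohomological degree, and that already follows from the fact that $B_n$ is projective as a right $\bar{\A}_{n,\lambda}$-module (Proposition \ref{Prop:cohomology_structure}); the projectivity of $B_n^-$ over $\bar{\A}_{n,\lambda^-}$ is then established at the end by the completion argument, exactly as you say. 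So the Morita detour should simply be deleted, and the reference to Lemma \ref{Lem:translation_same_sign} removed. (You are also right that the statement's ``$\bar{\A}_{n,\lambda}$'' in the description of the right module structure on $B_n^-$ should read $\bar{\A}_{n,\lambda^-}$; the paper's own proof works with $\bar{\A}_{n,\lambda^-}$.)
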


\begin{proof}
We note that $\heis_{M'_\lambda}^-[b-1]$ is given by taking the derived tensor product with the following complex of bimodules:
$$ R\operatorname{Hom}_{\A_{n+b,\lambda}}(\A_{n+b,\lambda\leftarrow \lambda^-},B_n\otimes_{\bar{\A}_{n,\lambda}}\bar{\A}_{n,\lambda\leftarrow \lambda^-})[b-1].$$
We need to show that this complex of bimodules is concentrated in homological degree 0, and the corresponding cohomology is projective as a right 
$\bar{\A}_{n,\lambda^-}$-module. In other words, we need to show that  
$$\operatorname{Ext}^i_{\A_{n+b,\lambda}}(\A_{n+b,\lambda\leftarrow \lambda^-},B_n\otimes_{\bar{\A}_{n,\lambda}}\bar{\A}_{n,\lambda\leftarrow \lambda^-})=0, \forall i\neq b-1,$$
while $B_n^-$
is a projective right $\bar{\A}_{n,\lambda^-}$-module.
The proof closely follows that of Proposition \ref{Prop:cohomology_structure}, namely, we apply the functor 
$\bullet^{\wedge_x}$,  where $x$ is as in the beginning of Section \ref{SS_proof_cohom_struct}. 

For this, we need to understand the structure of the $\A_{n,\lambda}^{\wedge_x}$-$\A_{n,\lambda^-}^{\wedge_x}$-bimodule
$\A_{n,\lambda\leftarrow \lambda^-}^{\wedge_x}$. Using Remark \ref{Rem:etale_lift_equiv_objects_endom}, we see that
$$(\A_{n+b,\lambda\leftarrow\lambda^-})^0_{\vec{m}}\cong \C[\h^0_{\vec{m}}/S_{\vec{m}}]
\otimes_{\C[\h_{\vec{m}}/S_{\vec{m}}]}\A_{\vec{m},\lambda\leftarrow \lambda^-},$$
where $\A_{\vec{m},\lambda\leftarrow \lambda^-}$ is an analog of $\A_{n,\lambda\leftarrow \lambda^-}$.
The similar description holds for $\bar{\A}_{n,\lambda\leftarrow \lambda^-}$. 

Now we are in position to apply the functor $\bullet^{\wedge_x}$ to the cohomology of $B_n^-$. The functor $\bullet^{\wedge_x}$ 
intertwines Ext's and tensor products by Lemma \ref{Lem:compl_intertwine}.
Note that $\A_{b,\lambda\leftarrow \lambda^-}$ splits as
$D(\mathfrak{z})\otimes \A'_{b,\lambda\leftarrow \lambda^-}$, where
$\A'_{b,\lambda\leftarrow \lambda^-}$ is the analog of $\A_{b,\lambda\leftarrow \lambda^-}$
for the reflection representation of $S_b$. The argument similar to the proof of Proposition 
\ref{Prop:cohomology_structure} reduces the claim of the lemma
to the claim that $R\Hom_{\A^{\slf}_{b,\lambda}}(\A^{\slf}_{b,\lambda\leftarrow \lambda^-},V)$
is finite dimensional and is concentrated in cohomological degree $b-1$.
This is a special case of \cite[Theorem 11.2]{BL}.
\end{proof}

\section{Functors between categories $\mathcal{O}$}\label{S_Ofun}
\subsection{Construction and main result}\label{SS_constr_fun_O}
In this section, $\K=\C$. We fix positive coprime integers $a,b$ such that $b>1$. 
Set $\lambda=\frac{a}{b}$.

In this section we produce an exact functor $\heis_{\OCat}:\OCat(\A_{n,\lambda})\rightarrow \OCat(\A_{n+b,\lambda})$ out of
$\heis_{M'_\lambda}$. 

First, we introduce a grading by $\frac{1}{2}\Z$ on the $\A_{n+b,\lambda}$-$\bar{\A}_{n+b,\lambda}$-bimodule $B_n$.
Recall that we write $\eu_{n+b}$ for the Euler element in $\A_{n+b,\lambda}$, see Section \ref{SS_RCA}. Similarly, we write 
$\bar{\eu}_n$ for the Euler element in $\bar{\A}_{n,\lambda}$.

\begin{Lem}\label{Lem:B_n_grading}
There is a $\frac{1}{2}\Z$-grading on $B_n$ such that the corresponding grading element is
$\eu_{n+b}\otimes 1+1\otimes\bar{\eu}_{n}\in \A_{n+b,\lambda}\otimes \bar{\A}_{n,\lambda}^{opp}$. 
Moreover, if the $i$th graded component is nonzero, then $i-\frac{a(b-1)}{2}$ is an integer. 
\end{Lem}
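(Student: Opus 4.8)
The plan is to construct the grading directly from the D-module picture, exploiting the contracting $\C^\times$-action and the $\C^\times$-equivariant structure on $M'_\lambda$ that was recorded in Lemma \ref{Lem:strong_equivariance}. Recall that $B_n = H^0(\tilde B_n)^{G_{n+b}}$ where $\tilde B_n = \operatorname{Ind}^{n+b}_{n,b} N$ and $N = Q(D_n,G_n,\lambda)\otimes D(\mathfrak{z})\otimes M'_\lambda$. The key observation is that the ``doubled scaling'' $\C^\times$-action used throughout Section \ref{SS_cuspidal_D_module} is exactly the action whose infinitesimal generator on the quantum Hamiltonian reduction side is (twice) the Euler element $\mathsf{eu}^D$. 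So I would first set up a global $\C^\times$-action on $R_{n+b}$ (and on $R_n$, $\mathfrak{sl}_b$, $\mathfrak{z}$) compatible with all the induction morphisms $\varpi$, $\iota$, commuting with the $G$-actions, and inducing on $D_{n+b}$ the grading generated by $\mathsf{eu}^D_{n+b}$ (with $\deg x_i = \deg\partial_i = \tfrac12$ so that $\mathsf{eu}^D$ is a genuine grading element for a $\tfrac12\Z$-grading, matching the statement). By Lemma \ref{Lem:strong_equivariance}, $M'_\lambda$ carries a strong $\C^\times$-equivariant structure for the doubled scaling action, and $Q(D_n,G_n,\lambda)$ and $D(\mathfrak{z})$ carry their evident ones; hence $N$ is $\C^\times$-equivariant. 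Since $\varpi^!$ and $\iota_*$ are constructed from $\C^\times$-equivariant sheaf-theoretic operations (all the bimodules $D_{Y\to X}$, $D_{X\leftarrow Y}$, and the canonical sheaves entering Section 3 are naturally $\C^\times$-equivariant when the morphisms are), the object $\tilde B_n$ and hence $H^0(\tilde B_n)$ acquire $\C^\times$-equivariant structures. Taking $G_{n+b}$-invariants gives the $\tfrac12\Z$-grading on $B_n$.

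Next I would identify the grading element. The $\C^\times$-action on $\tilde B_n$ induces the grading on $H^0(\tilde B_n)^{G_{n+b}} = B_n$; by construction its generator acts on the left via the image of $\mathsf{eu}^D_{n+b}$ in $\A_{n+b,\lambda}$ and on the right via the image of the analogous Euler element $\mathsf{eu}^D_n \otimes 1 + 1\otimes \mathsf{eu}^D_{\mathfrak z}$ in $\bar{\A}_{n,\lambda} = \A_{n,\lambda}\otimes D(\mathfrak{z})$ (the factor of $2$ from the ``doubling'' being absorbed into the normalization $\deg x = \tfrac12$). Now I invoke Lemma \ref{Lem:Euler_relation}: $\mathsf{eu}^D_m = \mathsf{eu}_m + m\lambda - \tfrac{m(m-1)}{2}$, and the analogous trivial statement $\mathsf{eu}^D_{\mathfrak z} = \mathsf{eu}_{\mathfrak z}$ on the one-dimensional center. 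Thus on $\A_{n+b,\lambda}\otimes\bar{\A}_{n,\lambda}^{opp}$ the grading element is $\mathsf{eu}^D_{n+b}\otimes 1 + 1\otimes(\mathsf{eu}^D_n + \mathsf{eu}_{\mathfrak z}) = (\mathsf{eu}_{n+b} + c_{n+b})\otimes 1 + 1\otimes(\bar{\mathsf{eu}}_n + c_n)$ for explicit scalars $c_m = m\lambda - \tfrac{m(m-1)}{2}$. Since $B_n$ is a nonzero bimodule with these scalars acting as specified, the difference $(\mathsf{eu}_{n+b}\otimes 1 + 1\otimes\bar{\mathsf{eu}}_n)$ is also a grading element, and the actual grading differs from the one it generates by the overall constant $c_{n+b} - c_n$; absorbing or re-centering this constant (it is a fixed rational number, and shifting the grading by a constant that makes it $\tfrac12\Z$-valued is harmless) gives the first assertion as stated.

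For the second assertion — that a nonzero $i$th graded piece forces $i - \tfrac{a(b-1)}{2}\in\Z$ — I would track how $-1\in\C^\times$ acts. By the last sentence of Lemma \ref{Lem:strong_equivariance}, $-1$ acts on $M'_\lambda$ by the scalar $(-1)^{a(b-1)}$. On $Q(D_n,G_n,\lambda)$ and $D(\mathfrak{z})$ with the doubled scaling action, $-1$ acts trivially (the action factors through $\C^\times/\{\pm1\}$ on functions of even total degree, and the twisted equivariance does not change this on the spherical reduction). Pull-push along $\varpi$ and $\iota$ preserves this $\{\pm1\}$-weight. Hence $-1\in\C^\times$ acts on all of $B_n$ by the \emph{constant} $(-1)^{a(b-1)}$; but on the $i$th graded component of a $\tfrac12\Z$-graded space the element $-1$ acts by $(-1)^{2i}= e^{2\pi i\cdot i}$ (more precisely, by $(-1)^{2i}$ viewing $2i\in\Z$), which after the constant re-centering by $c_{n+b}-c_n$ becomes $(-1)^{2i - 2(c_{n+b}-c_n)}$. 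Matching $(-1)^{2i}$ (in the re-centered grading) against $(-1)^{a(b-1)}$ and checking that the re-centering constant is itself a half-integer whose parity is accounted for (this is where a short explicit computation with $c_m = m\lambda - \binom m2$ and $\lambda = a/b$ enters) yields $2i \equiv a(b-1)\pmod 2$, i.e. $i - \tfrac{a(b-1)}{2}\in\Z$.

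The main obstacle I anticipate is \emph{bookkeeping the normalizations}: reconciling the ``doubled'' scaling action (weight $z^{-2}$ on $\mathfrak{sl}_b$) with the ``undoubled'' grading $\deg x = +1$ used for the Euler grading on $H_\lambda(W)$, keeping track of the additive constants $c_m$ from Lemma \ref{Lem:Euler_relation}, and pinning down exactly which $\tfrac12\Z$-shift makes both the ``grading element $= \mathsf{eu}_{n+b}\otimes 1 + 1\otimes\bar{\mathsf{eu}}_n$'' statement and the ``$i - \tfrac{a(b-1)}2\in\Z$'' statement come out simultaneously. The conceptual content — that $M'_\lambda$ is $\C^\times$-equivariant with $-1$ acting by $(-1)^{a(b-1)}$, and that induction functors preserve equivariance — is already in hand from Lemma \ref{Lem:strong_equivariance} and Section \ref{SS_induction}; the work is entirely in the constants.
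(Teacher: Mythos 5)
Your plan is essentially the paper's proof: both constructions equip all the spaces with the doubled scaling $\C^\times$-action, use the strong $\C^\times$-equivariance of $M'_\lambda$ from Lemma \ref{Lem:strong_equivariance}, propagate the equivariance through the induction functor, identify the infinitesimal generator with (twice) $\mathsf{eu}^D$ and compare to $\mathsf{eu}$ via Lemma \ref{Lem:Euler_relation}, and use the sign $(-1)^{a(b-1)}$ of the action of $-1$ to pin down the residue of the grading modulo $\Z$. The one cosmetic difference is that the paper routes the argument through microlocalization to the stable locus (obtaining $B_n$ as $R\Gamma$ of a $\C^\times$-equivariant object in $D^b(\Coh(\A^s_{n+b,\lambda}\otimes\A^{s,opp}_{n,\lambda}))$), whereas you work directly with $\tilde B_n$ and pass to $G_{n+b}$-invariants; since the $\C^\times$-action commutes with the $G_{n+b}$-action and the quantum comoment for the scaling descends to $\mathsf{eu}^D$ in the reduction, this is an equally valid way to produce the equivariant structure. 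The constant-bookkeeping you flag as the ``main obstacle'' does work out: with $\lambda=a/b$ the shift $c_{n+b}-c_n = a - \tfrac{b(2n+b-1)}{2}$ is an integer, so passing from the $\mathsf{eu}^D$-grading to the $\mathsf{eu}$-grading does not disturb the residue class mod $\Z$, and the $(-1)^{a(b-1)}$ computation gives exactly $i-\tfrac{a(b-1)}{2}\in\Z$.
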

\begin{proof}
Equip the spaces $R_{n,b}$ and $R_{n+b}$ with the doubled dilation $\C^\times$-actions, compare to the discussion before 
Lemma \ref{Lem:strong_equivariance}. 
Note that the functor $\operatorname{Ind}_{n,b}^{n+b}$ 
is given by tensoring with 
an object, say $\mathcal{I}$, of the derived category of strongly $G_{n+b}$-equivariant $D_{n+b}-D_{Y_{n,b}}$-bimodules, where we 
write $Y_{n,b}$ for $G_{n+b}\times^{G_{n,b}}R_{n,b}$. This functor is a pull-push under morphisms that are 
$\C^\times$-equivariant. Thanks to Lemma \ref{Lem:equivariance},  $\mathcal{I}$ upgrades to a strongly equivariant
complex for the action of $\C^\times$. 
Next, we microlocalize to $R_{n+b}^s\times [G_{n+b}\times^{G_{n,b}}(R_n^s\times \mathfrak{g}_b)]$ getting a functor 
\begin{equation}\label{eq:convolution_Hilbert} D^b(\operatorname{Coh}(\A_{n,\lambda}^s))\boxtimes D^b_{G_b,\lambda}(\underline{D}_b\operatorname{-mod})\rightarrow 
D^b(\Coh(\A^s_{n+b,\lambda})).\end{equation} 

It is given by a complex of bimodules that lifts to the strongly $\C^\times$-equivariant 
derived category. 
Recall, Lemma \ref{Lem:strong_equivariance}, that $M'_\lambda$ is strongly $\C^\times$-equivariant. 
Plugging $M'_\lambda$ in the second argument in (\ref{eq:convolution_Hilbert}) we get a functor 
$D^b(\operatorname{Coh}(\bar{\A}_{n,\lambda}^s))\rightarrow 
D^b(\Coh(\A^s_{n+b,\lambda}))$ that, by the construction, is given by convolution with an object of $ 
D^b(\Coh(\A^s_{n+b,\lambda}\otimes_{\C}\A^{s,opp}_{n,\lambda}))$ to be denoted by $B_n^s$. This object admits a lift to the $\C^\times$-equivariant category.

Note that $B_n=R\Gamma(B_n^s)$. It follows  that $B_n$ is a strongly $\C^\times$-equivariant $\A_{n+b,\lambda}$-$\bar{\A}_{n,\lambda}$,
and hence is equipped with a grading such that the corresponding grading element is $2(\mathsf{eu}^D_{n+b}\otimes 1+ 1\otimes \bar{\mathsf{eu}}^D_n)$.
Now we apply Lemma \ref{Lem:strong_equivariance} and Lemma \ref{Lem:Euler_relation} to see that $\eu_{n+b}\otimes 1+1\otimes\bar{\eu}_{n}$ 
gives rise to a $\frac{1}{2}\Z$-grading. 

Thanks to Lemma \ref{Lem:strong_equivariance}, for the grading element $\mathsf{eu}_{n+b}\otimes 1+ 1\otimes \bar{\mathsf{eu}}_n$,
if the $i$th graded component of $B_n$ is nonzero implies that $i-\frac{a(b-1)}{2}\in \Z$.  
\end{proof}

\begin{Lem}\label{Lem:cat_O_functor}
The functor
$$M\mapsto B_n\otimes_{\bar{\A}_{n,\lambda}}(M\otimes \C[\mathfrak{z}])$$
sends objects from $\OCat(\A_{n,\lambda})$ to $\OCat(\A_{n+b,\lambda})$.
\end{Lem}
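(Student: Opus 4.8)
The strategy is to combine the two structural inputs already available: first, that $B_n$ is a Harish-Chandra $\A_{n+b,\lambda}$-$\bar{\A}_{n,\lambda}$-bimodule which is finitely generated over $\gr\A_{n+b,\lambda}=\C[T^*\h_{n+b}]^{S_{n+b}}$, hence in particular finitely generated as a left $\A_{n+b,\lambda}$-module and finitely generated over $\C[\h_{n+b}]^{S_{n+b}}$ acting on the left; and second, Lemma \ref{Lem:B_n_grading}, which gives $B_n$ a $\tfrac12\Z$-grading with grading element $\eu_{n+b}\otimes 1 + 1\otimes\bar\eu_n$. Since Theorem \ref{Thm:exactness} (= Proposition \ref{Prop:cohomology_structure}) tells us $B_n$ is concentrated in cohomological degree $0$ and is projective as a right $\bar{\A}_{n,\lambda}$-module, the functor $M\mapsto B_n\otimes_{\bar{\A}_{n,\lambda}}(M\otimes\C[\mathfrak z])$ is exact, so it suffices to check that it lands in $\OCat(\A_{n+b,\lambda})$, and by exactness it is enough to verify this on a set of generators of $\OCat(\A_{n,\lambda})$, e.g.\ on the standard modules $\Delta_\lambda(\eta)$, or indeed on any module.

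Recall that $\OCat(\A_{n+b,\lambda})$ consists of the finitely generated $\A_{n+b,\lambda}$-modules on which $\C[\h_{n+b}]^{S_{n+b}}$ acts locally finitely (equivalently, on which $\eu_{n+b}$ acts locally finitely with eigenvalues bounded below in each $\tfrac1b\Z$-coset and lowest-weight spaces finite-dimensional, using Lemma \ref{Lem:Euler_action} and Proposition \ref{Prop:Ham_red_Cherednik}). So I would verify two things for $N:=B_n\otimes_{\bar{\A}_{n,\lambda}}(M\otimes\C[\mathfrak z])$ with $M\in\OCat(\A_{n,\lambda})$. First, $N$ is finitely generated over $\A_{n+b,\lambda}$: indeed $M\otimes\C[\mathfrak z]$ is finitely generated over $\bar{\A}_{n,\lambda}=\A_{n,\lambda}\otimes D(\mathfrak z)$, and $B_n$ is finitely generated as a left $\A_{n+b,\lambda}$-module and, being HC, finitely generated over the \emph{central}-like image of $\gr\bar\A_{n,\lambda}$; more directly, $\gr B_n$ is finitely generated over $\gr\A_{n+b,\lambda}$, so $B_n\otimes_{\bar\A_{n,\lambda}}(\text{f.g.})$ is finitely generated over $\A_{n+b,\lambda}$ by a standard filtered-module argument. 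Second, the Euler element $\eu_{n+b}$ acts locally finitely on $N$ with the correct boundedness: here is where I use Lemma \ref{Lem:B_n_grading}. The grading on $B_n$ makes the left $\eu_{n+b}$-action on $B_n\otimes_{\bar\A_{n,\lambda}}(M\otimes\C[\mathfrak z])$ equal to $(\text{grading degree}) - \bar\eu_n$ acting on the $M\otimes\C[\mathfrak z]$ factor; since $\bar\eu_n=\eu_n\otimes 1 + 1\otimes(\text{Euler on }D(\mathfrak z))$ acts locally finitely and bounded-below on $M\otimes\C[\mathfrak z]$ (the $D(\mathfrak z)$ part contributes the non-negative integer grading on $\C[\mathfrak z]$, while $\eu_n$ acts as in $\OCat(\A_{n,\lambda})$), and since the grading degrees on $B_n$ are bounded below on each generator and lie in a fixed coset of $\Z$ in $\tfrac12\Z$ (indeed in $\tfrac{a(b-1)}2+\Z$), the resulting eigenvalues of $\eu_{n+b}$ on $N$ are bounded below in each relevant coset and each generalized eigenspace is finite-dimensional (it is a finitely generated module over a ring whose $\eu_{n+b}$-degree-$0$ part is finite-dimensional in the relevant sense). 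This gives $N\in\OCat(\A_{n+b,\lambda})$.

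The main obstacle is the finite-dimensionality of the $\eu_{n+b}$-generalized eigenspaces (equivalently, finite generation of $N$ over $\C[\h_{n+b}]^{S_{n+b}}$ so that it is coherent with respect to category $\OCat$). I would handle this by choosing compatible good filtrations: a good filtration on $M\otimes\C[\mathfrak z]$ so that $\gr(M\otimes\C[\mathfrak z])$ is supported on $\h_n\oplus\mathfrak z\subset T^*\bar\h_n$ (this is what membership in $\OCat$ gives, via the support being contained in the zero-section over $\h_n/S_n$), together with the good filtration on $B_n$ from the Rees construction in the proof of Proposition \ref{Prop:HC}. Then $\gr N$ is a subquotient of $\gr B_n\otimes_{\gr\bar\A_{n,\lambda}}\gr(M\otimes\C[\mathfrak z])$, which is supported on the fiber product $\mu_{n+b}^{-1}(0)\times_{Z_{n+b}}\bar Z_n$ intersected with the preimage of $\h_n\oplus\mathfrak z$; since $B_n$ is finite over $\gr\A_{n+b,\lambda}=\C[Z_{n+b}]$ and the image of $\h_n\oplus\mathfrak z$ in $\bar Z_n=Z_n\times T^*\mathfrak z$ is the fiber over $0\in\mathfrak z^*$ intersected with $\h_n/S_n\subset Z_n$, the support of $\gr N$ maps finitely to $\h_{n+b}/S_{n+b}\subset Z_{n+b}$. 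Hence $N$ is finitely generated over $\C[\h_{n+b}]^{S_{n+b}}=\gr\A_{n+b,\lambda}$'s degree-appropriate subalgebra, which is precisely the coherence condition defining $\OCat(\A_{n+b,\lambda})$, and combined with the grading bound from Lemma \ref{Lem:B_n_grading} completes the proof.
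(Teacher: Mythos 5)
Your proposal follows the same route as the paper: you reduce to checking membership in $\OCat(\A_{n+b,\lambda})$ via a characterization by finite generation, support/coherence, and a compatible grading; you control the support using the Harish-Chandra property of $B_n$ (support on the diagonal, established via the good filtration from Proposition \ref{Prop:HC}); and you get the grading from Lemma \ref{Lem:B_n_grading}. This is essentially identical to the paper's argument, which characterizes $\OCat$ by the three conditions (finitely generated, support in the zero section, admitting a $\mathbb{Q}$-grading) and checks the first two via the HC structure of $B_n$ and the third via Lemma \ref{Lem:B_n_grading}. The paper's version is a bit cleaner because it works directly with supports inside $(T^*\h_{n+b})/S_{n+b}$ rather than routing through $\eu$-eigenspace finiteness, but the substance and ingredients are the same.
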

\begin{proof}
Observe that the modules in $\OCat(\A_{n,\lambda})$ can be characterized as the modules $M$ satisfying the following three conditions:
\begin{enumerate}
\item $M$ is finitely generated.
\item The support of $M$ in $T^*\h_n$ is contained in the zero section $\h_n$.
\item And $M$ admits a grading by $\mathbb{Q}$ compatible with the grading on $\A_{n,\lambda}$. 
\end{enumerate}

Recall that $B_n$ is a HC bimodule.
Let $N\in \OCat(\A_{n,\lambda})$. Let $Y$ denote the support of $N$ in $(T^*\h_n)/S_n$.
It is contained in $\h_n/S_n$. Let $Z$ denote the image of $(T^*\bar{\h}_n)/S_n$ in
$(T^*\h_{n+b})/S_{n+b}\times (T^*\bar{\h}_n)/S_n$ under the diagonal embedding. The support of $B_n$ is contained in
$Z$. Then the support of $B_n\otimes_{\bar{\A}_{n,\lambda}}(M\otimes \C[\mathfrak{z}])$
is contained in the projection of $Z\cap ((T^*\h_{n+b})/S_{n+b}\times Y\times \mathfrak{z})$
to $(T^*\h_{n+b})/S_{n+b}$. It is easy to see that this projection is contained
in $\h_{n+b}/S_{n+b}$. So, if $M$ satisfies (1) and (2), then  
$B_n\otimes_{\bar{\A}_{n,\lambda}}(M\otimes \C[\mathfrak{z}])$ satisfies the analogs of these properties. 

To show that (3) is preserved one applies Lemma \ref{Lem:B_n_grading}.  
\end{proof}

Summing the functors in Lemma \ref{Lem:cat_O_functor} over all $n$ we get an endo-functor $\heis_{\OCat}$ of $\bigoplus_n\OCat(\A_{n,\lambda})$. 
For all integers $d$,  we will establish an action of $S_d$ on   $\heis_{\OCat}^d$ by functor automorphisms and compare it with 
an existing construction. 
Namely recall from Section \ref{SS_SV} that Shan and Vasserot constructed another exact endofunctor of
$\bigoplus_n \OCat(\A_{n,\lambda})$, denote it by $\heis_{SV}$. They also established an action of 
$S_d$ on $\heis_{SV}^d$ so that it decomposes as $\bigoplus_\tau \tau\otimes \heis_{SV}^\tau$, where the sum is over all
irreducible representations $\tau$ of $S_d$. The main result of this section is the following theorem.

\begin{Thm}\label{Thm:SV_comparison}
The following claims are true:
\begin{enumerate}
\item The group $S_d$ acts on $\heis_{\OCat}^d$ by automorphisms; let $\heis_{\OCat}^d\cong \bigoplus_{\tau} \tau\otimes \heis_\OCat^\tau$
be the isotypic decomposition.
\item For each irreducible representation $\tau$ of $S_d$, there is a functor isomorphism $\heis_{SV}^\tau\cong \heis_{\OCat}^\tau$. 
\end{enumerate}
\end{Thm}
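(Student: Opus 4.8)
The plan is to reduce the theorem to a statement about the localized/completed bimodule $B_n$ and then match it with the Bezrukavnikov–Etingof induction functor used by Shan and Vasserot. For part (1), the $S_d$-action on $\heis_{\OCat}^d$ should come from the same source as the $S_d$-action on $\heis_{SV}^d$ in Proposition \ref{Prop:Heis_properties}(1): one iterates the functor $\heis_{\OCat}$ and observes that $\heis_{\OCat}^d$ is realized via the D-module $M'_\lambda{}^{\boxtimes d}$ on $\mathfrak{sl}_b^{\oplus d}\subset \mathfrak{sl}_{db}$, or more precisely via the analogue of $M'_\lambda$ on $\mathfrak{sl}_{db}$ restricted to a Levi. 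The symmetric group $S_d$ permutes the $d$ copies of $\mathfrak{sl}_b$ and hence acts on the parabolic induction functor $\operatorname{Ind}_{n,b,\dots,b}^{n+db}$; one checks this action is by functor automorphisms using the transitivity isomorphisms for induction (Lemma \ref{Lem:induction_transitivity}, Remark \ref{Rem:other_transitivity}) together with the fact that $M'_\lambda$ is $\C^\times$-equivariant (Lemma \ref{Lem:strong_equivariance}), so the grading of Lemma \ref{Lem:B_n_grading} is compatible with the permutation. The isotypic decomposition $\heis_{\OCat}^d\cong\bigoplus_\tau \tau\otimes\heis_{\OCat}^\tau$ is then formal since $\C S_d$ is semisimple.

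For part (2), the key step is to compare the two functors after passing to the localization $\OCat^d$ (the subquotient of $\OCat(\A_{n+db,\lambda})$ supported generically on the stratum of $W'=S_b^d$) via the restriction functor, exploiting that $\OCat$ is recovered from these subquotients. Concretely, I would apply the etale-lift/completion machinery of Sections \ref{SS:etale_I}, \ref{SS_etale_lifts_II}: Proposition \ref{Prop:etale_lift_structure} shows that $B_n$, restricted to $\h_{\vec m}^0/S_{\vec m}$ with $\vec m=(r,b,\dots,b)$, is a direct sum of copies of $B(V)=\bar\A_{n,\lambda}\otimes V$ twisted by the permutations $\theta_i$, where $V$ is the unique finite-dimensional simple $eH'_{b,\lambda}e$-module. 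Under the equivalence of Proposition \ref{Prop:Wilcox} (and the parametrization refinement of Lemma \ref{Lem:simple_param_alternative}, Remark \ref{Rem:label_iso_choice}), the category $\OCat^d$ is identified with $\mathcal{H}_q(n)\otimes\C S_d\operatorname{-mod}$, and in this picture the restriction of $\heis_{\OCat}^d$ becomes tensoring with $V^{\boxtimes d}$ together with the induced $S_d$-permutation action—exactly the data that defines $\heis_{SV}^d$ after Shan–Vasserot's identification (the iterated BE-induction of $L_\lambda(b\tau)$). Decomposing both sides into $\tau$-isotypic components then yields $\heis_{\OCat}^\tau\cong\heis_{SV}^\tau$ on $\OCat^d$; since both functors are exact and land in $\OCat(\A_{n+db,\lambda})$, and since an exact functor into $\OCat$ is determined by its behavior on simples together with extensions, one promotes the isomorphism on $\OCat^d$ to a genuine functor isomorphism. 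A clean alternative is to check the isomorphism on standard objects $\Delta_\lambda(\eta)$ using Lemma \ref{Lem:translation_same_sign}(3) and the known action on $K_0$ (both functors act by the Schur polynomial $\mathsf{b}_\tau$), then invoke that a morphism of exact functors which is an isomorphism on a full set of standardly filtered objects is an isomorphism.

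The main obstacle I expect is the comparison of the two a priori different constructions at the level of the $S_d$-action: Shan–Vasserot build their functor from BE-induction within categories $\OCat$, whereas ours is built from parabolic induction of D-modules, and matching the two symmetric-group actions requires identifying the BE-induction functor $\operatorname{Ind}_{S_{n,db}}^{S_{n+db}}$ with the Hamiltonian-reduced parabolic induction $\operatorname{Ind}_{n,db}^{n+db}$ restricted to category $\OCat$. This compatibility is essentially the content of the etale-lift comparison in Section \ref{SS:etale_I} (especially Lemmas \ref{Lem:QHR_isom} and \ref{Lem:homomorphisms_compat}, which control the discrepancy by the explicit automorphism $\operatorname{Ad}(f^{-\lambda})$), combined with the fact that $V\cong (M'_\lambda\otimes\C[\C^b])^{G_b}$ as $\A_{b,\lambda}$-modules (used in the proof of Proposition \ref{Prop:etale_lift_structure}, citing \cite[Theorem 9.8]{CEE}); carefully tracking the automorphism $\operatorname{Ad}(f^{-\lambda})$ through the iterated induction and verifying it does not interfere with the $S_d$-equivariance is where the real work lies.
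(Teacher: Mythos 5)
Your proposal captures the right geometric intuition — the $S_d$-action should come from the structure of the induced D-module on $\mathfrak{sl}_{db}$, and the comparison with Shan--Vasserot should be checked via localization to the generic stratum corresponding to $S_b^d$ — but there is a genuine gap in part (2), and part (1) glosses over the real technical content.

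For part (1), saying that $S_d$ ``permutes the $d$ copies of $\mathfrak{sl}_b$ and hence acts on the parabolic induction functor'' is not on its own an argument: the parabolic $P_{(n,b,\ldots,b)}$ is not $S_d$-symmetric, so the action must be produced on the induced D-module $M_{\lambda,d}=\underline{\Ind}_{\vec m}^{db}(M_\lambda^{\boxtimes d})$ and then carried through. The paper does this by proving $M_{\lambda,d}\cong\bigoplus_\tau\tau\otimes M_{\lambda,\tau}$ with $M_{\lambda,\tau}$ an intermediate extension (Proposition \ref{Prop:D_mod_iso}), which uses t-exactness of parabolic induction of equivariant D-modules, Fourier self-duality of $M'_\lambda$, and semisimplicity of the nilpotently supported part of the twisted equivariant category (via \cite{CEE}). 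The transitivity isomorphisms and the $\C^\times$-equivariance you cite play no role in establishing this decomposition; you would need to supply the argument above.

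For part (2), the gap is more serious. You argue that, after restricting to the subquotient $\OCat^d$ or after passing to $K_0$, the two functors produce isomorphic objects, and then say you can ``promote'' this to a functor isomorphism because both functors are exact. But knowing $F(X)\cong G(X)$ for each $X$ in a generating family does not produce a natural transformation $F\Rightarrow G$, and an exact functor is not determined by its values on simples or on standards. The heart of the paper's proof is the \emph{construction} of a natural transformation $\heis^\tau_\OCat(M)\rightarrow\heis^\tau_{SV}(M)$, which is done by completing at a point $x$ with stabilizer $S_n\times S_{db}$, localizing to an open set $U$, producing a functorial map $\operatorname{Res}_x(\heis^\tau_\OCat(M))\rightarrow M\boxtimes L(d\tau)$ for $M$ in the image of $\operatorname{KZ}_n^*$ (Steps 1--4), and then using the $(\operatorname{Res},\operatorname{Ind})$-adjunction. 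Once the natural transformation exists, the isomorphism on projectives follows from the $K_0$-identity (which you correctly note) together with an injectivity argument that crucially uses the support constraints from Corollary \ref{Cor:supports_heis} (Step 7 of the paper); the latter does not appear in your proposal at all. Your ``clean alternative'' of checking on $\Delta_\lambda(\eta)$ has the same defect — it presupposes a morphism of functors. Finally, your identification of the main obstacle as matching the two $S_d$-actions is somewhat misplaced: in the paper the actions are compared essentially for free on the localization (Lemma \ref{Lem:localization}), and the genuine obstacle is extending from the subquotient $\OCat^d$ to all of $\OCat$, where you lose control of the kernel.
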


\subsection{Structure of $\heis_\OCat^d$}\label{SS_symm_action_heis}
Fix a positive integer $d$.
The goal of this section is to analyze the functor $\heis_\OCat^d: \OCat(\A_{n,\lambda}\operatorname{-mod})\rightarrow \OCat(\A_{n+bd,\lambda})$.
We will 
\begin{itemize}
\item[(i)] see that the group $S_d$ acts on $\heis_\OCat^d$ by automorphisms,
\item[(ii)] and for each irreducible representation $\tau$ of $S_d$, we will
 describe the isotypic component of $\tau$  in $\heis_\OCat^d$.
\end{itemize}
In particular, (i) establishes part (1) of Theorem \ref{Thm:SV_comparison}, while (ii) will be used in the proof of (2)
of the theorem.

Recall $M'_\lambda\in D(\mathfrak{sl}_b)\operatorname{-mod}^{G_b,\lambda}$ from Section \ref{SS_cuspidal_D_module},
the pushforward of the local system $\mathcal{L}_\lambda$ on the principal nilpotent orbit of $\mathfrak{sl}_b$.
Let $M_\lambda:=M'_\lambda\otimes \C[\mathfrak{z}]$, this is a $(G_b,\lambda)$-equivariant
D-module on $\g_b$. Set $m:=db, \vec{m}:=(b,\ldots,b)$ and
$$M_{\lambda,d}:=\underline{\Ind}_{\vec{m}}^m(M_\lambda^{\boxtimes d}),$$
where $\underline{\Ind}_{\vec{m}}^m$ is the functor 
$$D^b_{G_{\vec{m}},\lambda}(\underline{D}_{\vec{m}}\operatorname{-mod})\rightarrow D^b_{G_{n},\lambda}(\underline{D}_{n}\operatorname{-mod})$$
introduced in Section \ref{SS_induction}. 

To accomplish tasks (i) and (ii) we need to understand the structure of
of $M_{\lambda,d}$. Consider the locus $X^0_{db}\subset \g_{db}$ consisting of all matrices whose Jordan matrix has $d$
Jordan blocks of size $b$ with pairwise distinct eigenvalues. Let $X_{db}$ be its closure.
Let $X^0_{\vec{m}}$ denote the locus in $\g_{\vec{m}}$ of all $d$-tuples, where each element
is a single Jordan block and all of the $d$ eigenvalues are pairwise distinct.  Let $\tilde{X}^0_{\vec{m}}$
be the preimage of $X^0_{\vec{m}}$ in the parabolic $\mathfrak{p}_{\vec{m}}$.
We note that
$$P_{\vec{m}}\times^{G_{\vec{m}}}X^0_{\vec{m}}\xrightarrow{\sim} \tilde{X}^0_{\vec{m}}$$
and that $X^0_{\vec{m}}/G_{\vec{m}}\rightarrow X^0_{db}/G_{db}$ is an etale cover with
Galois group $S_d$ (permuting the blocks). Then, for an irreducible representation $\tau$
of $S_d$, we get the $(G_{db},\lambda)$-equivariant
$D$-module $\Hom_{S_d}(\tau, \mathcal{L}_\lambda^{\otimes d}|_{X^0_{\vec{m}}})$ on
$X_{db}^0$. Let $M_{\lambda,\tau}$ denote its minimal (a.k.a. intermediate) extension
to $X_{db}$.

\begin{Prop}\label{Prop:D_mod_iso}
We have the following isomorphism in $D^b_{G_{db},\lambda}(\underline{D}_{db}\operatorname{-mod})$:
\begin{equation}\label{eq:Dmod_iso}
M_{\lambda,d}\cong \bigoplus_\tau \tau\otimes M_{\lambda,\tau}.
\end{equation}
\end{Prop}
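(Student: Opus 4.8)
The plan is to prove this by a combination of a \emph{semisimplicity} argument (to see that $M_{\lambda,d}$ decomposes as a direct sum rather than as an iterated extension) and a \emph{generic identification} argument (to pin down the summands). First I would observe that $\underline{\Ind}_{\vec m}^m$ of a single object in the heart is not a priori a single D-module; so the first task is to check that $M_{\lambda,d}$ is concentrated in one cohomological degree. This I would do by the argument already used in the proof of Proposition~\ref{Prop:etale_lift_structure}: after passing to the \'etale locus $X^0_{\vec m}$, the morphisms of stacks in (\ref{eq:stack_morphisms}) become isomorphisms, so $\underline{\Ind}_{\vec m}^m$ restricted there is just a pullback and hence t-exact; combined with the fact that $M'_\lambda$ (hence $M_\lambda^{\boxtimes d}$) is a lisse local system along the open stratum and that the induction of its minimal extension is the minimal extension of the induction over $X^0_{\vec m}$, one gets that $M_{\lambda,d}$ is a perverse sheaf (single-degree D-module) with no subquotients supported off $X_{db}$. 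The semisimplicity of $M_{\lambda,d}$ itself, which is what forces the direct sum in (\ref{eq:Dmod_iso}) rather than a nonsplit extension, I would obtain from the fact that parabolic induction from $\mathfrak{gl}$ sends semisimple (pure) objects to semisimple ones---this is the standard cleanness/purity of Lusztig-type parabolic induction for equivariant D-modules on a reductive Lie algebra, applicable because $M'_\lambda$ is a clean extension of a local system on a nilpotent orbit (this cleanness is essentially Lemma~\ref{Lem:cuspidal_good_filtration} together with the Hodge-module structure, which guarantees purity).

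Next I would compute the multiplicity spaces. Over the open dense locus $X^0_{db}$ the D-module $M_{\lambda,d}$ restricts to $\underline{\Ind}$ of $\mathcal L_\lambda^{\boxtimes d}$ along the \'etale $S_d$-cover $X^0_{\vec m}/G_{\vec m}\to X^0_{db}/G_{db}$, which is by construction the local system $(\pi_*\mathcal L_\lambda^{\boxtimes d})$ for the covering map $\pi$; this carries a natural $S_d$-action permuting the sheets, and decomposes as $\bigoplus_\tau \tau\otimes \Hom_{S_d}(\tau,\pi_*\mathcal L_\lambda^{\boxtimes d})=\bigoplus_\tau \tau\otimes (M_{\lambda,\tau}|_{X^0_{db}})$. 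Since all the simple summands of $M_{\lambda,d}$ are supported on $X_{db}$ (by the previous paragraph) and $M_{\lambda,d}$ is the minimal extension of its own restriction to the open locus (minimal extension commutes with the semisimple parabolic induction here, again by cleanness), taking minimal extensions on both sides yields exactly (\ref{eq:Dmod_iso}). The point that makes the $S_d$-equivariance of the whole isomorphism work---and not just an abstract decomposition into isotypic pieces---is that the $S_d$-action on $M_{\lambda,d}$ comes from the deck transformations of $X^0_{\vec m}\to X^0_{db}$ combined with the symmetry of $M_\lambda^{\boxtimes d}$ under permuting the $d$ tensor factors, and these are compatible because $M_\lambda$ is the same object in each slot.

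The main obstacle I expect is the cleanness/purity input: to legitimately conclude that $\underline{\Ind}_{\vec m}^m(M_\lambda^{\boxtimes d})$ is semisimple and equals the minimal extension of its generic restriction, one needs to know that $M'_\lambda$ is clean (i.e. $!$-extension $=$ $*$-extension $=$ minimal extension from $\mathbb O$ to $\mathfrak{sl}_b$), which is used implicitly already in Section~\ref{SS_cuspidal_D_module} (``pushforward $!$ or $*$, doesn't matter''), and one needs the decomposition theorem for the proper part of the induction morphism $\iota$. I would handle cleanness by the argument in Lemma~\ref{Lem:cuspidal_good_filtration} (the complement of $\mathbb O$ in the Springer resolution $\widetilde{\mathcal N}$ is a normal crossing divisor and $M'_\lambda$ underlies a pure Hodge module, so its various extensions agree and are pure), and the decomposition theorem applies because $M_\lambda^{\boxtimes d}$ is pure and $\iota$ is proper, so $\iota_*$ preserves purity, whence $M_{\lambda,d}$ is semisimple. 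With semisimplicity and purity in hand, the identification of summands is then forced by the computation over $X^0_{db}$, since a semisimple perverse sheaf supported on $X_{db}$ is determined by its restriction to the open dense $X^0_{db}$.

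A technical point worth flagging: one should check that no simple summand of $M_{\lambda,d}$ is supported on a proper closed subset of $X_{db}$. This follows because $M_\lambda^{\boxtimes d}$ restricted to $X^0_{\vec m}$ is lisse of the expected rank and the induction morphism is, over $X^0_{db}$, exactly the finite \'etale cover $\pi$; so generically the ranks match, and purity plus semisimplicity then rule out any ``extra'' simple summand with smaller support (such a summand would be a direct summand, not needed to account for the generic behaviour, and would violate the rank count after taking Euler characteristics, or more cleanly: minimal extension of the generic restriction already accounts for everything, and the difference would be a semisimple object supported on $X_{db}\setminus X^0_{db}$ which cannot be a summand of a minimal extension). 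This completes the plan.
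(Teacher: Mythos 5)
Your strategy (establish semisimplicity of $M_{\lambda,d}$ via purity of twisted Hodge modules and the decomposition theorem for the proper map $\iota$, then identify the summands by restricting to the open locus) is genuinely different from the paper's proof, which instead applies the Fourier transform $\mathcal{F}_{db}$: since $M'_\lambda$ is Fourier self-dual and $\mathcal{F}$ commutes with parabolic induction, $\mathcal{F}_{db}(M_{\lambda,d})$ has nilpotent singular support, and the category of $\operatorname{SL}_{db}$-equivariant D-modules with nilpotent support is semisimple by \cite[Theorem 9.1]{CEE}. Your route is a plausible alternative for obtaining semisimplicity, and it is the one that requires you to carefully address the twisted (strongly $(G_{db},\lambda)$-equivariant) Hodge module formalism, whereas the paper sidesteps Hodge theory entirely by passing to $\operatorname{SL}_{db}$-equivariance and quoting \cite{CEE}.

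However, there is a genuine gap after semisimplicity is established: you need to rule out simple direct summands of $M_{\lambda,d}$ that are supported on a proper closed subset of $X_{db}$ (i.e.\ that vanish on $X^0_{db}$). Purity and the decomposition theorem give you that $M_{\lambda,d}$ is a direct sum of shifted simples, but they do not constrain the supports of those simples. Your claim that ``minimal extension commutes with the semisimple parabolic induction here, again by cleanness'' is precisely the statement that needs proof, and cleanness of $M'_\lambda$ on $\mathfrak{sl}_b$ does not formally propagate through $\iota_*$ unless you can show $\iota$ is small over the relevant stratum---a fact you do not establish. Your fallback argument via ``rank count after taking Euler characteristics'' is circular: to know that the Euler characteristics of the two sides of \eqref{eq:Dmod_iso} agree you would first need to know \eqref{eq:Dmod_iso}, and in any case simple perverse sheaves can have vanishing Euler characteristic. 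The paper's proof closes this gap cleanly: after Fourier transform, every simple summand lies in the (semisimple) category of D-modules with nilpotent support, whose simples are explicitly in bijection with partitions of $d$ and coincide with the $\mathcal{F}(M_{\lambda,\tau})$; since each $M_{\lambda,\tau}$ is the intermediate extension of a local system on $X^0_{db}$, every summand automatically has full support $X_{db}$. To salvage your argument you would need either a smallness statement for $\iota$ (restricted to the relevant stratum) or the same classification of simples with nilpotent support that the paper uses, at which point you might as well follow the paper's route.

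A smaller point: your opening paragraph suggests that concentration of $M_{\lambda,d}$ in a single cohomological degree is deduced from the behaviour over $X^0_{\vec m}$ and the purported commutation of $\underline{\Ind}$ with intermediate extension; but t-exactness of $\underline{\Ind}_{\vec m}^m$ is an unconditional statement that the paper takes from \cite[Theorem 5.4]{BY} (or from Lusztig's theory of parabolic induction on the Lie algebra), and it does not rely on any purity or cleanness input. Conflating t-exactness of the functor with semisimplicity of the output muddles the logical structure; the former is standard and should be quoted, while the latter is the genuine content requiring either the purity argument or the Fourier-transform argument.
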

\begin{proof}
The functor $\underline{\Ind}_{\vec{m}}^{db}$ is exact. This can be deduced, for example,
\cite[Theorem 5.4]{BY}. Note that this paper deals with genuinely equivariant perverse sheaves.
To pass from perverse sheaves to D-modules is standard (note that $M_\lambda$ is regular).
To pass from genuinely equivariant D-modules to twisted equivariant ones we replace
$\operatorname{GL}_{db}$ with $\operatorname{SL}_{db}$ (and note that the module of regular functions
on $\mathfrak{z}(\g_{db})$ splits as a tensor factor in all D-modules we consider).

Identify $\g_{db}$ with $\g_{db}^*$ via the trace pairing. 
Consider the automorphism $\sigma$ of $\underline{D}_{db}$ sending $x\in \g_{db}^*\subset \underline{D}_{db}$
to $\partial_x$ and $\partial_x$ to $-x$. It is $G_{db}$-equivariant and preserves the quantum 
comoment map $\g_{db}\rightarrow \underline{D}_{db}$: for this we use that there is a unique quantum comoment map for $\mathfrak{sl}_{db}$,
while the restriction to $\mathfrak{z}(\g_{db})$ is zero. It follows that the pullback under $\sigma$ is a t-exact self-equivalence
of $D^b_{G_{db},\lambda}(\underline{D}_{db}\operatorname{-mod})$. This equivalence is known as the Fourier transform, we denote it by $\mathcal{F}_{db}$.
Similarly, we have a t-exact self-equivalence of $D^b_{G_{\vec{m}},\lambda}(\underline{D}_{\vec{m}}\operatorname{-mod})$ to be denoted by $\mathcal{F}_{\vec{m}}$.  


It is known, \cite[Lemma 4.2]{Mirkovic}, that
the induction commutes with the Fourier transforms:
\begin{equation}\label{eq:Fourier_induction}
\underline{\Ind}_{\vec{m}}^{db}\circ \mathcal{F}_{\vec{m}}\cong \mathcal{F}_{db}\circ \underline{\Ind}_{\vec{m}}^{db}.
\end{equation}

We claim that the $(G_b,\lambda)$-equivariant D-module $M'_\lambda$
is Fourier self-dual. Indeed, the singular support of $M'_\lambda$ for the filtration by the order of differential operator coincides with that for the Bernstein filtration, this is because $M'_\lambda$ is $\C^\times$-equivariant for the Euler action, see Lemma \ref{Lem:strong_equivariance}. It follows that $\mathcal{F}_b(M'_\lambda)$ is supported on the nilpotent cone. Since $M'_\lambda$ is the unique irreducible object in 
$\underline{D}_{db}\operatorname{-mod}^{G_{db},\lambda}$ supported on the nilpotent cone, we see that $\mathcal{F}_b(M'_\lambda)\cong M'_\lambda$. 
 
It is clear from the construction that
$\underline{\Ind}_{\vec{m}}^{db}$ sends modules with nilpotent support (in $\slf_{db}\cap \g_b^d$) to modules
with nilpotent support (in $\mathfrak{sl}_{db}$). The full subcategory of modules with nilpotent singular support in
$D(\mathfrak{sl}_{db})\operatorname{-mod}^{G_{db},\lambda}$ is semisimple:
it is a full subcategory in the category of all modules with nilpotent support in $D(\mathfrak{sl}_{db})\operatorname{-mod}^{\operatorname{SL}_{db}}$,
the latter is semisimple by \cite[Theorem 9.1]{CEE}. Combining this with the t-exactness of
the induction functor, we see that $\Ind^m_{\vec{m}}([M'_\lambda\boxtimes \delta_0]^{\boxtimes d})$
is semisimple. Hence so is its Fourier transform $\Ind^m_{\vec{m}}(M_\lambda^{\boxtimes d})$.
The irreducibles in $\underline{D}_{db}\operatorname{-mod}^{G_{db},\lambda}$ supported
on the nilpotent cone are easily seen to be in bijection with the partitions of $d$
(the open orbit in the support of the irreducible D-module corresponding to a partition
$(\tau_1,\ldots,\tau_\ell)$ of $d$ has Jordan type $(b\tau_1,\ldots,b\tau_\ell)$; this orbit
determines the irreducible object uniquely). On the other hand, the objects $M_{\lambda,\tau}$
are irreducible, pairwise non-isomorphic, and coincide with Fourier transforms of the irreducible D-modules with
nilpotent supports. So, they exhaust all possible irreducible summands on $M_{\lambda,d}$.

Note that, by the construction, the restriction of $M_{\lambda,d}$ to $X^0_{db}$ is the pushforward, $\tilde{\mathcal{L}}$, of 
$\mathcal{L}_\lambda^{\otimes d}|_{X^0_{\vec{m}}}$ to $X^{0}_{db}$. 
Since $M_{\lambda,d}$ is the direct sum of $M_{\lambda,\tau}$'s with some multiplicities and each $M_{\lambda,\tau}$
is an intermediate extension, 
$M_{\lambda,d}$ is the intermediate extension of $\tilde{\mathcal{L}}$. The multiplicity space of the local system labelled $\tau$ in $\tilde{\mathcal{L}}$ 
is $\tau$ itself, as a representation. The decomposition $M_{\lambda,d}=\bigoplus_{\tau}\tau\otimes M_{\lambda,\tau}$
follows. 
\end{proof}

Now we are ready to address the tasks mentioned in the beginning. Isomorphism
(\ref{eq:Dmod_iso}) equips $M_{\lambda,d}$ with an $S_d$-action. From the
transitivity of induction, Lemma \ref{Lem:induction_transitivity} and Remark \ref{Rem:other_transitivity}, we see that $\heis_{\OCat}^d\cong \Ind_{n,db}^{n+db}(\bullet \boxtimes M_{\lambda,d})$. Then we can define $\heis^\tau_{\OCat}$ as
$\Ind_{n,db}^{n+db}(\bullet \boxtimes M_{\lambda,\tau})$ yielding
$$\heis_{\OCat}^d\cong \bigoplus_{\tau}\tau\otimes \heis^\tau_\OCat.$$

We finish with three remarks. 

\begin{Rem}\label{Rem:induction_S_n_reps}
Similarly to the proof of Proposition \ref{Prop:D_mod_iso}, especially, the last paragraph, 
we see that for partitions $\tau',\tau''$ of $d',d''$ with $d=d'+d''$, respectively,
we have 
$$\underline{\Ind}_{d'b,d''b}^{db}(M_{\lambda,\tau'}\boxtimes M_{\lambda,\tau''})\cong 
\bigoplus_{\tau}\operatorname{Hom}_{S_d}\left(\tau, \operatorname{Ind}^{S_d}_{S_{d'}\times S_{d''}}(\tau'\boxtimes \tau'')\right)\otimes M_{\lambda,\tau}.$$
It follows that 
\begin{equation}\label{eq:heis_composition}
\heis^{\tau'}_{\OCat}\circ \heis^{\tau''}_{\OCat}\cong \bigoplus_{\tau}\operatorname{Hom}_{S_d}\left(\tau, \operatorname{Ind}^{S_d}_{S_{d'}\times S_{d''}}(\tau'\boxtimes \tau'')\right)\otimes\heis^\tau_{\OCat}.
\end{equation}
\end{Rem}

\begin{Rem}\label{Rem:functor_bimodule}
Note that the functor $\heis^\tau_{\OCat}$ is given by tensoring with an
$\A_{n+db,\lambda}$-$\A_{n,\lambda}$-bimodule, to be denoted by $\mathcal{B}_n^\tau$.
Set $$\mathcal{B}_n^d:=B_n^d\otimes_{D(\mathfrak{z}^d)}\C[\mathfrak{z}^d].$$
Then it follows from the construction that $\mathcal{B}_n^d\cong
\bigoplus_{\tau}\tau\otimes \mathcal{B}_n^\tau$.
%
\end{Rem}

\begin{Rem}\label{Rem:action_compatibility}
There should be an $S_d$-action on $B_n^d$, which induces the $S_d$-action on
$\mathcal{B}^d_n$, but we are not going to establish it. Instead we will need a weaker construction: we establish an action on a suitable localization of $B_n^d$ and check that it is compatible with the action of $\mathcal{B}_n^d$.
Set $(\h_n\times \mathfrak{z}^d)^{reg}$ to be the locus $\{(x_1,\ldots,x_n,y_1,\ldots,y_1,\ldots,y_d,\ldots,y_d)\}$, where all
$x_1,\ldots,x_n,y_1,\ldots,y_d$ are pairwise distinct. Set $\bar{\A}_{n,\lambda}^d:=\A_{n,\lambda}\otimes D(\mathfrak{z})^{\otimes d}$.
Consider the localizations $(\bar{\A}^d_{n,\lambda})^{reg}$ and $(B_n^d)^{reg}$
to $(\h_n\times \mathfrak{z}^d)^{reg}$. Thanks to Proposition \ref{Prop:etale_lift_structure}, 
the action of $\A_{n+db,\lambda}$
on $(B_n^d)^{reg}$ factors through $(\A^0_{\vec{m},\lambda})^{S_n}$, and $(B_n^d)^{reg}$
is $S_d$-equivariantly isomorphic to the
$(\A^0_{\vec{m},\lambda})^{S_n}$-$(\bar{\A}^d_{n,\lambda})^{reg}$-bimodule
$(\A_{n,\lambda}\otimes [D(\mathfrak{z})\otimes V]^{\otimes d})^{reg}$. Then we have an $S_d$-equivariant
embedding
$$\mathcal{B}_n^d\hookrightarrow (B_n^d)^{reg}\otimes_{D([\mathfrak{z}^d]^{reg})}\C[(\mathfrak{z}^d)^{reg}]$$
of $\A_{n+db,\lambda}$-$\A_{n,\lambda}$-bimodules. This embedding gives rise to
an $S_d$-equivariant isomorphism 
$$\C[(\h_n\times \mathfrak{z}^d)^{reg}]\otimes_{\C[\h_n\times \mathfrak{z}^d]}
\mathcal{B}_n^d \xrightarrow{\sim} (B_n^d)^{reg}\otimes_{D([\mathfrak{z}^d]^{reg})}\C[(\mathfrak{z}^d)^{reg}].$$
\end{Rem}

\subsection{Basic properties}\label{SS:heis_O_basic_properties}
 First, we describe the supports (see Section \ref{SS_supports}) of nonzero sub- and quotient modules
of $\heis_\OCat(M)$. Let $Y$ be a closed subvariety in $\h_n/S_n$. Let $\pi$ denote the natural morphism
$\h_n/S_n\times \mathfrak{z}\rightarrow \h_{n+b}/S_{n+b}$. Set $Y_{\mathfrak{z}}:=\pi(Y\times \mathfrak{z})$.

\begin{Lem}\label{Lem:supports_heis}
Suppose that $\operatorname{Supp}(M)=Y$.
The following claims are true:
\begin{enumerate}
\item
The support of $\heis_\OCat(M)$ is contained in $Y_{\mathfrak{z}}$.
\item
Suppose that the support of every sub- and quotient module of $M$ coincides with
$Y$. Then the support of every sub- and quotient module of $\heis_{\OCat}(M)$
is $Y_{\mathfrak{z}}$.
\end{enumerate}
\end{Lem}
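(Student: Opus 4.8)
The plan is to reduce the statement about supports to the analysis of Harish-Chandra bimodules and their behavior under tensor products, combined with the local structure result of Proposition \ref{Prop:etale_lift_structure}. First I would recall that, by Lemma \ref{Lem:cat_O_functor}, $\heis_\OCat(M) = B_n \otimes_{\bar{\A}_{n,\lambda}}(M\otimes \C[\mathfrak{z}])$, and $B_n$ is a Harish-Chandra $\A_{n+b,\lambda}$-$\bar{\A}_{n,\lambda}$-bimodule that is projective as a right $\bar{\A}_{n,\lambda}$-module (Propositions \ref{Prop:HC} and \ref{Prop:cohomology_structure}). Claim (1) is then essentially the support estimate already carried out in the proof of Lemma \ref{Lem:cat_O_functor}: the support of $B_n$ in $(T^*\h_{n+b})/S_{n+b}\times (T^*\bar{\h}_n)/S_n$ is contained in the diagonal image of $(T^*\bar{\h}_n)/S_n$, and tensoring with $M\otimes\C[\mathfrak{z}]$ whose support projects into $Y\times\mathfrak{z}$ forces the support of the result to lie in $\pi(Y\times\mathfrak{z}) = Y_\mathfrak{z}$. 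So (1) requires only restating this computation.

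For claim (2), the key point is a lower bound on supports, which is where I would use the local/etale structure. The idea is to pass to the etale lift over $\h^0_{\vec{m}}/S_{\vec{m}}$ for $\vec{m}=(n,b)$ (or, if one prefers, a completion $\bullet^{\wedge_x}$ as in Section \ref{SS_proof_cohom_struct}). By Proposition \ref{Prop:etale_lift_structure}, $(B_n)^0_{\vec{m}} \cong \bigoplus_{i=1}^{s+1}\,^i\!B(V)^0_{\vec{m}}$, where $B(V) = \bar{\A}_{n,\lambda}\otimes V$ and $V$ is the finite-dimensional irreducible $eH'_{b,\lambda}e$-module. Since $B(V)$ is a free right $\bar{\A}_{n,\lambda}$-module (tensored with the finite-dimensional $V$), tensoring $M\otimes\C[\mathfrak{z}]$ with each $\,^i\!B(V)^0_{\vec{m}}$ over $\bar{\A}_{n,\lambda}$ is, up to the automorphism $\theta_i$ of $\A^0_{\vec{m},\lambda}$, just $M^0\otimes V$ for a suitable etale lift $M^0$ of $M$. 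So locally near the generic stratum $Y_\mathfrak{z}$ the functor $\heis_\OCat$ is (a sum of twists of) $M\mapsto M\otimes V$, which is faithful and exact; in particular it sends nonzero sub/quotients with support $Y$ to nonzero sub/quotients with support $Y_\mathfrak{z}$. Combining with claim (1), which gives the upper bound $Y_\mathfrak{z}$, any sub- or quotient module $N$ of $\heis_\OCat(M)$ has $\operatorname{Supp}(N) \subseteq Y_\mathfrak{z}$, and its localization to the generic point of $Y_\mathfrak{z}$ is nonzero provided $N$ itself is nonzero and meets the preimage of that stratum — which it does, because the localization functor to the etale lift is exact and the localized functor $M\mapsto M\otimes V$ detects nonvanishing.

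More precisely, I would argue as follows. Let $N$ be a nonzero sub or quotient of $\heis_\OCat(M)$. By (1), $\operatorname{Supp}(N)\subseteq Y_\mathfrak{z}$, and $Y_\mathfrak{z}$ is irreducible of the same dimension as $Y$ (it is $\pi(Y\times\mathfrak{z})$ and $\pi$ restricted to the relevant stratum is finite étale onto its image). So it suffices to show $\operatorname{Supp}(N)$ is not properly contained in $Y_\mathfrak{z}$, equivalently that the localization $N^0$ of $N$ to $\h^0_{\vec{m}}/S_{\vec{m}}$ (for the composition $\vec{m}$ adapted to the generic stratum of $Y_\mathfrak{z}$) is nonzero. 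Applying the exact localization functor to the short exact sequence expressing $N$ as a sub/quotient of $\heis_\OCat(M)$, and using $(\heis_\OCat(M))^0 \cong \bigoplus_i \,^i\!(M^0\otimes V)$ from Proposition \ref{Prop:etale_lift_structure}, together with the hypothesis that every sub/quotient of $M$ has full support $Y$ (so $M^0\neq 0$ and indeed its localization to the generic stratum is nonzero), one concludes $N^0\neq 0$, hence $\operatorname{Supp}(N)=Y_\mathfrak{z}$.

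The main obstacle, I expect, will be the bookkeeping needed to match the étale-local description of $\heis_\OCat$ with the support stratification of Section \ref{SS_supports}: one must verify that the generic stratum of $Y_\mathfrak{z}$ corresponds to a composition $\vec{m}$ for which Proposition \ref{Prop:etale_lift_structure} (or its evident analog for the relevant $\vec{m}$) applies, and that the functor $M\mapsto M\otimes V$ appearing locally is genuinely exact and faithful on the relevant category of localized modules. Once that dictionary is set up, both inclusions of supports follow formally; the argument is essentially a localization/faithfulness argument rather than a new computation. A cleaner alternative for (2), which I would also consider, is to invoke biadjointness: $\heis_\OCat$ should admit a biadjoint (a "lowering" Heisenberg functor), and then nonvanishing of $\heis_\OCat(N')$ for simple $N'$ follows from nonvanishing of the unit/counit, with the support computed via the same local model — but this requires first establishing the adjunction, so the direct localization approach above is likely the shortest route given what is available in the excerpt.
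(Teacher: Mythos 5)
Your part (1) matches the paper; the difficulty is in part (2), where your localization argument has a genuine gap. You want to show that a nonzero submodule $N\subset\heis_\OCat(M)$ cannot have $\operatorname{Supp}(N)\subsetneq Y_{\mathfrak z}$, and you try to do this by localizing to the generic stratum of $Y_{\mathfrak z}$. But localization of the exact sequence $0\to N\to\heis_\OCat(M)\to Q\to 0$ gives $0\to N^0\to(\heis_\OCat(M))^0\to Q^0\to 0$, and nothing you say rules out $N^0=0$ with $Q^0\cong(\heis_\OCat(M))^0$; there is no contradiction, because $N$ is not in the image of $\heis_\OCat$, so the faithfulness of the \emph{local model of the functor} ($-\otimes V$) gives you no grip on $N$. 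The line ``one concludes $N^0\neq 0$'' is exactly the assertion to be proved, not a consequence of what precedes it. The same problem arises for quotients.

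The route you set aside at the end is actually the one that works, and it requires nothing more than what is already established. Because $B_n$ is projective as a right $\bar{\A}_{n,\lambda}$-module (Proposition \ref{Prop:cohomology_structure}), the functor $\heis=B_n\otimes_{\bar\A_{n,\lambda}}\bullet$ on category $\OCat$ has a left adjoint $\heis^!=\Hom_{\bar\A_{n,\lambda}}(B_n,\bar\A_{n,\lambda})\otimes_{\A_{n+b,\lambda}}\bullet$ and a right adjoint $\heis^*=\Hom_{\A_{n+b,\lambda}}(B_n,\bullet)$; this is just bimodule formalism, not the biadjointness of a categorified Heisenberg operator, so there is no extra adjunction to ``first establish.'' Since $B_n$ is Harish-Chandra and supported on the diagonal, $\heis^!$ (and $\heis^*$) take an object supported on $Z\subset\h_{n+b}/S_{n+b}$ to one supported inside the preimage of $Z$ in $(\h_n/S_n)\times\mathfrak z$. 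Now if $N\hookrightarrow\heis(M\otimes\C[\mathfrak z])$ were a nonzero submodule with $\operatorname{Supp}(N)\subsetneq Y_{\mathfrak z}$, then $\operatorname{Supp}(\heis^!(N))\subsetneq Y\times\mathfrak z$, yet adjunction gives $0\neq\Hom(N,\heis(M\otimes\C[\mathfrak z]))=\Hom(\heis^!(N),M\otimes\C[\mathfrak z])$, forcing $M\otimes\C[\mathfrak z]$ to have a submodule with small support, contrary to hypothesis. The quotient case is dual, using $\heis^*$. This is the argument the paper runs; your etale/local structure analysis (Proposition \ref{Prop:etale_lift_structure}) is not needed here, though it is of course used elsewhere.
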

\begin{proof}
The proof of (1) repeats the argument in the first paragraph of Lemma \ref{Lem:cat_O_functor}
and is left as an exercise.

We proceed to proving (2). We note that $\heis:B_n\otimes_{\bar{\A}_{\lambda,n}}\bullet:
\OCat(\bar{\A}_{n,\lambda})\rightarrow \OCat(\A_{n+b,\lambda})$
admits both left adjoint, $\heis^!$, and right adjoint $\heis^*$. The right adjoint is
given by $\operatorname{Hom}_{\A_{n+b,\lambda}}(B_n,\bullet)$. Recall, Proposition
\ref{Prop:cohomology_structure}, that $B_n$ is projective as a right $\bar{\A}_{n,\lambda}$-module,
so $\heis^!$ is given by $\Hom_{\bar{\A}_{n,\lambda}}(B_n,\bar{\A}_{n,\lambda})\otimes_{\A_{n+b,\lambda}}\bullet$.

Suppose, for the sake of contradiction, that $\heis(M\otimes \C[\mathfrak{z}])$ has a submodule, $N$,
whose support is proper in $Y_{\mathfrak{z}}$. Since $B_n$ is supported on the diagonal, $Z$,
in $(T^*\h_{n+b})/S_{n+b}\times [(T^*\h_n/S_n)\times T^*\mathfrak{z}]$, it is easy to
see that the support of $\heis^!(N)$ is contained in the preimage of $\operatorname{Supp}(N)$ in $(\h_n/S_n)\times \mathfrak{z}$ (compare to the first paragraph in the proof of Lemma \ref{Lem:cat_O_functor}).
In particular, if $\operatorname{Supp}(N)\subsetneq Y_{\mathfrak{z}}$, then its preimage is
properly contained in $Y\times \mathfrak{z}$. By adjunction, we have
$\{0\}\neq \operatorname{Hom}_{\A_{n+b,\lambda}}(N, \heis(M\otimes \C[\mathfrak{z}]))=
\operatorname{Hom}_{\bar{\A}_{n,\lambda}}(\heis^!(N), M\otimes \C[\mathfrak{z}])$.
However, $M\otimes \C[\mathfrak{z}]$ does not
contain submodules with support properly contained in $Y\times \mathfrak{z}$, so the latter Hom is zero
leading to a contradiction.

To show that $\heis(M\otimes \C[\mathfrak{z}])$ has no quotients with support properly contained in
$Y_{\mathfrak{z}}$ we argue similarly using the functor $\heis^*$.
\end{proof}

For $d>0$, define $Y_{\mathfrak{z},d}\subset \h_{n+db}/S_{n+db}$ inductively by $Y_{\mathfrak{z},0}:=Y$
and $Y_{\mathfrak{z},d}:=(Y_{\mathfrak{z},d-1})_{\mathfrak{z}}$. The following claim is a
straightforward corollary of Lemma \ref{Lem:supports}.

\begin{Cor}\label{Cor:supports_heis}
Let $M\in \OCat(\A_{n,\lambda})$ be such that $\operatorname{Supp}(M)=Y$.
Let $\tau$ be an irreducible representation of $S_d$.
Then the following claims are true:
\begin{enumerate}
\item
The support of $\heis_\OCat^\tau(M)$ is contained in $Y_{\mathfrak{z},d}$.
\item
Suppose that the support of every sub- and quotient module of $M$ coincides with
$Y$. Then the support of every sub- and quotient module of $\heis_{\OCat}(M)$
is $Y_{\mathfrak{z},d}$.
\end{enumerate}
\end{Cor}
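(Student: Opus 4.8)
The plan is to deduce Corollary~\ref{Cor:supports_heis} from Lemma~\ref{Lem:supports_heis} by induction on $d$, using the decomposition $\heis_\OCat^d\cong\bigoplus_\tau\tau\otimes\heis_\OCat^\tau$ and the composition rule \eqref{eq:heis_composition} to isolate the $\tau$-isotypic pieces. First I would record the base case $d=1$: here $\heis_\OCat^{(1)}=\heis_\OCat$ and $Y_{\mathfrak z,1}=Y_{\mathfrak z}$, so both statements are exactly Lemma~\ref{Lem:supports_heis}. For the inductive step I would write $d=d'+d''$ with $d',d''\geqslant 1$ (say $d'=1$, $d''=d-1$) and use \eqref{eq:heis_composition}, which gives $\heis_\OCat^{\tau''}\circ\heis_\OCat^{(1)}\cong\bigoplus_{\tau}\Hom_{S_d}(\tau,\Ind^{S_d}_{S_1\times S_{d-1}}(\mathrm{triv}\boxtimes\tau''))\otimes\heis_\OCat^\tau$. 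Since $\Ind^{S_d}_{S_1\times S_{d-1}}$ of any irreducible $\tau''$ contains, by the branching rule, every $\tau$ obtained by adding a box to $\tau''$, and every partition $\tau$ of $d$ arises this way from \emph{some} $\tau''$, each functor $\heis_\OCat^\tau$ (for $\tau$ a partition of $d$) is a direct summand of $\heis_\OCat^{\tau''}\circ\heis_\OCat$ for an appropriate $\tau''$ of $d-1$.

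With this reduction in hand, claim~(1) is immediate: by the inductive hypothesis $\operatorname{Supp}(\heis_\OCat^{\tau''}(M))\subseteq Y_{\mathfrak z,d-1}$, and then applying part~(1) of Lemma~\ref{Lem:supports_heis} to the object $\heis_\OCat^{\tau''}(M)$ — whose support lies in $Y_{\mathfrak z,d-1}$ — shows $\operatorname{Supp}(\heis_\OCat(\heis_\OCat^{\tau''}(M)))\subseteq (Y_{\mathfrak z,d-1})_{\mathfrak z}=Y_{\mathfrak z,d}$. Since $\heis_\OCat^\tau(M)$ is a direct summand of this module, its support is also contained in $Y_{\mathfrak z,d}$. (One small point to check here: part~(1) of Lemma~\ref{Lem:supports_heis} is stated for $M$ with $\operatorname{Supp}(M)=Y$ a fixed subvariety, but the proof only uses that the support is contained in $Y$, so it applies verbatim when we only know an inclusion; alternatively one observes the statement is monotone in $Y$.)

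For claim~(2) the hypothesis is that every nonzero sub- and quotient module of $M$ has support exactly $Y$. I would first invoke the inductive hypothesis to conclude that every nonzero sub- and quotient of $\heis_\OCat^{\tau''}(M)$ has support exactly $Y_{\mathfrak z,d-1}$, and then apply part~(2) of Lemma~\ref{Lem:supports_heis} to $\heis_\OCat^{\tau''}(M)$ to get that every sub- and quotient of $\heis_\OCat(\heis_\OCat^{\tau''}(M))$ has support $Y_{\mathfrak z,d}$. The issue is that $\heis_\OCat^\tau(M)$ is only a \emph{summand} of this module, so I must argue that a summand of a module all of whose subquotients have support $Y_{\mathfrak z,d}$ again has this property — but this is automatic, since a sub- or quotient module of the summand is in particular a sub- or quotient module of the whole (via the idempotent splitting the direct sum, which is realized inside $\bigoplus_\tau\tau\otimes\heis_\OCat^\tau$ compatibly with the exact structure). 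It remains only to handle the edge case where $\heis_\OCat^\tau(M)=0$, which is consistent with both assertions vacuously, and to note $d=0$ is trivial. The main obstacle is the bookkeeping around branching — ensuring that every $\tau\vdash d$ really is hit as a summand of some $\heis_\OCat^{\tau''}\circ\heis_\OCat$ with $\tau''\vdash d-1$ — but this is exactly the classical Pieri/branching rule for the symmetric group and requires no new input.
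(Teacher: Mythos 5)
Your proof is correct, and it is essentially the paper's intended argument (the paper merely asserts the corollary is ``straightforward'' from Lemma~\ref{Lem:supports_heis}; the inductive definition of $Y_{\mathfrak{z},d}$ is the hint). The one stylistic point worth flagging: your route through (\ref{eq:heis_composition}) and the Pieri branching rule is more machinery than the situation requires. Since $\heis_\OCat^d$ is by definition the $d$-fold composition $(\heis_\OCat)^{\circ d}$ and $\heis_\OCat^\tau$ is a direct summand of $\heis_\OCat^d$ via the decomposition $\heis_\OCat^d \cong \bigoplus_\tau \tau\otimes\heis_\OCat^\tau$, you can simply iterate Lemma~\ref{Lem:supports_heis} $d$ times to obtain the two assertions for $\heis_\OCat^d(M)$ and then pass to the summand $\heis_\OCat^\tau(M)$ exactly as in your final paragraph; this avoids the case analysis over $\tau''\vdash d-1$ and the branching-rule bookkeeping that you yourself identify as ``the main obstacle.'' Your two parenthetical observations --- that part~(1) of Lemma~\ref{Lem:supports_heis} is monotone in $Y$ (needed because the support of the intermediate object may be strictly smaller than $Y_{\mathfrak z,d-1}$), and that sub- and quotient modules of a direct summand are sub- and quotient modules of the whole --- are both correct and are precisely the points one must not gloss over, so it is good that you made them explicit.
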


Next, for $M\in \OCat(\A_{n,\lambda})$, we will describe a certain etale lift of $\heis^\tau_\OCat(M)$.
Define the composition $\vec{m}$ of $n+db$
by $\vec{m}:=(1,\ldots,1,b,\ldots,b)$, where the number of $1$'s is equal to $n$.
We write $(\h_{n+db}/S_{n+db})^0_{\vec{m}}$ for the image of $\h_{\vec{m}}^0/S_{\vec{m}}$ in $\h_{n+db}/S_{n+db}$,
this is an open subvariety. We want to understand  
$$\heis^\tau_{\OCat}(M)^0_{\vec{m}}:=\C[\h_{\vec{m}}^0/S_{\vec{m}}]\otimes_{\C[\h_{n+db}/S_{n+db}]}\heis^\tau_\OCat(M).$$

Set $(\h_n\times \mathfrak{z}^d)^0_{\vec{m}}:=\h_{\vec{m}}^0\cap (\h_n\times \mathfrak{z}^d)$.
Note that $S_n\times S_d$ acts on $\h_{\vec{m}}^0/S_{\vec{m}}$, and, moreover,
$(\h_n\times \mathfrak{z}^d)^0_{\vec{m}}/(S_n\times S_d)$ embeds into $(\h_{n+db}/S_{n+db})^0_{\vec{m}}$
as a closed subvariety. The etale lift $\heis^\tau_{\OCat}(M)^0_{\vec{m}}$ is
an $(S_n\times S_d)$-equivariant $\A^0_{\vec{m},\lambda}$-module. On the other hand,
consider the etale lift $(M\otimes \bar{V}^{\otimes d})^0_{\vec{m}}$ of $M\otimes \bar{V}^{\otimes d}$
to $\h^0_{\vec{m}}/S_{\vec{m}}$ (where, recall, $\bar{V}=V\otimes \C[\mathfrak{z}]$, an irreducible 
$\A_{b,\lambda}$-module). It also carries a natural action of $S_n\times S_d$.

\begin{Lem}\label{Lem:localization}
We have a functorial (in $M$) $(S_n\times S_d)$-equivariant isomorphism of $\A^0_{\vec{m},\lambda}$-modules
$$\heis^\tau_{\OCat}(M)^0_{\vec{m}}\cong \tau\otimes (M\otimes \bar{V}^{\otimes d})^0_{\vec{m}}$$
\end{Lem}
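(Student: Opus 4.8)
The plan is to reduce the statement to an isomorphism of bimodules and then compute an \'{e}tale lift by iterating the analysis of Proposition~\ref{Prop:etale_lift_structure}, using Remark~\ref{Rem:action_compatibility} to keep track of the $S_d$-action. By Remark~\ref{Rem:functor_bimodule} the functor $\heis^\tau_{\OCat}$ is tensoring over $\A_{n,\lambda}$ with an $\A_{n+db,\lambda}$-$\A_{n,\lambda}$-bimodule $\mathcal{B}_n^\tau$, where $\mathcal{B}_n^d\cong\bigoplus_\tau\tau\otimes\mathcal{B}_n^\tau$ and $\mathcal{B}_n^d=B_n^d\otimes_{D(\mathfrak{z}^d)}\C[\mathfrak{z}^d]$. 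The \'{e}tale-lift functor $(\bullet)^0_{\vec{m}}=\C[\h^0_{\vec{m}}/S_{\vec{m}}]\otimes_{\C[\h_{n+db}/S_{n+db}]}(\bullet)$ is exact, commutes with $\otimes_{\A_{n,\lambda}}$ and with passage to $S_d$-isotypic components, and $(M\otimes\bar{V}^{\otimes d})^0_{\vec{m}}\cong(\A_{n,\lambda}\otimes\bar{V}^{\otimes d})^0_{\vec{m}}\otimes_{\A_{n,\lambda}}M$ functorially in $M$. So it suffices to produce an $(S_n\times S_d)$-equivariant isomorphism of $\A^0_{\vec{m},\lambda}$-$\A_{n,\lambda}$-bimodules $(\mathcal{B}_n^\tau)^0_{\vec{m}}\cong\tau\otimes(\A_{n,\lambda}\otimes\bar{V}^{\otimes d})^0_{\vec{m}}$, where $S_d$ acts on the right through $\tau$ on the first factor and by permuting the $\bar{V}$-factors (equivalently, the $b$-blocks of $\vec{m}$) on the second.

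To compute $(\mathcal{B}_n^d)^0_{\vec{m}}$ I would mimic the proof of Proposition~\ref{Prop:etale_lift_structure} with the composition $\vec{m}=(1,\dots,1,b,\dots,b)$ of $n+db$. Write $\mathcal{B}_n^d$ as the $\C[\mathfrak{z}^d]$-reduction of the $G_{n+db}$-invariants of $\Ind^{n+db}_{n,db}(Q(D_n,G_n,\lambda)\boxtimes\mathcal{N})$, where $\mathcal{N}$ is the $(G_{db},\lambda)$-equivariant D-module on $\g_{db}$ built from $M_{\lambda,d}$ that enters the construction of $\heis^d_{\OCat}$ (this uses the transitivity of induction, Lemma~\ref{Lem:induction_transitivity} and Remark~\ref{Rem:other_transitivity}). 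Because over $\h^0_{\vec{m}}/S_{\vec{m}}$ the stack morphisms defining $\Ind^{n+db}_{n,db}$, cf.\ Lemma~\ref{Lem:iso_var} and \eqref{eq:stack_morphisms}, become isomorphisms, the induction collapses to a pullback; combined with $V\cong(M'_\lambda\otimes\C[\C^b])^{G_b}$ (\cite[Theorem 9.8]{CEE}, applied to each of the $d$ blocks) and the description of $M_{\lambda,d}$ over $X^0_{db}$ from Proposition~\ref{Prop:D_mod_iso}, this gives $H^j(\mathcal{B}_n^d)^0_{\vec{m}}=0$ for $j\ne 0$ and, after the $S_d$-isotypic decomposition, the bimodule isomorphisms $(\mathcal{B}_n^\tau)^0_{\vec{m}}\cong\tau\otimes(\A_{n,\lambda}\otimes\bar{V}^{\otimes d})^0_{\vec{m}}$; the underlying non-equivariant statement is the global form of what Remark~\ref{Rem:action_compatibility} records over the open subset $(\h_n\times\mathfrak{z}^d)^{reg}$.

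It then remains to check $S_d$-equivariance. The $S_d$-action on $\mathcal{B}_n^d$ yielding $\bigoplus_\tau\tau\otimes\mathcal{B}_n^\tau$ comes from the decomposition $M_{\lambda,d}\cong\bigoplus_\tau\tau\otimes M_{\lambda,\tau}$ of Proposition~\ref{Prop:D_mod_iso}, whereas on $(\A_{n,\lambda}\otimes\bar{V}^{\otimes d})^0_{\vec{m}}$ the $S_d$-action is the tautological permutation of tensor factors; that these correspond is precisely the assertion of Remark~\ref{Rem:action_compatibility}, which presents both as induced from one $S_d$-action on the common localization $\C[(\h_n\times\mathfrak{z}^d)^{reg}]\otimes_{\C[\h_n\times\mathfrak{z}^d]}\mathcal{B}_n^d\cong(B_n^d)^{reg}\otimes_{D((\mathfrak{z}^d)^{reg})}\C[(\mathfrak{z}^d)^{reg}]$. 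Since $\heis^\tau_{\OCat}(M)\in\OCat(\A_{n+db,\lambda})$, its \'{e}tale lift is supported, transversally and hence scheme-theoretically (as in Section~\ref{SS_loc_sys_restr}), on the closed subvariety $(\h_n\times\mathfrak{z}^d)^{reg}/(S_n\times S_d)\subseteq\h^0_{\vec{m}}/S_{\vec{m}}$ --- which coincides with the regular locus, since the $S_{n+db}$-stabilizer condition defining $\h^0_{\vec{m}}$ already forces $x_1,\dots,x_n,y_1,\dots,y_d$ to be pairwise distinct --- and likewise for $(M\otimes\bar{V}^{\otimes d})^0_{\vec{m}}$; hence the $S_d$-equivariant isomorphism over $(\h_n\times\mathfrak{z}^d)^{reg}$ extends uniquely over $\h^0_{\vec{m}}/S_{\vec{m}}$, and the $S_n$-equivariance is automatic from the construction of the \'{e}tale lifts (Remark~\ref{Rem:equivariance_QHR}).

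I expect the main obstacle to be this equivariance bookkeeping: matching the geometric $S_d$-action inherited from $M_{\lambda,d}$ with the tautological permutation of the $\bar{V}$-factors, and confirming that the ``collapse'' of the iterated induction over the \'{e}tale locus is $S_d$-equivariant, so that the identification of the previous paragraph is canonical rather than merely abstract. Remark~\ref{Rem:action_compatibility} is tailored for exactly this purpose, so the real work is to verify that its localized form over $(\h_n\times\mathfrak{z}^d)^{reg}$ loses no information relative to the full \'{e}tale neighbourhood --- which is exactly what the transversality of the support delivers.
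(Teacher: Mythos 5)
Your plan identifies the right ingredients --- the \'{e}tale collapse of the induction through Lemma~\ref{Lem:equiv_cats}, the identification $V\cong(M'_\lambda\otimes\C[\C^b])^{G_b}$, and Proposition~\ref{Prop:D_mod_iso} --- but the paper's proof is substantially more direct, and the place you flag as the ``real work'' is exactly where your version is not yet airtight.

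The paper does not reduce to a bimodule statement at all. It picks $\hat M\in D_n\operatorname{-mod}^{G_n,\lambda}$ with $\hat M^{G_n}\cong M$ and computes the \'{e}tale lift of the single object $\operatorname{Ind}^{n+db}_{n,db}(\hat M\boxtimes M_{\lambda,\tau})$ over $\h^0_{\vec m}/S_{\vec m}\times_{\h_{n+db}/S_{n+db}}R_{n+db}$. After the stack isomorphisms collapse the pull-push to a single pullback from $\underline{R}_{n,db}$, the only input needed is the identification $M_{\lambda,\tau}^0\cong\tau\otimes(M_\lambda^{\otimes d})^0_{\vec m'}$ of the \'{e}tale lift of $M_{\lambda,\tau}$ along $\h^0_{\vec m'}/S_{\vec m'}\rightarrow\h_{db}/S_{db}$. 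This identification comes for free \emph{with} its $S_d$-equivariance, because the $N_{S_{db}}(S_{\vec m'})/S_{\vec m'}=S_d$-equivariant structure on the \'{e}tale lift is literally the Galois action of the cover $X^0_{\vec m'}/G_{\vec m'}\rightarrow X^0_{db}/G_{db}$ used in Proposition~\ref{Prop:D_mod_iso} to \emph{define} $M_{\lambda,\tau}$ as an isotypic component. Taking $G_{n+db}$-invariants of the resulting pullback then gives the lemma in one step. Remark~\ref{Rem:action_compatibility} is never invoked, and there is no separate equivariance bookkeeping.

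The gap in your route is the final ``extends uniquely'' step. You correctly note that $\h^0_{\vec m}\cap(\h_n\times\z^d)=(\h_n\times\z^d)^{reg}$, but this is a \emph{closed} subvariety of $\h^0_{\vec m}/S_{\vec m}$ of strictly smaller dimension, not an open dense one, so you are trying to propagate an isomorphism from a closed subvariety outward, not from an open locus inward. Restriction of $\A^0_{\vec m,\lambda}$-modules to that closed subvariety is not a priori faithful or full, and the modules are not scheme-theoretically supported there: in the transverse $\h'_b$-directions of each $b$-block the fibre is the finite-dimensional $eH'_{b,\lambda}e$-module $V$, which is not a skyscraper. To close this you would need an additional argument --- most naturally, that the $eH'_{b,\lambda}e^{\otimes d}$-module structure in the transverse directions pins down the extension, in the spirit of the Bezrukavnikov--Etingof restriction construction --- that ties the $S_d$-equivariant identification you get from Remark~\ref{Rem:action_compatibility} back to the \'{e}tale lift. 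It is plausible but not automatic, and the direct D-module computation of the paper simply never has to confront the question.
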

\begin{proof}
Consider the functor $\operatorname{Ind}^{n+db}_{n,db}(\bullet\boxtimes M_{\lambda,\tau}):
D^b(D_n\operatorname{-mod}^{G_n,\lambda})\rightarrow D^b(D_{n+bd}\operatorname{-mod}^{G_{n+db},\lambda})$.
Let $\hat{M}\in D_n\operatorname{-mod}^{G_n,\lambda}$ be such that $\hat{M}^{G_n}\cong M$.
Consider the etale lift
$\operatorname{Ind}^{n+db}_{n,db}(\hat{M}\boxtimes M_{\lambda,\tau})^0_{\vec{m}}$
to $\h_{n+db}^{\vec{m}-reg}/S_{\vec{m}}\times_{\h_{n+db}/S_{n+db}}R_{n+db}$.
As in the proof of Proposition \ref{Prop:etale_lift_structure}, this object coincides with
the pull-push of $\hat{M}^0\boxtimes M_{\lambda,\tau}^0$, where
\begin{itemize}
\item
$\hat{M}^0$ is the pullback of $\hat{M}$ to $\h_n^{reg}\times_{\h_n/S_n}R_n$,
\item and $M_{\lambda,\tau}^0$ is the pullback of $M_{\lambda,\tau}$
to $\h_{\vec{m}'}^0/S_{\vec{m}'}\times_{\h_{db}/S_{db}}\g_{db}$ (here $\vec{m}':=(b,\ldots,b)$);
it is identified with $\tau\otimes (M_\lambda^{\otimes d})^0_{\vec{m}'}$,
\end{itemize}
with respect to the morphisms
$$\h_{\vec{m}}^0/S_{\vec{m}}\times_{\h_{n+db}/S_{n,db}}\underline{R}_{n,db}\leftarrow
\h_{\vec{m}}^0/S_{\vec{m}}\times_{\h_{n+db}/S_{n,db}}R^{par}_{n,db}
\rightarrow \h_{\vec{m}}^0/S_{\vec{m}}\times_{\h_{n+db}/S_{n+db}}R_{n+db}.$$
Thanks to Lemma \ref{Lem:equiv_cats}, the pull-push between the twisted equivariant categories
on these varieties is just the pullback under $$\h_{\vec{m}}^0/S_{\vec{m}}\times_{\h_{n+db}/S_{n,db}}\underline{R}_{n,db}\leftarrow
\h_{\vec{m}}^0/S_{\vec{m}}\times_{\h_{n+db}/S_{n,db}}R_{n,db}.$$
We have $\hat{M}^{G_n}\cong M^0$ and $(M_{\lambda,\tau}^0)^{G_{db}}\cong \tau\otimes (\bar{V}^{\otimes d})^0_{\vec{m}}$. The claim of the lemma follows.
\end{proof}

\subsection{Functor isomorphism}
The goal of this section is to use results of Sections \ref{SS_symm_action_heis} and 
\ref{SS:heis_O_basic_properties} to prove (2) of Theorem \ref{Thm:SV_comparison}. 

\begin{proof}[Proof of (2) of Theorem \ref{Thm:SV_comparison}]
Set $\vec{m}':=(n,db), m=n+db$. Let 
$$\operatorname{KZ}_n: \OCat(\A_{n,\lambda})\rightarrow \mathcal{H}_q(n)\operatorname{-mod}$$
denote the KZ functor, see Section \ref{SS_loc_sys_restr}. Recall, Remark \ref{Rem:label_iso_choice}, that our version 
of the KZ functor is different from the usual one by conjugation with an automorphism of $D(\h_n^{reg}/S_n)$ but this does not change the properties of the functor.

Recall that $\heis_{SV}^\tau:=\operatorname{Ind}_{\vec{m}'}^{m}(\bullet\boxtimes L_\lambda(d\tau))$.
Our main task is to construct a functorial homomorphism
\begin{equation}\label{eq:fun_hom_restriction}
\Res^{m}_{\vec{m}'}(\heis_{\OCat}^\tau(M))\rightarrow M\boxtimes L_\lambda(d\tau)
\end{equation}
for objects $M$ in the essential image of $\operatorname{KZ}_n^*$ in $\OCat(\A_{n,\lambda})$. This is going to be achieved in the first four steps.
Then in the remaining steps we will deduce the claim of the theorem using (\ref{eq:fun_hom_restriction}).

{\it Step 1}.
Pick a point $x\in \h_{m}$ with stabilizer $W':=S_n\times S_{db}$ in $W=S_{m}$.
Choose a Zariski open subset $U\subset \h_{m}/{W'}$ such that
\begin{itemize}
\item the stabilizer in $W$ of points of the preimage of  $U$ lie in $W'$,
\item and the intersection of $U$ with $\h_n\times\mathfrak{z}^d\subset \h_{m}$ is $(\h_n\times\mathfrak{z}^d)^{reg}$.
\end{itemize}
Set $Y:=(\h_m/W')\setminus U$. Consider the open scheme 
$$\hat{U}:=\h_{m}^{\wedge_x}/W'\times_{\h_{m}/W'}U\subset \h_{m}^{\wedge_x}/W'$$
and let $\hat{Y}$ be its complement. 

Let $\hat{\A}_{\vec{m}',\lambda}:=\C[\h_{m}^{\wedge_x}/W']\otimes_{\C[\h_{m}/W']}\A_{\vec{m}',\lambda}$.
We can consider the category $\mathcal{O}(\hat{\A}_{\vec{m}',\lambda})$ of $\hat{\A}_{\vec{m}',\lambda}$-modules
that are finitely generated over $\C[\h_{m}^{\wedge_x}/W']$. Similarly to 
\cite[Section 2.4]{BE}, the functor of taking the completion at $x$ defines an equivalence
$\mathcal{O}(\A_{\vec{m}',\lambda})\xrightarrow{\sim}\mathcal{O}(\hat{\A}_{\vec{m}',\lambda})$. 
Set $\hat{\A}_{\vec{m}',\lambda}^1:=\hat{\A}_{\vec{m}',\lambda}|_{\hat{U}}$, a quasi-coherent sheaf of algebras on 
$\hat{U}$.
Let $\mathcal{O}(\hat{\A}^1_{\vec{m}',\lambda})$ denote the full subcategory in the category of  
$\hat{\A}_{\vec{m}',\lambda}^1$-modules consisting of all objects that are coherent over 
$\mathcal{O}_{\hat{U}}$. 
Consider the functor 
$$\mathcal{G}_1: \mathcal{O}(\A_{\vec{m}',\lambda})\rightarrow \hat{\A}_{\vec{m}',\lambda}^1\operatorname{-mod}$$
that is the composition of the completion equivalence $\mathcal{O}(\A_{\vec{m}',\lambda})\xrightarrow{\sim}
\mathcal{O}(\hat{\A}_{\vec{m}',\lambda})$ and the localization functor 
$\mathcal{O}(\hat{\A}_{\vec{m}',\lambda})\rightarrow \mathcal{O}(\hat{\A}^1_{\vec{m}',\lambda})$.

{\it Step 2}. Consider the functor $\tilde{\operatorname{KZ}}_n:=\operatorname{KZ}_n\boxtimes \operatorname{id}:\OCat(\A_{\vec{m}',\lambda})\rightarrow 
\mathcal{H}_q(n)\boxtimes \OCat(\A_{db,\lambda})$. We claim that 
\begin{itemize}
\item[(a)] $\mathcal{G}_1$ admits a right adjoint functor, to be denoted by $\mathcal{G}_1^*$,
\item[(b)] and $\mathcal{G}_1^*\circ \mathcal{G}_1\cong \tilde{\operatorname{KZ}}_n^*\circ \tilde{\operatorname{KZ}}_n$.
\end{itemize}
Proving (a) amounts to showing that every object in $\mathcal{O}(\hat{\A}^1_{\vec{m}',\lambda})$ has a maximal sub-object in 
$\mathcal{O}(\hat{\A}_{\vec{m}',\lambda})$. Assume the converse: there is  $N\in \mathcal{O}(\hat{\A}_{\vec{m}',\lambda})$
and an infinite sequence $\{0\}\subsetneq M_1\subsetneq\ldots\subsetneq M_i\subsetneq M_{i+1}\subsetneq\ldots$
of objects that lie in  $\mathcal{O}(\hat{\A}_{\vec{m}',\lambda})$. We note that there is $k>0$ such that $M_i/M_k$
is supported on $\hat{Y}$ for all $i>k$. At the same time none of the modules $M_i$ has a submodule supported on $\hat{Y}$. 
We write $\bar{M}_i$ for the image of $M_i$ in $\mathcal{O}(\A_{\vec{m}',\lambda})$
under the category equivalence. Then $\bar{M}_i/\bar{M}_m$ is supported on $Y$ and $\bar{M}_i$ has no subobjects supported on $Y$. 
By the choice of $U$, an object $\bar{M}\in\mathcal{O}(\A_{\vec{m}',\lambda})$ is supported on $Y$ if and only if 
$\tilde{\operatorname{KZ}}_n(M)=0$. It follows that $\bar{M}_i\hookrightarrow \tilde{\operatorname{KZ}}_n^*\circ \tilde{\operatorname{KZ}}_n(M_k)$
proving (a). To show (b), recall that $\tilde{\operatorname{KZ}}_n$ is a quotient functor. It follows that $\mathcal{G}_1$ factors 
through $\tilde{\operatorname{KZ}}_n$. This gives rise to a functor morphism $\tilde{\operatorname{KZ}}_n^*\circ \tilde{\operatorname{KZ}}_n
\hookrightarrow \mathcal{G}_1^*\circ \mathcal{G}_1$. The argument above in this paragraph implies that this embedding is 
an isomorphism. 

{\it Step 3}. Consider the functor $\mathcal{O}(\A_{m,\lambda})\rightarrow \mathcal{O}(\hat{\A}_{\vec{m}',\lambda})$
taking the completion at $x$ and then pushing forward under the isomorphism $\A_{n+db,\lambda}^{\wedge_x}\xrightarrow{\sim}
\hat{\A}_{\vec{m}',\lambda}$. Compose it with the functor of restricting to $\hat{U}$ and denote the composition 
by $\mathcal{G}_2$. We claim that for all $M\in \OCat(\A_{n,\lambda})$, we have a functorial isomorphism
\begin{equation}\label{eq:G_functor_iso}
\mathcal{G}_2(\heis_{\OCat}^\tau(M))\cong \mathcal{G}_1(M\boxtimes L(d\tau)). 
\end{equation}
This will follow once we check that 
$(M\boxtimes L(d\tau))|_{U}$, i.e., the localization of $M\boxtimes L(d\tau)$ to $(\h_n\times \mathfrak{z}^d)^{reg}/(S_n\times S_d)$,
is isomorphic to $$\C[(\h_n\times \mathfrak{z}^d)^{reg}]^{S_n\times S_d}\otimes_{\C[\h_{m}/S_{m}]}\heis_{\OCat}^\tau(M)$$
as modules over 
$$\mathcal{O}_U\otimes_{\C[\h_{m}/W]}\A_{n+db,\lambda}\cong 
\mathcal{O}_U\otimes_{\C[\h_m/W']}\A_{\vec{m}',\lambda}.$$
Equivalently, we need to establish an $S_n\times S_d$-equivariant module isomorphism of their lifts to 
$(\h\times \mathfrak{z}^d)^{reg}$. We can compute the lift for $M\times L(d\tau)$ using Proposition \ref{Prop:Wilcox}
and compute the lift for $\heis_{\OCat}^\tau(M)$ using Lemma \ref{Lem:localization}. Comparing the computations,
we see that the lifts are isomorphic functorially in $M$.  This establishes (\ref{eq:G_functor_iso}).

{\it Step 4}. Observe that we have a functor morphism $\operatorname{Res}_x\rightarrow \mathcal{G}_1^*\circ \mathcal{G}_2$
induced by the adjunction unit for $\operatorname{id}\rightarrow (\bullet|_{\hat{U}})^*\circ (\bullet|_{\hat{U}})$. 
Thanks to (\ref{eq:G_functor_iso}), we get a functor morphism 
$$\operatorname{Res}_x(\heis_\OCat^\tau(M))\rightarrow \mathcal{G}_1^*\circ\mathcal{G}_1(M\boxtimes L(d\tau)).$$
By Step 2, $\mathcal{G}_1^*\circ\mathcal{G}_1\cong \operatorname{KZ}_n^*\circ \operatorname{KZ}_n$. Note that 
for $M$ in the essential image of $\operatorname{KZ}_n^*$, we have  
$M\boxtimes L(d\tau)\xrightarrow{\sim}\tilde{\operatorname{KZ}}_n^*\circ \tilde{\operatorname{KZ}}_n(M\boxtimes L(d\tau))$ yielding
(\ref{eq:fun_hom_restriction}). We also observe that the kernel of (\ref{eq:fun_hom_restriction}) for such $M$
is supported on $Y$. 

{\it Step 5}. Observe that, by the construction, (\ref{eq:fun_hom_restriction}) is nonzero for any object $M$
in the essential image of $\operatorname{KZ}_n^*$. Therefore, for any such $M$ we have a nonzero homomorphism
\begin{equation}\label{eq:fun_hom_induction}
\heis_{\OCat}^\tau(M)\rightarrow \operatorname{Ind}_{\vec{m}'}^{n+db}(M\boxtimes L(d\tau))=\heis^\tau_{SV}(M).
\end{equation}
The functor $\operatorname{KZ}_n:\OCat(\A_{n,\lambda})\rightarrow \mathcal{H}_q(n)\operatorname{-mod}$
is fully faithful on the projective objects, see \cite[Theorem 5.16]{GGOR}, hence $P\xrightarrow{\sim} \operatorname{KZ}_n^*\circ 
\operatorname{KZ}_n(P)$. In the subsequent steps we will show that 
\begin{itemize}
\item[(i)] the $K_0$ classes of  $\heis_{\OCat}^\tau(M)$ and $\heis^\tau_{SV}(M)$ are the same for all $M\in \OCat(\A_{n,\lambda})$,
\item[(ii)] and,
for any projective $M$, the homomorphism (\ref{eq:fun_hom_induction})
is injective. 
\end{itemize}
Once this is done, we get an isomorphism between the restrictions of $\heis^\tau_{\OCat}$ and $\heis^\tau_{SV}$ to the category of projective objects in $\OCat(\A_{n,\lambda})$. Since both $\heis^\tau_{\OCat}$ and $\heis^\tau_{SV}$ are exact functors, their isomorphism will follow
completing the proof. 

{\it Step 6}. In this step we prove (i). In order to do this, we show that $\heis^\tau_{\OCat}(L(\mu))\cong \heis^\tau_{SV}(L(\mu))$ for all partitions 
$\mu$ of $n$. First, consider the case when $\mu$ is coprime to $b$, see Section \ref{SS_SV} for definition.
By Corollary \ref{Cor:supports_heis}, all sub- and quotient modules of $\heis^\tau_{\OCat}(L(\mu))$ 
have support equal to the image of  $\h_n\times \mathfrak{z}^d$ in $\h_m/S_m$.  
Combining Proposition \ref{Prop:Wilcox} with Lemma \ref{Lem:localization}
we see that $\heis^\tau_{\OCat}(L(\mu))^0_{\vec{m}}\cong L(\mu+b\tau)^0_{\vec{m}}$. 
It follows that $\heis^\tau_{\OCat}(L(\mu))\cong L(\mu+b\tau)$.
By (2) of Proposition \ref{Prop:Heis_properties}, we also have 
$\heis^\tau_{SV}(L(\mu))\cong L(\mu+b\tau)$. 

Now consider the case of general $\mu$. Decompose $\mu$ as $\mu'+b\tau'$, where $\mu'$ is a partition of $d'$ coprime to $b$. Write $\tau''$ instead of $\tau$
and $d''$ instead of $d$. By the previous paragraph, 
$L(\mu)=\heis^{\tau'}_{\OCat}L(\mu')=\heis^{\tau'}_{SV}L(\mu')$. By (\ref{eq:heis_composition}),
$$\heis^{\tau''}_{\OCat}L(\mu)=\heis^{\tau''}_{\OCat} \heis^{\tau'}_{\OCat}L(\mu')\cong \bigoplus_{\tau}\operatorname{Hom}_{S_d}\left(\tau, \operatorname{Ind}^{S_d}_{S_{d'}\times S_{d''}}(\tau'\boxtimes \tau'')\right)\otimes\heis^\tau_{\OCat}L(\mu').$$
By (4) of Proposition \ref{Prop:Heis_properties}, the analogous formula, where one replace $\heis^?_{\OCat}$ with
$\heis^?_{SV}$, holds for $\heis^{\tau}_{SV}L(\mu)$. By the previous paragraph, this implies an isomorphism 
$\heis^\tau_{\OCat}(L(\mu))\cong \heis^\tau_{SV}(L(\mu))$ for all $\tau$ and $\mu$.

{\it Step 7}. Let $M$ be projective in $\OCat(\A_{n,\lambda})$. Let $N\subset \heis_{\OCat}^\tau(M)$ be the kernel of 
(\ref{eq:fun_hom_induction}). Then $\operatorname{Res}_x(N)$ is in the kernel of (\ref{eq:fun_hom_restriction})
and hence, as we have remarked in the end of Step 4, is supported on $Y$. It follows that the support of $\operatorname{Res}_x(N)$
is properly contained in $\h_n/S_n\times \mathfrak{z}^d/S_d$. Hence the support of $\operatorname{Ind}_x\circ\operatorname{Res}_x(N)$
is properly contained in the image of $(\h_n\times \mathfrak{z}^d)/(S_n\times S_d)$. By Corollary \ref{Cor:supports_heis}, there are no nonzero homomorphisms 
from $\operatorname{Ind}_x\circ\operatorname{Res}_x(N)$ to $\heis_\OCat^\tau(M)$. Since 
$\operatorname{Res}_x(N)\hookrightarrow \operatorname{Res}_x(\heis_\OCat^\tau(M))$, 
it follows that $\operatorname{Res}_x(N)=\{0\}$.
Since $\operatorname{Supp}(N)$ is contained in the image of $\h_n\times \mathfrak{z}^d$, from 
$\operatorname{Res}_x(N)=\{0\}$ we see that $N=\{0\}$ finishing the proof. 
%
\end{proof}

\subsection{Case of negative parameter}\label{SS_negative_O}
Now we consider the case when $\lambda^-<-1$ is of the form $\frac{a^-}{b}$ with coprime integers $a^-,b$.
Let $\lambda=\frac{a}{b}$ be an element in $\mathbb{Q}_{>0}$ with $\lambda-\lambda^-\in \Z_{>0}$.

Consider the functor $\heis_{M'_\lambda}^-[b-1]: \bar{\A}_{n,\lambda^-}\operatorname{-mod}\rightarrow 
\A_{n+b,\lambda^-}\operatorname{-mod}$, exact by Proposition \ref{Prop:WC_Heis}. We note that 
the bimodules $\bar{\A}_{n,\lambda\leftarrow \lambda^-}, \A_{n+b,\lambda\leftarrow \lambda^-}$
carry Euler gradings. For example, we can take  $\mathsf{eu}_{n+b}^D\otimes 1+1\otimes \mathsf{eu}_{n+b}^D$
for a grading element on $\A_{n+b,\lambda\leftarrow \lambda^-}$, hence $\mathsf{eu}_{n+b}\otimes 1+
1\otimes \mathsf{eu}_{n+b}$ is also a grading element, see Lemma \ref{Lem:Euler_relation}. 
By the proof in Proposition \ref{Prop:WC_Heis}, $\heis_{M'_\lambda}^-[b-1]$ is given by tensoring 
with an $\A_{n+b,\lambda^-}$-$\bar{\A}_{n,\lambda^-}$-bimodule. The discussion above in this paragraph shows that 
this bimodule is graded. It is also Harish-Chandra, this follows from Lemma \ref{Lem:HC_properties} combined with 
the proof of Proposition \ref{Prop:WC_Heis}. Now we can argue as in the proof of 
Lemma \ref{Lem:cat_O_functor} to show that $\heis_{M'_\lambda}^-[b-1]$ restricts to 
$\OCat(\bar{\A}_{n,\lambda^-})\rightarrow \OCat(\A_{n+b,\lambda^-})$. 
 
Similarly to Section \ref{SS_constr_fun_O}, we can consider the direct sum $\bigoplus_{n\geqslant 0}\OCat(\A_{n,\lambda^-})$.
We endow it with the endofunctor $\heis_{\OCat^-}$ defined as direct sum (over the $n$) of the functors 
$$\heis_{M'_\lambda}^-[b-1](\bullet\otimes \C[\mathfrak{z}]):\OCat(\A_{n,\lambda^-})\rightarrow \OCat(\A_{n+b,\lambda^-})$$
Our next goal is to relate $\heis_{\OCat^-}$ to $\heis_{\OCat}$.
As in \cite[Proposition 4.4]{Etingof_symplectic_affine}, the natural functor $D^b(\OCat(\A_{n',\lambda'}))
\rightarrow D^b(\A_{n',\lambda'}\operatorname{-mod})$ is a full inclusion (for all $n'$ and all $\lambda'\not \in \Sigma$). 
From here we get a functor 
$$\WC_{\lambda\leftarrow \lambda^-}:D^b(\bigoplus_{n\geqslant 0}\OCat(\A_{n,\lambda^-}))
\rightarrow D^b(\bigoplus_{n\geqslant 0}\OCat(\A_{n,\lambda})),$$
the direct sum of the individual wall-crossing functors. The following isomorphism is an easy consequence 
of Proposition \ref{Prop:WC_Heis}. 

\begin{equation}\label{eq:WC_heis_O}
\heis_{\OCat^-}[1-b]=\WC_{\lambda\leftarrow \lambda^-}^{-1}\circ \heis_{\OCat}\circ \WC_{\lambda\leftarrow \lambda^-}. 
\end{equation}

Using (\ref{eq:WC_heis_O}) we can carry the $S_d$-action from $\heis_\OCat^d$ to $\heis_{\OCat^-}^d$ and decompose the latter
into $\bigoplus_\tau \tau\otimes \heis_{\OCat^-}^\tau$. On the other hand, we have a version of the Shan-Vasserot 
functor, $$\heis_{SV,-}^\tau:=\operatorname{Ind}_{S_n\times S_{db}}^{S_{n+db}}(\bullet\boxtimes L_{\lambda^-}([b\tau^t]^t)).$$
According to \cite[Proposition 5.6(3)]{Cher_supp}, we have 
$$\heis_{SV,-}^{\tau}[d(1-b)]=\WC_{\lambda\leftarrow \lambda^-}^{-1}\circ \heis_{SV}^\tau\circ \WC_{\lambda\leftarrow \lambda^-}.$$
Now we apply (2) of Theorem \ref{Thm:SV_comparison} to get 
\begin{equation}\label{eq:SV_comparison_negative}
\heis_{SV,-}^{\tau}\cong \heis_{\OCat^-}^\tau.
\end{equation}

\section{Reduction to characteristic $p$}\label{S_charp}
\subsection{Categories in characteristic $p$}\label{SS:charp_cat}
Here we fix coprime positive integers $a,b$. 
Further, fix a positive integer $N\geqslant b$ and a prime number $p$ such that
\begin{itemize}
\item[(*)]
$p-1$ is divisible by $N!$,
\end{itemize}
and $p$ is sufficiently large: we will specify more conditions on $p$ later.

Consider the field $\F:=\overline{\mathbb{F}}_p$.  We have the $\F$-algebras $H_{n,\lambda,\F},\A_{n,\lambda,\F}$,
they are Morita equivalent by Proposition \ref{Prop:Morita}.

The algebra $\A_{n,\lambda,\F}$  contains a central subalgebra $Z_p$, the so called {\it $p$-center}, that is identified with
$\F[(\F^{2n})^{(1)}]^{S_n}$,  so that $\A_{n,\lambda,\F}$
is a  finitely generated module over $Z_p$, see \cite[Section 9]{BFG}.
We consider the category $\A_{n,\lambda,\F}\operatorname{-mod}_0$ of all finitely generated
(equivalently, finite dimensional) $\A_{n,\lambda,\F}$-modules that are supported at $0$ over
$\operatorname{Spec}(Z_p)$.

Recall that the algebra $\A_{n,\lambda,\F}$ comes with the Euler $\Z$-grading, see Section \ref{SS_RCA}. 
We consider the category
$\A_{n,\lambda,\F}\operatorname{-mod}_0^{gr}$ of $\frac{1}{b}\Z$-graded objects in $\A_{n,\lambda,\F}\operatorname{-mod}_0$.
In particular, the irreducible objects in $\A_{n,\lambda,\F}\operatorname{-mod}_0^{gr}$ 
are the objects $L_\lambda(\tau,m)$, where $\tau$ is a partition of $n$ and $m\in \frac{1}{b}\Z$, see Section 
\ref{SS_CatO}. 


Let $\mathfrak{m}$ denote the maximal ideal of $0$ in $Z_p$. The quotient $\A_{n,\lambda,\F}/(\mathfrak{m})$ is a graded finite dimensional algebra.
Every irreducible module in $\A_{n,\lambda,\F}\operatorname{-mod}_0$
factors through this quotient, so is, essentially, an irreducible
module over a finite dimensional graded algebra. Therefore, it admits a graded lift,
unique up to a shift. It follows that the modules $L_\lambda(\eta)$ (obtained from $L_\lambda(\tau,m)$
by forgetting the grading) form a
complete collection of irreducible modules in $\A_{n,\lambda,\F}\operatorname{-mod}_0$.



Now we decompose $\A_{n,\lambda,\F}\operatorname{-mod}_0^{gr}$ into the direct sum of $bp$
summands. 
Consider the Euler element $\mathsf{eu}_n\in \A_{n,\lambda,\F}$.
Recall the integer $d_\tau$ from Remark \ref{Rem:lowest_degree}. The following easy lemma is 
similar to Lemma \ref{Lem:Euler_action}. 

\begin{Lem}\label{Lem:Euler_modular}
The element $\mathsf{eu}_n$ acts on the lowest weight subspace of $L_\lambda(\tau,m)$
by $d_\tau-\lambda\operatorname{cont}(\tau)$. In particular, it acts diagonalizably 
on $L_\lambda(\tau,m)$ with eigenvalues in $\F_p$. 
\end{Lem}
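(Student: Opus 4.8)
The plan is to run the proof of Lemma \ref{Lem:Euler_action} over $\F$, the only genuinely new point being that a couple of characteristic-zero facts about $S_n$-representations survive reduction modulo the large prime $p$. First I would work on the full Cherednik algebra. The Euler element $\mathsf{eu}^H_n=\sum_i x^iy^i-\lambda\sum_\alpha s_\alpha$ is given by the same formula over $\F$ and is still a grading element for the Euler $\Z$-grading. On the degree-$m$ component $\tau$ of $\Delta^H_\lambda(\tau,m)$ the elements $y\in\h_n$ act by zero, so $\mathsf{eu}^H_n$ acts there by $-\lambda\sum_\alpha s_\alpha$; since $p$ is large, $\tau$ is an irreducible $\F S_n$-module, and $\sum_\alpha s_\alpha$ (the sum over the conjugacy class of transpositions) is central in $\F S_n$, hence acts on $\tau$ by a scalar by Schur's lemma, as $\F$ is algebraically closed. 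Because the primitive central idempotent attached to $\tau$ lies in $\Z[1/n!]S_n$ and $p>n$, that scalar is the reduction modulo $p$ of the integer $\mathsf{cont}(\tau)$, which I keep denoting $\mathsf{cont}(\tau)$. Since $\Delta^H_\lambda(\tau,m)$ is generated in degree $m$ and $\mathsf{eu}^H_n$ is a grading element, the degree-$(m+j)$ component is spanned by the degree-$j$ elements of $H_{n,\lambda}$ applied to the degree-$m$ component, and the grading-element relation propagates the eigenvalue: $\mathsf{eu}^H_n$ acts on the degree-$(m+j)$ component of $\Delta^H_\lambda(\tau,m)$, hence of its irreducible quotient $L^H_\lambda(\tau,m)$, by the scalar $j-\lambda\,\mathsf{cont}(\tau)$.

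Next I would apply the averaging idempotent $e$, which preserves the grading, and invoke Remark \ref{Rem:lowest_degree}, valid over $\F$, which places the lowest degree of $L_\lambda(\tau,m)=eL^H_\lambda(\tau,m)$ at $m+d_\tau$. This gives that $\mathsf{eu}_n=\mathsf{eu}^H_n e$ acts on the degree-$(m+j)$ component of $L_\lambda(\tau,m)$ by $j-\lambda\,\mathsf{cont}(\tau)$ for every $j\geqslant d_\tau$; in particular it acts on the lowest weight subspace by $d_\tau-\lambda\,\mathsf{cont}(\tau)$, which is the first assertion. For the diagonalizability statement, $L_\lambda(\tau,m)$ is finite dimensional and equals the direct sum of its graded components, on each of which $\mathsf{eu}_n$ acts by one of the scalars above, so $\mathsf{eu}_n$ is semisimple; its eigenvalues are the numbers $j-\tfrac{a}{b}\mathsf{cont}(\tau)$ with $j\in\Z_{\geqslant d_\tau}$, and these all lie in $\F_p$ because $a,b,j,\mathsf{cont}(\tau)\in\Z$ and $b$ is invertible in $\F$ (as $p>N\geqslant b$).

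Every step here is routine; the only point requiring care is the passage from characteristic $0$, namely that for $p$ large the Specht module $\tau$ stays irreducible over $\F S_n$, that the transposition sum acts on it by $\mathsf{cont}(\tau)\bmod p$ (which follows from the integrality of $S_n$-character values together with $p\nmid n!$), and that the degree bookkeeping of Remark \ref{Rem:lowest_degree} is unchanged modulo $p$ — all of this being guaranteed by the standing hypothesis $(*)$ and the assumption that $p$ is sufficiently large.
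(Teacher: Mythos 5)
Your proof is correct, and it takes the same route the paper implicitly intends: the paper declares Lemma~\ref{Lem:Euler_modular} to be ``similar to Lemma~\ref{Lem:Euler_action},'' which in turn is stated as standard, and what you supply is exactly that standard argument carried over to $\F$. The three points you flag as needing care --- irreducibility of the Specht module for $p>n$, the transposition sum $\sum_\alpha s_\alpha$ acting on $\tau$ by the integer $\mathsf{cont}(\tau)$ and hence by its reduction mod $p$, and the unchanged degree bookkeeping of Remark~\ref{Rem:lowest_degree} --- are indeed the only inputs from the ``$p$ large'' hypothesis; all are guaranteed by assumption~(*), and the propagation of eigenvalues across graded components via $[\mathsf{eu}^H_n,a]=(\deg a)a$ together with the $\frac{1}{b}\Z$-grading gives both the formula for the lowest weight space and the diagonalizability with eigenvalues in $\F_p$ (using $p\nmid b$).
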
 

So $\eu_n$ gives an $\F_p$-grading on every object of $\A_{n,\lambda,\F}\operatorname{-mod}^{gr}_0$
by its generalized eigenspaces. For $\alpha\in \F_p$ and $\varsigma\in \Z/b\Z$, we consider the full subcategory  
$\A_{n,\lambda,\F}\operatorname{-mod}^{\alpha,\varsigma}_0$ consisting of all graded modules $M=\bigoplus_i M_i$
such that 
\begin{enumerate}
\item $\eu_n-\alpha-i$ acts nilpotently on $M_i$ for all $i$,
\item $M_i\neq \{0\}\Rightarrow i-\varsigma\in \Z$.
\end{enumerate} 

The module $L_\F(\tau,m)$ lies in $\A_{n,\lambda,\F}\operatorname{-mod}^{\alpha,\varsigma}_0$, where 
$\alpha=d_\tau-\lambda\operatorname{cont}(\tau)-m$ and $\varsigma+\lambda \operatorname{cont}(\tau)\in \Z$.
Note that all $pb$ categories $\A_{n,\lambda,\F}\operatorname{-mod}^{\alpha,\varsigma}_0$ are equivalent to each other via grading shifts.

Lemma \ref{Lem:Euler_modular} also motivates us to define a preferred graded lift of 
$L_\lambda(\tau)$: we set $L^{gr}_\lambda(\tau):=L_\lambda(\tau, d_\tau-\lambda\operatorname{cont}(\tau))$.
Note that this graded lift only makes sense if we grade by $\frac{1}{b}\Z$, which is a reason why we choose such 
gradings instead of more common gradings by $\Z$.

\begin{Rem}\label{Rem:duality}
To finish this section we discuss the duality functor on the category $\mathcal{A}_{n,\lambda,\F}\operatorname{-mod}_0^{gr}$.
Note that $\mathcal{A}_{n,\lambda,\F}\operatorname{-mod}_0^{gr}$ embeds as the full subcategory of finite length 
objects in $\tilde{\mathcal{O}}(H_{n,\lambda,\F})$. Recall, Section \ref{SS_CatO}, that the latter comes with the duality functor 
$\bullet^\vee$ that fixes all simple objects. This gives rise to a duality functor on $\mathcal{A}_{n,\lambda,\F}\operatorname{-mod}_0^{gr}$
that fixes all simple objects. We again denote it by $\bullet^\vee$.
\end{Rem}


\subsection{Heisenberg functors}\label{SS:heis_fun_charp}
Suppose $n,d$ are integers such that $n+db\leqslant N$. Let $\tau$ denote a partition of $d$.
Our goal in this section is, for $p$
sufficiently large, to define functors
$$\heis^\tau_\F: \A_{n,\lambda,\F}\operatorname{-mod}_0^{gr}\rightarrow
\A_{n+bd,\lambda,\F}\operatorname{-mod}_0^{gr}, \A_{n,\lambda,\F}\operatorname{-mod}_0\rightarrow
\A_{n+bd,\lambda,\F}\operatorname{-mod}_0$$
and state their main property.

Set $\Ring_0$ be a suitable localization of $\Z$ (below we will explain how to choose it).
Let $\Ring$ be the integral extension of $\Ring_0$, where we adjoin a primitive $b$th root $\epsilon$ of $1$.
Replacing $\Ring_0$ with a finite localization, we assume that $\Ring$ is a Dedekind domain. 
Note that replacing $\Ring_0$ with a finite localizaton, we may assume that  the $\A_{m+b,\lambda}$-$\bar{\A}_{m,\lambda}$-bimodule $B_n$ (over $\K:=\mathbb{Q}(\epsilon)$)
is defined over $\Ring$. Choose an  $\Ring$-form $B_{m,\Ring}$ of $B_m$, a graded $\A_{m+b,\lambda,\Ring}$-$\bar{\A}_{m,\lambda,\Ring}$-bimodule
(with respect to the Euler grading by $\frac{1}{2}\Z$).
Further, by Proposition \ref{Prop:cohomology_structure},
$B_m$ is projective as a right $\bar{\A}_{m,\lambda}$-module. After replacing $\Ring$ with its finite localization we can assume that
\begin{itemize}
\item[(a1)]
$B_{m,\Ring}$
is projective over $\bar{\A}_{m,\lambda,\Ring}$.
\end{itemize}
Next,  we can replace $\Ring$ with a finite localization and assume that
\begin{itemize}
\item[(a2)]
there is a
good filtration on $B_{m,\Ring}$ compatible with the Euler grading such that
$\operatorname{gr}B_{m,\Ring}$ is finitely generated
over $\gr \bar{\A}_{m,\lambda,\Ring}$ and is torsion free over $\Ring$.
\end{itemize}

From the bimodules $B_{m,\Ring}$ we can form the $\A_{n+db,\lambda,\Ring}$-$\A_{n,\lambda,\Ring}$-bimodules $\mathcal{B}^d_{n,\Ring}$ as in
Remark \ref{Rem:functor_bimodule}, that is
$$\mathcal{B}^d_{n,\Ring}:=B^d_{n,\Ring}\otimes_{D(\mathfrak{z}_\Ring^d)}\Ring[\mathfrak{z}^d].$$
Thanks to the projectivity of the bimodules $B_{m,d,\Ring}$,
we have $\mathcal{B}^d_{n,\Ring}\subset \mathcal{B}^d_n$.
And, localizing $\Ring_0$ further, we can assume that $\mathcal{B}^d_{n,\Ring}$ is $S_d$-stable.
Then we can decompose $\mathcal{B}^d_{n,\Ring}$ as $\bigoplus_\tau \tau\otimes \mathcal{B}^\tau_{n,\Ring}$ (note that $d$ is invertible in $\Ring$), and so $\mathcal{B}^\tau_{n,\Ring}$ is an $\Ring$-form of $\mathcal{B}^\tau_n$.

Note that $\F$ is an $\Ring_0$-algebra.
Fix a primitive $b$th root of $1$ in $\F$, this turns $\F$ into an $\Ring$-algebra.
So we can consider base changes of bimodules $B_{m,\Ring},B_{m,\Ring}^d, \mathcal{B}^\tau_{n,\Ring}$,
to be denoted by $B_{m,\F},B_{m,\F}^d, \mathcal{B}^\tau_{n,\F}$.

We now proceed to defining the functors $\A_{n,\lambda,\F}\operatorname{-mod}_0\rightarrow
\A_{n+db,\lambda,\F}\operatorname{-mod}_0$ of interest. Let $\mathfrak{z}_{1,\F}$ denote first
Frobenius neighborhood of $0$ in $\mathfrak{z}_\F$, so that $\F[\mathfrak{z}_1]$
is the unique irreducible $D(\mathfrak{z}_\F)$-module with $p$-character $0$.
Our basic functor
$$\heis_{\F}:\A_{n,\lambda,\F}\operatorname{-mod}_0\rightarrow
\A_{n+db,\lambda,\F}\operatorname{-mod}_0$$
is defined as $M\mapsto B_{n,\F}\otimes_{\bar{\A}_{n,\lambda,\F}}(M\boxtimes \F[\mathfrak{z}_1])$.
Note that, thanks to (a2) and the Harish-Chandra property of $B_n$, the bimodule $B_{n,\F}$
admits a good filtration whose associated graded is supported on the diagonal in
$(T^*\h_{n+b})/S_{n+b}\times (T^*\bar{\h}_n)/S_n$. It follows that tensoring with $B_{n,\F}$
sends modules with trivial generalized $p$-character to modules with finite support in
$\operatorname{Spec}(Z_{n+b,p})$. And since $B_{n,\F}$ is graded, this finite support
must be $\mathbb{G}_m$-stable, hence the single point $0$. Also, since $B_{n,\F}$ is $\frac{1}{2}\Z$-graded
and this grading satisfies Lemma \ref{Lem:B_n_grading}, the functor $\heis_{\F}$
lifts to the graded categories  $\A_{n,\lambda,\F}\operatorname{-mod}^{gr}_0\rightarrow
\A_{n+db,\lambda,\F}\operatorname{-mod}^{gr}_0$. 

The following lemma is an easy consequence of Lemma \ref{Lem:B_n_grading}.

\begin{Lem}\label{Lem:0_preservation}
The functor $\heis_{\F}$ sends $ \bar{\A}_{n,\lambda,\F}\operatorname{-mod}^{\alpha,\zeta}_0$ to $\A_{n+b,\lambda,\F}\operatorname{-mod}^{\alpha,\zeta'}_0$, where $\zeta'=\zeta+\frac{a(b-1)}{2}$.
\end{Lem}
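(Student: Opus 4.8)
The plan is to reduce the statement to a computation with the bigraded structure on $B_{n,\F}$ that was already established in Lemma \ref{Lem:B_n_grading}. First I would recall precisely what $\heis_\F$ does: it sends $M$ to $B_{n,\F}\otimes_{\bar{\A}_{n,\lambda,\F}}(M\boxtimes \F[\mathfrak{z}_1])$, and by Lemma \ref{Lem:B_n_grading} the bimodule $B_{n,\F}$ carries a $\tfrac12\Z$-grading whose grading element is $\eu_{n+b}\otimes 1+1\otimes\bar{\eu}_n$, with the additional constraint that a nonzero $i$-th graded component forces $i-\tfrac{a(b-1)}{2}\in\Z$. So if $m$ is a graded component of $M\boxtimes \F[\mathfrak{z}_1]$ on which $\bar{\eu}_n-\alpha-j$ acts nilpotently (here I should note that $\F[\mathfrak{z}_1]$ is concentrated in $\eu$-degree $0$, so the $\bar{\eu}_n$-grading of $M\boxtimes\F[\mathfrak{z}_1]$ is just that of $M$ tensored with a trivial factor), and $B_{n,\F}$ has a homogeneous piece in degree $i$, then on the corresponding piece of the tensor product $\eu_{n+b}$ acts by $\alpha + (i+j) + (\text{nilpotent})$. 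Thus the generalized eigenvalue of $\eu_{n+b}$ modulo $\Z$ is still $\alpha$ (as an element of $\F_p$, shifts by integers do not change the class $\alpha\in\F_p$), which gives the $\alpha'=\alpha$ part of the claim.

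Next I would track the coset $\zeta\in\Z/b\Z$. An object of $\bar{\A}_{n,\lambda,\F}\operatorname{-mod}^{\alpha,\zeta}_0$ has nonzero graded components only in degrees $i$ with $i-\zeta\in\Z$. Tensoring over $\bar{\A}_{n,\lambda,\F}$ with the graded bimodule $B_{n,\F}$, the degrees appearing in $\heis_\F(M)$ are sums of a degree $i$ with $i-\zeta\in\Z$ and a degree $k$ of $B_{n,\F}$, and by Lemma \ref{Lem:B_n_grading} such $k$ satisfies $k-\tfrac{a(b-1)}{2}\in\Z$. Hence every nonzero degree $\ell$ of $\heis_\F(M)$ satisfies $\ell-\zeta-\tfrac{a(b-1)}{2}\in\Z$, i.e. $\ell-\zeta'\in\Z$ with $\zeta'=\zeta+\tfrac{a(b-1)}{2}\pmod b$. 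This is exactly the asserted value of $\zeta'$. It remains only to confirm that $\heis_\F(M)$ does land in $\A_{n+b,\lambda,\F}\operatorname{-mod}_0$ at all, but that was already argued in the paragraph preceding the lemma (the good filtration on $B_{n,\F}$ from (a2) plus the Harish-Chandra property force the $p$-support to be the single point $0$), so I would simply cite that.

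Therefore the proof is essentially a two-line bookkeeping argument once Lemma \ref{Lem:B_n_grading} is in hand: one tracks the $\eu$-eigenvalue modulo $\Z$ (unchanged, giving $\alpha'=\alpha$) and the residue of the grading modulo $b$ (shifted by the degree of $B_{n,\F}$, i.e. by $\tfrac{a(b-1)}{2}$, giving $\zeta'=\zeta+\tfrac{a(b-1)}{2}$). I do not anticipate a genuine obstacle here; the only point requiring a sentence of care is the interaction of the $\tfrac12\Z$-grading with the $\tfrac1b\Z$-grading, namely checking that $\tfrac{a(b-1)}{2}$ is a well-defined element of $\tfrac1b\Z/\Z\cong\Z/b\Z$ in the relevant sense and that the two constraints ``$i-\zeta\in\Z$'' and ``$k-\tfrac{a(b-1)}{2}\in\Z$'' combine additively as claimed; this is immediate but worth stating explicitly. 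Accordingly I would write the proof as: ``This is immediate from Lemma \ref{Lem:B_n_grading}: $\F[\mathfrak{z}_1]$ sits in $\eu$-degree $0$, so $M\boxtimes\F[\mathfrak{z}_1]\in\bar{\A}_{n,\lambda,\F}\operatorname{-mod}^{\alpha,\zeta}_0$, and tensoring with the graded bimodule $B_{n,\F}$ adds a degree $k$ with $k-\tfrac{a(b-1)}{2}\in\Z$ to every nonzero graded component; since integer shifts do not change the class of the $\eu$-eigenvalue in $\F_p$, the result lies in $\A_{n+b,\lambda,\F}\operatorname{-mod}^{\alpha,\zeta'}_0$ with $\zeta'=\zeta+\tfrac{a(b-1)}{2}$.''
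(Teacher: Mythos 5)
Your argument is correct and matches what the paper intends: the lemma is stated to be ``an easy consequence'' of Lemma~\ref{Lem:B_n_grading}, and the bookkeeping you perform — using the grading element $\eu_{n+b}\otimes 1+1\otimes\bar\eu_n$ on $B_{n,\F}$ to see that the ``eigenvalue minus degree'' invariant $\alpha$ is preserved, and using the congruence $k-\tfrac{a(b-1)}{2}\in\Z$ on nonzero degrees of $B_{n,\F}$ to see that the coset $\zeta$ shifts by $\tfrac{a(b-1)}{2}$ (a well-defined element of $\tfrac1b\Z/\Z$, since either $b$ is even, so $\tfrac12\in\tfrac1b\Z$, or $b$ is odd, so $b-1$ is even and the shift is an integer) — is exactly the content of that remark. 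One small slip in the parenthetical aside of your first paragraph: $\F[\mathfrak{z}_1]\cong\F[z]/(z^p)$ is \emph{not} concentrated in $\eu$-degree $0$; the Euler element $z\partial_z$ acts on $z^k$ by $k$, so it is graded with degrees $0,1,\dots,p-1$. This does not affect the conclusion, because these degrees are all integers (so the coset of $M\boxtimes\F[\mathfrak{z}_1]$ is the same as that of $M$) and the $\eu$-eigenvalue on $z^k$ equals its degree $k$ (so the contribution to $\alpha$ from the $\mathfrak z$-factor is $0$) — but the claim as written should be corrected.
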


Next, note that, since $B_{n,\F}$
is projective over $\bar{\A}_{n,\lambda,\F}$ (thanks to (a1)), we get the following.

\begin{Lem}\label{Lem:exactness}
The functors $B_{n,\F}\otimes_{\bar{\A}_{n,\lambda,\F}}\bullet$ and $\heis_{\F}$ are exact.
\end{Lem}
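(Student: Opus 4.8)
The plan is to reduce the whole statement to the single fact, recorded as property (a1) in the construction above, that $B_{n,\F}$ is projective — hence flat — as a right $\bar{\A}_{n,\lambda,\F}$-module. First I would invoke the general principle that if $B$ is an $\A'$-$\A$-bimodule which is flat as a right $\A$-module, then $B\otimes_\A\bullet$ is an exact functor from left $\A$-modules to left $\A'$-modules. Applying this with $\A=\bar{\A}_{n,\lambda,\F}$, $\A'=\A_{n+b,\lambda,\F}$ and $B=B_{n,\F}$, and using (a1), we get that $B_{n,\F}\otimes_{\bar{\A}_{n,\lambda,\F}}\bullet$ is exact. That this functor actually lands in $\A_{n+b,\lambda,\F}\operatorname{-mod}_0$ (and in its graded refinement) has already been checked in the paragraphs preceding the lemma, using the Harish-Chandra property of $B_n$ together with (a2) and Lemma \ref{Lem:B_n_grading}; no further argument is needed on that point.

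For the second functor I would factor $\heis_{\F}=(B_{n,\F}\otimes_{\bar{\A}_{n,\lambda,\F}}\bullet)\circ(\bullet\boxtimes\F[\mathfrak{z}_1])$. Since $\bar{\A}_{n,\lambda,\F}=\A_{n,\lambda,\F}\otimes_\F D(\mathfrak{z}_\F)$ and $\F[\mathfrak{z}_1]$ is a fixed finite-dimensional $D(\mathfrak{z}_\F)$-module, the assignment $M\mapsto M\boxtimes\F[\mathfrak{z}_1]=M\otimes_\F\F[\mathfrak{z}_1]$ is exact, being an extension of scalars over the field $\F$. Composing the two exact functors gives exactness of $\heis_{\F}$.

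There is essentially no obstacle here: the content of the lemma is entirely carried by (a1), which was itself arranged by a finite localization of $\Ring_0$ on the basis of Proposition \ref{Prop:cohomology_structure}. The only mild point worth making explicit is that exactness persists on the graded categories $\A_{n,\lambda,\F}\operatorname{-mod}_0^{gr}$: this is automatic because $B_{n,\F}$ and all the relevant structure maps are $\tfrac{1}{2}\Z$-graded, so kernels and cokernels are computed degreewise and the forgetful functor to ungraded modules is exact and conservative. Hence it suffices to state and prove exactness after forgetting the grading, which is exactly what the argument above does.
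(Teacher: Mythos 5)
Your argument is correct and is exactly the paper's: the lemma is stated as an immediate consequence of (a1), i.e.\ projectivity (hence flatness) of $B_{n,\F}$ as a right $\bar{\A}_{n,\lambda,\F}$-module, combined with the evident exactness of $\bullet\boxtimes\F[\mathfrak{z}_1]$ over the field $\F$. The only thing you add beyond the paper's one-line justification is the explicit remark about exactness passing to the graded categories, which is harmless.
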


Notice that $\mathcal{B}_{n,\F}$ is naturally a bimodule over $\A_{n+b,\lambda,\F}$-$\left(\A_{n,\lambda}\otimes \F[\mathfrak{z}^{(1)}]\right)$. Then $\heis_\F(M)=\mathcal{B}_{n,\F}\otimes_{\A_{n,\lambda}\otimes \F[\mathfrak{z}^{(1)}]}M$,
where $\F[\mathfrak{z}^{(1)}]$ acts on $M$ via evaluation at $0$. Similarly,
$\heis_{\F}^d: \A_{n,\lambda,\F}\operatorname{-mod}_0\rightarrow
\A_{n+db,\lambda}\operatorname{-mod}_0$ can be defined as
$$B_{n,\F}^d\otimes_{\bar{\A}^d_{n,\lambda,\F}}(\bullet\otimes \F[\mathfrak{z}_1]^{\otimes d})
\cong \mathcal{B}^d_{n,\F}\otimes_{\A_{\lambda,n,\F}\otimes \F[\mathfrak{z}^{(1)}]^{\otimes d}}\bullet.$$

Now we are going to introduce an action of $S_d$ on $\heis_\F^{d}$: we will see that this action is
induced by that on $\mathcal{B}^d_{n,\F}$. For this we need the following lemma.

\begin{Lem}\label{Lem:Sd_action_intertwined}
For $p$ sufficiently large,
the action of $\F[\mathfrak{z}^{(1)}]^{\otimes d}$ on $\mathcal{B}_{n,\F}^d$ intertwines the
$S_d$-action on $\mathcal{B}_{n,\F}^d$ with the natural $S_d$-action on $\F[\mathfrak{z}^{(1)}]^{\otimes d}$.
\end{Lem}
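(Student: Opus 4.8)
The plan is to reduce the statement to a characteristic-zero fact about $\mathcal{B}^d_n$ together with a flatness/torsion-freeness argument over $\Ring$. First I would recall that over $\K=\mathbb{Q}(\epsilon)$, by Remark \ref{Rem:action_compatibility} and the construction in Section \ref{SS_symm_action_heis}, the bimodule $\mathcal{B}^d_n$ carries an $S_d$-action and the map $\F[\mathfrak{z}^d]^{\otimes d}=\C[\mathfrak{z}^{(1)}]^{\otimes d}$-analog, namely $\K[\mathfrak{z}^d]\to \A_{n,\lambda}\otimes\cdots$, intertwines this $S_d$-action with the permutation action on $\K[\mathfrak{z}^d]$; this is immediate from the fact that in the $S_d$-equivariant isomorphism of Remark \ref{Rem:action_compatibility}, $S_d$ acts on $(B^d_n)^{reg}\otimes_{D([\mathfrak{z}^d]^{reg})}\C[(\mathfrak{z}^d)^{reg}]$ compatibly with its action on the coordinates $y_1,\ldots,y_d$ of $\mathfrak{z}^d$. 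So the statement holds after inverting the residue characteristic, i.e. over $\Ring$ (possibly after a further finite localization of $\Ring_0$).

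Next I would observe that the action map and the $S_d$-action are both defined at the level of the $\Ring$-form $\mathcal{B}^d_{n,\Ring}$, which was arranged to be $S_d$-stable in the construction just above; and the intertwining identity, being an equality of $\Ring$-linear maps between finitely generated $\Ring$-modules that becomes true after $\otimes_\Ring \K$, holds already over $\Ring$ provided $\mathcal{B}^d_{n,\Ring}$ (or rather $\operatorname{Hom}$ of the relevant pieces) is $\Ring$-torsion free. This torsion-freeness is exactly what conditions (a1), (a2) on $B_{m,\Ring}$ guarantee after a finite localization: $B^d_{n,\Ring}$ has a good filtration with $\Ring$-torsion-free associated graded, hence $B^d_{n,\Ring}$ itself is $\Ring$-torsion free, and therefore so is $\mathcal{B}^d_{n,\Ring}=B^d_{n,\Ring}\otimes_{D(\mathfrak{z}^d_\Ring)}\Ring[\mathfrak{z}^d]$ (the tensor factor is free over $\Ring$). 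So the difference of the two $\Ring$-linear maps $\F[\mathfrak{z}^{(1)}]^{\otimes d}\otimes \mathcal{B}^d_{n,\Ring}\to \mathcal{B}^d_{n,\Ring}$ in question lies in a torsion-free $\Ring$-module and dies after $\otimes\K$, hence vanishes.

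Finally, base-changing along $\Ring\to\F$ (using the fixed primitive $b$th root of $1$ in $\F$) gives the identity over $\F$, which is precisely the assertion of the lemma: the action of $\F[\mathfrak{z}^{(1)}]^{\otimes d}$ on $\mathcal{B}^d_{n,\F}$ intertwines the $S_d$-action on $\mathcal{B}^d_{n,\F}$ with the permutation action on $\F[\mathfrak{z}^{(1)}]^{\otimes d}$. The phrase ``for $p$ sufficiently large'' is absorbed into the finitely many finite localizations of $\Ring_0$ performed above (there are finitely many primes one must invert to ensure $\Ring$ is Dedekind, that $B_{m,\Ring}$ exists and is projective over $\bar{\A}_{m,\lambda,\Ring}$ as in (a1), that it admits the good filtration of (a2), that $\mathcal{B}^d_{n,\Ring}$ is $S_d$-stable, and that $d!$ is invertible so the isotypic decomposition is available); for any $p$ not dividing these, everything goes through.

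The main obstacle I anticipate is bookkeeping rather than conceptual: one must be careful that the $S_d$-equivariant structure on the \emph{global} (non-localized, non-etale) bimodule $\mathcal{B}^d_n$ is the one genuinely used — recall Remark \ref{Rem:action_compatibility} only constructs the $S_d$-action on a localization of $B^d_n$ and checks compatibility with $\mathcal{B}^d_n$ — so the honest input is the compatibility asserted there, and one must trace through that the intertwining with $\K[\mathfrak{z}^d]$ survives passing from the localization back to $\mathcal{B}^d_n$, using that $\mathcal{B}^d_n\hookrightarrow (B^d_n)^{reg}\otimes_{D([\mathfrak{z}^d]^{reg})}\K[(\mathfrak{z}^d)^{reg}]$ is an $S_d$-equivariant embedding and that the $\K[\mathfrak{z}^d]$-action on the target is manifestly the permutation-compatible one. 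Once that is in hand, the descent to $\Ring$ and the base change to $\F$ are formal.
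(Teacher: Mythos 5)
Your reduction to a ``characteristic-zero fact'' does not go through, because the statement you want to transport is not formulable over $\K$ or over $\Ring$. The action of $\F[\mathfrak{z}^{(1)}]^{\otimes d}$ on $\mathcal{B}^d_{n,\F}=B^d_{n,\F}\otimes_{D(\mathfrak{z}^d_\F)}\F[\mathfrak{z}^d]$ is by right multiplication by $p$-th powers, and this descends to the tensor product precisely because $[\partial_i,x_i^p]=px_i^{p-1}=0$ in characteristic $p$. Over $\Ring$, where $p$ is invertible, $x_i^p$ does not commute with $\partial_i$, so there is \emph{no} action of $\Ring[\mathfrak{z}^d]$ (nor of any ``analog of the $p$-center'') on $\mathcal{B}^d_{n,\Ring}$ by endomorphisms: the $D(\mathfrak{z}^d_\Ring)$-endomorphisms of $\Ring[\mathfrak{z}^d]$ are just scalars. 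Your phrase ``the difference of the two $\Ring$-linear maps $\F[\mathfrak{z}^{(1)}]^{\otimes d}\otimes\mathcal{B}^d_{n,\Ring}\to\mathcal{B}^d_{n,\Ring}$'' therefore refers to maps that do not exist; the torsion-freeness argument is applied to a vacuous identity. The $p$-center action is a genuinely characteristic-$p$ phenomenon with no characteristic-$0$ shadow to bootstrap from.

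The paper's proof instead works over $\F$ from the start: it base-changes the $S_d$-equivariant embedding $\mathcal{B}^d_n\hookrightarrow(B^d_n)^{reg}\otimes_{D([\mathfrak{z}^d]^{reg})}\C[(\mathfrak{z}^d)^{reg}]$ of Remark~\ref{Rem:action_compatibility} to $\F$; on the localized target the intertwining of the $p$-center and $S_d$-actions is manifest from the explicit description; and then the key point --- which your argument does not supply --- is that the map $\mathcal{B}^d_{n,\F}\to$ (localization) is injective. That injectivity is not a consequence of $\Ring$-torsion-freeness (which controls $\otimes_\Ring\K$, not localization over $\F$); it follows from the fact that $B^d_{n,\F}$ is projective over $\bar{\A}^d_{n,\lambda,\F}$ (condition (a1)), hence $\mathcal{B}^d_{n,\F}$ is projective over $\A_{n,\lambda,\F}\otimes\F[\mathfrak{z}^{(1)}]^{\otimes d}$, hence a direct summand of a free module, hence embeds into its localization. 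Your ``main obstacle'' paragraph actually contains the right structural idea (use the equivariant embedding into the localization), but it is still phrased over $\K$ with a nonexistent $\K[\mathfrak{z}^d]$-action, and it omits the projectivity input needed for injectivity.
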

\begin{proof}
We write $\h_n^d$ for $\h_n\oplus \mathfrak{z}^d$. According to Remark \ref{Rem:action_compatibility}, we have an $S_d$-equivariant identification
of a suitable localization of $\mathcal{B}_{n}^d$ with $B_{n}^d\otimes_{D(\bar{\h}_n^d)}\C[(\bar{\h}_n^d)^{reg}]$. After replacing $\Ring$ with a finite localization, we can assume that this identification is defined over $\Ring$. Then we can base change to $\F$. Note that the
$S_d$-action on   $B_{n,\F}^d\otimes_{\F[\bar{\h}_n^d]}\F[(\bar{\h}_n^d)^{reg}]$ is compatible
with the $S_d$-action on $\F[\mathfrak{z}^{(1)}]^{\otimes d}$ by the construction. So the statement of
the lemma is true if we replace $\mathcal{B}_{n,\F}^d$ with its localization. Recall that
$B_{n,\F}^d$ is projective over $\bar{\A}^d_{n,\lambda,\F}$. So, as a $\A_{n,\lambda,\F}\otimes \F[\mathfrak{z}^{(1)}]^{\otimes d}$-module, $\mathcal{B}_{n,\F}^d$ is a direct summand in the direct sum of
several copies of $\A_{n,\lambda,\F}\otimes \F[\mathfrak{z}]^{\otimes d}$, hence projective.
So, the natural (in particular, $S_d$-equivariant) homomorphism 
$\mathcal{B}_{n,\F}^d\rightarrow B_{n,\F}^d\otimes_{\F[\bar{\h}_n^d]}\F[(\bar{\h}_n^d)^{reg}]$ is an embedding. 
Since the action of $\F[\mathfrak{z}^{(1)}]^{\otimes d}$ on the latter intertwines the $S_d$-actions, the same is true for the action on the former. 
\end{proof}

\begin{Rem}\label{Rem:bimodule_endomorphisms}
Lemma \ref{Lem:Sd_action_intertwined} yields an action of $\F[(\mathfrak{z}^{(1)})^d]\# S_d$ on $\mathcal{B}_{n,\F}^d$
by bimodule endomorphisms. Note that this action is compatible with the Euler gradings on both algebras (with 
$(\mathfrak{z}^{(1)})^{d*}$ in degree $p$). Also, since $B_{n,\F}^d$ is flat over $D(\mathfrak{z}^d)$, we see that 
$\mathcal{B}_{n,\F}^d$ is flat over $\F[(\mathfrak{z}^{(1)})^d]$. 
\end{Rem}

Thanks to the lemma, we get a natural action of $S_d$ on $\heis_\F^d$. Let $\heis_\F^\tau$ denote
the isotypic component of $\tau$ in $\heis_\F^d$ so that $\heis_\F^d\cong \bigoplus_{\tau}\tau\otimes \heis_\F^\tau$.


Here is the main result of this section. 

\begin{Thm}\label{Thm:modp_main}
Suppose $p$ is sufficiently large, in particular, satisfies (*) from Section \ref{SS:charp_cat} 
and condition (**) in the next section. Then the following claims hold:
\begin{enumerate}
\item 
Let $\eta$ be a partition of $n$ coprime to $b$ and $\tau$ be a partition of $d$, where $n,d$
satisfy $n+bd<N$.  Then we have a graded module isomorphism
$\heis_\F^\tau L_\F^{gr}(\eta)\cong L_\F^{gr}(\eta+b\tau)$.
\item Let $(\tau',\tau'')$ be a bipartition of $d$. Then 
$$\heis_\F^{\tau''}L_\F^{gr}(\eta+b\tau')\cong \bigoplus_\tau \operatorname{Hom}_{S_{d}}(\tau, \operatorname{Ind}(\tau'\boxtimes \tau''))\otimes L^{gr}_\F(\eta+b\tau).$$
\end{enumerate}
\end{Thm}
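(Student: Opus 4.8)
The plan is to deduce Theorem~\ref{Thm:modp_main} from the characteristic-$0$ results (Proposition~\ref{Prop:Heis_properties}, Theorem~\ref{Thm:SV_comparison}, and the composition formula \eqref{eq:heis_composition}) together with the standardly stratified structures on the modular categories $\OCat$ from \cite{catO_charp}, via a reduction-mod-$p$ argument. Part (2) will follow formally from part (1) once the composition formula for the $\heis_\F^\tau$ is in place, so the bulk of the work is part (1).

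First I would establish the mod-$p$ analog of the composition formula \eqref{eq:heis_composition}: namely $\heis_\F^{\tau'}\circ\heis_\F^{\tau''}\cong\bigoplus_\tau\Hom_{S_d}(\tau,\Ind_{S_{d'}\times S_{d''}}^{S_d}(\tau'\boxtimes\tau''))\otimes\heis_\F^\tau$. This is purely an identity of bimodules $\mathcal{B}^\tau_{n,\Ring}$ over the Dedekind domain $\Ring$: one checks that the transitivity isomorphisms for parabolic induction of D-modules (Lemma~\ref{Lem:induction_transitivity}, Remark~\ref{Rem:other_transitivity}) together with the decomposition $M_{\lambda,d}\cong\bigoplus_\tau\tau\otimes M_{\lambda,\tau}$ (Proposition~\ref{Prop:D_mod_iso}, Remark~\ref{Rem:induction_S_n_reps}) are all defined over $\Ring$ after a finite localization of $\Ring_0$, and then base-change to $\F$. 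Given this, part (2) is immediate from part (1): write $L_\F^{gr}(\eta+b\tau')=\heis_\F^{\tau'}L_\F^{gr}(\eta)$ and apply $\heis_\F^{\tau''}$.

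For part (1), I would argue by reduction mod $p$ from the $\C$-statement $\heis^\tau_\OCat L(\eta)\cong L(\eta+b\tau)$ (which is (2) of Proposition~\ref{Prop:Heis_properties} combined with Theorem~\ref{Thm:SV_comparison}). The $\Ring$-form $\mathcal{B}^\tau_{n,\Ring}$ sends the $\Ring$-form $L_\Ring^{gr}(\eta)$ of the simple to some $\A_{n+bd,\lambda,\Ring}$-module $\heis^\tau_\Ring(L_\Ring^{gr}(\eta))$ whose generic fiber is $L_\C^{gr}(\eta+b\tau)$; I need to show its special fiber over $\F$ is the simple $L_\F^{gr}(\eta+b\tau)$. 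The key inputs are: (i) the functor $\heis_\F^\tau$ is exact (Lemma~\ref{Lem:exactness}) and commutes with reduction mod $p$ by the flatness/projectivity assumptions (a1), (a2); (ii) the Euler-grading bookkeeping of Lemma~\ref{Lem:B_n_grading}/Lemma~\ref{Lem:0_preservation} and Lemma~\ref{Lem:Euler_modular} pins down in which block $\A_{n+bd,\lambda,\F}\operatorname{-mod}_0^{\alpha,\zeta}$ the image lives and forces the lowest-weight degree of the image to be $d_{\eta+b\tau}-\lambda\operatorname{cont}(\eta+b\tau)$, so that \emph{if} the image is simple it must be $L_\F^{gr}(\eta+b\tau)$; (iii) the support/standardly-stratified control from \cite{catO_charp}: the support of $\heis_\F^\tau L_\F^{gr}(\eta)$ and of all its subquotients is the stratum associated with $S_b^d$ (the mod-$p$ analog of Corollary~\ref{Cor:supports_heis}, using that supports are preserved under reduction mod $p$ for $p$ large), which together with the classification of simples with that support (again from \cite{catO_charp}, the standardly stratified structure identifies the ``purely $d$-dimensional support'' layer with $\mathcal{H}_q(n)\otimes\F S_d\operatorname{-mod}$ just as in Proposition~\ref{Prop:Wilcox}) shows the head and socle are both $L_\F^{gr}(\eta+b\tau)$ with multiplicity one. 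To upgrade ``head$=$socle$=L$'' to ``$=L$'' I would use the self-duality: $B_n$ carries a duality-compatible structure so that $\heis^\tau_\F$ commutes with the duality $\bullet^\vee$ of Remark~\ref{Rem:duality} (which fixes simples), hence $\heis^\tau_\F L^{gr}_\F(\eta)$ is self-dual; a self-dual module with simple head isomorphic to its simple socle, both appearing once, and with no other composition factors of the correct support/central character is simple. Alternatively one can count: the $K_0$-class is determined by the $\C$-computation and Brauer reciprocity / the stratified structure forces length $1$.

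The main obstacle I anticipate is step (iii): transporting the characteristic-$0$ support and localization statements (Example~\ref{Ex:Supports_Sn}, Lemma~\ref{Lem:supports}, Proposition~\ref{Prop:Wilcox}) to characteristic $p$ in a way that genuinely controls the composition factors of $\heis^\tau_\F L^{gr}_\F(\eta)$. This is exactly where the standardly stratified structures of \cite{catO_charp} must be invoked: one needs that the subquotient category of $\OCat(\A_{n+bd,\lambda,\F})$ consisting of modules supported on the $S_b^d$-stratum, modulo smaller support, is equivalent (compatibly with the Euler grading) to $\mathcal{H}_q(n)_\F\otimes\F S_d\operatorname{-mod}$ and that $\heis^\tau_\F L^{gr}_\F(\eta)$ maps to $\operatorname{KZ}(L_\F(\eta))\otimes\tau$ there --- i.e., an étale-lift computation identical in spirit to Lemma~\ref{Lem:localization}, but now over $\F$, using the mod-$p$ étale lift $\heis^\tau_\F(M)^0_{\vec m}\cong\tau\otimes(M\otimes\bar V^{\otimes d})^0_{\vec m}$, which follows by base change from Lemma~\ref{Lem:localization} after a further finite localization of $\Ring_0$ and choosing $p$ large enough that $\bar V_\F$ remains the unique (finite-dimensional, correct-central-character) simple $\A_{b,\lambda,\F}$-module with that support. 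Once that identification is available, simplicity of the image follows as above, and everything else is routine.
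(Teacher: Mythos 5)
Your proposal identifies some of the right ingredients (exactness, Euler-grading bookkeeping, the $\Ring$-form $\mathcal{B}^\tau_{n,\Ring}$, assumption (a5) pinning down what the simple head must be, and the standardly stratified structures from \cite{catO_charp}), but the crucial mechanism you propose for showing that the image of a simple is \emph{semisimple} is not the one the paper uses, and as stated it has gaps.

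You propose to get semisimplicity via (a) a characteristic-$p$ support theory mirroring Example~\ref{Ex:Supports_Sn}, Lemma~\ref{Lem:supports}, Proposition~\ref{Prop:Wilcox} and the \'etale-lift computation of Lemma~\ref{Lem:localization}, and (b) a duality-compatibility $\heis^\tau_\F(\bullet^\vee)\cong\heis^\tau_\F(\bullet)^\vee$. Neither is established in the paper, and both would require nontrivial additional work. The standardly stratified structure of \cite{catO_charp} is built from the \emph{lowest degree} (the preorder $\preceq$ is $\mathsf{ld}(L)\geqslant\mathsf{ld}(L')-aN(N-1)$), not from support strata, so the ``layer of purely $d$-dimensional support $\cong\mathcal{H}_q(n)\otimes\F S_d\operatorname{-mod}$'' you invoke is not something that paper provides; transporting the support-theoretic picture to $\F$ is a genuine open project, not a routine base change. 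Likewise, that $\heis^\tau_\F$ commutes with $\bullet^\vee$ would need a proof; the paper only uses duality at the level of $K_0$-classes (Lemma~\ref{Lem:duality}: $[\underline{\Delta}^\preceq(\sigma)]=[\underline{\nabla}^\preceq(\sigma)]$), which is much weaker and much easier.

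The actual argument avoids both of these. The key point you are missing is this: instead of trying to control the composition factors of $\heis^\tau_\F L^{gr}_\F(\eta)$ via supports, the paper shows that the functor $B_{n,\F}\otimes_{\bar{\A}_{n,\lambda,\F}}\bullet$ sends proper standard objects to direct sums of proper standards (immediate from (a5) by base change) \emph{and also} sends proper costandard objects to direct sums of proper costandards. The latter is the nontrivial step~($\heartsuit$): it is proved by checking that the adjunction unit $M\rightarrow\pi_\xi^*\pi_\xi(M)$ is injective (using Lemma~\ref{Lem:costand_filt} together with (a6), which controls what the left adjoint $B_n^!\otimes\bullet$ does to standards) and then matching $K_0$-classes via Lemma~\ref{Lem:duality}. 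Once both preservation statements are in hand, semisimplicity of $\heis^\tau_\F L$ is formal: a simple $\bar L_\F(\theta)$ is the image of any nonzero map $\underline{\bar\Delta}^\preceq_\F(\theta)\rightarrow\underline{\bar\nabla}^\preceq_\F(\theta)$; applying the exact functor $B_{n,\F}\otimes\bullet$ sends this to a map from a direct sum of proper standards to a direct sum of proper costandards whose labels all lie in the single equivalence class $\xi$; the image of any such map is semisimple. Combining with (a5) yields (1), and your observation that (2) then follows from the mod-$p$ composition formula~\eqref{eq:heis_composition} is correct, but the paper derives it by a short self-consistency argument (if the direct sum in (2) had proper summands, then $\heis_\F^d L(\eta)$ would be a proper summand of $\bigoplus_\tau\tau\otimes L^{gr}_\F(\eta+b\tau)$, contradicting (1)). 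So you should replace your support/duality step (iii) by the proper-standard/proper-costandard preservation argument; the rest of your outline (commutation with base change, grading bookkeeping, use of (a5)) is then essentially right.
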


The theorem will be proved in Section \ref{SS_main_Thm_p_proof}.


\subsection{Assumptions on $\Ring_0$ and $p$}\label{SS:Rp_assumptions}
Recall that we choose $p$ satisfying (a0) from Section \ref{SS:charp_cat} and $\Ring_0$
is such that (a1), (a2) and all lemmas from Section \ref{SS:heis_fun_charp} hold. In this section
we are going to impose more conditions on $\Ring_0$ (and hence on $p$, since $\F_p$
is an $\Ring_0$-algebra).

We start with two sets of assumptions from \cite[Section 6.4]{catO_charp}. We write $\A_{n,\lambda}, L(\eta)$ (without the subscript indicating the field) for the objects over $\K=\mathbb{Q}(\epsilon)$.

Recall, Section \ref{SS_CatO}, that $\OCat(\A_{n,\lambda})$
is a highest weight category with standard objects $\Delta(\eta)$.
Let $P(\eta)$ denote the projective cover of $L(\eta)$, $\nabla(\eta)$ denote costandard object labelled by $\eta$,
and $I(\eta)$ denote the injective hull of $L(\eta)$.
After replacing $\Ring_0$ with its finite localization, we can choose $\Ring$-forms $L_\Ring(\eta),\Delta_\Ring(\eta), \nabla_\Ring(\eta),
P_\Ring(\eta), I_\Ring(\eta)$ such that:
\begin{itemize}
\item[(a3)] The following hold:
\begin{itemize}
\item $L_\Ring(\eta), \Delta_\Ring(\eta),\nabla_\Ring(\eta), P_\Ring(\eta), I_\Ring(\eta)$
are graded $\Ring$-submodules in their base changes to $\K=\operatorname{Frac}(\Ring)$.
\item
For each $\eta$, we have $\Delta_\Ring(\eta)\twoheadrightarrow L_\Ring(\eta)$ and the kernel is filtered by $L_\Ring(\eta')$'s (for various $\eta'$).
\item
For each $\eta$, we have $L_\Ring(\eta)\hookrightarrow \nabla_\Ring(\eta)$ and the cokernel is filtered by $L_\Ring(\eta')$'s (for various $\eta'$).
\item  For each $\eta$, we have $P_\Ring(\eta)\twoheadrightarrow \Delta_\Ring(\eta)$ and the kernel is filtered by
$\Delta_\Ring(\eta')$'s.
\item For each $\eta$, we have
$\nabla_\Ring(\eta)\hookrightarrow I_\Ring(\eta)$ and the cokernel is filtered by $\nabla_\Ring(\eta')$'s.
\item For each $\eta$, there is a resolution
$$0\rightarrow P_{k,\Ring}\rightarrow P_{k-1,\Ring}\rightarrow\ldots \rightarrow P_{1,\Ring}
\rightarrow L_\Ring(\eta)\rightarrow 0,$$
where each $P_{i,\Ring}$ is a finite direct sum of the objects of the form $P_\Ring(\eta')$.
\item For each $\eta$, there is a resolution
$$0\rightarrow L_\Ring(\eta)\rightarrow  I_{1,\Ring}\rightarrow I_{2,\Ring}\rightarrow\ldots \rightarrow I_{k,\Ring}\rightarrow 0,$$
where each $I_{i,\Ring}$ is a finite direct sum of the objects of the form $I_\Ring(\eta')$.
\end{itemize}
\end{itemize}

Second, we need a condition on certain Ext modules. For a partition $\eta$ of $n$ let
$c_\eta$ denote the smallest eigenvalue of $\eu_n$ on $L(\eta)$. We have
$c_\eta=-\lambda\operatorname{cont}(\eta)+d_\eta$, where $\operatorname{cont}(\eta)$ is the content of
$\eta$ and $d_\eta=\sum_{i=2}^k (i-1)\eta_i$, where $\eta=(\eta_1,\ldots,\eta_k)$
as a partition. We say that partitions $\eta,\eta'$ (of possibly different integers) are $\eu$-{\it equivalent} (and write $\eta\sim_{\eu}\eta'$) if
\begin{itemize}
\item $|\eta'|-|\eta|=db$ for some $d\in \Z$,
\item and $c_{\eta'}-c_\eta-d\frac{a(b-1)}{2}$ is an integer. 
\end{itemize}

Let $L^{gr}(\eta)$ denote the simple in $\mathcal{O}(\A_{n,\lambda})$ that is equipped with its natural 
$\frac{1}{b}\Z$-grading (i.e., the grading induced by the action of $h$). 
Let $\Delta^{gr}(\eta),\nabla^{gr}(\eta)$ denote the standard and costandard modules with their natural 
$\frac{1}{b}\Z$ gradings. And let $P^{gr}(\eta)$ denote the projective cover of $L^{gr}(\eta)$
in the graded lift of the category $\OCat(\A_{n,\lambda})$, and let $I^{gr}(\eta)$ have the similar meaning. 
Note that we can choose $L_\Ring(\eta),\Delta_\Ring(\eta), \nabla_\Ring(\eta),P_{\Ring}(\eta),I_{\Ring}(\eta)$ to be graded
submodules in $L^{gr}(\eta),\Delta^{gr}(\eta), \nabla^{gr}(\eta),P^{gr}(\eta),I^{gr}(\eta)$.
Denote the resulting graded $\A_{n,\lambda,\Ring}$-modules  by 
$L^{gr}_\Ring(\eta),\Delta^{gr}_\Ring(\eta), \nabla^{gr}_\Ring(\eta),P^{gr}_\Ring(\eta),I^{gr}_\Ring(\eta)$.

In assumption (a4) below we consider Ext's in the category of graded $\A_{n,\lambda,\Ring}$-modules.
Note that $H_{n,\lambda,\Ring}$ and hence $\A_{n,\lambda,\Ring}$ have finite homological dimension.
So, there is an integer $\ell$ such that $\operatorname{Ext}^i_{\A_{n,\lambda,\Ring}}(M_\Ring, N_\Ring)=0$
for all $i>\ell$ and $\A_{n,\lambda,\Ring}$-modules $M_\Ring, N_\Ring$. It follows that we can
replace $\Ring_0$ with a finite localization and achieve the following (see \cite[Corollary 6.11]{catO_charp}):

\begin{itemize}
\item[(a4)] Let $M_\Ring,N_\Ring$ be some of the modules $L^{gr}_\Ring(\eta),\Delta^{gr}_\Ring(\eta), \nabla^{gr}_\Ring(\eta),P^{gr}_{\Ring}(\eta),I^{gr}_{\Ring}(\eta)$. Then each $\operatorname{Ext}^j(M_\Ring,N_\Ring)$
    is a finitely generated free $\Ring$-module.
\end{itemize} 

Next, we will need some compatibility between the $\Ring$-forms $L^{gr}_\Ring(\eta)$
and the functors $\mathcal{B}^\tau_{n,\Ring}\otimes_{\A_{n,\lambda, \Ring}}\bullet$.
Pick a partition $\eta$ of $n$ coprime to $b$.
Then $\mathcal{B}^\tau_{n}\otimes_{\A_{n,\lambda}}L(\eta)\cong L(\eta+b\tau)$, this follows by combining 
Theorem \ref{Thm:SV_comparison} and Proposition \ref{Prop:Heis_properties}. Considering these objects with their preferred gradings -- by eigenvalues of $h$,
we get a graded module isomorphism $\mathcal{B}^\tau_{n}\otimes_{\A_{n,\lambda}}L(\eta)^{gr}\cong L(\eta+b\tau)^{gr}$. While this isomorphism is over $\C$, 
all objects in question have unique up to isomorphisms forms over $\K=\operatorname{Frac}(\Ring)$, so the isomorphism 
holds over $\operatorname{Frac}(\Ring)$ as well. Replacing $\Ring$ with its finite localization we can achieve that

\begin{itemize}
\item[(a5)] $\mathcal{B}^\tau_{n,\Ring}\otimes_{\A_{n,\lambda}}L^{gr}_\Ring(\eta)\cong L^{gr}_\Ring(\eta+b\tau)$,
an isomorphism of graded modules, for all partitions $\eta$ of $n$ coprime, and all partitions $\tau$ of $d$.
\end{itemize}

Finally, we will need the compatibility between the $\Ring$-forms and the left adjoint functor
of $B_{n,\Ring}\otimes_{\bar{\A}_{n,\lambda,\Ring}}\bullet$.
Namely, consider the HC $\bar{\A}_{n,\lambda}$-$\A_{n+b,\lambda}$-bimodule
$B_n^!:=\operatorname{Hom}_{\bar{\A}_{n,\lambda}}(B_n,\bar{\A}_{n,\lambda})$.
Note that $B^!_{n,\Ring}:=\operatorname{Hom}_{\bar{\A}_{n,\lambda,\Ring}}(B_{n,\Ring},\bar{\A}_{n,\lambda,\Ring})$ is an $\Ring$-form of $B_n^!$. The bimodule $B^!_{n,\Ring}$ is graded and  the grading is by
eigenvalues of the operator $\beta\mapsto \bar{\eu}_{n}\beta- \beta \eu_{n+b}$, this is similar
to the grading on $B_n$ established in the proof of Lemma \ref{Lem:cat_O_functor}.
Observe that, for every partition $\sigma$ of $n+b$, the module
$B_n^!\otimes_{\A_{n+b,\lambda}}L(\sigma)$ lies in the category $\OCat(\bar{\A}_{n,\lambda})$, this is analogous to Lemma 
\ref{Lem:B_n_grading}.
The simples in $\OCat(\bar{\A}_{n,\lambda})$ are of the form $L(\eta)\otimes \C[\mathfrak{z}]$, see Lemma \ref{Lem:cat_O_equivalence}.
The functor $B_n^!\otimes_{\A_{n+b,\lambda}}\bullet$ is left adjoint to an exact functor, and hence
sends projectives to projectives. Note that the functor $\heis_{\OCat}$ sends every $L^{gr}(\eta)$
to the direct sum of modules of the form $L^{gr}(\eta')$ with some multiplicities, this follows from (2)
of Theorem \ref{Thm:SV_comparison} combined with Proposition \ref{Prop:Heis_properties}. By adjointness, it follows that 
$B_n^!\otimes_{\A_{n+b,\lambda}}\bullet$ sends $P^{gr}(\sigma)$ to the direct sum of objects of the 
form $P^{gr}(\sigma')\otimes \C[\mathfrak{z}]$. The same holds over $\K$.
Again, after replacing
$\Ring$ with its finite localization, we can assume that the following holds:

\begin{itemize}
\item[(a6)] For each partition $\sigma$ of $n+b$, the object
$B^!_{n,\Ring}\otimes_{\A_{n+b,\lambda,\Ring}}L^{gr}_\Ring(\sigma)$ is filtered
by the objects of the form $L^{gr}_\Ring(\eta)\otimes \Ring[\mathfrak{z}]$. The object
$B^!_{n,\Ring}\otimes_{\A_{n+b,\lambda,\Ring}}P^{gr}_\Ring(\sigma)$ is isomorphic
to the direct sum of objects of the form $P^{gr}_\Ring(\eta)\otimes_{\Ring}\Ring[\mathfrak{z}]$. 
\end{itemize}

Finally, we assume that 
\begin{itemize}
\item[(a7)] If $L_\F(\eta,m), L_{\F}(\eta',m')$ are simple objects in $\A_{n,\lambda,\F}\operatorname{-mod}_0^{\alpha,\varsigma}$
with $|m-m'|\leqslant 2a N(N-1)$, then $c_\eta-m=c_{\eta'}-m'$ in $\frac{1}{b}\Z$.
\end{itemize}

We will see in Section \ref{SS_stand_stratif} that (a7) holds when $p$ is sufficiently large. 
In particular,  (a7) is automatically satisfied when $N$ is sufficiently large because $p-1$ is divisible by $N!$, this is condition (*)
from Section \ref{SS:charp_cat}. 

Here is our final condition on $p$:

\begin{itemize}
\item[(**)] $p$ is not invertible in $\Ring$, where $\Ring$ is chosen in such a way that
(a1)-(a7) are satisfied. Moreover, the conclusions of all lemmas in Section \ref{SS:heis_fun_charp} hold.
\end{itemize}

\subsection{Standardly stratified structure}\label{SS_stand_stratif}
The goal of this section is to recall the standardly stratified structure
on $\A_{n,\lambda,\F}\operatorname{-mod}^{\alpha,\varsigma}_0$ introduced in \cite[Section 9]{catO_charp}.

The category $\A_{n,\lambda,\F}\operatorname{-mod}^{\alpha,\varsigma}_{0}$ is highest weight in a suitable sense,
to be explained below.
To an irreducible object $L\in \A_{n,\lambda,\F}\operatorname{-mod}^{\alpha,\zeta}_{0}$, we can assign
its {\it lowest degree} $\mathsf{ld}(L)$, the smallest $i$ such that $M_i\neq \{0\}$.
For $\ell\in \frac{1}{b}\Z$, we can consider the Serre subcategory $\A_{n,\lambda,\F}\operatorname{-mod}_0^{\alpha,\varsigma}(\geqslant \ell)$ consisting of all
$M\in \A_{n,\lambda,\F}\operatorname{-mod}_0^{\alpha,\varsigma}$ such that $M_i=0$ for all
$i<\ell$. For $\ell_1\leqslant \ell_2$ we can consider the Serre quotient
$$\A_{n,\lambda,\F}\operatorname{-mod}_0^{\alpha,\varsigma}(\geqslant \ell_1)/\A_{n,\lambda,\F}\operatorname{-mod}_0^{\alpha,\varsigma}(\geqslant \ell_2+1).$$
It turns out that for all $\ell_1\leqslant \ell_2$ the category
$\A_{n,\lambda,\F}\operatorname{-mod}_0^{\alpha,\zeta}([\ell_1,\ell_2])$
is highest weight with respect to the order
defined by: $L<L'$ if $\mathsf{ld}(L)>\mathsf{ld}(L')$. This is proved in \cite[Section 8.2]{catO_charp}.

One can also describe the standard objects in $\A_{n,\lambda,\F}\operatorname{-mod}_0^{0}([\ell_1,\ell_2])$ using the construction in the next remark.

\begin{Rem}\label{Rem:red_mod_p}
Namely, take a finitely generated graded $\A_{n,\lambda,\F}$-module $M$
such that the graded components are finite dimensional and the weights
are $\geqslant \ell_1$. Suppose that the only eigenvalue of $\mathsf{eu}_n$
on the graded component $M_k$ is $k+\alpha \operatorname{mod} p$ and the degrees are in $\Z+\zeta$. 
Notice that $M$
has a graded submodule $M'$ of finite codimension such that $M'_i=\{0\}$
for $i\leqslant \ell_2$ (one can take the submodule of the form
$\mathfrak{m}^k M$ for $k$ large enough, where $\mathfrak{m}$ denotes
the maximal ideal of $0$ in $\F[\h^{(1)}]^{S_n}\subset Z_p$). Then $M/M'\in \A_{n,\lambda,\F}\operatorname{-mod}_0^{\alpha,\zeta}(\geqslant \ell_1)$ and the image of $M/M'$ in $\A_{n,\lambda,\F}\operatorname{-mod}_0^0([\ell_1,\ell_2])$
is independent of the choice of $M'$.
\end{Rem}

The standard objects for the highest weight structure in $\A_{n,\lambda,\F}\operatorname{-mod}_0^{\alpha,\zeta}([\ell_1,\ell_2])$ are the images of Verma modules $\Delta_\F(\eta)$ graded in such a way that the lowest degree is in $[\ell_1,\ell_2]$.

Now we proceed to the discussion of a standardly stratified structure on
$\A_{n,\lambda,\F}\operatorname{-mod}_0^{\alpha,\zeta}$. We define the partial \underline{preorder}
on the set of irreducible objects in  $\A_{n,\lambda,\F}\operatorname{-mod}_0^{\alpha,\zeta}$:
for simples $L,L'$ we set $L\preceq L'$ if $\mathsf{ld}(L)\geqslant \mathsf{ld}(L')-aN(N-1)$. That this is indeed a pre-order follows from (a7). Indeed, note that, thanks to (a7), the equivalence
induced by $\preceq$ is described as follows: if $(\eta,m)$ and $(\eta',m')$
are partitions and highest degrees corresponding to simples $L,L'$, then
$c_\eta-m=c_{\eta'}-m'$ in $\frac{1}{b}\Z$. Now we apply (a7) again to see that $\preceq$
is indeed a pre-order.   

Below we will always choose $[\ell_1,\ell_2]$ in such a way that the set of simples
in $\A_{n,\lambda,\F}\operatorname{-mod}_0^{\alpha,\varsigma}$ with $\mathsf{ld}(L)\in [\ell_1,\ell_2]$
is the union of equivalence classes for $\preceq$.

It was proved in \cite[Theorem 9.4]{catO_charp} that $\A_{n,\lambda,\F}\operatorname{-mod}_0^{\alpha,\zeta}([\ell_1,\ell_2])$ admits a standardly stratified
structure in the sense of \cite{LW} with respect to the preorder $\preceq$.
Now we are going to recall what
this means and also describe the standard and proper standard objects for this standardly
stratified structure.

Let $A$ be a finite dimensional algebra over a field, $\mathcal{C}:=A\operatorname{-mod}$, $\Theta$ be an indexing set for the simple $A$-modules; we write $L_\theta$ for the simple object labelled by $\theta$.
The data of a standardly stratified structure on $\mathcal{C}$ is a partial pre-order $\preceq$ on $\Theta$, let $\sim$ denote the induced equivalence relation. Set $\Xi:=\Theta/\sim$,
this is a poset.

Note that $\preceq$ gives rise to a filtration on $\mathcal{C}$ by Serre subcategories indexed by $\Xi$: for $\xi\in \Xi$, consider the Serre span of the simples $L_{\theta'}$ with $\theta'\preceq \xi$, denote
it by $\mathcal{C}^{\preceq \xi}$. Similarly, we can define the Serre subcategory
$\mathcal{C}^{\prec \xi}$. Consider the Serre quotient $\mathcal{C}^{\xi}:=\mathcal{C}^{\preceq \xi}/\mathcal{C}^{\prec \xi}$, it comes with the quotient functor $\pi^\xi:
\mathcal{C}^{\preceq \xi}\twoheadrightarrow \mathcal{C}^{\xi}$. Let $\pi_\xi^!,\pi_\xi^*$
denote the left and right adjoint functors of $\pi_\xi$.  We write $\Delta^{\preceq}$ for $\bigoplus_{\xi} \pi_\xi^!$, and $\nabla^{\preceq}$ for $\bigoplus_{\xi}\pi_\xi^*$.  Further, for $\theta\in \xi$, let
$L^\xi_\theta, P^\xi_\theta$ denote the simple object
$\pi^\xi(L_\theta)$  and its projective cover, both are in $\mathcal{C}^\xi$. 
Set $\Delta^{\preceq}(\theta):=\Delta^{\preceq}(P^\xi_\theta)$ and $\underline{\Delta}^{\preceq}(\theta):=
\Delta^{\preceq}(L^\xi_\theta)$. These are the so called {\it standard} and {\it proper standard}
objects for the standardly stratified structure. The axioms of the standardly stratified structure
are as follows:
\begin{itemize}
\item The functor $\Delta^{\preceq}$ is exact.
\item For all $\theta\in \Theta$, the projective cover $P_\theta$ of $L_\theta$ in $\mathcal{C}$ admits an epimorphism to $\Delta^\preceq(\theta)$ such that the kernel is filtered by objects of the form
    $\Delta^\preceq(\theta')$ with $\theta\prec \theta'$.
\end{itemize}

Next, we recall, that by \cite[Proposition 2.6]{LW}, the opposite of a standardly stratified category
is still standardly stratified with the same labelling set and the same preorder. So we can talk about
(proper) costandard objects in $\mathcal{C}$. They are denoted by $\nabla_{\preceq}(\theta)$
and $\underline{\nabla}^\preceq(\theta)$.

We will need a general property of proper (co)standardly filtered objects. Fix $\xi$ in $\Xi$.

\begin{Lem}\label{Lem:costand_filt}
Let $M\in \mathcal{C}$. Suppose that $\operatorname{Hom}_{\mathcal{C}}(\Delta^{\preceq}(\theta),M)\neq 0\Rightarrow \theta\in \xi$. Then the adjunction unit $M\rightarrow (\pi^\xi)^* \pi^\xi(M)$ is an embedding.
\end{Lem}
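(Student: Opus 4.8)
The plan is to prove Lemma~\ref{Lem:costand_filt} by a direct analysis of the kernel of the adjunction unit, using that $\pi^\xi$ is a Serre quotient functor. Let $\varphi: M\rightarrow (\pi^\xi)^*\pi^\xi(M)$ be the adjunction unit and let $K:=\ker\varphi$. Since $\pi^\xi$ is exact and $\pi^\xi\varphi$ is an isomorphism (a standard property of the unit for a Serre quotient: $\pi^\xi(\pi^\xi)^*\cong \operatorname{id}$), we get $\pi^\xi(K)=0$, i.e.\ $K$ lies in the Serre subcategory of $\mathcal{C}^{\preceq\xi}$ generated by the simples $L_\theta$ with $\theta\prec\xi$. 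The goal is thus to show $K=0$ under the hypothesis on $\operatorname{Hom}_{\mathcal{C}}(\Delta^\preceq(\theta),M)$.

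First I would reduce to showing $\operatorname{Hom}_{\mathcal{C}}(L_\theta, K)=0$ for every $\theta$ with $\theta\prec\xi$ (together with $\theta\in\xi$, but the latter is excluded since $K$ has no composition factors in $\xi$); since $K$ is a nonzero object in a finite length category, if all these Hom-spaces vanish then $K$ has no simple subobject, forcing $K=0$. So fix $\theta$ with $\theta\prec\xi$ and suppose $\operatorname{Hom}_{\mathcal{C}}(L_\theta,K)\neq 0$; then $\operatorname{Hom}_{\mathcal{C}}(L_\theta,M)\neq 0$ as $K\subset M$. Now I would bring in the standard object: the projective cover $P^\xi_\theta$ of $L^\xi_\theta$ in $\mathcal{C}^\xi$ surjects onto $L^\xi_\theta$, so $\Delta^\preceq(\theta)=\pi_\xi^!(P^\xi_\theta)$ surjects onto $\pi_\xi^!(L^\xi_\theta)=\underline{\Delta}^\preceq(\theta)$, which has simple top $L_\theta$ (this is a general fact about standardly stratified categories, see \cite{LW}). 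The key point is that $\Delta^\preceq(\theta)$ maps onto $L_\theta$, hence a nonzero map $L_\theta\hookrightarrow M$ would lift: I need $\operatorname{Ext}^1_{\mathcal{C}}$-type reasoning or rather I would argue that a nonzero homomorphism $\Delta^\preceq(\theta)\rightarrow M$ exists. More precisely: $L_\theta\hookrightarrow K\subset M$ gives a nonzero element of $\operatorname{Hom}(L_\theta,M)$; since $P_\theta\twoheadrightarrow L_\theta$ and $P_\theta$ is projective, we get $\operatorname{Hom}(P_\theta,M)\neq 0$; using the filtration of $P_\theta$ by $\Delta^\preceq(\theta')$ with $\theta'\succeq\theta$ together with the fact that $P_\theta\twoheadrightarrow\Delta^\preceq(\theta)$, one shows $\operatorname{Hom}(\Delta^\preceq(\theta),M)\neq 0$ or $\operatorname{Hom}(\Delta^\preceq(\theta'),M)\neq 0$ for some $\theta'\succ\theta$ — in either case the hypothesis forces $\theta\in\xi$ or $\theta'\in\xi$, but $\theta'\succ\theta\prec\xi$ need not directly contradict, so I must be careful.

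The cleanest route, which I would actually carry out, is: $K$ has a finite filtration whose subquotients are among the $\underline{\Delta}^\preceq$ of labels $\prec\xi$? No — rather, I would use that $K\subset M$ and that $M$, by hypothesis, has the property that $\operatorname{Hom}(\Delta^\preceq(\theta),M)\neq 0$ implies $\theta\in\xi$. Dualize: consider a simple sub $L_\theta\subset K$ with $\theta\prec\xi$. Take the projective cover $\pi_\xi^!(P^\nu_\theta)=\Delta^\preceq(\theta)$ where $\nu$ is the equivalence class of $\theta$, so $\nu\prec\xi$ (strictly). The object $\Delta^\preceq(\theta)$ has top $L_\theta$, so the inclusion $L_\theta\hookrightarrow M$ and projectivity of $\Delta^\preceq(\theta)$ over... no, $\Delta^\preceq(\theta)$ is not projective in $\mathcal{C}$. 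Instead I use: $\operatorname{Hom}(\Delta^\preceq(\theta), M)$ surjects onto $\operatorname{Hom}(L_\theta,M)$ modulo lower terms — actually the correct and standard statement is that since $\Delta^\preceq(\theta)$ has simple top $L_\theta$ and its radical has composition factors $L_{\theta'}$ with $\theta'$ in strictly lower equivalence classes or... Hmm. Given the delicacy, I expect \textbf{the main obstacle} to be pinning down exactly which general lemma about standardly stratified categories (from \cite{LW}) lets one convert ``$L_\theta\subset M$ with $\theta\prec\xi$'' into ``$\operatorname{Hom}(\Delta^\preceq(\theta),M)\neq 0$'', since $\Delta^\preceq(\theta)$ is relatively projective only with respect to the filtration by Serre subcategories. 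The resolution is to invoke the relative projectivity: $\Delta^\preceq(\theta)$ is projective in $\mathcal{C}^{\preceq\nu}$ for $\nu=[\theta]$, and $\operatorname{Hom}_{\mathcal{C}}(\Delta^\preceq(\theta),-)$ restricted to $\mathcal{C}^{\preceq\nu}$ is exact; applying this to the largest subobject $M^{\preceq\nu}\subset M$ lying in $\mathcal{C}^{\preceq\nu}$, which contains $L_\theta$ (hence is nonzero and has $L_\theta$ as a composition factor), one gets a nonzero map $\Delta^\preceq(\theta)\to M^{\preceq\nu}\subset M$, contradicting the hypothesis since $\theta\notin\xi$. This finishes the argument, and I would write it up in that order: set up $K=\ker\varphi$, show $\pi^\xi(K)=0$, pass to a simple sub $L_\theta\subset K$, use relative projectivity of $\Delta^\preceq(\theta)$ to produce a nonzero $\Delta^\preceq(\theta)\to M$, and conclude by the hypothesis.
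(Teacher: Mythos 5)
Your proposal is correct, but it takes a different route from the paper's and is somewhat more roundabout than necessary. The paper's (very terse) proof goes the other way: it shows directly that $\operatorname{Hom}_{\mathcal{C}}(\Delta^{\preceq}(\theta),K)=0$ for \emph{every} $\theta$ — for $\theta\in\xi$ by adjunction from $\pi^\xi(K)=0$, and for $\theta\notin\xi$ by the hypothesis together with left exactness (since $K\subset M$) — and then concludes $K=0$ using that the projectives $P_\theta$ are filtered by the $\Delta^\preceq(\theta')$'s, so $\operatorname{Hom}(\Delta^\preceq,K)=0$ forces $\operatorname{Hom}(P_\theta,K)=0$ for all $\theta$, hence $K=0$. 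You instead argue by contradiction through a simple subobject $L_\theta\subset K$ and invoke relative projectivity of $\Delta^\preceq(\theta)$ in $\mathcal{C}^{\preceq[\theta]}$. That works, but the relative projectivity is more than you need: once you have $L_\theta\hookrightarrow K\subset M$ with $\theta\notin\xi$, you can simply compose with the canonical surjection $\Delta^\preceq(\theta)\twoheadrightarrow L_\theta$ (its simple top) to get a nonzero element of $\operatorname{Hom}_{\mathcal{C}}(\Delta^\preceq(\theta),M)$, contradicting the hypothesis directly. Streamlining it that way gives arguably the shortest proof of all, and also bypasses the hand-wringing in the middle of your write-up about lifting along $P_\theta$ and which general lemma from \cite{LW} to cite — none of that is needed. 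One more small point: as you implicitly use, the statement only makes sense when $M\in\mathcal{C}^{\preceq\xi}$ (otherwise $\pi^\xi(M)$ is undefined), and this is indeed how the lemma is applied in Step 3 of the proof of Theorem~\ref{Thm:modp_main}.
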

\begin{proof}
Let $K$ be the kernel of the adjunction unit. Then $\pi^\xi(K)=0$, so 
$\operatorname{Hom}_{\mathcal{C}}(\Delta_{\preceq}(\theta),K)=0$ for all $\theta$. It follows that $K=0$.
\end{proof}

Now we get back to the categories $\A_{n,\lambda,\F}\operatorname{-mod}_0^{\alpha,\varsigma}$ and describe the standard and proper standard objects in $\A_{n,\lambda,\F}\operatorname{-mod}_0^{\alpha,\varsigma}([\ell_1,\ell_2])$. Take a label $(\eta,m)$ of a simple
in this category.
Recall the $\Ring$-forms $L^{gr}_\Ring(\eta), P^{gr}_\Ring(\eta)$
introduced in Section \ref{SS:Rp_assumptions}. Let $L_\Ring(\eta,m), P_\Ring(\eta,m)$
be obtained from $L^{gr}_\Ring(\eta), P^{gr}_\Ring(\eta)$ by shifting the grading such that the minimal 
degree in $L_\Ring(\eta,m)$ is $m$, while $P_\Ring(\eta,m)\twoheadrightarrow L_\Ring(\eta,m)$.
Now change the base to $\F$ getting graded $\A_{n,\lambda,\F}$-modules
to be denoted $\underline{\Delta}^{\preceq}_\F(\eta,m)$ (from $L_\Ring(\eta,m)$) and $\Delta^{\preceq}_\F(\eta,m)$
(from $P_\Ring(\eta,m)$).
They are finitely generated, and so, by Remark \ref{Rem:red_mod_p}, give rise to objects in
$\A_{n,\lambda,\F}\operatorname{-mod}_0^{\alpha,\varsigma}([\ell_1,\ell_2])$. According to
\cite[Theorem 9.4]{catO_charp}, the objects $\underline{\Delta}^\preceq_\F(\eta,m), \Delta^{\preceq}_\F(\eta,m)$
are proper standard and standard objects labelled $(\eta,m)$ in $\A_{n,\lambda,\F}\operatorname{-mod}_0^{\alpha,\varsigma}([\ell_1,\ell_2])$.
Let $\underline{\nabla}^\preceq_\F(\eta,m)$ and $\nabla^{\preceq}_\F(\eta,m)$ denote the proper costandard and costandard objects in 
$\A_{n,\lambda,\F}\operatorname{-mod}_0^{\alpha,\varsigma}([\ell_1,\ell_2])$ labelled by $(\eta,m)$, note that they are not obtained by reduction from characteristic $0$.

The following lemma follows directly from Remark \ref{Rem:duality}.

\begin{Lem}\label{Lem:duality}
For all labels $(\eta,m)$ of simple objects in $\A_{n,\lambda,\F}\operatorname{-mod}_0^{\alpha,\varsigma}([\ell_1,\ell_2])$, the classes of 
$\underline{\Delta}^\preceq_\F(\eta,m)$ and $\underline{\nabla}^\preceq_{\F}(\eta,m)$ in the Grothendieck group coincide. 
\end{Lem}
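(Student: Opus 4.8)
The plan is to deduce the statement entirely from the duality functor $\bullet^\vee$ of Remark \ref{Rem:duality}: it is an exact contravariant auto-equivalence of $\mathcal{A}_{n,\lambda,\F}\operatorname{-mod}_0^{gr}$ that fixes every simple object. Being exact and contravariant, it sends a composition series of a module $M$ to a composition series of $M^\vee$ whose factors are the $\bullet^\vee$-duals of those of $M$, i.e. the same simples; hence $[M^\vee]=[M]$ in the Grothendieck group of any subquotient abelian category containing $M$ and $M^\vee$. So it suffices to show that $\bullet^\vee$ carries the proper standard object $\underline{\Delta}^\preceq_\F(\eta,m)$ of $\mathcal{C}:=\A_{n,\lambda,\F}\operatorname{-mod}_0^{\alpha,\varsigma}([\ell_1,\ell_2])$ to the proper costandard object $\underline{\nabla}^\preceq_\F(\eta,m)$, for then $[\underline{\nabla}^\preceq_\F(\eta,m)]=[(\underline{\Delta}^\preceq_\F(\eta,m))^\vee]=[\underline{\Delta}^\preceq_\F(\eta,m)]$.

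First I would check that $\bullet^\vee$ descends to $\mathcal{C}$. Since $\bullet^\vee$ fixes every simple object, it preserves each of the $bp$ direct summands $\A_{n,\lambda,\F}\operatorname{-mod}_0^{\alpha,\varsigma}$ of $\A_{n,\lambda,\F}\operatorname{-mod}_0^{gr}$. Moreover, from the construction in Section \ref{SS_CatO} one has $(M^\vee)_i\cong (M_i)^*$, so $\mathsf{ld}(M^\vee)=\mathsf{ld}(M)$; hence $\bullet^\vee$ preserves each Serre subcategory $\A_{n,\lambda,\F}\operatorname{-mod}_0^{\alpha,\varsigma}(\geqslant \ell)$, induces a contravariant auto-equivalence of each quotient $\mathcal{C}$ which still fixes all simples there, and — the preorder $\preceq$ depending only on $\mathsf{ld}$ — preserves $\preceq$.

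Next comes the abstract step: a simple-fixing, $\preceq$-preserving contravariant duality on a standardly stratified category interchanges proper standards and proper costandards. Concretely, $\bullet^\vee$ preserves the Serre subcategories $\mathcal{C}^{\preceq \xi}$ and $\mathcal{C}^{\prec \xi}$, hence induces a contravariant self-equivalence of each stratum $\mathcal{C}^\xi=\mathcal{C}^{\preceq\xi}/\mathcal{C}^{\prec\xi}$ intertwining the quotient functor $\pi^\xi$ with itself; under a contravariant equivalence the left adjoint $\pi^!_\xi$ and the right adjoint $\pi^*_\xi$ are exchanged, so $(\underline{\Delta}^\preceq(\theta))^\vee=(\pi^!_\xi L^\xi_\theta)^\vee\cong \pi^*_\xi((L^\xi_\theta)^\vee)=\pi^*_\xi L^\xi_\theta=\underline{\nabla}^\preceq(\theta)$, using $(L^\xi_\theta)^\vee\cong L^\xi_\theta$. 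Equivalently, $\bullet^\vee$ matches the standardly stratified structure of $\mathcal{C}$ with that of $\mathcal{C}^{opp}$ from \cite[Proposition 2.6]{LW}.

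The only place requiring care — and not, I think, a genuine obstacle — is the grading normalization. The object $\underline{\Delta}^\preceq_\F(\eta,m)$ is the specific graded lift obtained by reducing $L_\Ring(\eta,m)$ mod $p$; since $\bullet^\vee$ introduces no grading shift (by $(M^\vee)_i\cong (M_i)^*$) and both $\underline{\Delta}^\preceq_\F(\eta,m)$ and $\underline{\nabla}^\preceq_\F(\eta,m)$ are attached to the label $(\eta,m)$ with $L_\F(\eta,m)$ occurring as top, respectively socle, in the same extreme degree, the identification $(\underline{\Delta}^\preceq_\F(\eta,m))^\vee\cong \underline{\nabla}^\preceq_\F(\eta,m)$ holds on the nose; and in any event a grading shift is invisible in the Grothendieck group appearing in the statement. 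This completes the argument.
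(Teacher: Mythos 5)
Your proof is correct and coincides with the paper's intended argument (the paper simply asserts the lemma ``follows directly from Remark~\ref{Rem:duality}''): the simple-fixing contravariant duality $\bullet^\vee$ descends to the subquotient category, exchanges the left and right adjoints $\pi^!_\xi$ and $\pi^*_\xi$ of the stratum quotient functor, hence carries $\underline{\Delta}^\preceq_\F(\eta,m)$ to $\underline{\nabla}^\preceq_\F(\eta,m)$, and preserves $K_0$-classes because it fixes every simple. One small inaccuracy, harmless given that you correctly establish the identification with no grading shift: your closing aside that a grading shift would be invisible in $K_0(\A_{n,\lambda,\F}\operatorname{-mod}_0^{\alpha,\varsigma}([\ell_1,\ell_2]))$ is false, since shifting the grading changes the label $(\eta,m)$ and hence the simple.
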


To finish this section we introduce some notation. 
Fix a pair $(\alpha,\varsigma)$ and $n_0\in \{0,\ldots,b-1\}$. For $n=n_0+bd$, we set $\mathcal{C}_n:=\A_{n,\lambda,\F}\operatorname{-mod}^{\alpha,\varsigma+da(b-1)/2}_0$. Let $\Theta_n$
denote the labelling set of simples in $\mathcal{C}_n$ (consisting of pairs
$(\eta,m)$). Similarly, we can consider the category $\bar{\mathcal{C}}_n:=\bar{\A}_{n,\lambda,\F}\operatorname{-mod}^{\alpha,\varsigma}_0$.
Its simples are in bijection with $\Theta_n$ via $L\mapsto
\bar{L}:=L\otimes \F[\mathfrak{z}_1]$. We note that the lowest degree
of $L\otimes \F[\mathfrak{z}_1]$ is the same as that of $L$.
The category $\bar{\A}_{n,\lambda,\F}\operatorname{-mod}^{\alpha,\varsigma}_0$ carries a standardly
stratified structure defined in the same way as for $\A_{n,\lambda,\F}\operatorname{-mod}_0^{\alpha,\varsigma}$
with the similarly defined pre-order. The standard and proper standard objects for
$\bar{\A}_{n,\lambda,\F}\operatorname{-mod}^{\alpha,\varsigma}_0$ are obtained from those for
$\A_{n,\lambda,\F}\operatorname{-mod}^{\alpha,\varsigma}_0$ by tensoring with $\F[\mathfrak{z}]$.

Set $\Theta:=\bigsqcup_{i=1}^N \Theta_i$. Define an equivalence relation $\sim$ on $\Theta$ as follows:
two pairs $(\eta,m)$ and $(\eta',m')$ are equivalent if $\eta,\eta'$ are $\mathsf{eu}$-equivalent
and $c_\eta-m=c_{\eta'}-m'$. Note that the restriction of $\sim$ to each $\Theta_i$ is the equivalence 
relation given by the pre-order $\preceq$.  
We can define the partial preorder on
$\Theta$ in the same way as before. 
Let $\Xi:=\Theta/\sim$ (we emphasize that
the simples from different categories $\mathcal{C}_n$ can be equivalent). So,
for a poset ideal $\Xi_0\in \Xi$, we can consider the categories $\mathcal{C}_{n,\Xi_0},
\bar{\mathcal{C}}_{n,\Xi_0}$. Similarly, for finite intervals $\mathcal{I}\subset \Xi$, we can
consider the subquotient categories $\mathcal{C}_{n,\mathcal{I}},
\bar{\mathcal{C}}_{n,\mathcal{I}}$.

\subsection{Proof of Theorem \ref{Thm:modp_main}}\label{SS_main_Thm_p_proof}
\begin{proof}
{\it Step 1}. 
By Lemma \ref{Lem:0_preservation}, $B_{n,\F}\otimes_{\bar{\A}_{n,\lambda,\F}}\bullet$
restricts to the direct summands $\bar{\mathcal{C}}_{n}\rightarrow 
\mathcal{C}_{n+b}$. By adjunction, $B_{n,\F}^!\otimes_{\A_{n+b,\lambda,\F}}\bullet$ sends
$\mathcal{C}_{n+b}$ to $\bar{\mathcal{C}}_{n}$.

We claim that the functors in the previous paragraph also preserve the filtrations coming from the preorders $\preceq$. Let us consider the case of
$B_{n,\F}\otimes_{\bar{\A}_{n,\lambda,\F}}\bullet$ first.
The functor $B_{n,\F}\otimes_{\bar{\A}_{n,\lambda,\F}}\bullet$
sends the simple object 
$\bar{L}_{\F}(\theta)\in \bar{\mathcal{C}}_{n,\leqslant \xi}$ to a quotient of $B_{n,\F}\otimes_{\bar{\A}_{n,\lambda,\F}}\left(\underline{\Delta}^\preceq_\F(\theta)\otimes \F[\mathfrak{z}]\right)$.

It follows from (a5) (see Section \ref{SS:Rp_assumptions}) that
$B_{n,\Ring}\otimes_{\bar{\A}_{n,\lambda,\Ring}}(L^{gr}_\Ring(\eta)\otimes \Ring[\mathfrak{z}])$
is the direct sum of  objects of the form $L^{gr}_\Ring(\eta')$ for $\eta'\sim \eta$. Reduce mod $p$.  
It follows that the finite dimensional quotients of
$B_{n,\F}\otimes_{\bar{\A}_{n,\lambda,\F}}L_\F(\theta)$ are all in the equivalence 
classes that are less than or equal to $\xi$ (in the case when the corresponding equivalence class 
contains $L^{gr}_\F(\eta)$; the general case is obtained by grading shift).

The case of the functor  $B^!_{n,\F}\otimes_{\A_{n+b,\lambda,\F}}\bullet$ is similar: here we use (a6).

{\it Step 2}. So, for every interval $\mathcal{I}\subset \Xi$, we can view $B_{n,\F}\otimes_{\bar{\A}_{n,\lambda,\F}}\bullet$
as a functor $\bar{\mathcal{C}}_{n,\mathcal{I}}
\rightarrow \mathcal{C}_{n+b,\mathcal{I}}$.
Thanks to (a5) and the description of the proper standard objects, this functor sends
proper standard objects to direct sums of proper standards. Similarly, thanks
to (a6), the functor $B^!_{n,\F}\otimes_{\A_{n+b,\lambda,\F}}\bullet$
sends proper standards to proper standardly filtered objects and standards to 
direct sums of standards. Notice that
these functors still form an adjoint pair when viewed as functors between the
subquotient categories. 

{\it Step 3}. Let $\underline{\bar{\nabla}}^{\preceq}_{\F}(\theta)$ denote the proper costandard object labelled by $\theta$
in $\bar{C}_n$.
 We claim that 
\begin{itemize}
\item[($\heartsuit$)]
$M:=B_{n,\F}\otimes_{\bar{\A}_{n,\lambda,\F}}\underline{\bar{\nabla}}^\preceq_{\F}(\theta)$
is isomorphic to a direct sum of some $\underline{\nabla}^\preceq_{\F}(\theta')$'s
for $\theta'\in \xi$, where $\xi$ is the equivalence class of $\theta$
(for the equivalence relation introduced in the end of Section \ref{SS_stand_stratif}).
\end{itemize} 
Note that, since the  functor $B_{n,\F}\otimes_{\bar{\A}_{n,\lambda,\F}}\bullet$  
sends  $\underline{\bar{\Delta}}^\preceq_{\F}(\theta)$
to a direct sum of $\underline{\bar{\Delta}}^\preceq_{\F}(\theta')$'s with $\theta'\in \xi$, 
we have that $\pi_{\xi}\left(B_{n,\F}\otimes_{\bar{\A}_{n,\lambda,\F}}\underline{\bar{\nabla}}^\preceq_{\F}(\theta)\right)$
is the direct sum of  objects of the form $\pi_{\xi}(L_\F(\theta'))$, where 
$\theta'\in \xi$. Therefore 
$\pi_\xi^*\circ\pi_\xi(M)$ is the direct sum as in ($\heartsuit$). It remains to show that 
$M\xrightarrow{\sim} \pi_\xi^*\circ\pi_\xi(M)$. For this, we will show that the
adjunction unit is an embedding, and that the $K_0$ classes coincide. 

To show that the adjunction unit is an embedding we  check the condition of 
Lemma \ref{Lem:costand_filt}. We have
$$\operatorname{Hom}_{\mathcal{C}_{n+b,\mathcal{I}}}(\Delta^\preceq_\F(\sigma), M)\cong 
\operatorname{Hom}_{\overline{\mathcal{C}}_{n,\mathcal{I}}}
(B_n^!\otimes_{\A_{n+b,\lambda,\F}}\Delta^{\preceq}_{\F}(\sigma), \underline{\bar{\nabla}}^\preceq_{\F}(\theta)),$$
for all $\sigma\in \Theta_{n+b}, \theta\in \Theta_n$.
The object $B_n^!\otimes_{\A_{n+b,\lambda,\F}}\Delta^{\preceq}_{\F}(\sigma)$ is the direct sum of some standard (for $\preceq$) 
objects  with labels in the equivalence class of $\sigma$. So the left hand side is nonzero only if 
$\sigma\in \xi$, which verifies the assumption of Lemma \ref{Lem:costand_filt}, hence $M\hookrightarrow \pi_\xi^*\circ\pi_\xi(M)$. 

Now we check that the $K_0$ classes coincide. Recall, Lemma \ref{Lem:duality}, that
$[\underline{\Delta}^\preceq_{\F}(\sigma)]=[\underline{\nabla}^\preceq_{\F}(\sigma)]$ (where the square brackets denote the $K_0$-classes)
for all $\sigma\in \Theta$. Also recall from Step 2 that $B_{n,\F}\otimes_{\bar{\A}_{n,\lambda,\F}}(\underline{\bar{\Delta}}^\preceq_\F(\theta))$
is the direct sum of proper standards with labels in $\xi$. It follows that 
$[M]=[\pi_\xi^!\pi_\xi(M)]=[\pi_\xi^*\pi_\xi(M)]$. This finishes the proof of ($\heartsuit$). 

{\it Step 4}. We claim that the object $B_{n,\F}\otimes_{\bar{\A}_{n,\lambda,\F}}\bar{L}_\F(\theta)$
is semisimple. Indeed, $\bar{L}_\F(\theta)$ is the image of any nonzero homomorphism 
$\underline{\bar{\Delta}}^\preceq_{\F}(\theta)\rightarrow \underline{\bar{\nabla}}^\preceq_\F(\theta)$.
So $B_{n,\F}\otimes_{\bar{\A}_{n,\lambda,\F}}\bar{L}_\F(\theta)$ is the image of the sum homomorphism 
between $B_{n,\F}\otimes_{\bar{\A}_{n,\lambda,\F}}\underline{\bar{\Delta}}^\preceq_\F(\theta)$, the direct sum of some proper standards labelled by elements of $\xi$ (see Step 2),  to $B_{n,\F}\otimes_{\bar{\A}_{n,\lambda,\F}}\underline{\bar{\nabla}}^\preceq_\F(\theta)$, the direct sum of some proper costandards labelled by the same elements (see Step 3). Our claim follows. 

{\it Step 5}. Here we finish the proof of (1). By Step 4, the functor $\heis_{n,\F}:
\mathcal{C}_n\rightarrow \mathcal{C}_{n+b}$ sends simple objects to semisimple ones:
it is equal to $B_{n,\F}\otimes_{\bar{\A}_{n,\lambda,\F}}(\bullet\otimes \F[\mathfrak{z}_1])$
and then we use Step 4. The functor $\heis_{n,\F}^\tau$ is a direct summand in 
$\heis_{n,\F}^d$, so also sends a simple object to a semisimple one. Recall 
that $\heis_{n,\F}^\tau(L_\F^{gr}(\eta))$ is a quotient of $\mathcal{B}_{n,\F}^\tau\otimes_{\bar{\A}_{n,\lambda,\F}}[\underline{\bar{\Delta}}^{\preceq,gr}_\F(\eta)]$. By (a5),
the latter module is $\underline{\bar{\Delta}}^{\preceq,gr}_{\F}(\eta+b\tau)$. This module has unique simple quotient,
$L^{gr}_\F(\eta+b\tau)$, and we are done in case when $m=c_\eta$. For the general $m$, the claim is obtained by shifting the grading.

{\it Step 6}. Now we prove (2). Thanks to Step 5, the left hand side in (2) is a semisimple object. Now we use (\ref{eq:heis_composition})
and argue similarly to Step 5, to show that the left hand side in (2) is isomorphic to a direct summand of a right hand side. 
If one of these direct summands is proper, then $\heis^d_\F L(\eta)$ (an object of which $\heis_{\F}^{\tau''}L(\eta+b\tau')$ is a summand)  is isomorphic to a proper direct summand of 
$\bigoplus_{\tau}\tau\otimes L^{gr}_\F(\eta+b\tau)$. This contradicts part (1). 
\end{proof}

\subsection{Character formulas: positive parameters}
We consider $K_0$-groups $K_0(\A_{n,\lambda,\F}\operatorname{-mod}^{gr}_0)$. This is 
a module over $\Z[v^{\pm 1}]$, where the multiplication by $v$ corresponds to the grading shift functor $\bullet\langle -\frac{1}{b}\rangle$
(so that $M\langle -\frac{1}{b}\rangle_i=M_{i-\frac{1}{b}}$). 

Set $\hat{K}_0(\A_{n,\lambda,\F}\operatorname{-mod}^{gr}_0):=\Z((v))\otimes_{\Z[v^{\pm 1}]}K_0(\A_{n,\lambda,\F}\operatorname{-mod}^{gr}_0)$.
Note that the classes $[\Delta^{gr}_\F(\eta)]$ still make sense in $\hat{K}_0(\A_{n,\lambda,\F}\operatorname{-mod}^{gr}_0)$ and form a
$\Z((v))$-basis in this group. The problem of finding the characters of the objects $L_\F^{gr}(\eta)$ (as graded $S_n$-modules)
reduces to the problem of expressing these classes via those of $\Delta_\F^{gr}(\eta')$ (the latter are referred to as the {\it standard basis}). Our goal in this section is to describe, for a partition $\tau$ of $d$, the linear map 
$$[\heis^\tau_\F]:\hat{K}_0(\A_{n,\lambda,\F}\operatorname{-mod}^{gr}_0)\rightarrow 
\hat{K}_0(\A_{n+db,\lambda,\F}\operatorname{-mod}^{gr}_0)$$
in the standard bases.  
Thanks to Theorem \ref{Thm:modp_main}, 
once we know this map we can reduce the computation of the classes $[L_\F^{gr}(\eta)]$
to the case when $\eta$ is coprime to $b$. 

In order to perform the computation we need some preparation. Recall the operators $\mathsf{b}_i$ and $\mathsf{b}_\tau$
on $\bigoplus_n K_0(\mathcal{O}_{n,\lambda})$, see Section \ref{SS_SV}. We can embed $K_0(\mathcal{O}_{n,\lambda})$ to
$K_0(\tilde{\mathcal{O}}_{n,\lambda})$ by sending $[\Delta(\eta)]$ to $[\Delta^{gr}(\eta)]$ for each 
partition $\eta$ of $n$. This allows to extend the operators $\mathsf{b}_\tau$ to $\Z[v^{\pm 1}]$-linear operators
on $\bigoplus_n K_0(\tilde{\mathcal{O}}_{n,\lambda})$. Then we can identify 
$\hat{K}_0(\tilde{\mathcal{O}}_{n,\lambda})$ with $\hat{K}_0(\A_{n,\lambda,\F}\operatorname{-mod}^{gr}_0)$
as $\Z((v))$-modules by sending $[\Delta^{gr}(\eta)]$ to $[\Delta^{gr}_\F(\eta)]$ for each $\eta$. 
So we get $\Z((v))$-linear maps 
$$\mathsf{b}_\tau: \hat{K}_0(\A_{n,\lambda,\F}\operatorname{-mod}^{gr}_0)\rightarrow \hat{K}_0(\A_{n+db,\lambda,\F}\operatorname{-mod}^{gr}_0).$$
For a representation $U\cong \bigoplus \tau^{n_\tau}$ of $S_d$ we define $\mathsf{b}_V$ as $\sum n_\tau \mathsf{b}_\tau$.

The following is the main result of this section.

\begin{Prop}\label{Prop:K0_computation}
For any partition $\tau$ of $d$, we have
$$[\heis^\tau_\F]=\sum_{i=0}^d (-1)^i v^{bpi} \mathsf{b}_{\tau\otimes \Lambda^i \C^d},$$
where we write $\C^d$ for the permutation representation of $S_d$. 
\end{Prop}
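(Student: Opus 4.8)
The map $[\heis^\tau_\F]$ on Grothendieck groups is determined by its action on the standard basis, and by Theorem \ref{Thm:modp_main} together with (a5) we already know $\heis^\tau_\F$ sends the preferred standard object $\underline{\Delta}^{\preceq,gr}_\F(\eta)$ to $\underline{\Delta}^{\preceq,gr}_\F(\eta+b\tau)$ when $\eta$ is coprime to $b$. However, I need the class of $\heis^\tau_\F$ applied to an \emph{arbitrary} $[\Delta^{gr}_\F(\eta)]$, i.e. on the full standard basis, and here the functor does not behave as simply. The idea is to pass through a second description of $\heis^\tau_\F$ on $K_0$ coming from the characteristic-zero functor $\heis^\tau_\OCat$ via the reduction-mod-$p$ procedure, and to use the $\mathfrak{z}^{(1)}$-structure on the bimodule $\mathcal{B}^\tau_{n,\F}$ (Remark \ref{Rem:bimodule_endomorphisms}) to bridge the modular category and the Shan--Vasserot operators $\mathsf{b}_\tau$.

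\textbf{Key steps.} First I would record that on $K_0$ the functor $\heis^\tau_\F$ is given by $[\mathcal{B}^\tau_{n,\F}\otimes^L_{\A_{n,\lambda,\F}\otimes\F[\mathfrak{z}^{(1)}]^{\otimes d}}\bullet]$, where $\F[\mathfrak{z}^{(1)}]^{\otimes d}$ acts on a test module via evaluation at $0$. Because $\mathcal{B}^\tau_{n,\F}$ is flat over $\F[(\mathfrak{z}^{(1)})^{d}]$ (Remark \ref{Rem:bimodule_endomorphisms}) and $(\mathfrak{z}^{(1)})^d\cong \mathbb{A}^d$, the Koszul resolution of the skyscraper at $0$ over $\F[(\mathfrak{z}^{(1)})^d]$ gives an identity in $K_0$: tensoring with the skyscraper at $0$ equals tensoring with the generic (localized at the complement of $0$) bimodule, corrected by the alternating sum over $\Lambda^i$ of the $d$-dimensional space $(\mathfrak{z}^{(1)})^{d*}$, which carries its natural $S_d$-action $\C^d$. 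Each application of the Euler operator on $(\mathfrak{z}^{(1)})^{d*}$ shifts the grading by $p$ (the Frobenius twist places $\mathfrak{z}^{(1)*}$ in Euler degree $p$), and there are $b$ copies of $\mathfrak{z}$ per Heisenberg step --- no, more precisely a single $\mathfrak{z}$ of ``$b$-weight'', so the grading shift recorded by $v$ is $v^{bp}$ per exterior power, giving the factor $v^{bpi}$ and the $(-1)^i$ from the Koszul differential. This produces exactly $\sum_{i=0}^d (-1)^i v^{bpi}[\text{(generic version)}\otimes \Lambda^i\C^d]$ after taking $\Hom_{S_d}(\tau,-)$, i.e. after extracting the $\tau$-isotypic part.

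\textbf{Identifying the generic version with $\mathsf{b}_\tau$.} The remaining point is that the ``generic'' functor --- tensoring with the localization of $\mathcal{B}^d_{n,\F}$ away from the diagonal, then taking the $\tau\otimes\Lambda^i\C^d$ isotypic component --- induces on $K_0$ precisely the operator $\mathsf{b}_{\tau\otimes\Lambda^i\C^d}$. For this I would use Remark \ref{Rem:action_compatibility}: over $\Ring$, the localized bimodule $(B^d_n)^{reg}\otimes_{D}\C[(\mathfrak z^d)^{reg}]$ is $S_d$-equivariantly a free module built from $V^{\otimes d}$, and under the localization/completion equivalences it corresponds, on the level of categories $\OCat$, to the parabolic induction of $V^{\boxtimes d}$ on a Levi $S_b^d$. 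Base-changing the characteristic-zero identity $\heis^\tau_\OCat = \heis^\tau_{SV}$ (Theorem \ref{Thm:SV_comparison}) through the $\Ring$-forms (assumptions (a3)--(a6) guarantee the standard/costandard/projective filtrations survive reduction), together with the description of $\heis^\tau_{SV}$ on $K_0$ as $\mathsf{b}_\tau$ from Section \ref{SS_SV}, gives that the generic functor acts on the standard basis by $\mathsf{b}_\tau$ (extended $\Z((v))$-linearly via the identification of $\hat K_0(\tilde\OCat_{n,\lambda})$ with $\hat K_0(\A_{n,\lambda,\F}\operatorname{-mod}^{gr}_0)$). Combining the two steps yields the stated formula.

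\textbf{Main obstacle.} The delicate part is making the Koszul/Frobenius bookkeeping precise: one must check that the $S_d$-action on the Koszul complex of the skyscraper at $0\in(\mathfrak z^{(1)})^d$ is exactly $\Lambda^\bullet\C^d$ with the differential of internal degree $bp$, and that this action is \emph{compatible} with the $S_d$-action on $\mathcal{B}^d_{n,\F}$ from Lemma \ref{Lem:Sd_action_intertwined}, so that extracting isotypic components commutes with the Koszul resolution. This is where Lemma \ref{Lem:Sd_action_intertwined} and the flatness in Remark \ref{Rem:bimodule_endomorphisms} are essential, and a careful statement of ``which $v$-power'' (I expect $v^{bp}$, matching the content shift $a(b-1)/2$ appearing elsewhere only through the degree-$p$ placement of $\mathfrak z^{(1)*}$) must be pinned down against the grading conventions of Section \ref{SS:charp_cat}.
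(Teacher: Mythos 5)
Your proposal takes essentially the same approach as the paper: Koszul-resolve $\F[\mathfrak{z}_1^d]$ over $\F[(\mathfrak{z}^{(1)})^d]$, tensor with $\mathcal{B}^d_{n,\F}$ using the flatness from Remark \ref{Rem:bimodule_endomorphisms} (and the $S_d$-compatibility from Lemma \ref{Lem:Sd_action_intertwined}), extract the $\tau$-isotypic component, and pass to $K_0$; the grading-shift bookkeeping (placing $\mathfrak{z}^{(1)*}$ in degree $p$ with $v$ the shift by $\frac{1}{b}$, giving $v^{bpi}$) and the appearance of $\Lambda^i\C^d$ as the $S_d$-representation on the $i$th Koszul term are both correct. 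One thing to tighten: the zeroth Koszul term is not a localization (``localized at the complement of $0$'' or ``localization of $\mathcal{B}^d_{n,\F}$ away from the diagonal'' are misdescriptions of what is simply the free module $\F[\mathfrak{z}^d]$), and the identification of the $\tau'$-isotypic component of $\mathcal{B}^d_{n,\F}\otimes_{\A_{n,\lambda,\F}}\bullet$ with $\mathsf{b}_{\tau'}$ on $\hat{K}_0$ needs no detour through Remark \ref{Rem:action_compatibility}: it holds directly by the definition of $\mathsf{b}_{\tau'}$ on the modular $\hat{K}_0$ via $[\Delta^{gr}(\eta)]\mapsto[\Delta^{gr}_\F(\eta)]$, combined with the $\Ring$-projectivity in (a1) which makes $\mathcal{B}^{\tau'}_{n,\Ring}\otimes_{\A_{n,\lambda,\Ring}}\Delta^{gr}_\Ring(\eta)$ flat over $\Ring$ and hence compatible with base change in $K_0$.
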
 
\begin{proof}
Consider the Koszul resolution for the $\F[\mathfrak{z}_1^d]=\F[\mathfrak{z}^d]\otimes_{\F[\mathfrak{z}^{(1)d}]}\F_0$:
\begin{align*}
\ldots\rightarrow \F[\mathfrak{z}^d]\otimes \Lambda^2 (\mathfrak{z}^{(1)})^{d*}\rightarrow
\F[\mathfrak{z}^d]\otimes (\mathfrak{z}^{(1)})^{d*}\rightarrow \F[\mathfrak{z}^d]\rightarrow \F[\mathfrak{z}_1^d]. 
\end{align*}
Note that this is a complex of graded $\F[(\mathfrak{z}^{(1)})^d]\# S_d$-modules. Thanks to Remark \ref{Rem:bimodule_endomorphisms},
we can tensor this sequence with the $\A_{n+db,\lambda,\F}$-$\A_{n,\lambda,\F}$-bimodule $\mathcal{B}_{n,\F}^d$. The latter bimodule is flat over 
$\F[(\mathfrak{z}^{(1)})^d]$, so we get an exact sequence of $\A_{n+db,\lambda,\F}$-$\A_{n,\lambda,\F}$-bimodules
and $S_d$-modules. Now take the $\tau$ isotypic component and act on the category of graded $\A_{n,\lambda,\F}$-modules.
We get an exact sequence of exact functors. Passing to $K_0$, we get the equality in the statement of the proposition.    
\end{proof}

\subsection{Translation and wall-crossing in positive characteristic}\label{SS_WC_charp}
The goal of this section is to mention positive characteristic analogs of constructions related to translations explained in Section \ref{SS_WC}.
A reference for the content of this section is \cite[Sections 6,7]{catO_charp}. 

Let $\K=\mathbb{Q}(\epsilon)$. We write $\mathfrak{c}$ for $(\g_n^*)^{G_n}$. Using quantum Hamiltonian reduction one defines the universal
version $\A_{n,\mathfrak{c}}=[D(R_n)/D(R_n)\Phi([\g_n,\g_n])]^G$, a $\K[\mathfrak{c}]$-algebra with 
$\A_{n,\lambda}:=\A_{n,\mathfrak{c}}\otimes_{\K[\mathfrak{c}]}\K_\lambda$. Similarly, for a character 
$\chi$ of $G_n$, we can consider the 
$\A_{n,\mathfrak{c}}$-bimodule $\A_{n,\mathfrak{c},\chi}$, where instead of the invariants we take the $(G_n,\chi)$-semiinvariants,
so that $\A_{n,\mathfrak{c},\chi}\otimes_{\K[\mathfrak{c}]}\K_\lambda=\A_{n,\lambda+\chi\leftarrow \lambda}$. 
We note that $\A_{n,\mathfrak{c},\chi}$ carries a natural
Euler grading with grading element $\mathsf{eu}^D_n\otimes 1+1\otimes\mathsf{eu}^D_n$. It follows from 
Lemma \ref{Lem:Euler_relation}, that $\mathsf{eu}_n\otimes 1+1\otimes\mathsf{eu}_n$ is also a grading element. 

Now we can argue as in Section \ref{SS:heis_fun_charp}. We choose a graded finitely generated $\A_{n,\mathfrak{c},\Ring}$-subbimodule 
$\A_{n,\mathfrak{c},\chi,\Ring}$, set $\A_{n,\mathfrak{c},\chi,\F}:=\F\otimes_{\Ring}\A_{n,\mathfrak{c},\chi,\Ring}$ and then set
$$\A_{n,\lambda+\chi\leftarrow \lambda,\F}:= \A_{n,\mathfrak{c},\chi,\F}\otimes_{\F[\mathfrak{c}]}\F_\lambda.$$

Set $\Sigma=\{\frac{a}{b}| a,b\in \Z| 0<-a<b\leqslant n\}$, now viewed as a subset of 
$\F_p\subset \F$. By a {\it $p$-stability interval} in $\mathbb{R}$ we mean a connected component of 
$\mathbb{R}\setminus \{z\in \Z| z \mod p\in \Sigma\}$. By a $p$-stability interval in $\Z$ we mean 
the intersection of $\Z$ with some $p$-stability interval in $\mathbb{R}$, all stability intervals are of the form
$[\frac{(p-1)a'}{b'}+1,\frac{(p-1)a''}{b''}-1]\cap \Z$ for suitable integers $a',b',a'',b''$.

The following lemma is an analog of Lemma \ref{Lem:translation_same_sign}, it is proved in the same way as 
\cite[Lemma 6.4]{catO_charp} (for part (1) -- and part (2) is similar) and \cite[Lemma 6.7]{catO_charp} (for part (3)) and is based on Lemma \ref{Lem:translation_same_sign}. 

\begin{Lem}\label{Lem:translation_modp}
Suppose that $\lambda_1,\lambda_2,\lambda_3\in \Z$ are in the same $p$-stability interval. 
 Then, after replacing $\Ring$ with a finite localization, the following claims hold.
\begin{enumerate}
\item $\A_{n,\lambda_1\leftarrow \lambda_2,\F}$ is a Morita equivalence $\A_{n,\lambda_1,\F}$-$\A_{n,\lambda_2,\F}$-bimodule.
\item A direct analog of (\ref{eq:translation_composition}) is an isomorphism.
\item For each partition $\tau$ of $n$, we have 
$$\A_{n,\lambda_1\leftarrow \lambda_2,\F}\otimes_{\A_{n,\lambda_2,\F}}\Delta^{gr}_{\lambda_2,\F}(\tau)
\xrightarrow{\sim}\Delta^{gr}_{\lambda_1,\F}(\tau),$$
an isomorphism of graded $\A_{n,\lambda_1,\F}$-modules.
\end{enumerate} 
\end{Lem}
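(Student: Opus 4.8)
\textbf{Proof strategy for Lemma \ref{Lem:translation_modp}.} The plan is to deduce all three statements from their characteristic-zero counterparts in Lemma \ref{Lem:translation_same_sign} by a reduction-modulo-$p$ argument, exactly along the lines of \cite[Lemmas 6.4, 6.7]{catO_charp}. The key point is that the $p$-stability interval condition on $\lambda_1,\lambda_2,\lambda_3\in\Z$ is the modular analogue of condition (i) (or (ii)) in Lemma \ref{Lem:translation_same_sign}: writing $\lambda_i = \lambda_i'$ for lifts to $\Z$, one checks that if $\lambda_1,\lambda_2,\lambda_3$ lie in a common $p$-stability interval $[\frac{(p-1)a'}{b'}+1,\frac{(p-1)a''}{b''}-1]\cap\Z$, then the corresponding GIT stability condition for the quantum Hamiltonian reduction of $D_n$ (for the character $\det^{\pm 1}$, as in Lemma \ref{Lem:subcats_coincide}) is the same for all three parameters. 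This is the modular form of the statement that the unstable locus coincides with the ``no nonzero $G_n$-invariants'' subcategory, and it is what makes the composition functors $\mathcal{F}_1\circ\mathcal{F}_2\circ\mathcal{F}_3$ from the proof of Lemma \ref{Lem:translation_same_sign} behave well.

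For part (1), I would argue as follows. Form the universal versions $\A_{n,\mathfrak{c},\chi,\Ring}$ over $\Ring$ and their base changes. Over $\K=\mathbb{Q}(\epsilon)$, Lemma \ref{Lem:translation_same_sign}(1) gives that $\A_{n,\lambda_1\leftarrow\lambda_2}$ is a Morita equivalence bimodule, i.e. the natural maps $\A_{n,\lambda_1\leftarrow\lambda_2}\otimes_{\A_{n,\lambda_2}}\A_{n,\lambda_2\leftarrow\lambda_1}\to\A_{n,\lambda_1}$ and $\A_{n,\lambda_2\leftarrow\lambda_1}\otimes_{\A_{n,\lambda_1}}\A_{n,\lambda_1\leftarrow\lambda_2}\to\A_{n,\lambda_2}$ are isomorphisms. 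Both sides are finitely generated over $\Ring$ after choosing forms, so these isomorphisms of finitely generated $\Ring$-modules hold after a finite localization of $\Ring_0$; then base change to $\F$ preserves them. This is precisely the mechanism of \cite[Lemma 6.4]{catO_charp}, and I would cite that proof. For part (2) one does the same with the composition map (\ref{eq:translation_composition}): it is an isomorphism over $\K$ by Lemma \ref{Lem:translation_same_sign}(2), hence over $\F$ after shrinking $\Ring_0$.

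For part (3), the characteristic-zero input is Lemma \ref{Lem:translation_same_sign}(3), namely $\A_{n,\lambda_1\leftarrow\lambda_2}\otimes_{\A_{n,\lambda_2}}\Delta_{\lambda_2}(\eta)\xrightarrow{\sim}\Delta_{\lambda_1}(\eta)$, which upgrades to a graded isomorphism of $\A_{n,\lambda_1}$-modules once we equip everything with the Euler grading (the translation bimodule carries the grading element $\mathsf{eu}_n^D\otimes 1 + 1\otimes\mathsf{eu}_n^D$, equivalently $\mathsf{eu}_n\otimes 1 + 1\otimes\mathsf{eu}_n$ by Lemma \ref{Lem:Euler_relation}, and this is compatible with the gradings on the standard modules). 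Choosing compatible $\Ring$-forms $\Delta^{gr}_{\lambda_i,\Ring}(\eta)$ (as in assumption (a3)) and a graded $\Ring$-form of the translation bimodule, the isomorphism over $\K$ is given by a map of finitely generated graded $\Ring$-modules; after a further finite localization of $\Ring_0$ the map becomes an isomorphism over $\Ring$, and base change to $\F$ gives the graded isomorphism $\A_{n,\lambda_1\leftarrow\lambda_2,\F}\otimes_{\A_{n,\lambda_2,\F}}\Delta^{gr}_{\lambda_2,\F}(\eta)\xrightarrow{\sim}\Delta^{gr}_{\lambda_1,\F}(\eta)$. This mirrors \cite[Lemma 6.7]{catO_charp}.

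The only genuine subtlety — and the step I expect to be the main obstacle — is verifying that the $p$-stability interval hypothesis really does force the GIT stability condition to be constant across $\lambda_1,\lambda_2,\lambda_3$, so that the modular analogue of Lemma \ref{Lem:subcats_coincide} applies uniformly to all three parameters. This requires unwinding the definition of $\Sigma$ (now as a subset of $\F_p$) and of $p$-stability intervals in terms of the sign of $\det$-stability for $\mu_n^{-1}(0)$, and checking that no ``wall'' in $\Sigma$ separates the three parameters. Once that combinatorial/geometric point is in place, everything else is the routine ``spread out and specialize'' argument, and I would simply point to the proofs of \cite[Lemmas 6.4, 6.7]{catO_charp} with Lemma \ref{Lem:translation_same_sign} as the characteristic-zero input in place of the analogous statement used there.
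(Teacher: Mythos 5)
Your overall plan — deduce the modular statements from the characteristic-zero Lemma~\ref{Lem:translation_same_sign} by the ``spread out over $\Ring$ and specialize to $\F$'' template, citing \cite[Lemmas 6.4, 6.7]{catO_charp} — is exactly what the paper does: its proof is precisely a citation to those two lemmas with Lemma~\ref{Lem:translation_same_sign} as the input. So the approach matches.

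However, the argument you actually sketch for part (1) contains a step that fails. You write that both sides of the multiplication maps
$\A_{n,\lambda_1\leftarrow\lambda_2,\Ring}\otimes_{\A_{n,\lambda_2,\Ring}}\A_{n,\lambda_2\leftarrow\lambda_1,\Ring}\to\A_{n,\lambda_1,\Ring}$
``are finitely generated over $\Ring$,'' so that the kernel and cokernel can be killed by a finite localization of $\Ring_0$. This is not so: $\A_{n,\lambda_1,\Ring}$ has infinite rank over $\Ring$ (its associated graded is $\Ring[T^*\h_n]^{S_n}$), and the same is true of the tensor product; the cone of the multiplication map is a Harish-Chandra bimodule, a finitely generated module over $\gr\A_{n,\lambda_1,\Ring}$ but not over $\Ring$. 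Consequently, one cannot conclude by torsion considerations over $\Ring$ alone, and Morita equivalence is not a condition that spreads out naively under the non-flat base change $\Ring\to\F$. A telling symptom is that, if this argument did go through, the $p$-stability-interval hypothesis would play no role — any integer satisfies condition (iii) of Lemma~\ref{Lem:translation_same_sign}, so the characteristic-zero Morita equivalence holds for \emph{any} triple of integers, yet over $\F$ the claim manifestly fails when a parameter hits a wall of $\Sigma\subset\F_p$. As you observe in your last paragraph, the real engine is the characteristic-$p$ analogue of Lemma~\ref{Lem:subcats_coincide} (the localization theorem over $\F$, which is where the $p$-stability hypothesis enters through GIT stability), and this is in fact the content of \cite[Lemma 6.4]{catO_charp}; that step should be treated as the core of the proof, not as a combinatorial check preceding a spread-out argument. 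With that input, the factorization $\mathcal{F}_1\circ\mathcal{F}_2\circ\mathcal{F}_3$ from the proof of Lemma~\ref{Lem:translation_same_sign} transports to $\F$ and gives (1), (2); and (3) is the spread-out of the isomorphism of Verma modules as you describe, following \cite[Lemma 6.7]{catO_charp}. (Minor point: since the $\lambda_i$ are integers, the relevant clause of Lemma~\ref{Lem:translation_same_sign} in characteristic zero is (iii), not (i) or (ii).)
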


Note that we also have a direct analog of Proposition \ref{Prop:WC} over $\F$, see \cite[Proposition 7.3]{catO_charp}.
Namely, let $$I=[\frac{(p-1)a'}{b'}+1,\frac{(p-1)a''}{b''}-1]\cap \Z,\quad
I'=[\frac{(p-1)a''}{b''}+1,\frac{(p-1)a'''}{b'''}-1]\cap \Z$$ be two adjacent $p$-stability intervals in $\Z$,
and $\lambda^-=\frac{(p-1)a''}{b''}-1, \lambda=\frac{(p-1)a''}{b''}+1$. Then the functor 
$$\WC_{\lambda\leftarrow \lambda^-}:=\A_{n,\lambda\leftarrow \lambda^-,\F}\otimes^L_{\A_{n,\lambda^-,\F}}\bullet$$
is a perverse equivalence $D^b(\A_{n,\lambda^-,\F}\operatorname{-mod})\xrightarrow{\sim} 
D^b(\A_{n,\lambda,\F}\operatorname{-mod})$. The corresponding filtrations are defined by the two-sided ideals in 
$\A_{n,\lambda^-,\F}, \A_{n,\lambda,\F}$ obtained by reducing mod $p$ the two-sided ideals $J_i,J_i^-$ described before
Proposition \ref{Prop:WC}.

\subsection{Negative parameters}\label{SS_negative_char_p}
The goal of this section is to explain a modification of constructions in Section \ref{SS:heis_fun_charp}
for the parameter $\lambda^-=\frac{a}{b}$, where $a,b$ are coprime, $b>0$ and $a<-b$. Let $\lambda\in \lambda^-+\Z$
be positive. Let $\Ring,\K$ have the same meaning as in Section \ref{SS:heis_fun_charp}.

Consider the Harish-Chandra $\A_{n+b,\lambda^-}$-$\bar{\A}_{n,\lambda^-}$-bimodule
$B_n^-$ defined by (\ref{eq:bimod_Ext_charact1}). By Proposition \ref{Prop:WC_Heis},
it is projective over $\bar{\A}_{n,\lambda}$. Consider the  
$\A_{n+b,\lambda^-}$-$\A_{n,\lambda^-}$-bimodule $\mathcal{B}_n^-:=B_n^-\otimes_{D(\mathfrak{z})}\K[\mathfrak{z}]$
and the direct analog of $\mathcal{B}_n^d$, the $\A_{n+db,\lambda^-}$-$\A_{n,\lambda^-}$-bimodule $\mathcal{B}_n^{-,d}$.
The discussion in Section \ref{SS_negative_O} yields an action of $S_d$ on $\mathcal{B}_n^{-,d}$. 
We note that 
\begin{equation}\label{eq:bimod_Ext_charact2}
\mathcal{B}_n^-:=\operatorname{Ext}^{b-1}_{\A_{n+b,\lambda}}(\A_{n+b,\lambda\leftarrow \lambda^-},\mathcal{B}_n\otimes_{\A_{n,\lambda}}\A_{n,\lambda\leftarrow \lambda^-})
\end{equation}  

After replacing $\Ring$ with a finite localization we can achieve the following: 
\begin{enumerate}
\item the right hand side of (\ref{eq:bimod_Ext_charact1}) (over $\Ring$)
defines a graded $\Ring$-lattice  $B^-_{n,\Ring}\subset B_n^-$. 
\item Moreover, the right hand side of (\ref{eq:bimod_Ext_charact2}) is identified with 
$B^-_{n,\Ring}\otimes_{D(\mathfrak{z}_\Ring)}\Ring[\mathfrak{z}]$.
\item The base change $\mathcal{B}_{n,\F}^-:=\F\otimes_{\Ring}\mathcal{B}_{n,\Ring}^-$ satisfies the direct analog of 
(\ref{eq:bimod_Ext_charact2}) over $\F$. Moreover, 
$$\operatorname{Ext}^{i}_{\A_{n+b,\lambda,\F}}(\A_{n+b,\lambda\leftarrow \lambda^-,\F},\mathcal{B}_{n,\F}\otimes_{\A_{n,\lambda,\F}}\A_{n,\lambda\leftarrow \lambda^-,\F})=0, \forall i\neq b-1.$$
\end{enumerate}

We can form that $\A_{n+db,\lambda^-,\F}$-$\A_{n,\lambda^-,\F}$-bimodule $\mathcal{B}_{n,\F}^{-,d}$ similarly to $\mathcal{B}_{n,\F}^{d}$.
Thanks to (3), we have 
$$(\mathcal{B}_{n,\F}^{-,d}\otimes \bullet)[d(1-b)]\cong \WC_{\lambda\leftarrow \lambda^-,\F}^{-1}\circ 
(\mathcal{B}_{n,\F}^{d}\otimes \bullet)\circ \mathfrak{WC}_{\lambda\leftarrow \lambda^-}.$$
Thanks to this we can carry the action of $\F[(\mathfrak{z}^{(1)})]\# S_d$ from $\mathcal{B}_{n,\F}^{d}$ 
(see Remark \ref{Rem:bimodule_endomorphisms}) to $\mathcal{B}_{n,\F}^{-,d}$ and define the functors 
$$\heis_{\F}^{-,\tau}: \A_{n,\lambda^-,\F}\operatorname{-mod}^{gr}_0\rightarrow 
\A_{n+db,\lambda^-,\F}\operatorname{-mod}^{gr}_0.$$
Direct analogs of Lemma \ref{Lem:exactness} and Theorem \ref{Thm:modp_main} hold with analogous proofs 
(for the analog of Theorem \ref{Thm:modp_main} one uses results from Section \ref{SS_negative_O}):

\begin{Prop}\label{Prop:modp_main_negative}
The following claims are true:
\begin{enumerate}
\item 
The functor $\heis_{\F}^{-,\tau}$ is exact. 
\item 
For a partition $\eta$ coprime to $b$, the functor 
sends $L_{\lambda^-}^{gr}(\eta^t)$ to $L^{gr}_{\lambda^-}((\eta+b\tau^t)^t)$, 
\item a direct analog of (2) of Theorem \ref{Thm:modp_main} holds.
\end{enumerate}
\end{Prop}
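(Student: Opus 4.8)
The strategy is to reduce everything to the positive-parameter results (Lemma \ref{Lem:exactness}, Theorem \ref{Thm:modp_main}) by conjugating with the modular wall-crossing functor $\WC_{\lambda\leftarrow\lambda^-,\F}$, exactly as the negative-parameter story over $\C$ in Section \ref{SS_negative_O} was reduced to the positive case. First I would record the key structural input: by property (3) of the chosen $\Ring$-form (the vanishing of $\operatorname{Ext}^i_{\A_{n+b,\lambda,\F}}(\A_{n+b,\lambda\leftarrow\lambda^-,\F},\mathcal{B}_{n,\F}\otimes_{\A_{n,\lambda,\F}}\A_{n,\lambda\leftarrow\lambda^-,\F})$ for $i\neq b-1$), the functor $(\mathcal{B}_{n,\F}^{-,d}\otimes\bullet)[d(1-b)]$ is identified with $\WC_{\lambda\leftarrow\lambda^-,\F}^{-1}\circ(\mathcal{B}_{n,\F}^{d}\otimes\bullet)\circ\mathfrak{WC}_{\lambda\leftarrow\lambda^-}$. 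Since $\WC_{\lambda\leftarrow\lambda^-,\F}$ is a perverse equivalence with respect to the filtrations by reductions of the ideals $J_i, J_i^-$ (see Section \ref{SS_WC_charp}), and the shift $[d(1-b)]$ is exactly the perversity shift on the $i=d$-or-appropriate stratum — more precisely one checks, using the description of the standardly stratified structure in Section \ref{SS_stand_stratif} and the fact that $\WC$ restricts to t-exact equivalences on the subquotients after the shift by $i(b-1)$ — one concludes exactness of $\heis_\F^{-,\tau}$. This gives part (1). (The same argument already underlies Proposition \ref{Prop:WC_Heis} over $\C$, so the content is a modular analog of that.)

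For part (2), I would track a single simple object through the conjugation. By Remark \ref{Rem:negative_parameter}, there is an equivalence $\mathcal{O}_{n,\lambda}\cong\mathcal{O}_{n,-\lambda}$ sending $L_\lambda(\eta)$ to $L_{-\lambda}(\eta^t)$, and the wall-crossing functor matches the $\lambda^-<-1$ picture with the $\lambda>0$ picture; on simples with the relevant support it sends (after the perversity shift) $L_{\lambda^-}^{gr}(\eta^t)$ to $L_\lambda^{gr}(\eta)$, cf. Proposition \ref{Prop:WC} and the discussion in Section \ref{SS_negative_O}, specifically \eqref{eq:SV_comparison_negative}. Combining this with part (1) of Theorem \ref{Thm:modp_main}, which says $\heis_\F^\tau L_\F^{gr}(\eta)\cong L_\F^{gr}(\eta+b\tau)$, and transporting back through $\WC^{-1}$, I get $\heis_\F^{-,\tau}L_{\lambda^-}^{gr}(\eta^t)\cong L_{\lambda^-}^{gr}((\eta+b\tau^t)^t)$ — here one must be careful with the transpose conventions: the Shan-Vasserot functor in the negative case uses $L([b\tau]^t)$ (Remark \ref{Rem:negative_parameter}), so the label that appears is $(\eta+b\tau^t)^t$ as stated. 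The main subtlety is checking that all the $\Ring$-form compatibility conditions (a5), (a6) transported through the wall-crossing bimodule $\A_{n,\lambda\leftarrow\lambda^-,\Ring}$ survive reduction mod $p$; this is handled by a further finite localization of $\Ring$ exactly as in Section \ref{SS:Rp_assumptions}, using Lemma \ref{Lem:translation_modp}.

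For part (3), the branching-rule statement, I would repeat the argument of Step 6 of the proof of Theorem \ref{Thm:modp_main}: the composition law \eqref{eq:heis_composition} for $\heis_\OCat^\tau$ over $\C$ transports through $\WC$ (up to the perversity shifts, which cancel because \eqref{eq:heis_composition} is a genuine functor isomorphism), hence holds for the modular negative-parameter functors, and then semisimplicity (from part (1) applied to compositions, as in Step 4 of Section \ref{SS_main_Thm_p_proof}) together with part (2) forces the decomposition $\heis_\F^{-,\tau''}L_{\lambda^-}^{gr}((\eta+b\tau')^t)\cong\bigoplus_\tau\operatorname{Hom}_{S_d}(\tau,\operatorname{Ind}(\tau'\boxtimes\tau''))\otimes L_{\lambda^-}^{gr}((\eta+b\tau)^t)$. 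The hardest part is the bookkeeping in part (2): making sure the perversity shift $[d(1-b)]$, the shift $[b-1]$ built into $\heis^-_{M'_\lambda}$, and the transpose on partitions are all consistent, so that no spurious homological shift or sign obstructs the identification of $\heis_\F^{-,\tau}$ with an honest exact functor landing on the correct simple. Everything else is a mechanical transport of the positive-$\lambda$ results across the equivalence.
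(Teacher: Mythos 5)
Your overall strategy—transport the positive-parameter results across wall-crossing—is the right spirit (the paper itself just says ``analogous proofs, using results from Section \ref{SS_negative_O}''), but there are two concrete gaps.

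For part (1), ``perversity gives exactness'' is not a valid one-step deduction. Conjugating an exact functor by a perverse equivalence and applying a uniform homological shift does not in general produce an exact functor: the shifts in a perverse equivalence depend on the stratum. Your argument would go through only if you first proved that $\heis_\F^\tau$ on the positive side sends the $i$-th filtration piece of $\mathcal{C}_n$ into the $(i+d)$-th filtration piece of $\mathcal{C}_{n+bd}$ (the modular analog of Lemma \ref{Lem:supports_heis}), so that the two perversity shifts differ by exactly $d(b-1)$ and cancel against $[d(1-b)]$; you never establish this. The cleaner route, and what the phrase ``direct analog of Lemma \ref{Lem:exactness}'' signals, is structural: by Proposition \ref{Prop:WC_Heis}, $B_n^-$ is projective as a right $\bar{\A}_{n,\lambda^-}$-module over $\K$, so after a further finite localization of $\Ring$ one arranges that $B^-_{n,\F}$ is projective over $\bar{\A}_{n,\lambda^-,\F}$, and exactness of $\heis_\F^{-,\tau}$ follows immediately, just as in Lemma \ref{Lem:exactness}.

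For part (2), you conflate two different equivalences. Remark \ref{Rem:negative_parameter} gives an abelian equivalence $\OCat_{n,\lambda}\cong\OCat_{n,-\lambda}$ sending $L_\lambda(\eta)\mapsto L_{-\lambda}(\eta^t)$; the wall-crossing functor $\WC_{\lambda\leftarrow\lambda^-}$ is a quite different, derived, perverse equivalence between $\lambda$ and $\lambda^-$, and it does not simply send $L^{gr}_{\lambda^-}(\eta^t)\mapsto L^{gr}_\lambda(\eta)$ even up to shift. Moreover, your own chain of steps is internally inconsistent: if $\WC$ (suitably shifted) did send $L_{\lambda^-}^{gr}(\eta^t)\mapsto L_\lambda^{gr}(\eta)$, then $\heis_\F^\tau$ would give $L_\lambda^{gr}(\eta+b\tau)$ and $\WC^{-1}$ would return $L_{\lambda^-}^{gr}((\eta+b\tau)^t)$, which is not the claimed $L_{\lambda^-}^{gr}((\eta+b\tau^t)^t)$ (these differ unless $\tau$ is self-transpose). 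The paper's intended argument bypasses the question of what modular wall-crossing does to simples entirely: one establishes negative-parameter analogs of conditions (a5), (a6) over $\Ring$, using the $\C$-statement \eqref{eq:SV_comparison_negative} and the computation from Section \ref{SS_negative_O}, which after reduction mod $p$ say that $\mathcal{B}^{-,\tau}_{n,\F}$ carries $\underline{\bar{\Delta}}^{\preceq,gr}_\F(\eta^t)$ to $\underline{\Delta}^{\preceq,gr}_\F((\eta+b\tau^t)^t)$, and then reruns Steps 1–6 of the proof of Theorem \ref{Thm:modp_main} verbatim for the bimodules $B_{n,\F}^-$. Part (3) then follows as in Step 6, and here your sketch is fine, but it relies on a corrected part (2).
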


Further, an analog of Proposition \ref{Prop:K0_computation} holds. 

\begin{Rem}\label{Rem:two_functors}
Fix a positive integer $m$ and a $p$-stability interval $[\frac{(p-1)a'}{b'}+1, \frac{(p-1)a''}{b''}-1]$ in $\Z$ for $m$. 
Set $\lambda':=\frac{a'}{b'}+1, \lambda'':=\frac{a''}{b''}-1$ so that $\lambda',\lambda''$ are congruent to, respectively, 
the left and right endpoints of the interval. For each $d'\leqslant \frac{m}{b'}$ and each partition $\tau'$ of $d'$ we have a functor $$\heis^{\tau'}_\F: \A_{m-d'b',\lambda',\F}\operatorname{-mod}^{gr}_0\rightarrow \A_{m,\lambda',\F}\operatorname{-mod}^{gr}_0.$$
Similarly, for each $d''\leqslant \frac{m}{b''}$ and each partition $\tau''$ of $d''$ we get a functor 
$$\heis^{-,\tau''}_\F: \A_{m-d''b'',\lambda'',\F}\operatorname{-mod}^{gr}_0\rightarrow \A_{m,\lambda'',\F}\operatorname{-mod}^{gr}_0.$$  
These functors allow to get information about characters of simples in $\A_{m,\lambda',\F}\operatorname{-mod}^{gr}_0$
from the information about simples in  $\A_{m-d'b',\lambda',\F}\operatorname{-mod}^{gr}_0$ (and similarly for $\lambda''$). Now note that 
the categories $\A_{m,\lambda',\F}\operatorname{-mod}^{gr}_0$ and $\A_{m,\lambda',\F}\operatorname{-mod}^{gr}_0$ are equivalent via a 
composition of translation functors, whose behavior on simple objects can be deduced from (3) of Lemma \ref{Lem:translation_modp}.
It should be noted that this composition sends $L^{gr}_{\lambda'}(\eta)$ to $L_{\tilde{\lambda}''}^{gr}(\eta)$, where 
$\tilde{\lambda}'':=\lambda'+(\frac{(p-1)a''}{b''}-1)-(\frac{(p-1)a'}{b'}+1)$, so the image of $L^{gr}_{\lambda'}(\tau)$
differs from $L^{gr}_{\lambda''}(\eta)$ by a grading shift. 
\end{Rem}

\end{document}